%% file: main.tex
\documentclass[11pt]{article}

\usepackage[a4paper,margin=25mm]{geometry}
\usepackage[T1]{fontenc}
\usepackage[utf8]{inputenc}
\usepackage{lmodern}
\usepackage{microtype}
\usepackage{amsmath,amssymb,amsthm,mathtools}
\usepackage{bm}
\usepackage{booktabs,tabularx,array,multirow}
\usepackage{graphicx,float}
\usepackage{placeins}
\usepackage{xcolor}
\usepackage{enumitem}
\usepackage[authoryear,round]{natbib}
\usepackage[hidelinks]{hyperref}
\usepackage[nameinlink,noabbrev]{cleveref}
\usepackage{authblk}
\usepackage{caption}
\theoremstyle{plain}
\allowdisplaybreaks[1]
\newcommand{\doi}[1]{\url{https://doi.org/#1}}

\hypersetup{
  pdftitle={MIACE and Monotone Tracking: Exact Tuning for First-Order Plants and Positive Cascades with Pure Delay},
  pdfauthor={Latchezar P. Tomov},
  pdfsubject={MIACE optimality, monotone tracking, exact pure delay, positive cascades, and a nonlinear first-order extension},
  pdfkeywords={MIACE, monotone tracking, positive cascades, pure delay, PI control, filtered PID control, nonlinear control, global optimization}
}

\graphicspath{{figures/}}
\setlist{nosep,leftmargin=*}

\newtheorem{theorem}{Theorem}
\newtheorem{proposition}[theorem]{Proposition}
\newtheorem{corollary}[theorem]{Corollary}
\newtheorem{lemma}[theorem]{Lemma}
\theoremstyle{remark}

\AddToHook{env/theorem/begin}{\crefalias{theorem}{theorem}}
\AddToHook{env/proposition/begin}{\crefalias{theorem}{proposition}}
\AddToHook{env/corollary/begin}{\crefalias{theorem}{corollary}}
\AddToHook{env/lemma/begin}{\crefalias{theorem}{lemma}}
\AddToHook{env/remark/begin}{\crefalias{theorem}{remark}}

\newcommand{\Ecal}{\mathcal{E}}
\newcommand{\Ucal}{\mathcal{U}}
\newcommand{\Mcal}{\mathcal{M}}
\newcommand{\Ncal}{\mathcal{N}}
\newcommand{\Rset}{\mathbb{R}}
\newcommand{\iu}{\mathrm{i}}
\newcommand{\dd}{\,\mathrm{d}}
\newcommand{\norm}[1]{\left\lVert #1\right\rVert}

\title{MIACE and Monotone Tracking:\\
Exact Tuning for First-Order Plants\\
and Positive Cascades with Pure Delay}

\author{Latchezar P. Tomov}
\affil{\small\itshape Department of Informatics, New Bulgarian University,\\
21 Montevideo Street, 1618 Sofia, Bulgaria\\
\normalfont Corresponding author: \href{mailto:lptomov@nbu.bg}{lptomov@nbu.bg}}
\date{}

\begin{document}
\maketitle

\noindent\textbf{ORCID:} \href{https://orcid.org/0000-0003-1902-6473}{0000-0003-1902-6473}

\begin{abstract}
The Module of Integral Absolute Complex Error (MIACE) was created by me 23 years ago. It gave very nice behaviour to the transient response of the controlled systems I tested. It combines absolute
tracking-error and equilibrium-centered command areas. For a stable
unit-gain plant with known pure delay, an exact identity separates
squared MIACE excess into a negative-area penalty and a weighted area imbalance. The resulting lower bound depends on the plant's first
low-frequency moment and the delay. Every attaining response of a
positive cascade is monotone, independently of controller structure.
For a first-order plant, output variation and peak overshoot also admit
quantitative bounds. Exact conditions identify the unique positive PI
controller attaining the delayed bound; filtered PID extends attainment
beyond this PI range. For cascades of arbitrary order, Hermite
interpolation and a finite-support delay remainder construct proper
rational controllers of the same order, with internal exponential
stability proved by positive convolution representations. An
exponential-moment characterization then closes the attainment question
when internal delay blocks are allowed: an ordinary command attains the
bound if and only if $\alpha>\mu+\delta-1$. An explicit internal-model
controller realizes every feasible weight and delay. A sharp amplitude-cap
condition determines the least command peak among all attaining responses
and the complete capped sign-preserving area front. A small-gain estimate
supplies an explicit MIACE deviation bound under delay error. These
larger-class results are distinguished from the rational PI/PID attainment
ranges. An exact minimum-phase counterexample delimits the cascade
monotonicity claim.
A storage balance also extends the exact MIACE identity and monotone
attainment to a nonlinear first-order class without delay.
Independent physical integrations retain delays
exactly; quadratic comparisons provide complementary benchmarks.
\end{abstract}

\noindent\textbf{Keywords:} MIACE; monotone tracking; positive cascades;
pure delay; PI and filtered PID control; nonlinear control; global optimization.

\input{sections/01_introduction}
\input{sections/02_problem}

\input{sections/06_normalized_miace}

\input{sections/06b_miace_shape}
\input{sections/06c_pure_delay}
\input{sections/06d_filtered_pid}
\input{sections/06e_arbitrary_order}
\input{sections/06f_complete_attainment}

\input{sections/03_exact_energies}
\input{sections/04_global_optimum}
\input{sections/04b_delayed_quadratic}
\input{sections/05_pareto_geometry}
\input{sections/07_actuator_design}
\input{sections/08_model_mismatch}
\input{sections/08_numerical_studies}

\FloatBarrier
\input{sections/08b_nonlinear_balance}
\input{sections/09_discussion}
\input{sections/10_conclusion}
\FloatBarrier
\input{declarations}

\input{sections/appendices}
\FloatBarrier
\bibliographystyle{plainnat}
{\small\interlinepenalty=10000\setlength{\bibsep}{2pt}\bibliography{references}}

\end{document}

%% file: sections/01_introduction.tex
\section{Introduction}
\label{sec:introduction}

When a regulator makes an output reach its target quickly, a large command
excursion may be the price of that speed. If the output then crosses the
target and returns, part of its motion has to be undone. Even a response
that stays below the target can move backwards before finally settling.
A tuning criterion based on integral errors should therefore be examined
in two ways: which controller minimizes it, and what a small value says
about the shape of the resulting transient.

The Module of Integral Absolute Complex Error (MIACE) combines the
absolute tracking-error area and the absolute area of a command excursion
in a weighted Euclidean norm. In the form studied here, the command is
measured from the equilibrium required by the new setpoint. This centering
is essential on an infinite horizon: integrating the absolute value of a
nonzero steady command would give infinity. The resulting effort term
measures the size and duration of the transient excursion. It does not
measure steady energy consumption or actuator wear.

The author first formulated MIACE in 2003 for his bachelor's diploma
work at the Technical University of Sofia \citep{Tomov2003} and implemented
it in the MATLAB/Simulink software PIDBul2003. The subsequent account of
PIDBul by \citet{GaripovTomov2006} records the early software development
and cites that diploma work as reference~3. The finite-horizon formula
later appears in the author's doctoral dissertation
\citep[Sec.~4.5.1, p.~155, Eq.~(24)]{Tomov2011}.
The 2003 date concerns the original formulation and implementation;
the dimensionless infinite-horizon problem, equality analysis, and
response-shape bounds developed below are separate results.

The delay-free first-order plant gives a setting in which the connection between
criterion and response shape can be established exactly. Its normalized
balance is $y'=e+v$, where $e$ is the tracking error and $v$ the centered
command. Thus the total output travel cannot exceed the sum of their
absolute areas. A settled unit-step response has net travel one; every
backward movement adds twice its size to the total travel. Combining these
two observations with the weighted norm gives a quantitative bound:
a MIACE value within $1\%$ of its exact global minimum permits at most
$0.5\%$ peak overshoot and at most the same total backward motion.
At the exact minimum the output is monotone. The proof uses the plant
balance and integrability, so it also covers admissible commands produced
by controller structures other than PI.

The bound is accompanied by an exact decomposition of the MIACE excess.
One term measures negative areas of error and centered command; the other
measures imbalance between the two objectives at the chosen weight.
For positive ideal PI controllers, the complete absolute-area front and
its equality cases identify the unique minimizing gains. On a finite
record, the terminal error remains in the shape estimate. A short record
with little accumulated cost cannot by itself establish that tracking
has been achieved.

A known pure delay changes the waiting time before the output can move,
while the integrator acts throughout that interval. The same area argument
can still be used after separating the waiting interval from the plant
motion. Its intercept becomes $1+L/\tau$, and the MIACE lower bound changes
by the same factor. The PI gain must be reduced accordingly. The resulting
formula is globally optimal only on an explicit delay range, obtained by
combining the area equality conditions with the classical positivity
threshold for a scalar delay equation. An equality argument then excludes
all other positive PI settings. The thresholds for scalar delay stability
and oscillation are established results
\citep{Hayes1950,Domoshnitsky2014,Benekas2020}; their role here is to decide
when the MIACE lower bound can be attained by the stipulated PI controller.

The same equality conditions also guide the extension to a filtered PID
controller. They first fix its integral gain and require cancellation of
the plant pole; neither condition is assumed in advance. The additional
filter and derivative parameters can then keep the centered command
nonnegative at delays where no positive PI controller attains the bound.
A nonnegative convolution representation proves this sign property,
positivity of the error, and internal exponential stability. One exact
construction works at equal weights beyond the PI boundary; another gives
an explicit additional delay interval for larger effort weights. These
are sufficient attainment results. They do not identify the largest
possible PID delay range, and the attaining PID is no longer unique.
The controller differentiates the full error through an independent
first-order filter, so its precise realization is part of the theorem.

The next step is a cascade of $N$ stable real lags with a pure delay.
The signed-area intercept becomes the sum of the normalized time constants
and the delay. The same MIACE decomposition remains valid, but the
first-order derivative identity no longer does. A state argument proves
that the equality signs still force every attaining cascade response to
be monotone. Attainment itself imposes a strict weight--delay inequality.
An exact interpolation remainder for the delay then gives a proper
rational controller of order $N$, with no delay block inside it.
Positivity of its convolution resolvent proves internal stability and
both signal signs. For each fixed cascade and a suitable weight, the
construction works for all sufficiently small delays. Equal time
constants allow a uniform statement for every $N\ge2$, with
$\alpha=2N-1$ and $0\le L/\tau\le1/4$.

Monotonic response design is not new. \citet{LinFang1997} distinguish
nonovershooting from monotone third-order responses, while
\citet{Darbha2003} studies synthesis with a nonnegative impulse response.
Transfer-function conditions for external positivity are developed by
\citet{DrummondTurner2023}, and \citet{TaghavianEtAl2023} use logarithmic
complete monotonicity in rational-system analysis and synthesis.
\citet{DuEtAl2024} give nonovershooting controller designs for high-order
plants with or without time delay. More recently, the preprint
\citet{Taghavian2026} examines feasibility, controller order, and decay
rate in rational monotonic tracking systems. These works establish
the context in which the present claims must be read. Our contribution centers around a novel optimisation criteria which forces monotone tracking and has wider potential spectrum of applications than this paper develops and even this area of research in control theory. The criteria is 23 years old and yet it hasn't been truly discovered by other researchers so far. We want to change this with our paper.

The specific question here is stronger than obtaining a monotone
response: can the response attain the universal MIACE lower bound with
the prescribed ratio of absolute areas, while retaining the exact
transport delay and a specified finite controller order? The positive
cascade construction answers that question on the stated domain.
Classical Hermite interpolation and Volterra resolvents provide its
mathematical tools; they are not claimed as new general methods.
Nor does plant order alone suffice. A minimum-phase example attains
the MIACE bound but moves backwards, and the familiar obstruction
from a real right-half-plane zero remains. These distinctions prevent
the cascade theorem from being read as a theorem for every order-$N$
plant or as a determination of the smallest possible controller order.

Allowing internal delay blocks closes a second, larger-class question.
For a positive cascade, a nonnegative centered command avoids overshoot
exactly when its exponential moment, scaled by the slowest lag, is at
most one. This makes the necessary weight--delay inequality sufficient
through an explicit internal-model filter. The same characterization
gives a sharp command-cap condition and the least possible peak among
all responses attaining MIACE equality. Internal model control itself
is established \citep{GarciaMorari1982,Morari1983}; the present result
selects its filter by the MIACE equality conditions. An explicit bound
for delay error further separates nominal attainment from robustness.
These results are proved in \cref{sec:complete_attainment} and do not
assert that a finite-order rational controller can attain the bound
throughout this larger range.

Integral-error tuning, control-activity penalties, and multiobjective
PI/PID design have a substantial literature
\citep{AstromHagglund1995,ODwyer2009,Skogestad2003,Panagopoulos1998,
Panagopoulos2002,Herreros2002,Tavakoli2007,ReynosoMeza2012,
ContrerasLeiva2015}.  MIACE combines the absolute tracking-error area and the equilibrium-centered
command-excursion area through a weighted Euclidean norm in objective space.
Its aggregation belongs to the standard framework of multiobjective
scalarization \citep{Miettinen1998}, while its components are $L^1$ signal
functionals: integration precedes squaring. This distinguishes MIACE from
quadratic integral criteria. The results below establish the exact bounds,
attainment conditions, and response-shape consequences of this particular
tracking--effort criterion.
Total variation is also an established measure of departure from a
monotone response \citep{HubaEtAl2021}. The question answered here is the
precise relation between that departure, the attained MIACE value, and
the dynamics of the specified plant.

Quadratic tracking--effort tuning provides a useful comparison. The
stationary PI setting is recorded in the dissertation
\citep[Sec.~2.1, pp.~25--28, Eqs.~(10)--(11)]{Tomov2011}.
Its signal-level lower bound is the classical scalar linear-quadratic
regulator (LQR) result. LQR-derived PI/PID constructions are already
established \citep{HeWangLee2000}; constrained servo and regulatory
formulations with manipulated-variable-rate penalties are studied by
\citet{ThuLee2013} and \citet{TchamnaLee2017}. Here the quadratic result
is developed after MIACE, with exact parameter gaps and a complete
energy front. It shows why the two criteria select the same efficient
PI family even though their penalties away from that family differ.

Command limits and model errors are examined for the delay-free plant. An
amplitude cap admits an exact solution over the complete nonovershooting
PI domain. A feasible oscillatory counterexample shows why that clipping
rule cannot be extended to the unrestricted domain. An exact quadratic
mismatch loss then separates small cost degradation from preservation of
monotonicity. These results retain the delay-free first-order plant assumptions;
the additional constrained searches are numerical candidates with
reproducibility checks, not proofs of a general constrained optimum.

A nonlinear first-order plant gives a further test of the criterion.
In $y'=a(y)(w-y)$, the positive response rate $a(y)$ depends on the
output. Integrating a storage balance gives the same signed-area
identity and exact MIACE decomposition, with an intercept determined
by $\int_0^1 a(s)^{-1}\dd s$. A static output feedback with equilibrium
feedforward attains the bound for every positive locally Lipschitz
$a$, and every attaining response is monotone. This result, proved in
\cref{sec:nonlinear_balance}, uses the original error and centered
command areas. It concerns the specified nonlinear class without delay;
the delayed linear-system results retain their earlier assumptions.

%% file: sections/02_problem.tex
\section{Plant balance, normalization, and the PI family}
\label{sec:problem}

Table~\ref{tab:notation} summarizes the notation. All signal formulas in the main
analysis refer to a positive unit step from a zero operating point. The plant
initially has no delay, its parameters are known, and the controller and
actuator are ideal until explicit command bounds are imposed.
Section~\ref{sec:pure_delay} introduces a known pure delay and states
the changed balance and stability conditions separately.
Sections~\ref{sec:filtered_pid} and~\ref{sec:order_n} specify filtered PID
and positive cascades, respectively.

\begin{table}[!htbp]
\centering
\caption{Notation and its meaning. A prime denotes differentiation with
respect to normalized time $\theta$; a dot denotes differentiation with
respect to physical time $t$.}
\label{tab:notation}
\small
\begin{tabularx}{\linewidth}{@{}lX@{}}
\toprule
Symbol & Meaning\\
\midrule
$k,\tau$ & Positive plant gain and time constant in $G(s)=k/(\tau s+1)$.\\
$K_c,T_i$ & Positive proportional gain and integral time of the ideal PI controller.\\
$L,\delta$ & Pure delay and its normalized value $\delta=L/\tau$; zero unless stated.\\
$a$ & Nonzero reference-step amplitude; $a=1$ unless specified.\\
$y,e,u$ & Plant output, tracking error $e=1-y$, and command for a unit step.\\
$u_\infty,\widetilde u$ & Equilibrium command $1/k$ and its deviation $u-1/k$.\\
$\theta,p$ & Normalized time $t/\tau$ and normalized Laplace variable $\tau s$.\\
$x,r$ & Dimensionless PI coordinates $x=kK_c$, $r=T_i/\tau$.\\
$w,v$ & Plant-scaled command $w=ku$ and centered command $v=w-1$.\\
$D(p)$ & Closed-loop polynomial $rp^2+r(1+x)p+x$.\\
$\widehat E,\widehat V$ & Laplace transforms of $e(\theta)$ and $v(\theta)$.\\
$A_e,A_u$ & Dimensionless absolute areas of $e$ and $v$.\\
$\alpha,\Mcal_\alpha$ & Positive effort weight and normalized criterion
$\sqrt{A_e^2+\alpha A_u^2}$.\\
$z$ & Signed error area $r/x$.\\
$\lambda_1,\lambda_2$ & Positive decay rates when the closed-loop poles are real.\\
$A,\mathcal C_{\rm mon}$ & Command cap $ku_{\max}$ and complete class of monotone unit-step responses.\\
$m(x),\Delta$ & Boundary of that class and pole discriminant $(1+x)^2-4x/r$.\\
$N_e,N_v,\Ncal$ & Negative-part areas of $e$ and $v$, and their sum.\\
$m_\alpha,D_\alpha$ & MIACE lower bound $\sqrt{\alpha/(1+\alpha)}$ and area imbalance $A_e-\alpha A_u$.\\
$\mathcal V_y,B_y,P_y$ & Total output variation, total backward motion, and peak overshoot.\\
$E_2,U_2$ & Physical-time integrals of $e^2$ and $(u-u_\infty)^2$.\\
$\Ecal,\Ucal$ & Dimensionless integrals of $e^2$ and $v^2$ over $\theta\ge0$.\\
$Q,R$ & Positive weights, with units that make the two terms in $J$ comparable.\\
$q,F_q,s_q$ & Weight ratio $Qk^2/R$, objective $q\Ecal+\Ucal$, and $\sqrt{1+q}$.\\
$\kappa,\eta$ & Actual-to-nominal plant ratios $k/k_0$ and $\tau/\tau_0$.\\
$H,h$ & Physical and normalized finite horizons, with $h=H/\tau$.\\
$p_c,i_c,\beta$ & Parallel gains, with $p_c=x$, $i_c=x/r$ for PI, and quadratic weight $\beta=q$.\\
$d_c,\rho,\ell$ & Normalized derivative gain, filter time, and lead parameter of the filtered PID in \cref{sec:filtered_pid}.\\
\bottomrule
\end{tabularx}
\end{table}

With negative unity feedback and zero initial conditions, let
\begin{equation}
 G(s)=\frac{k}{\tau s+1},\qquad
 C(s)=K_c\left(1+\frac{1}{T_i s}\right),
 \qquad k,\tau,K_c,T_i>0.
 \label{eq:plant_controller}
\end{equation}
The corresponding time-domain equations are
\begin{equation}
 \tau\dot y+y=ku,\qquad
 u=K_c e+\frac{K_c}{T_i}\int_0^t e(\xi)\dd\xi,
 \qquad e=1-y.
 \label{eq:physical_loop}
\end{equation}
Introduce
\begin{equation}
 \theta=\frac t\tau,\qquad x=kK_c,\qquad r=\frac{T_i}{\tau},
 \qquad w=ku.
 \label{eq:normalization}
\end{equation}
Changing variables in the integral in \eqref{eq:physical_loop} gives
\begin{equation}
 y'+y=w,\qquad
 w=xe+\frac xr\int_0^\theta e(\eta)\dd\eta.
 \label{eq:normalized_loop}
\end{equation}
The integral state is initially zero. Thus $e(0^+)=1$ and the proportional
action gives $w(0^+)=x$. The plant state is continuous at the step, although
its input jumps. To obtain the error equation explicitly, substitute
$y=1-e$ into the plant equation and differentiate the PI law:
\[
 -e'+1-e=w,\qquad w'=xe'+\frac xr e.
\]
Differentiating the first relation gives $-e''-e'=w'$; inserting the second
and moving every term to the left yields
\begin{equation}
 e''+(1+x)e'+\frac xr e=0,
 \qquad e(0^+)=1,\quad e'(0^+)=-x.
 \label{eq:error_ode}
\end{equation}
Its characteristic polynomial is
\begin{equation}
 D(p)=rp^2+r(1+x)p+x.
 \label{eq:characteristic}
\end{equation}
A real second-order polynomial with positive leading coefficient is Hurwitz
when its other two coefficients are positive. Thus every $x,r>0$ gives an
asymptotically stable closed loop. In particular, $e\to0$, $y\to1$, and
$u\to u_\infty=1/k$.
This is internal stability of the two-state plant--integrator realization,
whose state matrix is displayed in \cref{app:peak_oracle}. It is established
before any transfer-function factor is cancelled.

Define
\begin{equation}
 \widetilde u=u-1/k,\qquad v=k\widetilde u=w-1.
 \label{eq:centered_command}
\end{equation}
The plant equation now has the useful form
\begin{equation}
 \boxed{e'=-e-v.}
 \label{eq:error_balance}
\end{equation}
Both $e$ and $v$ decay exponentially for positive PI gains. This makes
the absolute areas used by MIACE finite. More general commands will be
allowed in the response-shape theorems, with their integrability and
endpoint assumptions stated explicitly.

For a step $a\ne0$, use the normalized signals $(a-y)/a$ and $ku/a-1$.
The same dimensionless plant balance applies. The physical scaling of the
absolute areas is given in \eqref{eq:physical_nmiace_scaling}; the
quadratic energies considered later acquire a factor $a^2$.

\subsection{Transforms needed for the PI equality cases}

Let $\widehat E(p)=\int_0^\infty e^{-p\theta}e(\theta)\dd\theta$.
The transforms of the first two derivatives are
$p\widehat E-1$ and $p^2\widehat E-p+x$, respectively. Therefore
\eqref{eq:error_ode} becomes
\[
 (p^2\widehat E-p+x)+(1+x)(p\widehat E-1)+(x/r)\widehat E=0.
\]
The constant terms reduce to $-p-1$, giving
\[
 \left[p^2+(1+x)p+x/r\right]\widehat E(p)=p+1.
\]
Also, \eqref{eq:error_balance} gives
$\widehat V=1-(p+1)\widehat E$. Thus
\begin{equation}
 \widehat E(p)=\frac{r(p+1)}{D(p)},\qquad
 \widehat V(p)=\frac{r(x-1)p+x-r}{D(p)}.
 \label{eq:error_control_transforms}
\end{equation}
The constant one in $\widehat V=1-(p+1)\widehat E$ comes from $e(0^+)=1$;
it cancels the leading term, so $\widehat V$ is strictly proper.

%% file: sections/06_normalized_miace.tex
\section{Absolute areas and normalized MIACE}
\label{sec:miace}

The practical motivation for MIACE is to account for tracking error and
transient command excursion together. Its original development in 2003
and later finite-horizon formulation are described in the introduction.
Here both signals are normalized before integration, and the horizon is
infinite. These choices specify the criterion for which the exact
statements below hold. Behind the historical name, ``Module of Integral
Absolute Complex Error'', the criterion is a weighted Euclidean norm of
two nonnegative objectives.

The original aim was ``minimum inertia'': faster tracking without the
overshoot that might accompany larger control amplitudes. The two
integral objectives were pictured as the real and imaginary parts of a
complex number, with a smaller distance to the origin representing a
smaller combined transient burden. The weight expressed the ``price'' in
command area that the designer was willing to pay for faster tracking.
Taking absolute values before integration prevented cancellation of
positive and negative excursions and was intended to suppress overshoot
and oscillations and favour monotone responses. The author developed this
design intuition through exploratory testing of thousands of different
linear models in PIDBUL2003. The results below give precise statements
under explicit plant and controller assumptions.

Define the dimensionless infinite-horizon areas
\begin{equation}
 A_e=\int_0^\infty|e|\dd\theta,\qquad
 A_u=\int_0^\infty|v|\dd\theta,
 \label{eq:absolute_areas}
\end{equation}
and the normalized criterion
\begin{equation}
 \Mcal_\alpha=\sqrt{A_e^2+\alpha A_u^2},\qquad \alpha>0.
 \label{eq:nmiace}
\end{equation}
The complex-plane picture is made explicit by
\[
 \zeta_\alpha=A_e+\iu\sqrt{\alpha}\,A_u,
 \qquad |\zeta_\alpha|=\Mcal_\alpha.
\]
Thus the coordinates are absolute areas computed separately before they
are combined. Equal-MIACE contours are circles in the weighted complex
plane and ellipses in the unscaled $(A_e,A_u)$ plane. The origin represents
zero tracking-error area and zero command-excursion area about the new
equilibrium. Here ``inertia'' is a design metaphor for the combined
transient burden.

For a physical step $a\ne0$, let
$\operatorname{IAE}=\int_0^\infty|a-y(t)|\dd t$ and
$\operatorname{IAU}=\int_0^\infty|u(t)-a/k|\dd t$. Then
\begin{equation}
 \boxed{A_e=\frac{\operatorname{IAE}}{|a|\tau},\qquad
 A_u=\frac{k\operatorname{IAU}}{|a|\tau}.}
 \label{eq:physical_nmiace_scaling}
\end{equation}
Both denominators contain the step magnitude. The two quantities, and hence
$\alpha$, are dimensionless; changing the step amplitude alone does not
change this tuning preference.

There is an exact interpretation of the line that will form the efficient
front. Define the negative-part areas
\[
 N_e=\int_0^\infty\max\{-e,0\}\dd\theta,\qquad
 N_v=\int_0^\infty\max\{-v,0\}\dd\theta.
\]
For an integrable signal $f$, the pointwise identity
$|f|=f+2\max\{-f,0\}$ gives its absolute area from its signed area.
The transforms at $p=0$ give $\int e=r/x$ and $\int v=1-r/x$.
Writing $z=r/x$ therefore yields
\begin{equation}
 A_e=z+2N_e,\quad A_u=1-z+2N_v,\quad
 \boxed{A_e+A_u-1=2(N_e+N_v).}
 \label{eq:negative_area_gap}
\end{equation}
For a positive step, $N_e$ is the area of output overshoot, and $N_v$ is
the area by which the command falls below its equilibrium level. Either
effect moves the objective pair strictly above the line $A_e+A_u=1$.
The identity also holds for $z>1$, when $N_v$ is necessarily positive.
The question is which PI controllers can attain the line. A controller is
Pareto efficient if no other admissible controller makes both areas no
larger and at least one strictly smaller.

\begin{theorem}[Complete $L^1$ front and normalized MIACE optimum]
\label{thm:miace}
The complete efficient positive PI controller set for $(A_e,A_u)$ is
\begin{equation}
 r=1,\qquad x\ge1,
 \label{eq:l1_controller_front}
\end{equation}
with front
\begin{equation}
 \boxed{(A_e,A_u)=\left(\frac1x,1-\frac1x\right),\qquad x\ge1.}
 \label{eq:l1_pareto_front}
\end{equation}
For every finite $\alpha>0$, the unique global minimizer is
\begin{equation}
 \boxed{r_\alpha^\star=1,\qquad x_\alpha^\star=1+\frac1\alpha,}
 \label{eq:nmiace_optimum}
\end{equation}
and
\begin{equation}
 (\Mcal_\alpha^\star)^2=\frac\alpha{1+\alpha}.
 \label{eq:nmiace_minimum}
\end{equation}
\end{theorem}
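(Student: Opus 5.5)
The plan has three parts: a lower bound valid for every positive PI controller, a characterization of when it is tight, and then the front and the minimizer read off from these.

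\emph{Lower bound.} The starting point is the identity \eqref{eq:negative_area_gap}, $A_e+A_u-1=2(N_e+N_v)\ge0$. Hence every positive PI controller satisfies $A_e+A_u\ge1$, with equality if and only if $e\ge0$ and $v\ge0$ almost everywhere. By Cauchy--Schwarz in the weighted plane,
\[
1\le A_e+A_u=A_e\cdot1+\sqrt\alpha A_u\cdot\tfrac1{\sqrt\alpha}\le \Mcal_\alpha\sqrt{1+1/\alpha}.
\]
This gives $\Mcal_\alpha^2\ge\alpha/(1+\alpha)$. Equality holds exactly when $A_e+A_u=1$ and $(A_e,\sqrt\alpha A_u)$ is proportional to $(1,1/\sqrt\alpha)$, that is, when $A_e=\alpha A_u$. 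Combined with $A_e+A_u=1$, this means $A_e=\alpha/(1+\alpha)$ and $A_u=1/(1+\alpha)$.

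\emph{Which PI controllers attain the line.} The central step is to determine exactly which $(x,r)$ give $e\ge0$ and $v\ge0$, and I expect it to be the main obstacle.

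Sufficiency is a direct computation. For $r=1$ we have $D(p)=(p+1)(p+x)$, so \eqref{eq:error_control_transforms} gives $e=e^{-x\theta}$ and $v=(x-1)e^{-x\theta}$. Both are nonnegative exactly when $x\ge1$, and then $(A_e,A_u)=(1/x,1-1/x)$.

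For necessity, first note that $v(0^+)=x-1$, which forces $x\ge1$. Then I will use the asymptotics of $e$ dictated by the poles of $D$:
\begin{itemize}
\item \emph{Complex poles.} The numerator $p+1$ of $\widehat E$ cannot cancel a complex pole, so $e$ changes sign infinitely often.
\item \emph{Real poles $-\lambda_1\ge-\lambda_2$.} The evaluation $D(-1)=x(1-r)$, together with the vertex of $D$ lying at $-(1+x)/2\le-1$, locates $\lambda_1$ relative to $1$. If $r<1$, then $D(-1)>0$ and both roots are below $-1$, so $\lambda_1>1$. The residue of $\widehat E$ at $-\lambda_1$ is proportional to $(1-\lambda_1)/(\lambda_2-\lambda_1)<0$, so the tail of $e$ is negative. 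If $r>1$, then $D(-1)<0$ and $\lambda_1<1<\lambda_2$. The leading coefficient $c_1$ of $e$ is positive, and \eqref{eq:error_balance} gives $v=-e-e'\sim c_1(\lambda_1-1)e^{-\lambda_1\theta}<0$.
\item \emph{Double root.} This case is handled the same way using the $\theta e^{-\lambda\theta}$ term.
\end{itemize}
Hence $e,v\ge0$ forces $r=1$, and together with the first observation, $x\ge1$.

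\emph{Front and minimizer.} Every point $(1/x,1-1/x)$ with $x\ge1$ lies on $A_e+A_u=1$, so no other controller can weakly improve both areas. Any controller off the set \eqref{eq:l1_controller_front} has $A_e+A_u>1$ and is dominated by a front point:
\begin{itemize}
\item if $A_u<1$, take $x=1/(1-A_u)\ge1$; this front point has the same $A_u$ and a strictly smaller $A_e=1-A_u$;
\item if $A_u\ge1$, take any $x>\max\{1,1/A_e\}$; both of its areas are strictly smaller.
\end{itemize}
This proves \eqref{eq:l1_controller_front}--\eqref{eq:l1_pareto_front}. Finally, the Cauchy--Schwarz equality conditions require $r=1$ and $1/x=\alpha/(1+\alpha)$, so $x=1+1/\alpha$. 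This controller satisfies $x\ge1$ and attains the bound, which gives \eqref{eq:nmiace_optimum}--\eqref{eq:nmiace_minimum} together with uniqueness.
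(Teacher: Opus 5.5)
Your proof is correct. Its key step is the same sign analysis the paper carries out in \cref{app:miace_uniqueness}: showing that $e,v\ge0$ forces $x\ge1$ and $r=1$. You get $x\ge1$ from $v(0^+)=x-1$, and $r=1$ by locating $-1$ relative to the roots of $D$ and reading off the sign of the slow residue in $e$ or in $v=-e-e'$.

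Where you differ is in how the bound and the dominance argument are organized.

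The paper works throughout with the signed error area $z=r/x$ and the componentwise bounds $A_e\ge z$, $A_u\ge|1-z|$. Every controller with $z\le1$ is then weakly dominated by the front controller $(1/z,1)$ with the same signed areas. Every controller with $z>1$ is dominated by $(1,1)$. The MIACE bound comes from completing the square in $z$ in \eqref{eq:nmiace_certificate}. The sign analysis is needed only to upgrade weak to strict dominance and to prove uniqueness; it is done on $0<z<1$, with $z=1$ settled separately via $v\equiv0$.

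You use only the scalar balance $A_e+A_u\ge1$ from \eqref{eq:negative_area_gap}, together with Cauchy--Schwarz. In effect this is the controller-independent argument of \cref{thm:miace_shape_gap}, that is, the Pythagorean split \eqref{eq:miace_area_decomposition}. It gives you the equality conditions $e,v\ge0$ and $A_e=\alpha A_u$ at once. It also lets you run the sign analysis once, uniformly over all $(x,r)$, with no case split on $z$.

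The price is that a sum constraint gives no componentwise comparison. You therefore need the complete sign characterization before you can make any dominance claim. You must then construct the dominating front point by hand: match $A_u$ when $A_u<1$, or beat both areas when $A_u\ge1$. You do this correctly.
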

\begin{proof}
Evaluation of \eqref{eq:error_control_transforms} at $p=0$ gives
\begin{equation}
 \int_0^\infty e\dd\theta=\frac rx=:z>0,\qquad
 \int_0^\infty v\dd\theta=1-z.
 \label{eq:signed_areas}
\end{equation}
Taking absolute values after integration cannot increase either area:
$A_e\ge z$ and $A_u\ge|1-z|$.
For $0<z\le1$, the controller $(x,r)=(1/z,1)$ produces
\begin{equation}
 e(\theta)=e^{-x\theta},\qquad
 v(\theta)=(x-1)e^{-x\theta}
 \label{eq:front_signals}
\end{equation}
and attains $(A_e,A_u)=(z,1-z)$. For $z>1$, both lower bounds show
that $(1,1)$, with areas $(1,0)$, dominates the original controller.
Also $A_e+A_u\ge z+|1-z|\ge1$. Therefore none of the attainable
points $(z,1-z)$ can be dominated.

To identify all efficient controllers, we must also exclude a different PI
controller attaining the same point. For $0<z<1$, equality requires
$e\ge0$ and $v\ge0$ for all $\theta\ge0$. The explicit sign argument in
\cref{app:miace_uniqueness} proves that these conditions force $r=1$.
For $z=1$, equality gives $A_u=0$ and hence $v\equiv0$; the plant equation
and $e'(0^+)=-x$ then give $x=r=1$.

Finally,
\begin{align}
 \Mcal_\alpha^2&\ge z^2+\alpha(1-z)^2\notag\\
 &=(1+\alpha)\left(z-\frac\alpha{1+\alpha}\right)^2
      +\frac\alpha{1+\alpha}.
 \label{eq:nmiace_certificate}
\end{align}
The minimum is attained at $z=\alpha/(1+\alpha)\in(0,1)$, and the equality
conditions already proved give \eqref{eq:nmiace_optimum} uniquely.
\end{proof}

The $L^1$ front is a half-open line segment: $(1,0)$ is attained, while
$(0,1)$ is approached only as $x\to\infty$.
Figure~\ref{fig:miace_complex_geometry} connects this front to the original
complex-plane picture. The area bound $A_e+A_u\ge1$ makes the origin an
unattainable reference point in the present first-order problem. The
closest admissible point in the weighted plane corresponds to the area
pair
\[
 (A_e^\star,A_u^\star)
 =\left(\frac{\alpha}{1+\alpha},\frac{1}{1+\alpha}\right).
\]
In the unscaled plane this is the tangency point of an equal-MIACE
ellipse with the front.
Theorem~\ref{thm:miace} proves that this point is attained and identifies
its unique positive PI controller.

Along this efficient PI family, the geometric trade-off has an exact
time-domain interpretation. For a relative settling tolerance
$0<\varepsilon<1$, \eqref{eq:front_signals} gives the normalized settling time
\[
 h_\varepsilon=\frac{\log(1/\varepsilon)}{x_\alpha^\star}
 =\frac{\alpha}{1+\alpha}\log(1/\varepsilon),
 \qquad v(0^+)=x_\alpha^\star-1=\frac1\alpha.
\]
The physical settling time is $\tau h_\varepsilon$. Decreasing $\alpha$
therefore shortens settling time while increasing both the command area
and its initial excursion; all these optimal responses remain monotone.
The plots use $\alpha=1/4,1,4$, giving $x_\alpha^\star=5,2,5/4$,
respectively. This settling-time interpretation applies to the specified
PI family. As \cref{sec:miace_response_shape} shows, other response shapes
can have the same two areas and the same minimum MIACE.

\begin{figure}[!htbp]
\centering
\includegraphics[width=\linewidth]{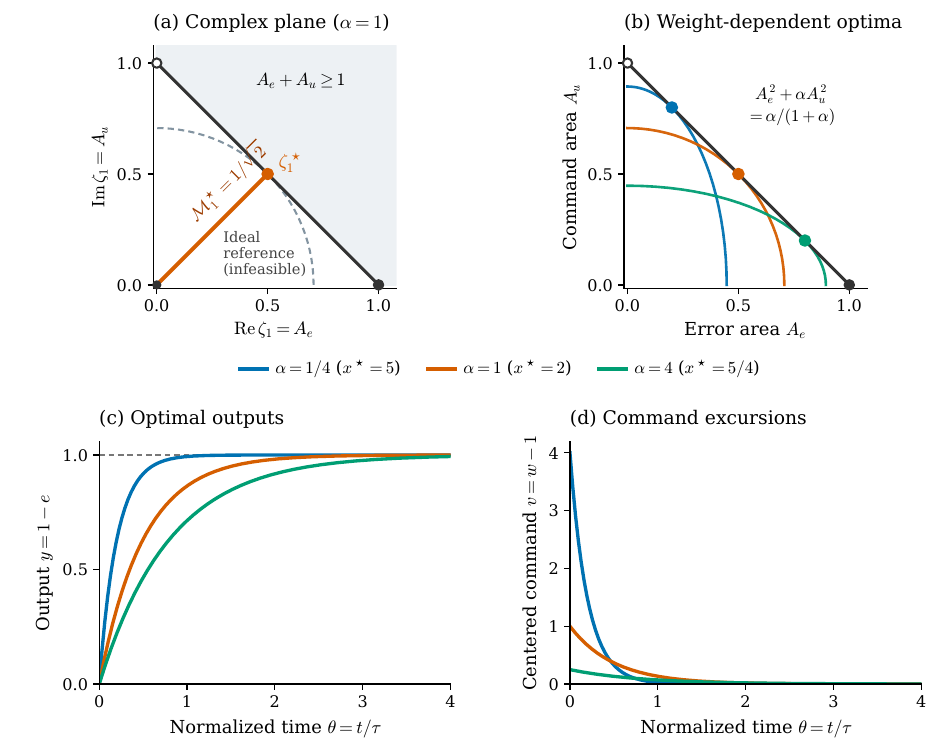}
\caption{The original MIACE picture and its exact first-order PI
realization. (a) At $\alpha=1$, MIACE is the distance from the origin in
the complex objective plane; shading indicates the necessary area bound
$A_e+A_u\ge1$.
(b) In unscaled area coordinates, equal-MIACE ellipses touch the front
at the optimal area pairs for three weights. (c,d) The corresponding
optimal output and equilibrium-centered command responses: smaller
$\alpha$ purchases faster monotone tracking with greater command
excursion. Open and filled endpoints denote limiting and attained
points of the front. The curves follow \cref{thm:miace} and
\eqref{eq:front_signals}.}

\label{fig:miace_complex_geometry}
\end{figure}

Absolute values also give the picture a direct connection to response
shape: \eqref{eq:negative_area_gap} measures the excess of total absolute
area above the efficient-front value caused by output overshoot or
command undershoot.
Theorems~\ref{thm:miace_shape_gap} and~\ref{thm:miace_variation} turn this
observation into an exact MIACE-excess identity and quantitative bounds
on backward output motion and peak overshoot under their stated
first-order assumptions.

For a finite horizon $H$, with $h=H/\tau$, the omitted normalized areas on
this front are
\begin{equation}
 A_{e,\infty}-A_{e,H}=\frac{e^{-xh}}x,\qquad
 A_{u,\infty}-A_{u,H}=\frac{x-1}{x}e^{-xh}.
 \label{eq:finite_horizon_tails}
\end{equation}
These expressions quantify truncation error for a fixed controller. They do
not identify the optimizer of a finite-horizon criterion, which need not
coincide with its infinite-horizon counterpart.

%% file: sections/06b_miace_shape.tex
\section{MIACE excess, monotonicity, and overshoot}
\label{sec:miace_response_shape}

\subsection{An exact penalty for sign reversals}
\label{sec:miace_shape}

The optimal PI response in \eqref{eq:front_signals} is monotone. A more
useful question for tuning is whether the value of MIACE also controls
how far a response can depart from that shape. This can be answered
without using the PI law. In this subsection retain the first-order plant,
but allow any command for which $e$ is locally absolutely continuous,
$e(0)=1$, $e(\infty)=0$, and $e,v\in L^1(0,\infty)$. The plant equation
$e'=-e-v$ holds almost everywhere. Every positive PI controller considered
above satisfies these assumptions, but the statements below also apply to
other controller structures producing such signals.

Write
\begin{equation}
 \Ncal=N_e+N_v,\qquad D_\alpha=A_e-\alpha A_u,\qquad
 m_\alpha=\sqrt{\frac{\alpha}{1+\alpha}}.
 \label{eq:miace_shape_quantities}
\end{equation}
Here $N_e$ measures the area by which the output exceeds its target,
and $N_v$ measures the area by which the command falls below its new
equilibrium. These two effects need not occur at the same time.

\begin{theorem}[Exact MIACE excess and shape of every minimizer]
\label{thm:miace_shape_gap}
For every response satisfying the assumptions above,
\begin{equation}
 \boxed{\Mcal_\alpha^2-m_\alpha^2
 =4m_\alpha^2\Ncal(1+\Ncal)+\frac{D_\alpha^2}{1+\alpha}.}
 \label{eq:miace_shape_gap}
\end{equation}
Consequently $\Mcal_\alpha\ge m_\alpha$. Equality holds precisely when
$e,v\ge0$ almost everywhere and $A_e=\alpha A_u$. Every response attaining
this bound has a nondecreasing output from zero to one and no overshoot.
The bound is attained by the PI controller in \eqref{eq:nmiace_optimum}.
\end{theorem}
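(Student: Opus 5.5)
The plan is to reduce everything to two signed-area facts and one algebraic identity in the pair $(A_e,A_u)$.

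\emph{Step 1: signed area.} I will integrate the balance $e'=-e-v$ over $[0,T]$. Local absolute continuity of $e$ justifies this, and gives
\[
 e(T)-1=-\int_0^T e\dd\theta-\int_0^T v\dd\theta .
\]
Letting $T\to\infty$ and using $e(\infty)=0$ together with $e,v\in L^1(0,\infty)$ gives $\int_0^\infty e\dd\theta+\int_0^\infty v\dd\theta=1$. Writing $z=\int_0^\infty e\dd\theta$, the pointwise identity $|f|=f+2\max\{-f,0\}$ then yields $A_e=z+2N_e$ and $A_u=1-z+2N_v$. Hence
\[
 A_e+A_u=1+2\Ncal .
\]
This reproduces \eqref{eq:negative_area_gap} without the PI transforms. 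This limiting step is the only analytic point needing care, and the hypotheses are tailored exactly to it.

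\emph{Step 2: the identity.} I will use the Lagrange-type identity
\[
 (1+\alpha)(A_e^2+\alpha A_u^2)=(A_e-\alpha A_u)^2+\alpha(A_e+A_u)^2 ,
\]
which is checked by expanding both sides: the cross terms $\mp2\alpha A_eA_u$ cancel. Dividing by $1+\alpha$ gives
\[
 \Mcal_\alpha^2=\frac{D_\alpha^2}{1+\alpha}+m_\alpha^2(A_e+A_u)^2 .
\]
Substituting $A_e+A_u=1+2\Ncal$ and expanding $(1+2\Ncal)^2=1+4\Ncal(1+\Ncal)$ gives \eqref{eq:miace_shape_gap}. Both added terms are nonnegative, since $\Ncal\ge0$, so $\Mcal_\alpha\ge m_\alpha$.

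\emph{Step 3: equality.} Equality holds if and only if both added terms vanish, that is, $\Ncal=0$ and $D_\alpha=0$. Since $N_e,N_v\ge0$, the condition $\Ncal=0$ means $N_e=N_v=0$, i.e.\ $e\ge0$ and $v\ge0$ almost everywhere. The condition $D_\alpha=0$ is $A_e=\alpha A_u$.

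\emph{Step 4: shape of the response.} In the equality case the output $y=1-e$ is locally absolutely continuous with $y'=-e'=e+v\ge0$ almost everywhere. Hence $y$ is nondecreasing from $y(0)=0$ to $y(\infty)=1$. Moreover $y=1-e\le1$ everywhere, because $e\ge0$ almost everywhere and $e$ is continuous; so there is no overshoot.

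\emph{Step 5: attainment.} Attainment by the PI controller \eqref{eq:nmiace_optimum} follows from \cref{thm:miace}, where $(\Mcal_\alpha^\star)^2=\alpha/(1+\alpha)=m_\alpha^2$. Alternatively, one can check directly from \eqref{eq:front_signals} that $e,v\ge0$ and that $A_e=1/x=\alpha/(1+\alpha)=\alpha A_u$.
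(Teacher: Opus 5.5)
Your proposal is correct and follows the paper's proof essentially step for step. Both integrate $e'=-e-v$ to get $A_e+A_u=1+2\Ncal$, then apply the weighted-square identity $\alpha(a+b)^2+(a-\alpha b)^2=(1+\alpha)(a^2+\alpha b^2)$, read off the equality conditions, obtain monotonicity from $y'=e+v\ge0$ almost everywhere, and show attainment through \eqref{eq:front_signals} with $x=1+1/\alpha$.
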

\begin{proof}
Integrating the plant equation over $[0,h]$ gives
\[
 \int_0^h(e+v)\dd\theta=1-e(h).
\]
Pass to the limit using $e(h)\to0$ and absolute integrability. The
identity for the negative part of a real signal then gives
\begin{equation}
 A_e+A_u=\int_0^\infty(e+v)\dd\theta+2\Ncal=1+2\Ncal.
 \label{eq:general_area_balance}
\end{equation}
For any real $a,b$, direct expansion yields
\[
 \alpha(a+b)^2+(a-\alpha b)^2=(1+\alpha)(a^2+\alpha b^2).
\]
With $a=A_e,b=A_u$, this becomes
\begin{equation}
 \Mcal_\alpha^2=m_\alpha^2(A_e+A_u)^2
                  +\frac{D_\alpha^2}{1+\alpha}.
 \label{eq:miace_area_decomposition}
\end{equation}
Substitution of \eqref{eq:general_area_balance} proves
\eqref{eq:miace_shape_gap}.

Both terms on the right are nonnegative, and their sum is zero precisely
when $\Ncal=0,D_\alpha=0$. Vanishing integrals of the negative parts give
$e,v\ge0$ almost everywhere. Continuity gives $e\ge0$ everywhere, and
the plant equation gives $y'=e+v\ge0$ almost everywhere. Absolute
continuity therefore makes $y$ nondecreasing. Its initial and final values
are zero and one, so it cannot cross its target. Finally,
\eqref{eq:front_signals} with $x=1+1/\alpha$ attains the bound.
\end{proof}

The two terms have different meanings. The first is the exact contribution
of negative areas to the squared excess above the universal lower bound;
the second measures imbalance between the two objectives at the selected
weight. For small $\Ncal$, the first term is linear to leading order in $\Ncal$.
Geometrically, \eqref{eq:miace_area_decomposition} is Pythagoras in the
weighted complex plane of \cref{fig:miace_complex_geometry}: the signed
scalar components of $(A_e,\sqrt\alpha A_u)$ in unit directions normal
and parallel to the supporting line of the front are
$m_\alpha(A_e+A_u)$ and $D_\alpha/\sqrt{1+\alpha}$, respectively.
Even when the areas satisfy $D_\alpha=0$, any nonzero negative area still
increases the criterion. Monotonicity by itself is not sufficient for
optimality: the command condition and the weighted area balance must also
hold.

The controller-independent statement does not assert uniqueness outside
the positive PI family. For example, at $\alpha=1$, both the PI error
$e=e^{-2\theta}$ and the response
\[
 e=\tfrac14e^{-\theta}+\tfrac34e^{-3\theta},\qquad
 v=\tfrac32e^{-3\theta}
\]
satisfy the plant equation and have $A_e=A_u=1/2$. Both attain
$\Mcal_1=1/\sqrt2$ and both are monotone. The uniqueness in
\cref{thm:miace} concerns the two-parameter ideal PI family.

\subsection{A bound on total variation and peak overshoot}
\label{sec:miace_variation}

An output can reverse direction before reaching the setpoint, so its
overshoot alone does not measure all oscillatory motion. Define
\begin{equation}
 \mathcal V_y=\int_0^\infty|y'|\dd\theta,\qquad
 B_y=\int_0^\infty\max\{-y',0\}\dd\theta,\qquad
 P_y=\sup_{\theta\ge0}\max\{y(\theta)-1,0\}.
 \label{eq:output_variation}
\end{equation}
$B_y$ is the total distance travelled in the direction opposite to the
positive step. Total variation minus the net displacement is an established
measure of departure from monotonicity; for example,
\citet[Sec.~4, Eq.~(37)]{HubaEtAl2021} use its sampled-data counterpart
as a shape constraint. Here this quantity can be bounded directly by the
value of MIACE through the first-order plant equation.

\begin{theorem}[MIACE controls output variation and overshoot]
\label{thm:miace_variation}
Under the assumptions of \cref{thm:miace_shape_gap},
\begin{equation}
 \boxed{1\le\mathcal V_y=1+2B_y
       \le A_e+A_u=1+2\Ncal
       \le\frac{\Mcal_\alpha}{m_\alpha}.}
 \label{eq:miace_variation_bound}
\end{equation}
In particular,
\begin{equation}
 P_y\le B_y\le \Ncal\le
 \frac12\left(\frac{\Mcal_\alpha}{m_\alpha}-1\right).
 \label{eq:miace_peak_bound}
\end{equation}
If $\Mcal_\alpha\le(1+\varepsilon)m_\alpha$, with $\varepsilon\ge0$,
then
\begin{equation}
 \mathcal V_y-1\le\varepsilon,\qquad
 B_y\le\frac\varepsilon2,\qquad
 P_y\le\frac\varepsilon2,
 \qquad N_e+N_v\le\frac\varepsilon2.
 \label{eq:miace_near_optimal_shape}
\end{equation}
\end{theorem}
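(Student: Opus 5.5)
The proof is a chain of elementary comparisons, each using only the plant balance $y'=e+v$ (equivalently $e'=-e-v$), the area balance \eqref{eq:general_area_balance}, and the decomposition \eqref{eq:miace_shape_gap} of \cref{thm:miace_shape_gap}.

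\textbf{Step 1: total variation.} Since $y=1-e$ is locally absolutely continuous with $y(0)=0$ and $y(\infty)=1$, we have $\int_0^\infty y'\dd\theta=1$. This integral converges absolutely because $|y'|\le|e|+|v|$, which is integrable. Using the pointwise identity $|f|=f+2\max\{-f,0\}$ with $f=y'$ gives $\mathcal V_y=1+2B_y$, and hence $\mathcal V_y\ge1$.

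\textbf{Step 2: comparison with the areas.} The triangle inequality $|y'|=|e+v|\le|e|+|v|$, integrated over $[0,\infty)$, gives $\mathcal V_y\le A_e+A_u$. The equality $A_e+A_u=1+2\Ncal$ is exactly \eqref{eq:general_area_balance}.

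\textbf{Step 3: comparison with MIACE.} Dropping the nonnegative imbalance term in \eqref{eq:miace_area_decomposition} gives $\Mcal_\alpha^2\ge m_\alpha^2(A_e+A_u)^2$. Both sides are nonnegative, so taking square roots yields $A_e+A_u\le\Mcal_\alpha/m_\alpha$. This completes \eqref{eq:miace_variation_bound}. Subtracting $1$ and halving then gives $B_y\le\Ncal\le\frac12(\Mcal_\alpha/m_\alpha-1)$.

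\textbf{Step 4: peak overshoot.} This is the only step needing a separate argument. Suppose $y(\theta_0)=1+P$ with $P>0$. Since $y(\theta)\to1$ as $\theta\to\infty$, for every $\epsilon>0$ there is $T>\theta_0$ with $y(T)<1+\epsilon$. Absolute continuity then gives
\[
 P-\epsilon< y(\theta_0)-y(T)=-\int_{\theta_0}^T y'\dd\theta\le\int_{\theta_0}^T\max\{-y',0\}\dd\theta\le B_y .
\]
Letting $\epsilon\to0$ and taking the supremum over $\theta_0$ gives $P_y\le B_y$. Together with Step 3 this proves \eqref{eq:miace_peak_bound}.

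\textbf{Step 5: near-optimal case.} If $\Mcal_\alpha\le(1+\varepsilon)m_\alpha$, then $\Mcal_\alpha/m_\alpha-1\le\varepsilon$. Substituting this into the chains from Steps 1–4 gives every inequality in \eqref{eq:miace_near_optimal_shape} directly.

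Most of this is bookkeeping. The points needing care are the integrability of $y'$ in Step 1 and the limiting argument in Step 4; both follow from $e,v\in L^1$ and $e(\infty)=0$.
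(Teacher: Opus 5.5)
Your proposal is correct and follows the paper's proof essentially step for step. Integrability of $y'=e+v$ together with the negative-part identity gives $\mathcal V_y=1+2B_y$, and the triangle inequality and the decomposition \eqref{eq:miace_area_decomposition} complete the chain; your finite-$T$ limit in Step~4 is just a slightly more explicit version of the paper's direct bound $y(t)-1=-\int_t^\infty y'\dd\theta\le B_y$.
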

\begin{proof}
Because $y'=e+v$ and $e,v\in L^1$, the derivative is integrable and
\[
 \int_0^\infty y'\dd\theta=y(\infty)-y(0)=1.
\]
Apply $|f|=f+2\max\{-f,0\}$ to $f=y'$ to obtain
$\mathcal V_y=1+2B_y$. The triangle inequality gives
\[
 \mathcal V_y=\int_0^\infty|e+v|\dd\theta
 \le\int_0^\infty(|e|+|v|)\dd\theta=A_e+A_u.
\]
The last inequality in \eqref{eq:miace_variation_bound} follows from
\eqref{eq:miace_area_decomposition}. This proves the whole chain.

For every time $t$ at which $y(t)>1$, the subsequent net decrease is
\[
 y(t)-1=-\int_t^\infty y'(\theta)\dd\theta
 \le\int_t^\infty\max\{-y'(\theta),0\}\dd\theta\le B_y.
\]
Taking the supremum proves the peak bound. The remaining conclusions
follow by rearranging the chain and inserting the assumed MIACE value.
\end{proof}

For a positive unit step, a value within $1\%$ of the global lower bound
therefore guarantees peak overshoot at most $0.5\%$ of the step, and
bounds the sum of all backward output movements by the same amount.
For a step of magnitude $|a|$, the corresponding physical distances are
multiplied by $|a|$. The $1\%$ refers to $\Mcal_\alpha$, not its square;
if the squared criterion is within a factor $1+\varepsilon$, replace
$\varepsilon/2$ by $(\sqrt{1+\varepsilon}-1)/2$.
The comparison must use $m_\alpha$, not merely the best value found by
a numerical search. Extra constraints can make this bound unattainable.

These bounds control the accumulated size of reversals. They do not bound
the number or frequency of arbitrarily small oscillations. At the exact
minimum every reversal is excluded; close to it the allowed total reverse
motion is small. This is the precise sense in which the criterion favours
a well-damped response in the present model. It does not require a damping
ratio to be assigned to every response.

\begin{figure}[htbp]
\centering
\includegraphics[width=\linewidth]{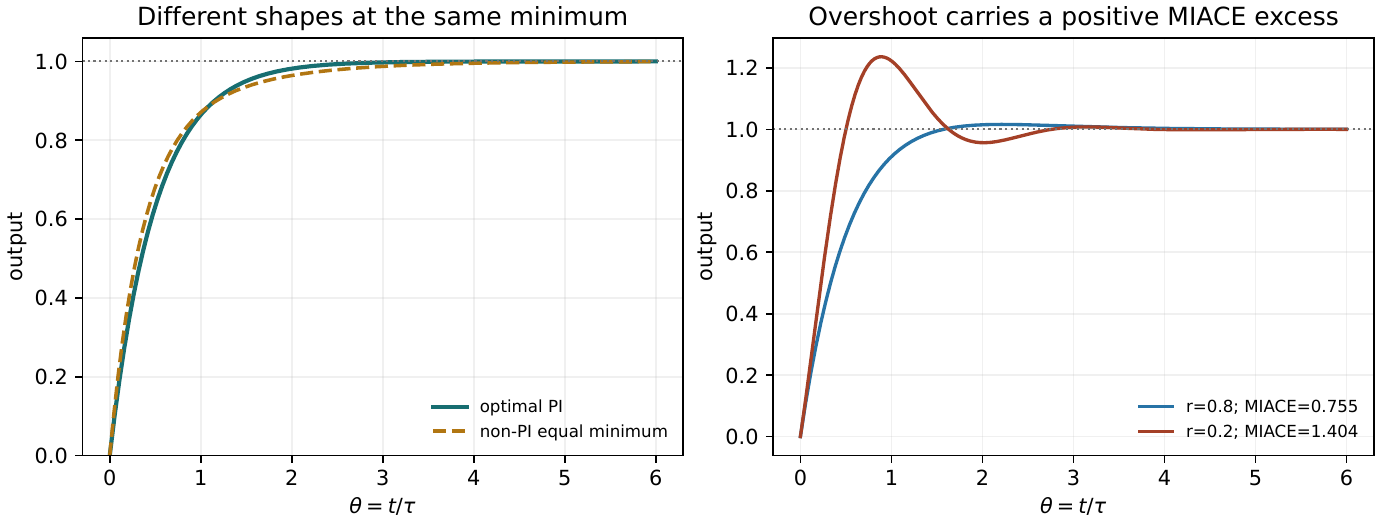}
\caption{Left: two different monotone responses attaining the same
MIACE lower bound. Right: PI responses with $x=2$ and shorter integral
times have overshoot and a strictly larger criterion.}

\label{fig:miace_shape_examples}
\end{figure}

\subsection{What remains true on a finite horizon}
\label{sec:miace_finite_shape}

Practical tuning evaluates a finite record. The endpoint must then remain
in the calculation. Append a subscript $h$ to areas, variation, and MIACE
when their integrals end at $h$, and put $e_h=e(h)$ and
$N_h=N_{e,h}+N_{v,h}$. The same two elementary expansions give
\begin{align}
 A_{e,h}+A_{u,h}&=1-e_h+2N_h,\label{eq:finite_area_balance}\\
 \Mcal_{\alpha,h}^2&=
 m_\alpha^2(1-e_h+2N_h)^2+
 \frac{(A_{e,h}-\alpha A_{u,h})^2}{1+\alpha}.
 \label{eq:finite_miace_identity}
\end{align}
Thus dropping the endpoint term would change the statement being proved.

\begin{proposition}[Finite-record overshoot bound]
\label{prop:finite_miace_shape}
For $P_{y,h}=\max_{0\le\theta\le h}(y(\theta)-1)_+$,
\begin{equation}
 P_{y,h}\le\frac12\left(
 \frac{\Mcal_{\alpha,h}}{m_\alpha}-1+|e_h|\right).
 \label{eq:finite_peak_bound}
\end{equation}
Consequently, $\Mcal_{\alpha,h}\le(1+\varepsilon)m_\alpha$ and
$|e_h|\le\varepsilon_e$ imply $P_{y,h}\le(\varepsilon+\varepsilon_e)/2$.
This is a bound on the observed interval, not on the unobserved tail.
\end{proposition}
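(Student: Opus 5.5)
Our plan is to repeat the argument of \cref{thm:miace_variation} on $[0,h]$, while keeping the endpoint terms instead of passing to the limit. We assume that $y(0)=0$, that $y'=e+v$ holds almost everywhere, and that $e,v$ are integrable on $[0,h]$.

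The first step bounds the peak by backward motion on the record. Fix $t\in[0,h]$ with $y(t)>1$, and write $B_{y,h}=\int_0^h\max\{-y',0\}\dd\theta$. On the interval before $t$, the output rises from $0$ to $y(t)$, so
\[
 y(t)=\int_0^t y'\dd\theta\le\int_0^t\max\{y',0\}\dd\theta .
\]
On the interval after $t$, it goes from $y(t)$ to $y(h)=1-e_h$, so
\[
 y(t)-(1-e_h)\le\int_t^h\max\{-y',0\}\dd\theta .
\]
The second inequality alone gives $y(t)-1\le B_{y,h}-e_h$, which is a one-sided bound. We prefer a symmetric version that does not depend on the sign of $e_h$, and we obtain it through the variation. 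Let $\mathcal V_{y,h}=\int_0^h|y'|\dd\theta$. The path goes up from $0$ to $y(t)$ and then from $y(t)$ down to $1-e_h$, so $\mathcal V_{y,h}\ge y(t)+(y(t)-1+e_h)$. This gives
\[
 2(y(t)-1)\le\mathcal V_{y,h}-1-e_h\le\mathcal V_{y,h}-1+|e_h|.
\]
Taking the maximum over $t$ yields $P_{y,h}\le\tfrac12(\mathcal V_{y,h}-1+|e_h|)$. If no such $t$ exists, the bound is trivial because the right side is nonnegative once the next step is in place.

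The second step bounds the variation by MIACE. The triangle inequality applied to $y'=e+v$ gives $\mathcal V_{y,h}\le A_{e,h}+A_{u,h}$. The algebraic identity from the proof of \cref{thm:miace_shape_gap} holds for any real pair $(a,b)$, and applied to the finite-horizon areas it is exactly \eqref{eq:finite_miace_identity}. In that identity the second term is nonnegative, so
\[
 \Mcal_{\alpha,h}^2\ge m_\alpha^2(A_{e,h}+A_{u,h})^2 .
\]
Because the areas are nonnegative, taking square roots gives $A_{e,h}+A_{u,h}\le\Mcal_{\alpha,h}/m_\alpha$. Substituting this into the first step gives \eqref{eq:finite_peak_bound}. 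It also shows the right side is nonnegative, since $\mathcal V_{y,h}\ge|y(h)-y(0)|=|1-e_h|\ge1-|e_h|$.

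The corollary then follows by inserting $\Mcal_{\alpha,h}\le(1+\varepsilon)m_\alpha$ and $|e_h|\le\varepsilon_e$. The only delicate point is the endpoint bookkeeping in the first step. The infinite-horizon argument used $y(\infty)=1$; here the terminal value $1-e_h$ can lie on either side of the target. Routing the estimate through the total variation handles both signs uniformly, and that is what produces the $|e_h|$ term. None of this constrains the behaviour after $h$, which is why the bound applies only to the observed interval.
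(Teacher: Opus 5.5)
Your proof is correct and follows essentially the paper's route. You bound the peak on $[0,h]$ by the truncated variation with the endpoint correction $|e_h|$, and then use the triangle inequality on $y'=e+v$ together with the weighted-square decomposition to get $\mathcal V_{y,h}\le\Mcal_{\alpha,h}/m_\alpha$. The only difference is bookkeeping in the first step: the paper writes $\mathcal V_{y,h}=y(h)+2B_{y,h}$ and $P_{y,h}\le(y(h)-1)_++B_{y,h}$, whereas you lower-bound the variation directly along the rise to $y(t)$ and the return to $y(h)$, which gives the same inequality.
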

\begin{proof}
Let $B_{y,h}=\int_0^h(-y')_+\dd\theta$. Its variation identity is
$\mathcal V_{y,h}=y(h)+2B_{y,h}$. Every earlier output value is at most
$y(h)+B_{y,h}$, because any subsequent decrease is included in
$B_{y,h}$. Therefore
\[
 P_{y,h}\le (y(h)-1)_++B_{y,h}
 =\frac{\mathcal V_{y,h}-1+|e_h|}{2}.
\]
The finite-interval triangle inequality and
\eqref{eq:miace_area_decomposition}, applied to the truncated areas,
give $\mathcal V_{y,h}\le\Mcal_{\alpha,h}/m_\alpha$.
These two inequalities prove \eqref{eq:finite_peak_bound}.
\end{proof}

A low truncated cost alone does not establish that the response has settled. For example,
holding $y=0,w=0$ throughout $[0,h]$ gives $e=1,v=-1$ and
$\Mcal_{\alpha,h}=h\sqrt{1+\alpha}$, which is arbitrarily small as
$h\downarrow0$ although no tracking has occurred. The endpoint remains
$e_h=1$. The example can be followed by a settling command after $h$;
it shows why the endpoint condition and a justified evaluation horizon
are necessary.

All shape conclusions above use the actual first-order plant balance and
an effort signal centred at the actual equilibrium. They are not a general
zero-overshoot theorem for higher-order or saturated PID loops. A known
pure delay requires the changed balance and attainment conditions derived
in \cref{sec:pure_delay}; the finite-record formulas above are for $L=0$.
The generalized signal statements do not establish internal stability of an
arbitrary controller architecture. Nor do they bound the variation of the command itself: rapid, small-area
command changes require a separate actuator or rate condition.

%% file: sections/06c_pure_delay.tex
\section{Pure delay: the area balance and an exact PI optimum}
\label{sec:pure_delay}

During a pure delay the controller already sees the new setpoint, but the
output cannot yet respond. The tracking error therefore accumulates before
any output motion takes place. At the same time the PI integrator continues
to increase the command. These two effects must both be included when
extending the preceding calculation. Shifting a delay-free response to the
right and retaining its old objective value would omit the first effect;
retaining its old PI gains would not in general produce that shifted response.

Replace the plant in \eqref{eq:plant_controller} by
\begin{equation}
 G_L(s)=\frac{k e^{-Ls}}{\tau s+1},\qquad
 \delta=\frac L\tau\ge0,
 \label{eq:delayed_plant}
\end{equation}
and retain the ideal PI law, the normalized variables, and the areas in
\eqref{eq:physical_nmiace_scaling}. The step is applied at time zero,
the plant and integrator start at zero, and $w(\theta)=0$ for $\theta<0$.
No command anticipates the step. The delay is known exactly; it is not
replaced by a rational approximation. All other sections use $L=0$
unless a delay is explicitly introduced.

\subsection{Separating the waiting interval from the plant motion}

The normalized plant balance becomes
\begin{equation}
 y'(\theta)+y(\theta)=w(\theta-\delta).
 \label{eq:delayed_balance}
\end{equation}
For $0\le\theta<\delta$, the right-hand side is zero. Hence $y=0$ and
$e=1$ throughout this interval. Define the shifted error
\begin{equation}
 f(\xi)=e(\xi+\delta),\qquad \xi\ge0.
 \label{eq:shifted_error}
\end{equation}
Substitution into \eqref{eq:delayed_balance}, at time $\xi+\delta$, gives
\begin{equation}
 f'=-f-v,\qquad f(0)=1,\qquad
 A_e=\delta+\int_0^\infty|f(\xi)|\dd\xi.
 \label{eq:delayed_shift_balance}
\end{equation}
The command in this equation is $v(\xi)=w(\xi)-1$, without a time shift.
This distinction avoids assigning a zero prehistory to the centered
command: its actual prehistory is $v=-1$.

For the signal-level results assume that $f$ is locally absolutely
continuous, $f(\infty)=0$, and $f,v\in L^1(0,\infty)$. Integrating
\eqref{eq:delayed_shift_balance} first on $[0,h]$ and then passing to
the limit gives $\int(f+v)=1$. Restoring the waiting interval yields
\begin{equation}
 \int_0^\infty e\dd\theta+\int_0^\infty v\dd\theta=1+\delta.
 \label{eq:delayed_signed_sum}
\end{equation}
Thus the unit intercept of the delay-free area balance increases by
exactly $\delta$. The extra term is an error area, not an approximation
to an additional plant time constant.

\subsection{MIACE excess and output motion}

Retain $N_e,N_v,\Ncal,D_\alpha,m_\alpha$ from \cref{sec:miace_shape}.
The waiting interval contributes no negative error area. Applying
$|g|=g+2(-g)_+$ to \eqref{eq:delayed_signed_sum} therefore gives
\begin{equation}
 A_e+A_u=1+\delta+2\Ncal.
 \label{eq:delayed_absolute_sum}
\end{equation}

\begin{theorem}[MIACE excess and shape with pure delay]
\label{thm:delayed_shape}
Under the signal assumptions above,
\begin{equation}
 \boxed{\Mcal_\alpha^2-(1+\delta)^2m_\alpha^2
 =4m_\alpha^2\Ncal(1+\delta+\Ncal)+\frac{D_\alpha^2}{1+\alpha}.}
 \label{eq:delayed_gap}
\end{equation}
In particular, $\Mcal_\alpha\ge(1+\delta)m_\alpha$. Equality holds
precisely when $e,v\ge0$ almost everywhere and $A_e=\alpha A_u$.
Every response attaining this lower bound is nondecreasing from zero
to one. Moreover,
\begin{equation}
 1\le\mathcal V_y=1+2B_y\le A_e+A_u-\delta=1+2\Ncal
 \le\frac{\Mcal_\alpha}{m_\alpha}-\delta,
 \label{eq:delayed_variation}
\end{equation}
and hence
\begin{equation}
 \boxed{P_y\le B_y\le \Ncal\le
 \frac12\left(\frac{\Mcal_\alpha}{m_\alpha}-1-\delta\right).}
 \label{eq:delayed_peak_bound}
\end{equation}
\end{theorem}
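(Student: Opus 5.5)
The proof follows the delay-free argument of \cref{thm:miace_shape_gap,thm:miace_variation}, with the intercept $1$ replaced by $1+\delta$ from \eqref{eq:delayed_absolute_sum}. Apply the algebraic identity $\alpha(a+b)^2+(a-\alpha b)^2=(1+\alpha)(a^2+\alpha b^2)$ with $a=A_e$, $b=A_u$; this gives \eqref{eq:miace_area_decomposition}, $\Mcal_\alpha^2=m_\alpha^2(A_e+A_u)^2+D_\alpha^2/(1+\alpha)$, which holds for any pair of areas. Substitute $A_e+A_u=1+\delta+2\Ncal$ and expand:
\[
(1+\delta+2\Ncal)^2-(1+\delta)^2=4\Ncal(1+\delta+\Ncal).
\]
This is \eqref{eq:delayed_gap}. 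Both terms on the right are nonnegative, so $\Mcal_\alpha\ge(1+\delta)m_\alpha$. Equality holds exactly when $\Ncal=0$ and $D_\alpha=0$, because $1+\delta+\Ncal>0$ and $m_\alpha>0$. Since $\Ncal$ is a sum of integrals of negative parts, $\Ncal=0$ means $e,v\ge0$ almost everywhere.

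For the shape conclusion, on $[0,\delta]$ we have $y=0$. On $[\delta,\infty)$, set $\xi=\theta-\delta$. Then \eqref{eq:delayed_shift_balance} gives $y'(\xi+\delta)=-f'(\xi)=f(\xi)+v(\xi)$ almost everywhere. Here $f$ is the error after the wait, and $v$ is the \emph{unshifted} command. This bookkeeping is the only delicate point, and it must be done carefully. Every statement about $v\ge0$ refers to $v$ on $[0,\infty)$, and that interval is exactly the range of $\xi$. So at equality $y'\ge0$ almost everywhere after $\delta$ and $y'=0$ before it. Local absolute continuity then makes $y$ nondecreasing from $y(0)=0$ to $y(\infty)=1$, and therefore it never exceeds one.

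For the variation chain, $y'$ vanishes on $[0,\delta)$, and afterwards $y'=f+v$ is integrable with total integral $1$. Hence $\mathcal V_y=1+2B_y$ by the negative-part identity. The triangle inequality gives
\[
\mathcal V_y\le\int_0^\infty(|f|+|v|)\dd\xi=(A_e-\delta)+A_u,
\]
using $A_e=\delta+\int|f|$. This equals $1+2\Ncal$ by \eqref{eq:delayed_absolute_sum}. The final inequality $A_e+A_u\le\Mcal_\alpha/m_\alpha$ follows from \eqref{eq:miace_area_decomposition}, since $D_\alpha^2\ge0$. Subtracting $\delta$ gives \eqref{eq:delayed_variation}.

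The peak bound follows as in \cref{thm:miace_variation}. If $y(t)>1$, then $y(t)-1=-\int_t^\infty y'\le B_y$, so $P_y\le B_y$. Also $B_y=\Ncal-(\text{slack})\le\Ncal$ from the chain. Rearranging $1+2\Ncal\le\Mcal_\alpha/m_\alpha-\delta$ yields \eqref{eq:delayed_peak_bound}.
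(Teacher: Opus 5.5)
Your proposal is correct and follows essentially the same route as the paper's proof. Both substitute the shifted intercept $A_e+A_u=1+\delta+2\Ncal$ into the algebraic decomposition \eqref{eq:miace_area_decomposition}, prove monotonicity at equality from $y'(\xi+\delta)=f(\xi)+v(\xi)$ with the unshifted command, and apply the triangle inequality only after the waiting interval to obtain $\mathcal V_y\le A_e-\delta+A_u$ and then the peak bound.
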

\begin{proof}
The weighted-square identity \eqref{eq:miace_area_decomposition},
which is algebraic and does not depend on the plant, gives
\[
 \Mcal_\alpha^2
 =m_\alpha^2(1+\delta+2\Ncal)^2+\frac{D_\alpha^2}{1+\alpha}.
\]
Subtracting $(1+\delta)^2m_\alpha^2$ and expanding the remaining
difference of squares proves \eqref{eq:delayed_gap}. Both terms on its
right are nonnegative and vanish together exactly under the stated
equality conditions. Before the delay the output is constant. Afterwards,
\eqref{eq:delayed_shift_balance} gives
$y'(\xi+\delta)=f(\xi)+v(\xi)\ge0$ at equality. Absolute continuity
and the initial and final values then establish monotonicity.

For a general admissible response, the derivative is integrable because
$f,v\in L^1$. Its signed integral is one, so
$\mathcal V_y=1+2B_y$. The triangle inequality must now be applied only
after the delay:
\[
 \mathcal V_y=\int_0^\infty|f+v|\dd\xi
 \le\int_0^\infty|f|\dd\xi+\int_0^\infty|v|\dd\xi
 =A_e-\delta+A_u.
\]
Combining this with \eqref{eq:delayed_absolute_sum} and
$A_e+A_u\le\Mcal_\alpha/m_\alpha$ proves
\eqref{eq:delayed_variation}. Any overshoot must subsequently be
undone before the output settles at one; the same integral argument as
in \cref{thm:miace_variation} gives $P_y\le B_y$. Rearranging the
chain proves \eqref{eq:delayed_peak_bound}.
\end{proof}

Consequently, a value satisfying
$\Mcal_\alpha\le(1+\varepsilon)(1+\delta)m_\alpha$ implies
$P_y,B_y\le(1+\delta)\varepsilon/2$. The comparison is with the
analytical lower bound. If a restricted controller class cannot attain
that bound, closeness to its own best value does not give the same
bounds in terms of relative excess. The next calculation determines exactly
when the positive ideal PI family can attain it.

\subsection{The cancellation family and its delay limits}

The error transform follows directly from the feedback interconnection.
Writing the step transform as $1/p$ and the loop transfer as
$x(rp+1)e^{-\delta p}/[rp(p+1)]$ gives
\begin{equation}
 \widehat E(p)=\frac{r(p+1)}{
 rp(p+1)+x(rp+1)e^{-\delta p}}.
 \label{eq:delayed_error_transform}
\end{equation}
Whenever the PI loop settles with absolutely integrable error and
centered command, evaluation at zero and
\eqref{eq:delayed_signed_sum} give
\begin{equation}
 \int_0^\infty e\dd\theta=\frac rx,
 \qquad\int_0^\infty v\dd\theta=1+\delta-\frac rx.
 \label{eq:delayed_pi_signed_areas}
\end{equation}
These are signed areas. They cannot be substituted for absolute areas
until their signs have been established.

At $r=1$ the full characteristic equation factorizes as
\begin{equation}
 (p+1)(p+xe^{-\delta p})=0,
 \qquad\widehat E(p)=\frac1{p+xe^{-\delta p}}.
 \label{eq:delayed_cancellation}
\end{equation}
The cancelled internal mode $p=-1$ is stable. The remaining modes belong
to the scalar delay equation
\begin{equation}
 e'(\theta)=-xe(\theta-\delta),\qquad
 e(0)=1,\quad e(\theta)=0\quad(\theta<0).
 \label{eq:delayed_error_dde}
\end{equation}
Here the zero error prehistory describes the time before the reference
step. For $0\le\theta\le\delta$, the solution is instead $e=1$.

\begin{lemma}[Stability and monotonicity on $r=1$]
\label{lem:delayed_cancellation}
For $x>0$, the internal PI loop at $r=1$ is asymptotically stable exactly
when $x\delta<\pi/2$. Its unit-step output is nondecreasing from zero
to one exactly when $x\delta\le1/\mathrm e$, where $\mathrm e$ is
Euler's number. Under the latter condition, $e>0$ at every finite
nonnegative time. If also $x\ge1$, then $v\ge0$.
\end{lemma}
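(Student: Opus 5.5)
The plan is to reduce everything to the scalar delay equation \eqref{eq:delayed_error_dde} through the factorization \eqref{eq:delayed_cancellation}, and then import the classical stability and oscillation thresholds for $e'=-xe(\theta-\delta)$.

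\textbf{Stability.} At $r=1$ the internal plant--integrator realization has characteristic function $(p+1)(p+xe^{-\delta p})$. I would verify this directly on the state equations rather than on the cancelled transfer function, so that the mode $p=-1$ is retained. The factor $p+1$ contributes a stable mode. By the Hayes criterion \citep{Hayes1950}, all roots of $p+xe^{-\delta p}$ lie in $\operatorname{Re}p<0$ exactly when $0\le x\delta<\pi/2$, given $x>0$. Asymptotic stability of the retarded system is therefore equivalent to $x\delta<\pi/2$.

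\textbf{Monotonicity.} The plant output satisfies $y=1-e$, so $y'=-e'$. By \eqref{eq:delayed_error_dde}, $y'(\theta)=xe(\theta-\delta)$, where $e$ is extended by zero for $\theta<0$. Hence $y$ is nondecreasing exactly when $e\ge0$ on $[0,\infty)$.

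\emph{Sufficiency.} If $x\delta\le1/\mathrm e$, then $p+xe^{-\delta p}$ has a real negative root $-\lambda$ with $\lambda=xe^{\lambda\delta}$. I would invoke the classical positivity of the fundamental solution in this regime \citep{Domoshnitsky2014,Benekas2020}. Our solution is exactly that fundamental solution: $e=1$ on $[0,\delta]$, with zero prehistory. This gives $e>0$ at every finite time. If a self-contained argument is preferred, I would compare with $e^{-\lambda\theta}$ to show that $e(\theta)/e^{-\lambda\theta}$ cannot reach zero. That comparison is the technical step I expect to need the most care. Since $1/\mathrm e<\pi/2$, stability also gives $e\to0$. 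Therefore $y$ rises from $0$ to $1$ monotonically.

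\emph{Necessity.} If $x\delta>1/\mathrm e$, the characteristic equation has no real root. The Győri--Ladas oscillation theorem cited in \citep{Benekas2020} then says every nontrivial solution oscillates. Hence $e$ becomes negative on some interval $I$. On $I+\delta$ we get $y'=xe(\cdot-\delta)<0$, so $y$ is not nondecreasing.

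\textbf{Sign of the command.} Assume $x\delta\le1/\mathrm e$ and $x\ge1$. From \eqref{eq:delayed_shift_balance}, $v(\xi)=-f(\xi)-f'(\xi)$. Substituting $f'(\xi)=e'(\xi+\delta)=-xe(\xi)$ gives
\[
 v(\xi)=xe(\xi)-e(\xi+\delta).
\]
Since $e\ge0$, we have $e'=-xe(\cdot-\delta)\le0$, so $e$ is nonincreasing. Hence $e(\xi+\delta)\le e(\xi)$, and
\[
 v(\xi)\ge(x-1)e(\xi)\ge0.
\]
The main obstacle is the positivity claim for this specific initial function. Everything else is either the cited threshold theory or the one-line identity above.
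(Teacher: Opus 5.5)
Your proposal is correct and follows the paper's proof essentially step for step: the factorization with the retained mode $p=-1$ together with the Hayes criterion, a real root $\lambda=xe^{\lambda\delta}$ for positivity, the Benekas oscillation criterion for the converse, and the identity $v(\xi)=xe(\xi)-e(\xi+\delta)$ for the command sign. The comparison you single out as delicate is exactly the paper's self-contained step: $h=e^{\lambda\theta}e$ with zero prehistory satisfies $h'=\lambda[h-h(\theta-\delta)]$, hence $h(\theta)=1+\lambda\int_{\max(0,\theta-\delta)}^{\theta}h(s)\dd s$, which is at least one at any first zero. For the converse, the paper applies the oscillation criterion from $\theta=\delta$ onward, where the history $e\equiv1$ on $[0,\delta]$ is continuous, rather than to the discontinuous zero prehistory.
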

\begin{proof}
For $\delta=0$, $e=e^{-x\theta}$ and the assertions follow immediately.
For $\delta>0$, rescale the characteristic roots by $z=\delta p$.
The remaining equation is $z+ae^{-z}=0$ with $a=x\delta>0$.
The classical scalar stability criterion gives $0<a<\pi/2$
\citep{Hayes1950}. The first boundary can also be checked directly:
putting $p=i\omega$ in \eqref{eq:delayed_cancellation} gives
$x\cos(\omega\delta)=0$ and
$\omega=x\sin(\omega\delta)$, whose first positive crossing is
$\omega=x$, $x\delta=\pi/2$. It is a crossing into the right half-plane,
since implicit differentiation with respect to $\delta$ gives
$\operatorname{Re}(\partial p/\partial\delta)
=x^2/[1+(x\delta)^2]>0$ there. The scalar criterion excludes an
earlier unstable root. The additional internal root $-1$ does not
alter stability.

For positivity, suppose $x\delta\le1/\mathrm e$. The function
$\lambda e^{-\lambda\delta}$ has maximum $1/(\mathrm e\delta)$
at $\lambda=1/\delta$. Thus there is a positive $\lambda$ with
$\lambda=xe^{\lambda\delta}$. Set
$h(\theta)=e^{\lambda\theta}e(\theta)$, with zero prehistory. Then
\[
 h'=\lambda[h-h(\theta-\delta)],\qquad
 h(\theta)=1+\lambda\int_{\max(0,\theta-\delta)}^\theta h(s)\dd s.
\]
If $h$ had a first zero, the last expression would be strictly positive
there. Therefore $h>0$, and hence $e>0$. Equation
\eqref{eq:delayed_error_dde} then gives $e'\le0$.
For the converse, the classical constant-coefficient oscillation
criterion states that every nontrivial real solution of
$g'(t)+xg(t-\delta)=0$ changes sign when $x\delta>1/\mathrm e$
\citep[Introduction, Eqs.~(17)--(20)]{Benekas2020}.
It applies to \eqref{eq:delayed_error_dde} from time $\delta$ onward,
with continuous history $e=1$ on $[0,\delta]$.
Thus a globally nonnegative error is impossible above this threshold.
The positivity boundary agrees with the scalar cases discussed by
\citet{Domoshnitsky2014}; it is not a new delay-equation criterion.

Finally, evaluating \eqref{eq:delayed_shift_balance} and
\eqref{eq:delayed_error_dde} at matching times gives
\begin{equation}
 v(\theta)=xe(\theta)-e(\theta+\delta).
 \label{eq:delayed_command_identity}
\end{equation}
Since $e$ is nonnegative and nonincreasing,
$v\ge(x-1)e\ge0$ when $x\ge1$.
\end{proof}

The two thresholds answer different questions. Between $1/\mathrm e$
and $\pi/2$ the output eventually settles but crosses its target.
At $x\delta=\pi/2$ the loop no longer decays asymptotically; the
infinite-horizon absolute areas used here are not finite.

\subsection{Attaining the lower bound and excluding other PI settings}

\begin{theorem}[Exact delayed PI optimum and its attainment range]
\label{thm:delayed_optimum}
Let $\alpha>0$ and define
\begin{equation}
 \delta_{\max}(\alpha)=
 \min\left\{\frac1\alpha,
 \frac{\alpha}{\mathrm e(\alpha+1)-\alpha}\right\}.
 \label{eq:delayed_attainment_range}
\end{equation}
A positive ideal PI controller attains the bound
$(1+\delta)m_\alpha$ if and only if
$0\le\delta\le\delta_{\max}(\alpha)$. In this range its unique
global minimizing setting is
\begin{equation}
 \boxed{r^\star=1,\qquad
 x^\star=\frac{1+1/\alpha}{1+\delta},\qquad
 \Mcal_\alpha^\star=(1+\delta)m_\alpha.}
 \label{eq:delayed_pi_optimum}
\end{equation}
The minimization is over all positive PI controllers with finite
infinite-horizon areas, without an imposed monotonicity constraint.
In physical units,
\begin{equation}
 \boxed{T_i^\star=\tau,\qquad
 K_c^\star=\frac{\tau(1+1/\alpha)}{k(\tau+L)}.}
 \label{eq:delayed_physical_optimum}
\end{equation}
\end{theorem}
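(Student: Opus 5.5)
By \cref{thm:delayed_shape}, a positive PI controller attains $(1+\delta)m_\alpha$ exactly when its response satisfies three conditions: $e\ge0$, $v\ge0$ almost everywhere, and $A_e=\alpha A_u$. The plan is to convert these signal conditions into conditions on $(x,r)$. The argument has three steps: necessity of $r=1$, the value of $x$ forced by the balance condition, and the sign conditions that produce $\delta_{\max}(\alpha)$.

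\textbf{Step 1: attainment forces $r=1$.} Since $e\ge0$ and $v\ge0$, the signed areas in \eqref{eq:delayed_pi_signed_areas} equal the absolute areas. Hence $A_e=r/x$ and $A_u=1+\delta-r/x$. The condition $A_e=\alpha A_u$ then gives $r/x=\alpha(1+\delta)/(1+\alpha)$.

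To show $r=1$, I would adapt the sign argument of \cref{app:miace_uniqueness} to the delayed loop. The starting point is the shifted balance \eqref{eq:delayed_shift_balance}, $f'=-f-v$. Combined with the PI law $w=xe+(x/r)\int e$, differentiating gives $v'=xe'+(x/r)e$. The case analysis is as follows.
\begin{itemize}
\item On $[0,\delta)$ we have $e\equiv1$. Hence $v(\theta)=x-1+x\theta/r$, and $v(0^+)\ge0$ requires $x\ge1$.
\item For large time the response is governed by the dominant root of the characteristic equation $rp(p+1)+x(rp+1)e^{-\delta p}=0$.
\item If $r\neq1$, the mode $p=-1$ is not cancelled. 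I would then argue that either the dominant root is complex, so $e$ eventually oscillates and changes sign, or that $e$ and $v$ have asymptotic expansions whose leading terms force one of them negative.
\end{itemize}

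This step is the main obstacle. With delay the characteristic function is transcendental, so the explicit polynomial argument of the appendix does not transfer directly. The first thing I would try is a Laplace-transform positivity argument. If $e$ and $v$ are nonnegative, then $\widehat E$ and $\widehat V$ are Laplace transforms of nonnegative functions, so they are completely monotone on the real axis. Moreover, their abscissa of convergence must be a real singularity, by Pringsheim's theorem for Laplace transforms of nonnegative functions. I would then examine the real zeros of the denominator, together with the zero of $\widehat V$'s numerator, to derive a contradiction unless the factor $(p+1)$ cancels, which means $r=1$. Concretely, $\widehat V=1-(p+1)\widehat E\cdot(\text{delay factor})$ must stay positive near the rightmost real pole. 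If $r\ne1$, the pole at or near $-1$ gives a residue of the wrong sign.

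\textbf{Step 2: the value of $x^\star$.} With $r=1$, the relation $r/x=\alpha(1+\delta)/(1+\alpha)$ gives $x^\star=(1+1/\alpha)/(1+\delta)$. This proves uniqueness.

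\textbf{Step 3: the sign conditions and the range $\delta_{\max}(\alpha)$.} By \cref{lem:delayed_cancellation}, $e\ge0$ holds exactly when $x^\star\delta\le1/\mathrm e$, which also guarantees stability. The inequality $x^\star\delta\le1/\mathrm e$ reads $\mathrm e(1+\alpha)\delta\le\alpha(1+\delta)$, that is, $\delta\le\alpha/(\mathrm e(\alpha+1)-\alpha)$.

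For $v\ge0$, the lemma gives sufficiency once $x^\star\ge1$, which is equivalent to $\delta\le1/\alpha$. Necessity also holds: if $x^\star<1$, then $v(0^+)=x^\star-1<0$ on the waiting interval. So both sign conditions hold exactly when $\delta\le\delta_{\max}(\alpha)$.

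\textbf{Converse and physical units.} In this range the three equality conditions hold, so the bound is attained, and every other positive PI setting is excluded by Steps 1 and 2. The physical form \eqref{eq:delayed_physical_optimum} follows by substituting $x=kK_c$, $r=T_i/\tau$ and $\delta=L/\tau$.
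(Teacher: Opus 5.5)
Your sufficiency direction and Steps~2 and~3 coincide with the paper's argument. Step~1, however, carries the entire uniqueness and necessity claim, and it is not yet a proof. You list candidate strategies (dominant roots, asymptotic expansions, Pringsheim) without executing any of them, and the mechanism you name is misidentified. For $r\neq1$ the point $p=-1$ is not a pole of $\widehat E$. In \eqref{eq:delayed_error_transform} the numerator $r(p+1)$ vanishes there, while the denominator equals $x(1-r)e^{\delta}\neq0$, so there is no residue at $-1$ with a wrong sign.

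The denominator can also have real zeros in $(-1,0)$: it equals $x>0$ at $p=0$ and $-r/4+x(1-r/2)e^{\delta/2}$ at $p=-1/2$, which is negative for small $x$. A Pringsheim argument would therefore first have to locate the abscissa of convergence relative to $-1$, and you give no way to do that. The dominant-root route would likewise need a full asymptotic expansion for a retarded equation with infinitely many characteristic roots, plus a case analysis you have not supplied.

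The missing idea is an a priori decay bound that the equality conditions supply themselves. Since $v\ge0$, the shifted balance \eqref{eq:delayed_shift_balance} gives $f'=-f-v\le-f$ with $f(0)=1$, hence $0\le f(\xi)\le e^{-\xi}$. This has three consequences:
\begin{enumerate}
\item $\widehat E(p)=\int_0^\infty e^{-p\theta}e(\theta)\dd\theta$ converges and is analytic on $\operatorname{Re}p>-1$.
\item The identity obtained by multiplying \eqref{eq:delayed_error_transform} by its denominator holds for $\operatorname{Re}p>0$, so it extends to that half-plane by analytic continuation.
\item For $0<a<1$, $\widehat E(-a)=\int_0^\infty e^{a\theta}e(\theta)\dd\theta\ge\int_0^\infty e\dd\theta=r/x>0$.
\end{enumerate}
Now suppose $r\neq1$ and let $a\uparrow1$ in the quotient. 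The numerator $r(1-a)$ tends to $0$ while the denominator tends to $x(1-r)e^{\delta}\neq0$. Hence $\widehat E(-a)\to0$, contradicting the uniform positive lower bound. This forces $r=1$ directly, with no root location or residue analysis. Your Steps~2 and~3 then complete the proof exactly as in the paper.
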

\begin{proof}
First suppose $r=1$, $x\ge1$, and $x\delta\le1/\mathrm e$.
By \cref{lem:delayed_cancellation}, the loop is stable and $e,v\ge0$.
Consequently \eqref{eq:delayed_pi_signed_areas} becomes
$A_e=1/x$, $A_u=1+\delta-1/x$. The remaining equality condition
in \cref{thm:delayed_shape} is
\[
 \frac1x=\alpha\left(1+\delta-\frac1x\right).
\]
Solving gives $x=x^\star$ in \eqref{eq:delayed_pi_optimum}.
The requirement $x^\star\ge1$ is equivalent to $\delta\le1/\alpha$.
The second requirement is
\[
 \frac{(\alpha+1)\delta}{\alpha(1+\delta)}\le\frac1{\mathrm e},
\]
which is equivalent to the second bound in
\eqref{eq:delayed_attainment_range}. Its denominator is positive.
Thus the stated range is sufficient for attainment of the universal
lower bound and therefore for global optimality.

To prove uniqueness and necessity, allow an arbitrary $r>0$ and suppose
that its PI response attains the bound. Equality in
\eqref{eq:delayed_gap} requires $e,v\ge0$. For the shifted error this
gives $f'=-f-v\le-f$, $f(0)=1$, and hence
\begin{equation}
 0\le f(\xi)\le e^{-\xi}.
 \label{eq:delayed_equality_decay}
\end{equation}
The transform $\widehat E(p)$ is consequently analytic for
$\operatorname{Re}p>-1$. The identity obtained by multiplying
\eqref{eq:delayed_error_transform} by its denominator extends throughout
that half-plane, because it holds on the positive real half-plane and
both sides are analytic. In particular the displayed quotient applies
at real $p=-a$, $0<a<1$: its numerator is nonzero, so its denominator
cannot vanish there.

If $r\ne1$, that denominator tends, as $a\uparrow1$, to
$x(1-r)e^\delta\ne0$, while the numerator tends to zero. It would
follow that $\widehat E(-a)\to0$. On the other hand,
\[
 \widehat E(-a)=\int_0^\infty e^{a\theta}e(\theta)\dd\theta
 \ge\int_0^\infty e(\theta)\dd\theta=\frac rx>0,
\]
a contradiction. Therefore $r=1$. Since $v(0^+)=x-1$ and $v$ is
right-continuous, $v\ge0$ requires $x\ge1$. Nonnegativity of $e$
requires $x\delta\le1/\mathrm e$ by
\cref{lem:delayed_cancellation}. Finally $A_e=\alpha A_u$ fixes
$x=x^\star$. This proves necessity of the range as well as uniqueness.
\end{proof}

The theorem identifies when the lower bound is attained. It does not
describe the full delayed Pareto front or solve the remaining PI
optimization outside this range. In particular, failure of attainment
does not prove that every optimum outside the range must overshoot.
A monotone output may coexist with a command that falls below its
equilibrium level, making $N_v>0$ even when $N_e=0$.

\subsection{The initial command peak and the meaning of a smooth response}

The delayed optimum keeps the integral time but reduces the gain.
There is a useful consequence for the command amplitude. Before the
output starts moving, the PI law at $r=1$ gives
$w(\theta)=x(1+\theta)$ for $0\le\theta\le\delta$.
After this interval, differentiation gives
\[
 w'=x(e'+e)=x[e(\theta)-xe(\theta-\delta)]\le0
\]
when $x\ge1$ and the error is nonincreasing. Therefore the command has
one initial rise and then decreases to its equilibrium. At the optimum,
\begin{equation}
 \boxed{\sup_{\theta\ge0}w(\theta)
 =x^\star(1+\delta)=1+1/\alpha.}
 \label{eq:delayed_peak_command}
\end{equation}
This is the same peak as for the delay-free optimum. It is a property
of the unconstrained optimum in \cref{thm:delayed_optimum}, not a
solution of a delayed amplitude-constrained optimization problem.

For $\alpha=1$, the theorem applies up to
$\delta=1/(2\mathrm e-1)=0.225399674\ldots$.
The old setting $x=2,r=1$ loses monotonicity earlier, at
$\delta=1/(2\mathrm e)=0.183939721\ldots$, although it remains
stable for $\delta<\pi/4=0.785398163\ldots$.
At $\delta=0.2$, the corrected gain is $x^\star=5/3$ and both
areas are $0.6$. Thus $\Mcal_1^\star=1.2/\sqrt2$ and the
response is exactly monotone. The old gain gives a small overshoot,
visible in the enlarged view in \cref{fig:delay_miace_responses}.
Numerical areas and independent checks appear in
\cref{sec:delay_numerical_checks}.

\begin{figure}[htbp]
 \centering
 \includegraphics[width=\linewidth]{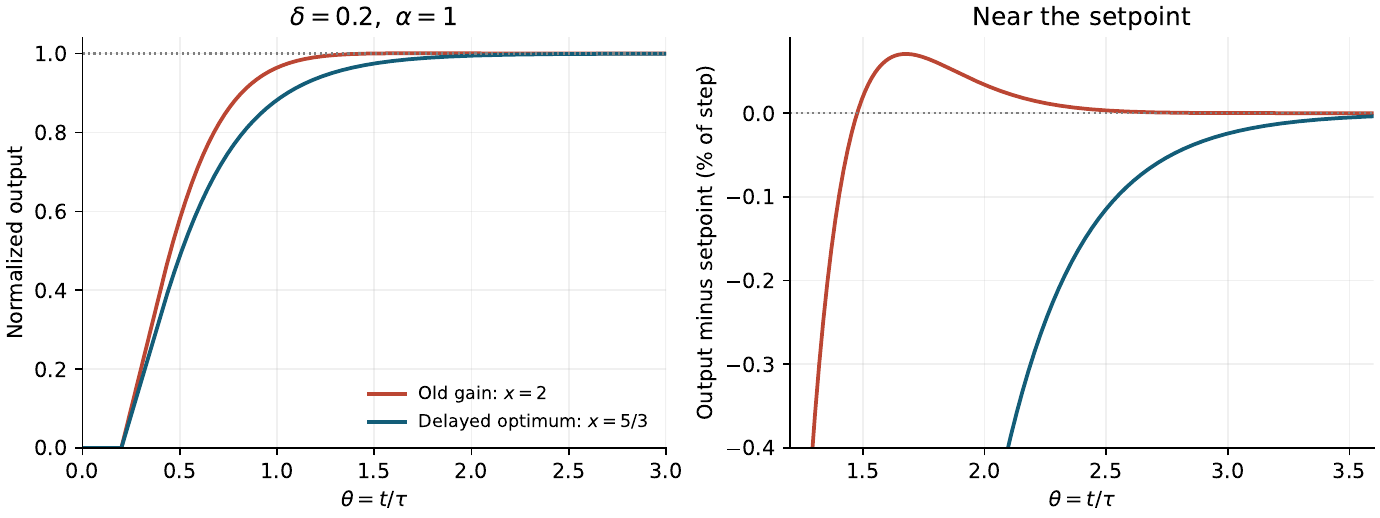}
 \caption{A pure delay $L=0.2\tau$ at $\alpha=1$. Both designs use
 $T_i=\tau$. Red: the delay-free gain $kK_c=2$ retained after adding
 the delay. Blue: the delayed MIACE optimum $kK_c=5/3$.
 The right panel enlarges the neighborhood of the setpoint;
 its vertical scale is percent of the step.}

 \label{fig:delay_miace_responses}
\end{figure}

Monotonicity is the precise output-shape property established here.
It does not mean differentiability at every instant. The output remains
zero before $\theta=\delta$, while its right derivative there is
$x$, so its slope has a corner at the end of the delay. The command
also retains the ideal proportional jump at the reference step.
Neither feature is an oscillation, but neither is removed by minimizing
the infinite-horizon areas.

%% file: sections/06d_filtered_pid.tex
\section{Filtered PID: attaining the same bound beyond the PI range}
\label{sec:filtered_pid}

The preceding lower bound belongs to the plant and the two absolute areas.
Its failure to be attained by PI at a given delay therefore leaves a
specific question: can another controller keep both the error and the
centered command nonnegative while supplying the required area ratio?
A derivative term gives an additional way of shaping the initial command.
To use it in the present problem, its realization must first be specified.

\subsection{Controller realization and the necessary equality conditions}

Consider a parallel PID controller with a first-order derivative filter,
acting on the full tracking error:
\begin{equation}
 C_F(s)=K_p+\frac{K_i}{s}+\frac{K_d s}{1+T_f s},\qquad
 K_p,K_i,K_d,T_f>0.
 \label{eq:pid_physical_law}
\end{equation}
The filter time $T_f$ is an independent tuning parameter. The reference
step, plant, integrator, filter, and command prehistory have the same rest
conditions as in \cref{sec:pure_delay}. In dimensionless variables define
\begin{equation}
 p_c=kK_p,\quad i_c=k\tau K_i,\quad d_c=\frac{kK_d}{\tau},\quad
 \rho=\frac{T_f}{\tau},\qquad
 \overline C_F(p)=p_c+\frac{i_c}{p}+\frac{d_c p}{1+\rho p}.
 \label{eq:pid_normalization}
\end{equation}
Thus $K_p=p_c/k$, $K_i=i_c/(k\tau)$, $K_d=d_c\tau/k$, and
$T_f=\rho\tau$. In the standard time-constant notation,
$T_i=p_c\tau/i_c$ and $T_d=d_c\tau/p_c$.
The proportional and integral symbols extend the parallel PI notation
used in the numerical examples below; they do not reuse the step amplitude.

Filtering makes the command at a reference step an ordinary function
with a finite initial jump. An unfiltered derivative of that step would
produce a Dirac impulse, outside the ordinary $L^1$ signal class used to
define $A_u$. A criterion on measures would require a separate definition.
Differentiating the measured output instead of the full error avoids the
setpoint derivative kick, but changes the closed-loop numerator and is a
different problem. These implementation distinctions are standard in PID
design \citep{AstromHagglund1995}. Filtered PID design and real-pole tuning
are also established topics \citep{Belai2019}. The assertions below concern
exactly \eqref{eq:pid_physical_law}, without a prescribed ratio $T_d/T_f$.

Combining the controller terms over a common denominator gives
\begin{align}
 Q_F(p)&=(p_c\rho+d_c)p^2+(p_c+i_c\rho)p+i_c,\\
 \widehat E(p)&=\frac{(p+1)(1+\rho p)}{
 p(p+1)(1+\rho p)+Q_F(p)e^{-\delta p}}.
 \label{eq:pid_full_error}
\end{align}
For a settled response with finite absolute areas, evaluation at zero
and \eqref{eq:delayed_signed_sum} yield
\begin{equation}
 \int_0^\infty e\dd\theta=\frac1{i_c},\qquad
 \int_0^\infty v\dd\theta=1+\delta-\frac1{i_c}.
 \label{eq:pid_signed_areas}
\end{equation}
Consequently every PID controller attaining the MIACE lower bound must have
\begin{equation}
 \boxed{i_c=i_c^\star:=\frac{1+1/\alpha}{1+\delta}.}
 \label{eq:pid_integral_gain}
\end{equation}
This fixes the integral gain, not the entire PID controller. Indeed,
equality requires $A_e=\alpha A_u$, so that
$1/i_c=\alpha(1+\delta-1/i_c)$. It also requires $e,v\ge0$;
the signed-area calculation alone does not supply these signs.

There is a controller-independent necessary restriction. Since $e=1$
throughout the waiting interval and the output is continuous when that
interval ends, $A_e>\delta$. At equality,
$A_e=\alpha(1+\delta)/(1+\alpha)$, and therefore
\begin{equation}
 \delta<\alpha.
 \label{eq:pid_general_necessary_delay}
\end{equation}
This restriction is not asserted to be sufficient.

\begin{proposition}[Structure of an attaining genuine PID controller]
\label{prop:pid_necessary_structure}
If a controller in \eqref{eq:pid_normalization}, with all four parameters
strictly positive, attains $(1+\delta)m_\alpha$, then
\begin{equation}
 0<\rho<1,\qquad p_c>i_c,\qquad
 d_c=(1-\rho)(p_c-i_c).
 \label{eq:pid_necessary_structure}
\end{equation}
Equivalently, for a parameter $\ell>\rho$,
\begin{align}
 p_c&=i_c(1+\ell-\rho),&
 d_c&=i_c(\ell-\rho)(1-\rho),\\
 \overline C_F(p)&=
 \frac{i_c(p+1)(1+\ell p)}{p(1+\rho p)}.
 \label{eq:pid_cancellation_form}
\end{align}
\end{proposition}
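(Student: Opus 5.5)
The plan is to repeat the analytic-continuation argument from the uniqueness part of \cref{thm:delayed_optimum}, now applied to the PID error transform \eqref{eq:pid_full_error}. It is run twice: first at the plant pole $p=-1$, then at the filter pole $p=-1/\rho$.

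\emph{Step 1: consequences of equality.} Equality in \eqref{eq:delayed_gap} gives $e,v\ge0$ by \cref{thm:delayed_shape}. Then the shifted error satisfies $f'=-f-v\le -f$, so $0\le f(\xi)\le e^{-\xi}$ exactly as in \eqref{eq:delayed_equality_decay}. Hence $\widehat E$ is analytic on $\operatorname{Re}p>-1$. The identity obtained by multiplying \eqref{eq:pid_full_error} by its denominator extends to that half-plane by analyticity. For every real $a\in(0,1)$,
\[
\widehat E(-a)=\int_0^\infty e^{a\theta}e(\theta)\dd\theta\ \ge\ \int_0^\infty e(\theta)\dd\theta=\frac1{i_c}>0 .
\]

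\emph{Step 2: cancellation of the plant pole.} Let $a\uparrow1$. The numerator $(p+1)(1+\rho p)$ tends to $0$, while the denominator tends to $Q_F(-1)e^{\delta}$. If $Q_F(-1)\ne0$, then $\widehat E(-a)\to0$, which contradicts the bound $\widehat E(-a)\ge 1/i_c$. Hence $Q_F(-1)=0$. A direct evaluation gives
\[
Q_F(-1)=d_c-(1-\rho)(p_c-i_c),
\]
which is the relation for $d_c$ in \eqref{eq:pid_necessary_structure}. Factor $Q_F(p)=(p+1)(i_c\ell p+i_c)$, which defines $\ell$ through $i_c\ell=p_c\rho+d_c>0$. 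Matching the linear coefficient $p_c+i_c\rho=i_c\ell+i_c$ gives $p_c=i_c(1+\ell-\rho)$. Then $d_c=i_c\ell-p_c\rho=i_c(\ell-\rho)(1-\rho)$. This is exactly the cancellation form \eqref{eq:pid_cancellation_form}, and
\[
\widehat E(p)=\frac{1+\rho p}{p(1+\rho p)+i_c(1+\ell p)e^{-\delta p}} .
\]

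\emph{Step 3: signs, the main obstacle.} Positivity $d_c>0$ forces $(\ell-\rho)(1-\rho)>0$. This excludes $\rho=1$ and $\ell=\rho$, but it still allows the branch $\rho>1$, $\ell<\rho$. That branch is the step I expect to need care, and I would exclude it with the same device applied at the filter pole. If $\rho>1$, then $-1/\rho\in(-1,0)$ lies inside the half-plane of analyticity. At that point the reduced numerator vanishes, while the denominator equals
\[
i_c\left(1-\frac{\ell}{\rho}\right)e^{\delta/\rho}\ne0
\]
because $\ell\ne\rho$. Thus $\widehat E(-1/\rho)=0$, which contradicts $\widehat E(-1/\rho)\ge1/i_c>0$. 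Therefore $0<\rho<1$ and $\ell>\rho$, and then $p_c-i_c=i_c(\ell-\rho)>0$.

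This proves \eqref{eq:pid_necessary_structure}. Conversely, every $\ell>\rho$ reproduces strictly positive parameters, which establishes the stated equivalence of the two parameterizations.
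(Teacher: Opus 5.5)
Your proof is correct and uses the same ingredients as the paper. These are the analytic continuation of $\widehat E$ to $\operatorname{Re}p>-1$ via $0\le f\le e^{-\xi}$, the lower bound $\widehat E\ge 1/i_c$ on $(-1,0]$, and evaluation at the plant pole $p=-1$ and the filter pole $p=-1/\rho$. The only difference is the order. You extract $Q_F(-1)=0$ first, which is valid for every $\rho>0$, so that $\rho=1$ is excluded by $d_c>0$ and $\rho>1$ by the reduced denominator value $i_c(1-\ell/\rho)e^{\delta/\rho}\neq0$. The paper instead first rules out $\rho>1$ using $Q_F(-1/\rho)=d_c/\rho^2>0$ and $\rho=1$ using the limit $d_c e^{\delta}$ at $p=-1$, and only then imposes $Q_F(-1)=0$.
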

\begin{proof}
At equality, \cref{thm:delayed_shape} gives $e,v\ge0$ and hence
$0\le f(\xi)\le e^{-\xi}$ from \eqref{eq:delayed_equality_decay}.
Thus $\widehat E$ is analytic for $\operatorname{Re}p>-1$ and is
strictly positive on the real interval $(-1,\infty)$. The identity
obtained by multiplying \eqref{eq:pid_full_error} by its denominator
extends throughout that half-plane.

If $\rho>1$, the point $p=-1/\rho$ lies inside this interval.
The numerator is zero there, whereas
$Q_F(-1/\rho)=d_c/\rho^2>0$. The identity would give
$\widehat E(-1/\rho)=0$, a contradiction. If $\rho=1$, the
denominator tends to $d_c e^\delta>0$ as $p$ tends to $-1$ from
the right, while the numerator tends to zero. This would imply
$\widehat E(p)\to0$, although
$\widehat E(p)\ge\widehat E(0)=1/i_c>0$ for $-1<p\le0$.
Therefore $0<\rho<1$.

The same limit now requires $Q_F(-1)=0$. Expanding it gives
$d_c+(\rho-1)(p_c-i_c)=0$. Positivity of $d_c$ and $1-\rho$
implies $p_c>i_c$. Finally put
$\ell=\rho+(p_c-i_c)/i_c$ and factor $Q_F$ to obtain
\eqref{eq:pid_cancellation_form}.
\end{proof}

The cancellation is therefore a consequence of attainment. It was not
imposed to reduce the optimization problem. For this structure write
\begin{equation}
 H_F(p)=\frac{1+\ell p}{1+\rho p},\qquad
 \Delta_F(p)=p(1+\rho p)+i_c(1+\ell p)e^{-\delta p}.
 \label{eq:pid_reduced_denominator}
\end{equation}
Then $\widehat E=(1+\rho p)/\Delta_F$ and the full characteristic
expression is $(p+1)\Delta_F$. The internal mode $p=-1$ is stable;
stability of the remaining delay equation still has to be proved.

\subsection{A positivity argument that also proves stability}

For causal functions write $(K*g)(\theta)=\int_0^\theta
K(\theta-s)g(s)\dd s$. We shall represent the error and centered
command by the same nonnegative convolution kernel.

\begin{lemma}[Nonnegative convolution resolvent]
\label{lem:pid_positive_resolvent}
Suppose $K\ge0$, $K\in L^1(0,\infty)$, and
$q_K=\int_0^\infty K\dd\theta<1$. For $g\ge0$ in $L^1$, the
equation $z=g+K*z$ has the unique $L^1$ solution
\begin{equation}
 z=g+K*g+K*K*g+\cdots\ge0.
 \label{eq:pid_renewal_series}
\end{equation}
If $K$ has a finite positive exponential moment, then a sufficiently
small $\eta>0$ satisfies $\int e^{\eta\theta}K(\theta)\dd\theta<1$.
The same series converges in the exponentially weighted $L^1$ norm
whenever $g$ belongs to that space.
\end{lemma}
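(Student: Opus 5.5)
The plan is to treat $z\mapsto g+K*z$ as a contraction on $L^1(0,\infty)$ and obtain the series as its Neumann expansion. The key estimate is Young's inequality for causal convolution on the half-line:
\[
 \norm{K*h}_{L^1}\le\int_0^\infty\!\!\int_0^\theta K(\theta-s)|h(s)|\dd s\dd\theta=q_K\norm{h}_{L^1},
\]
which follows from Tonelli's theorem by integrating first in $\theta$. Hence the operator $T h=K*h$ has norm at most $q_K<1$ on $L^1$. The Neumann series $\sum_{n\ge0}T^n g$ then converges absolutely in $L^1$, with norm at most $\norm{g}_{L^1}/(1-q_K)$.

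Its sum $z$ satisfies $z-Tz=g$. For this I apply $T$ termwise, which is justified because $T$ is bounded, so the series telescopes. Uniqueness follows because the difference $d$ of two $L^1$ solutions satisfies $d=Td$, so $\norm{d}\le q_K\norm{d}$ and $d=0$. Nonnegativity is immediate: $K\ge0$ and $g\ge0$ make every iterate $K^{*n}*g$ nonnegative almost everywhere. The $L^1$ limit of nonnegative partial sums is nonnegative almost everywhere, since a subsequence converges pointwise.

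For the exponential statement, suppose $\int e^{\eta_0\theta}K\dd\theta<\infty$ for some $\eta_0>0$. The function $\eta\mapsto\int e^{\eta\theta}K\dd\theta$ on $[0,\eta_0]$ is continuous at $0$ by dominated convergence, with dominating function $e^{\eta_0\theta}K$. Its value at $0$ is $q_K<1$, so some small $\eta>0$ gives $q_\eta:=\int e^{\eta\theta}K<1$.

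Then I conjugate by the weight. Set $\widetilde h=e^{\eta\theta}h$. Because $e^{\eta\theta}=e^{\eta(\theta-s)}e^{\eta s}$, the conjugated convolution satisfies $e^{\eta\theta}(K*h)=(e^{\eta\cdot}K)*\widetilde h$. Thus $T$ acting on the weighted space $L^1(e^{\eta\theta}\dd\theta)$ is isometric to convolution by $e^{\eta\cdot}K$ on $L^1$, which has norm at most $q_\eta<1$. The same Neumann argument shows the series converges in the weighted norm whenever $e^{\eta\cdot}g\in L^1$. Its weighted limit coincides with the unweighted one, because weighted convergence implies $L^1$ convergence.

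There is no real obstacle here. The only points needing care are the Tonelli justification for the norm bound and the continuity argument producing $\eta$. Both are standard, and I would state them briefly rather than grind through them.
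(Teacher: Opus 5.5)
Your proposal is correct and follows essentially the same route as the paper: Young's inequality gives the geometric bound $q_K^n\|g\|_1$ and contraction-based uniqueness, nonnegativity holds termwise, and the exponential case uses dominated-convergence continuity of the moment at zero followed by multiplying every function by the weight $e^{\eta\theta}$. Your version only spells out a few steps the paper leaves implicit: the Tonelli justification, almost-everywhere nonnegativity of the $L^1$ limit, and the agreement of the weighted and unweighted sums.
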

\begin{proof}
Young's inequality bounds the norm of the $n$th term by
$q_K^n\|g\|_1$, so the series converges absolutely in $L^1$ and
solves the equation. If two solutions existed, the norm of their
difference would be at most $q_K$ times itself, forcing equality.
Each summand is nonnegative. Continuity of the exponential moment at
$\eta=0$ follows by dominated convergence, using any smaller weight
than the given finite moment. Its value at zero is strictly below one.
Replacing each function by its product with $e^{\eta\theta}$ then
gives the same convolution and norm argument.
\end{proof}

We first construct a kernel with compact support. Choose two real numbers
$1<\lambda<\mu$ for which
\begin{equation}
 i_c(1-\ell\lambda)e^{\lambda\delta}
   =\lambda(1-\rho\lambda),\qquad
 i_c(1-\ell\mu)e^{\mu\delta}
   =\mu(1-\rho\mu).
 \label{eq:pid_two_roots}
\end{equation}
These equations place zeros of $\Delta_F$ at $-\lambda$ and $-\mu$.
They do not assume that these are the dominant characteristic roots.
For chosen $i_c,\delta,\lambda,\mu$, they are solved explicitly by
\begin{align}
 \rho&=\frac{e^{-\lambda\delta}-e^{-\mu\delta}
                  +i_c(1/\mu-1/\lambda)}
               {\lambda e^{-\lambda\delta}-\mu e^{-\mu\delta}},
 \label{eq:pid_two_rate_filter}\\
 \ell&=\frac1\lambda+
          \frac{(\rho\lambda-1)e^{-\lambda\delta}}{i_c}.
 \label{eq:pid_two_rate_lead}
\end{align}
The denominator in \eqref{eq:pid_two_rate_filter} must be nonzero.
The resulting values must also satisfy $0<\rho<1$ and $\ell>\rho$.

Define
\begin{equation}
 A_g=\frac{1/\rho-\lambda}{\mu-\lambda},\quad B_g=1-A_g,
 \qquad g(\theta)=A_g e^{-\lambda\theta}+B_g e^{-\mu\theta}.
 \label{eq:pid_two_rate_g}
\end{equation}
Its transform is
$\widehat g=(1+\rho p)/[\rho(p+\lambda)(p+\mu)]$.
The following sufficient restriction is convenient:
\begin{equation}
 \mu\le\frac1\rho\le\lambda+\mu-1.
 \label{eq:pid_rate_restriction}
\end{equation}
It implies $A_g\ge1$, $B_g\le0$, and $g>0$, since
$g=e^{-\lambda\theta}[A_g-(A_g-1)e^{-(\mu-\lambda)\theta}]$.
More precisely,
\[
 -g'-g=e^{-\lambda\theta}
 \left[(\lambda-1)A_g-(\mu-1)(A_g-1)e^{-(\mu-\lambda)\theta}\right]
 \ge0.
\]
The bracket is smallest at zero, where it equals
$\lambda+\mu-1/\rho-1$. Thus $-g'\ge g>0$.

Partial fractions and \eqref{eq:pid_two_roots} give the useful identity
\begin{equation}
 \mathcal L^{-1}\{i_cH_F\widehat g\}(\theta)=-g'(\theta+\delta).
 \label{eq:pid_shifted_g_identity}
\end{equation}
For example, the residue at $p=-\lambda$ on the left is
$i_c(1-\ell\lambda)/[\rho(\mu-\lambda)]$; the first equation
in \eqref{eq:pid_two_roots} makes it
$\lambda A_g e^{-\lambda\delta}$, the corresponding coefficient
on the right. The residue at $-\mu$ is verified in the same way.
Since both transforms are strictly proper and have only these two
simple poles, these two comparisons prove the identity.

Now put
\begin{equation}
 K(\theta)=
 \begin{cases}-g'(\theta),&0\le\theta<\delta,\\
 0,&\theta\ge\delta.
 \end{cases}
 \label{eq:pid_compact_kernel}
\end{equation}
Using $g(0)=1$ and integrating its derivative up to $\delta$ gives
\begin{equation}
 \widehat K=1-(p+i_cH_F e^{-\delta p})\widehat g,
 \quad \widehat E=\frac{\widehat g}{1-\widehat K},
 \quad \|K\|_1=1-g(\delta)<1.
 \label{eq:pid_compact_resolvent}
\end{equation}
In particular $e\ge0$ by \cref{lem:pid_positive_resolvent}.
The stronger stability statement follows from the exact factorization
\begin{equation}
 \Delta_F(p)=\rho(p+\lambda)(p+\mu)[1-\widehat K(p)].
 \label{eq:pid_stability_factor}
\end{equation}
For a sufficiently small $\eta<\min\{1,\lambda\}$, the weighted
kernel norm is below one, so
$|\widehat K(p)|<1$ for $\operatorname{Re}p\ge-\eta$.
There are no zeros of the full characteristic expression in that strip.
To check that no internal mode has been omitted, take the physical states
$y,z,\zeta$ with $z'=e$, $\rho\zeta'=e-\zeta$, and
$w=p_c e+i_cz+(d_c/\rho)(e-\zeta)$.
The homogeneous three-state retarded system has characteristic determinant
\[
 \det[pI-A_0-A_1e^{-\delta p}]
 =\frac{p(p+1)(1+\rho p)+Q_F(p)e^{-\delta p}}{\rho}
 =\frac{(p+1)\Delta_F(p)}{\rho}.
\]
The characteristic-root criterion for a finite-dimensional retarded
system therefore gives internal exponential stability. Equivalently,
its resolvent uses the exponentially integrable convolution series above
and stable rational factors. This verifies the internal dynamics, in
addition to positivity of the particular step response.

\subsection{Keeping the centered command nonnegative}

The error sign alone does not prove MIACE optimality. From
\eqref{eq:pid_cancellation_form}, the controller relation
$\widehat W=\overline C_F\widehat E$, and
$1=(p+i_cH_F e^{-\delta p})\widehat E$, subtraction of the new
equilibrium command gives
\begin{equation}
 \widehat V=
 \left[i_cH_F\left(1+\frac{1-e^{-\delta p}}p\right)-1\right]
 \widehat E=\frac{\widehat R}{1-\widehat K}.
 \label{eq:pid_command_transform}
\end{equation}
Here \eqref{eq:pid_shifted_g_identity} yields
\begin{equation}
 R(\theta)=-g'(\theta+\delta)-g(\theta+\delta)
                  +g(\max\{\delta,\theta\})-g(\theta).
 \label{eq:pid_command_forcing}
\end{equation}
Indeed, the factor $(1-e^{-\delta p})/p$ integrates
$-g'(s+\delta)$ from $\max(0,\theta-\delta)$ to $\theta$.
The integral is $g(\max\{\delta,\theta\})-g(\theta+\delta)$,
which proves the displayed formula after including the remaining terms.

For $\theta\ge\delta$, this reduces to
$R=-g'(\theta+\delta)-g(\theta+\delta)\ge0$.
On the initial interval define
$c_z=(z-1)e^{-z\delta}-1$. Then
\begin{equation}
 R(\theta)=g(\delta)+A_gc_\lambda e^{-\lambda\theta}
                         +B_gc_\mu e^{-\mu\theta},
 \qquad 0\le\theta\le\delta.
 \label{eq:pid_initial_forcing}
\end{equation}
A sufficient check, requiring only three scalar signs, is
\begin{equation}
 c_\lambda\ge0,\qquad c_\mu\ge0,\qquad
 g(\delta)-(A_g-1)c_\mu\ge0.
 \label{eq:pid_simple_forcing_check}
\end{equation}
The positive term in \eqref{eq:pid_initial_forcing} may then be
discarded for a lower bound, and the magnitude of its negative term
is at most $(A_g-1)c_\mu$.
If this sufficient check fails, the exact minimum of $R$ on the initial
interval can still be checked at its endpoints and at the single possible
interior stationary point
\begin{equation}
 \theta_0=\frac1{\mu-\lambda}
       \log\!\left(-\frac{\mu B_gc_\mu}{\lambda A_gc_\lambda}\right).
 \label{eq:pid_forcing_stationary}
\end{equation}
This point is included only when its logarithm has a positive argument
and $0<\theta_0<\delta$. If a coefficient vanishes, the remaining
single exponential is monotone or constant, so endpoints suffice.

\begin{theorem}[A two-rate sufficient condition for the delayed PID optimum]
\label{thm:pid_two_rate_optimum}
Let $i_c=i_c^\star$ in \eqref{eq:pid_integral_gain}. Suppose
\eqref{eq:pid_two_rate_filter}--\eqref{eq:pid_two_rate_lead} give
$0<\rho<1$, $\ell>\rho$, and satisfy
\eqref{eq:pid_rate_restriction}. If $R\ge0$ on $[0,\delta]$,
as guaranteed in particular by \eqref{eq:pid_simple_forcing_check},
then \eqref{eq:pid_cancellation_form} is internally exponentially
stable and attains
\begin{equation}
 \boxed{A_e=\frac{\alpha(1+\delta)}{1+\alpha},\qquad
 A_u=\frac{1+\delta}{1+\alpha},\qquad
 \Mcal_\alpha=(1+\delta)\sqrt{\frac\alpha{1+\alpha}}.}
 \label{eq:pid_optimal_areas}
\end{equation}
Its output is nondecreasing from zero to one. The minimum is global over
all admissible commands of the same plant with the stipulated initial
conditions, and consequently over the specified filtered PID family.
\end{theorem}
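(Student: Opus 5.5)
The proof assembles the pieces already prepared in this section; the only new content is checking that the hypotheses make them fit together. First I would confirm that the prescribed parameters define a genuine controller of the form \eqref{eq:pid_cancellation_form}. From $0<\rho<1$ and $\ell>\rho$, the formulas of \cref{prop:pid_necessary_structure} give $p_c=i_c(1+\ell-\rho)>i_c>0$ and $d_c=i_c(\ell-\rho)(1-\rho)>0$, so all four gains are strictly positive. The restriction \eqref{eq:pid_rate_restriction} gives $A_g\ge1$, $B_g\le0$, $g>0$ and $-g'\ge g>0$. Since $g(0)=1$ and $g(\delta)>0$, the compact kernel \eqref{eq:pid_compact_kernel} is nonnegative and satisfies $\|K\|_1=1-g(\delta)<1$. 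Because $K$ has compact support, every exponential moment is finite. This puts us exactly in the setting of \cref{lem:pid_positive_resolvent}.

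Second, internal stability and the sign of the error. I would verify the factorization \eqref{eq:pid_stability_factor} by multiplying $\widehat K=1-(p+i_cH_Fe^{-\delta p})\widehat g$ out with $\widehat g=(1+\rho p)/[\rho(p+\lambda)(p+\mu)]$; this is a one-line identity. On a strip $\operatorname{Re}p\ge-\eta$, the weighted norm bound gives $|\widehat K|<1$, and $\lambda,\mu>1>\eta$. Hence $(p+1)\Delta_F$ has no zeros there. The three-state determinant computation then gives internal exponential stability by the retarded characteristic-root criterion, and in particular $e,v\in L^1$ with exponential decay and $e(\infty)=0$. Writing $\widehat E=\widehat g/(1-\widehat K)$, the lemma gives $e=g+K*g+\cdots\ge0$ because $g>0$.

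Third, the sign of the command. By \eqref{eq:pid_command_transform}, $v$ is the resolvent series applied to $R$, so $v\ge0$ once $R\ge0$. For $\theta\ge\delta$ this follows from $-g'-g\ge0$ evaluated at $\theta+\delta$. On $[0,\delta]$ it is the hypothesis. Under \eqref{eq:pid_simple_forcing_check}, it follows by dropping the nonnegative term $A_gc_\lambda e^{-\lambda\theta}$ and bounding $|B_gc_\mu e^{-\mu\theta}|\le(A_g-1)c_\mu$, using $B_g=1-A_g$. I would also check that $R\in L^1$, so that the lemma applies; this holds because $R$ is a bounded exponential expression.

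Finally, I would read off the areas. With $e,v\ge0$, absolute and signed areas coincide, and \eqref{eq:pid_signed_areas} with $i_c=i_c^\star$ gives $A_e=1/i_c^\star=\alpha(1+\delta)/(1+\alpha)$ and $A_u=(1+\delta)/(1+\alpha)$, so $D_\alpha=0$ and $\Ncal=0$. Then \cref{thm:delayed_shape} gives equality $\Mcal_\alpha=(1+\delta)m_\alpha$ and a nondecreasing output. Since that bound holds for all admissible commands of the plant, optimality is global, a fortiori within the PID family.

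The main obstacle is bookkeeping rather than depth. One part is justifying that the evaluation at $p=0$ in \eqref{eq:pid_signed_areas} is valid, which follows from exponential integrability. The other is confirming that $f$ is locally absolutely continuous with $f(\infty)=0$, so that the hypotheses of \cref{thm:delayed_shape} hold. Both follow from the stability established in the second step.
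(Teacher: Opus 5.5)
Your proposal is correct and follows the paper's proof. The compact-kernel factorization and \cref{lem:pid_positive_resolvent} give internal exponential stability and $e\ge0$; the same resolvent applied to $R$ gives $v\ge0$; and the signed areas with $i_c=i_c^\star$ then meet both equality conditions of \cref{thm:delayed_shape}. You make explicit the bookkeeping that the paper's short proof leaves to the preceding subsection: positivity of all four gains, the bound on $R$ from the three-sign check, $R\in L^1$, and the admissibility hypotheses.
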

\begin{proof}
The preceding kernel argument proves stability and $e\ge0$.
For $R\ge0$, \cref{lem:pid_positive_resolvent} applied to
\eqref{eq:pid_command_transform} gives $v\ge0$ and absolute
integrability. The signed areas in \eqref{eq:pid_signed_areas}
therefore equal the absolute areas. The choice $i_c=i_c^\star$
balances them in the required ratio. All equality conditions of
\cref{thm:delayed_shape} are satisfied, which proves the minimum
and the output monotonicity.
\end{proof}

\subsection{Equal weights: an exact construction beyond the PI boundary}

Take $\alpha=1$, $\delta=1/4$, $i_c=8/5$, $\lambda=5$, and $\mu=6$.
The formulas become
\begin{equation}
 \rho=\frac{e^{-5/4}-e^{-3/2}-4/75}{5e^{-5/4}-6e^{-3/2}},\qquad
 \ell=\frac15+\frac58(5\rho-1)e^{-5/4}.
 \label{eq:pid_balanced_exact}
\end{equation}
For orientation, their decimal values and the resulting gains are
\begin{align}
 \rho&\simeq0.1071152072,& \ell&\simeq0.1168376916,\\
 p_c&\simeq1.6155559750,& i_c&=1.6,\qquad d_c\simeq0.0138896935.
 \label{eq:pid_balanced_decimals}
\end{align}
The theorem concerns the exact expressions and
\eqref{eq:pid_cancellation_form}; rounded gains do not preserve an exact
cancellation or area equality by definition.

All signs can be checked with short bounds. The exact formulas imply
\[
 0.10711<\rho<0.10712,\qquad
 0.11683<\ell<0.11685,\qquad 4.33<A_g<4.34.
\]
Thus $6<1/\rho<10$, proving \eqref{eq:pid_rate_restriction} strictly.
Also
\[
 0.28650<e^{-5/4}<0.28651,\qquad
 0.22313<e^{-3/2}<0.22314.
\]
These bounds give $c_5>0.146$ and $c_6>0.11565$. Finally,
\begin{align*}
 g(1/4)&>4.33(0.28650)-3.34(0.22314)=0.4952574,\\
 (A_g-1)c_6&<3.34(5(0.22314)-1)=0.386438.
\end{align*}
Their difference exceeds $0.1088$, so the command sign check is strict.
The exponential enclosures, and all subsequent interval operations,
are also verified in exact rational arithmetic in the supplied code.
The even Taylor sum through degree 36 is an upper bound and the odd sum
through degree 37 a lower bound for each exponential used here.

Consequently,
\begin{equation}
 A_e=A_u=\frac58,\qquad
 \Mcal_1=\frac5{4\sqrt2}=0.8838834765\ldots.
 \label{eq:pid_balanced_value}
\end{equation}
The exact PI boundary from \cref{thm:delayed_optimum} is
$1/(2\mathrm e-1)=0.225399674\ldots<1/4$.
Thus this PID attains a value which no positive ideal PI controller with
finite cost attains for the same problem. The extrapolated PI setting
$r=1,x=1.6$ is stable, but $x\delta=0.4>1/\mathrm e$ and its
output is not monotone.

Unlike the PI optimum, the PID optimum is not unique. The strict
inequalities above persist under small changes of $\lambda$ and $\mu$.
Equations \eqref{eq:pid_two_rate_filter}--\eqref{eq:pid_two_rate_lead}
then give a continuous family of attaining settings. This family cannot
reduce to a single controller: its characteristic function would otherwise
vanish at every $-\lambda$ in an interval, contrary to analyticity and
$\Delta_F(0)=i_c>0$. The separate choice $\lambda=5$, $\mu=7$
also passes the exact rational interval checks. Additional controller
parameters therefore permit different command shapes with the same two
optimal areas.

\subsection{An explicit family for larger effort weights}

An additional interval of attainable delays can be described without
selecting two rates. Suppose $\alpha>\mathrm e$ and
\begin{equation}
 \frac1\alpha<\delta\le
 \min\left\{\frac1{\mathrm e},
              \frac{\mathrm e}{\alpha+1-\mathrm e}\right\}.
 \label{eq:pid_explicit_range}
\end{equation}
Set
\begin{equation}
 i_c=\frac{1+1/\alpha}{1+\delta},\qquad
 \rho=1-i_c,\qquad \ell=\frac\rho{i_c},\qquad
 \boxed{p_c=i_c+\rho^2,\qquad d_c=i_c\rho^2.}
 \label{eq:pid_explicit_tuning}
\end{equation}
These coefficients satisfy \eqref{eq:pid_cancellation_form}. All four
controller parameters are positive. At the excluded left endpoint,
$\delta=1/\alpha$, they have the PI limit $i_c=1$, $\rho=d_c=0$;
the genuine filtered PID uses the strict lower inequality.

\begin{theorem}[An explicit additional attainment interval]
\label{thm:pid_explicit_family}
The controller \eqref{eq:pid_explicit_tuning}, on
\eqref{eq:pid_explicit_range}, is internally exponentially stable,
attains \eqref{eq:pid_optimal_areas}, and has a monotone step output.
No positive ideal PI controller attains the same lower bound at any
delay in this interval.
\end{theorem}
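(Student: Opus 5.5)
The last assertion needs no new analysis. Since $\delta>1/\alpha\ge\delta_{\max}(\alpha)$ throughout \eqref{eq:pid_explicit_range}, \cref{thm:delayed_optimum} already says no positive ideal PI controller attains $(1+\delta)m_\alpha$ there. The substance is to show that \eqref{eq:pid_explicit_tuning} gives $e\ge0$, $v\ge0$ and internal exponential stability. Once these hold, \eqref{eq:pid_signed_areas} with $i_c=i_c^\star$ gives exactly the areas \eqref{eq:pid_optimal_areas}, and \cref{thm:delayed_shape} supplies optimality and monotonicity, exactly as in the proof of \cref{thm:pid_two_rate_optimum}.

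\textbf{Step 1: simplify the structure.} I would first record the parameter signs. The hypothesis $\delta>1/\alpha$ gives $i_c<1$, so $\rho=1-i_c\in(0,1)$. Also $\ell=\rho/i_c>\rho$, so \cref{prop:pid_necessary_structure} is respected. The choice $i_c\ell=\rho$ makes
\[
 i_cH_F(p)=\frac{i_c+\rho p}{1+\rho p}=1-\frac{\rho}{1+\rho p}.
\]
The loop then reads $e'(\theta)=-e(\theta-\delta)+\bigl(\kappa*e(\cdot-\delta)\bigr)(\theta)$ with $\kappa(\theta)=e^{-\theta/\rho}$, a nonnegative kernel of mass $\rho$. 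This form suggests a single-exponential comparison function in place of the two-rate $g$.

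\textbf{Step 2: build a compact nonnegative kernel.} I would look for a real $\lambda>1$ with $\Delta_F(-\lambda)=0$, that is,
\[
 \lambda(1-\rho\lambda)=(i_c-\rho\lambda)e^{\lambda\delta},
\]
taking $\lambda$ close to the positivity point $1/\delta$ of the scalar equation. The upper bounds $\delta\le1/\mathrm e$ and $\delta\le\mathrm e/(\alpha+1-\mathrm e)$ should be exactly what guarantee such a root with $\rho\lambda\le1$. I would verify this by checking a sign change of $\Delta_F(-\lambda)$ between $\lambda=1$ and $\lambda=1/\delta$. Useful here is the reformulation of the second bound as $(\alpha+1)\delta\le\mathrm e(1+\delta)$, equivalently $i_c\delta\le\mathrm e/\alpha$.

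With this root, I would set $g=e^{-\lambda\theta}$, or a two-term $g$ with transform proportional to $(1+\rho p)/[(p+\lambda)(p+1/\rho)]$ if the first choice does not suffice. I would then redo the computation of \eqref{eq:pid_shifted_g_identity}--\eqref{eq:pid_compact_resolvent}:
\begin{itemize}
 \item obtain a kernel $K$ supported on $[0,\delta]$ with $\widehat E=\widehat g/(1-\widehat K)$;
 \item check $K\ge0$ and $\|K\|_1=1-g(\delta)<1$;
 \item obtain the factorization of $\Delta_F$ analogous to \eqref{eq:pid_stability_factor}.
\end{itemize}
\cref{lem:pid_positive_resolvent} then gives $e\ge0$. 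The exponentially weighted version, together with the three-state determinant computed before \cref{thm:pid_two_rate_optimum}, gives internal exponential stability.

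\textbf{Step 3: command sign (main obstacle).} I would use \eqref{eq:pid_command_transform} and compute the forcing $R$ as in \eqref{eq:pid_command_forcing}. For $\theta\ge\delta$, the sign $R\ge0$ should reduce to $-g'-g\ge0$, which follows from $\lambda>1$. On $[0,\delta]$, $R$ is an explicit combination of $g(\delta)$ and exponentials. Nonnegativity there is where $\delta\le1/\mathrm e$ and the cap involving $\alpha$ must enter, probably through $c_\lambda=(\lambda-1)e^{-\lambda\delta}-1$ and a bound like \eqref{eq:pid_simple_forcing_check}.

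If the single-exponential choice does not close on this step, I would fall back on \cref{thm:pid_two_rate_optimum}. I would pick $\mu=1/\rho$, the filter pole, which makes $B_g=0$, and then verify \eqref{eq:pid_rate_restriction} and \eqref{eq:pid_simple_forcing_check} directly in terms of $\alpha$ and $\delta$.
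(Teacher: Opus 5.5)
\textbf{Gap: your Step~2 cannot produce a nonnegative kernel.} Your reduction of the last assertion to \cref{thm:delayed_optimum} is correct, and so is the Step~1 algebra $i_cH_F=1-\rho/(1+\rho p)$. The problem is in Step~2.

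With $g=e^{-\lambda\theta}$, the definition $\widehat K=1-(p+i_cH_Fe^{-\delta p})\widehat g$ gives $\widehat K=(\lambda-i_cH_Fe^{-\delta p})/(p+\lambda)$. Because $\ell\ne\rho$, the filter pole $-1/\rho$ of $H_F$ is not cancelled. So with a single exponential $g$, $K$ is never supported on $[0,\delta]$. For $\theta\ge\delta$,
\[
 K(\theta)=(\lambda-i_cA_0e^{\lambda\delta})e^{-\lambda\theta}-i_cB_0e^{-(\theta-\delta)/\rho},\qquad A_0=\frac{1-\lambda\ell}{1-\lambda\rho},\quad B_0=\frac{\ell-\rho}{\rho(1-\lambda\rho)}>0.
\]
Suppose $-\lambda$ is a zero of $\Delta_F$, as you propose; such a root in $(1,1/\delta)$ does exist. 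Then the root equation is exactly $i_cA_0e^{\lambda\delta}=\lambda$. The tail therefore reduces to $-i_cB_0e^{-(\theta-\delta)/\rho}<0$, and \cref{lem:pid_positive_resolvent} cannot be applied.

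Your ``two-term'' alternative is the same function, since $(1+\rho p)/[(p+\lambda)(p+1/\rho)]=\rho/(p+\lambda)$. The fallback to \cref{thm:pid_two_rate_optimum} with $\mu=1/\rho$ also fails. That theorem requires $-\mu$ to be a zero of $\Delta_F$, by \eqref{eq:pid_two_roots}. Here, however, $\Delta_F(-1/\rho)=i_c(1-\ell/\rho)e^{\delta/\rho}=-\rho e^{\delta/\rho}\ne0$.

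\textbf{The missing idea: choose $\lambda=1/\delta$, deliberately not a characteristic root.} Then work with the noncompact kernel displayed above.
\begin{itemize}
\item Since $i_c(A_0+B_0)=i_c\ell/\rho=1$, the kernel's value just after the delay is $\lambda e^{-\lambda\delta}-1=1/(\mathrm e\delta)-1$. This is nonnegative precisely because $\delta\le1/\mathrm e$.
\item The second bound in \eqref{eq:pid_explicit_range} is equivalent to $i_c\ge\lambda/(\lambda+1-\mathrm e^{-1})$, which gives $\rho<\ell<1/\lambda$.
\item Hence the negative exponential decays faster than the positive one, so $K\ge0$ everywhere, with mass $1-i_c/\lambda<1$.
\item Stability then follows from $\Delta_F=(p+\lambda)(1+\rho p)(1-\widehat K)$ together with the three-state determinant.
\end{itemize}

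\textbf{Consequence for Step~3.} This also reverses where the hypotheses enter in your Step~3.
\begin{itemize}
\item On $[0,\delta]$ the forcing $R$ starts at zero, consistent with $w(0^+)=p_c+d_c/\rho=1$. It is nondecreasing using only $\rho<1/\lambda$, which was already needed for $K$.
\item For $\theta\ge\delta$, $R$ is not $-g'(\theta+\delta)-g(\theta+\delta)$, because the kernel is not compact. Instead it is $C_0e^{-\lambda\theta}$ plus a positive multiple of $e^{-\theta/\rho}$.
\item The condition $C_0\ge0$ is exactly the $\alpha$-dependent upper bound.
\end{itemize}
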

\begin{proof}
Put $\lambda=1/\delta$. The second upper bound in
\eqref{eq:pid_explicit_range} is equivalent to
\begin{equation}
 i_c\ge\frac\lambda{\lambda+1-e^{-1}}.
 \label{eq:pid_explicit_gain_condition}
\end{equation}
Therefore $\ell=(1-i_c)/i_c\le(1-e^{-1})/\lambda<1/\lambda$,
and $0<\rho<\ell<1/\lambda$.

Use $g=e^{-\lambda\theta}$ and
$\widehat K=(\lambda-i_cH_F e^{-\delta p})/(p+\lambda)$.
Again $\widehat E=\widehat g/(1-\widehat K)$. Define
\[
 A_0=\frac{1-\lambda\ell}{1-\lambda\rho}>0,\qquad
 B_0=\frac{\ell-\rho}{\rho(1-\lambda\rho)}>0.
\]
Partial fractions give
$\mathcal L^{-1}\{H_F/(p+\lambda)\}
=A_0e^{-\lambda\theta}+B_0e^{-\theta/\rho}$, and hence
\begin{equation}
 K(\theta)=
 \begin{cases}
 \lambda e^{-\lambda\theta},&0\le\theta<\delta,\\
 (\lambda-i_cA_0e^{\lambda\delta})e^{-\lambda\theta}
       -i_cB_0e^{-(\theta-\delta)/\rho},&\theta\ge\delta.
 \end{cases}
 \label{eq:pid_explicit_kernel}
\end{equation}
Since $i_c\ell/\rho=1$, its value immediately after the delay is
$K(\delta^+)=\lambda e^{-\lambda\delta}-1
=1/(\mathrm e\delta)-1\ge0$. The negative exponential in the
second branch decays faster than the positive one because $1/\rho>\lambda$.
Thus this branch stays nonnegative, as does the first. Its norm is
$\widehat K(0)=1-i_c/\lambda<1$.

The same command relation \eqref{eq:pid_command_transform}, now with
this $g$ and $K$, gives on $0\le\theta\le\delta$
\begin{equation}
 R(\theta)=\frac{i_c}{\lambda(1-\lambda\rho)}
 \left[1-\lambda\rho-e^{-\lambda\theta}
                   +\lambda\rho e^{-\theta/\rho}\right].
 \label{eq:pid_explicit_R_initial}
\end{equation}
It starts at zero and has derivative
$R'=i_c(e^{-\lambda\theta}-e^{-\theta/\rho})/(1-\lambda\rho)\ge0$.
For $\theta\ge\delta$, direct integration of the two exponentials gives
\begin{align}
 R(\theta)&=C_0e^{-\lambda\theta}
     +i_cB_0[1+\rho(e^{\delta/\rho}-1)]e^{-\theta/\rho},\\
 C_0&=i_cA_0\left[1+\frac{e^{\lambda\delta}-1}{\lambda}\right]-1.
 \label{eq:pid_explicit_R_tail}
\end{align}
The second coefficient is positive. To verify the first without a sign
guess, substitute $\rho=1-i_c$ and $\lambda\delta=1$. Then
\[
 i_c A_0=\frac{i_c-\lambda(1-i_c)}{1-\lambda\rho},\qquad
 C_0=\frac{i_c(\mathrm e-1)-\lambda\mathrm e(1-i_c)}
                 {\lambda(1-\lambda\rho)}.
\]
The denominator is positive. Its numerator is nonnegative exactly when
$i_c\ge\lambda\mathrm e/(\lambda\mathrm e+\mathrm e-1)$, which
is \eqref{eq:pid_explicit_gain_condition}. Thus $R\ge0$ everywhere.
The convolution series proves $e,v\ge0$ and absolute integrability.
Here $K$ has an exponentially decaying tail, so its norm remains below
one for a sufficiently small positive exponential weight. The factorization
$\Delta_F=(p+\lambda)(1+\rho p)(1-\widehat K)$ and the full
characteristic determinant used above then prove internal exponential
stability. The prescribed integral gain gives
\eqref{eq:pid_optimal_areas}, and \cref{thm:delayed_shape} proves
monotonicity and global optimality.

For $\alpha>\mathrm e$, the smaller PI delay limit in
\eqref{eq:delayed_attainment_range} is $1/\alpha$: the strict
comparison reduces to $(\alpha+1)(\alpha-\mathrm e)>0$.
Every delay in \eqref{eq:pid_explicit_range} exceeds that limit.
\end{proof}

For $\alpha=4$ the additional interval established above is
$1/4<\delta\le1/\mathrm e=0.367879441\ldots$.
At $\delta=1/3$, all controller coefficients are rational:
\begin{equation}
 \boxed{p_c=\frac{241}{256},\quad i_c=\frac{15}{16},\quad
 d_c=\frac{15}{4096},\quad \rho=\frac1{16}},\qquad \ell=\frac1{15}.
 \label{eq:pid_rational_example}
\end{equation}
The areas and criterion are exactly
\begin{equation}
 A_e=\frac{16}{15},\qquad A_u=\frac4{15},\qquad
 \Mcal_4=\frac{8\sqrt5}{15}=1.192569588\ldots.
 \label{eq:pid_rational_value}
\end{equation}
The initial command satisfies $w(0^+)=p_c+d_c/\rho=1$ and
$w'(0^+)=i_c-d_c/\rho^2=0$. Before the output can respond,
\begin{equation}
 w(\theta)=p_c+i_c\theta+\frac{d_c}{\rho}e^{-\theta/\rho},
 \qquad 0\le\theta\le\delta.
 \label{eq:pid_initial_command}
\end{equation}
It rises from the equilibrium value and ultimately returns to it.
The extrapolated PI setting with $x=i_c=15/16$, $r=1$, has a
monotone output because $x\delta=5/16<1/\mathrm e$, but starts
below the equilibrium command. Its negative command area prevents equality.

\begin{figure}[htbp]
 \centering
 \includegraphics[width=\linewidth]{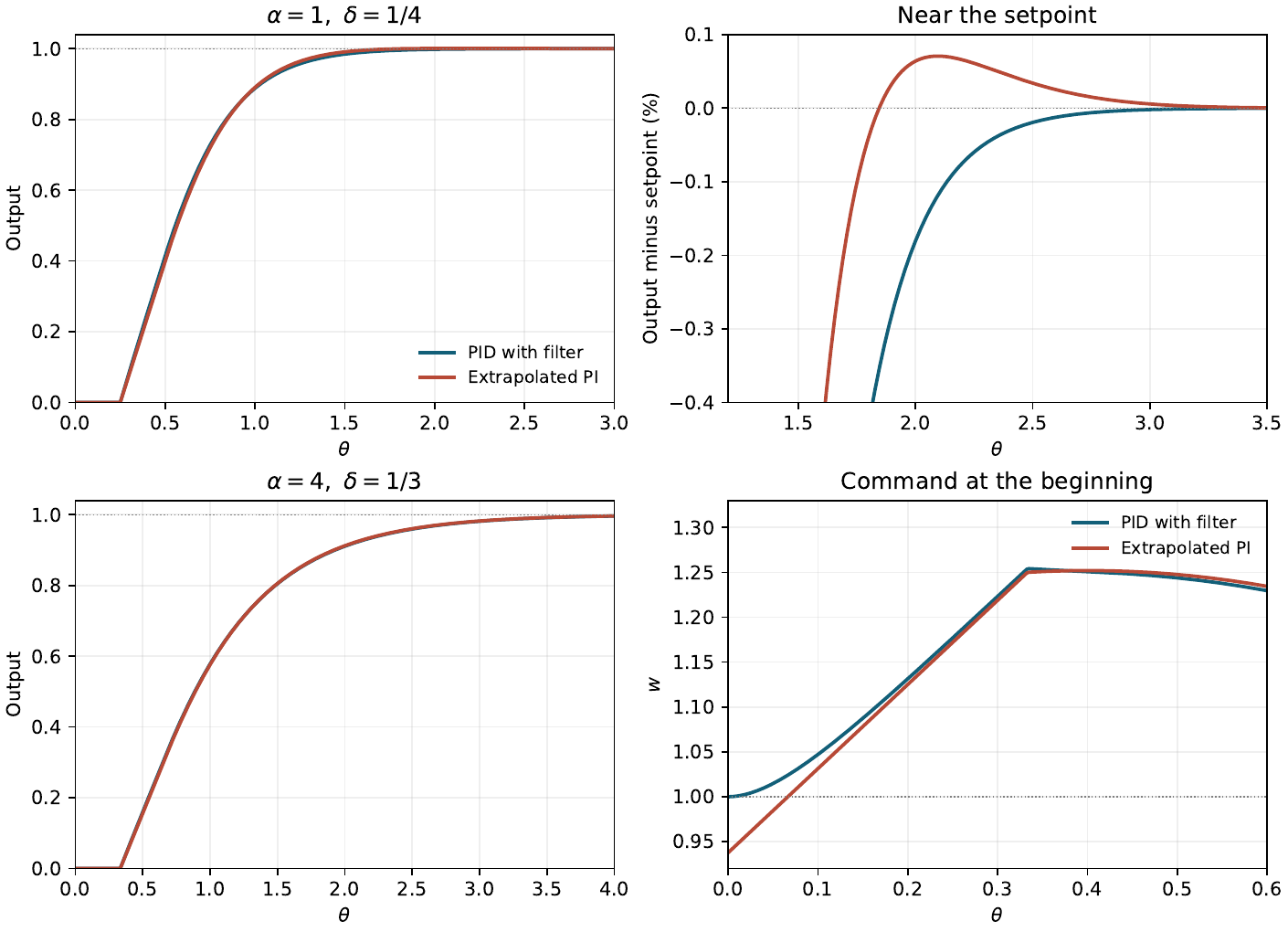}
 \caption{Two filtered PID constructions beyond the PI attainment range.
 At $\alpha=1$, $\delta=1/4$, the PID of
 \eqref{eq:pid_balanced_exact} is monotone, while the extrapolated
 PI setting $x=1.6,r=1$ slightly overshoots. At $\alpha=4$,
 $\delta=1/3$, both plotted outputs are monotone, but the PI command
 starts below equilibrium. The PID of \eqref{eq:pid_rational_example}
 keeps both error and centered command nonnegative.}

 \label{fig:pid_delay_comparison}
\end{figure}

These constructions establish additional attainment, not a complete
filtered PID delay boundary or Pareto front. The necessary conditions
\eqref{eq:pid_general_necessary_delay}, \eqref{eq:pid_integral_gain}, and
\eqref{eq:pid_necessary_structure} do not become sufficient without a
sign and stability argument. Nor does output monotonicity imply a minimum
command peak, freedom from a setpoint jump, or robustness to delay and
parameter errors. The derivative is filtered, but the output can still
have a slope corner at the end of the waiting interval. The precise
conclusion is the absence of output overshoot and backward motion under
the stated nominal model and controller realization.

%% file: sections/06e_arbitrary_order.tex
\section{Positive cascades of arbitrary order with pure delay}
\label{sec:order_n}

The first-order balance makes the shape argument particularly direct:
when $e$ and $v$ are nonnegative, $y'=e+v$ cannot be negative.
Additional storage elements change this last step. Their intermediate
states can move differently, and the derivative of the final output is
no longer the sum of the two signals used in MIACE. The area balance
survives, but monotonicity requires a separate argument about the plant.

Consider a cascade of stable first-order lags,
\begin{equation}
 G_L(s)=\frac{k e^{-Ls}}{\prod_{j=1}^N(1+\tau_js)},
 \qquad k>0,\quad \tau_j>0,\quad L\ge0.
 \label{eq:n_physical}
\end{equation}
Choose $\tau_* =\max_j\tau_j$ and order the lags so that
$\tau_1=\tau_*$. The variables in this section are
\begin{gather}
 p=\tau_*s,\quad \theta=t/\tau_*,\quad
 \delta=L/\tau_*,\quad \sigma_j=\tau_j/\tau_*,\qquad
 \sigma_1=1,\quad 0<\sigma_j\le1,\\
 A(p)=\prod_{j=1}^N(1+\sigma_jp),\qquad H(p)=\frac1{A(p)}.
 \label{eq:n_normalization}
\end{gather}
The same unit step, zero initial states, zero command prehistory, and
equilibrium-centered command $v=ku-1$ are retained. The areas in MIACE
are now integrated in $\theta=t/\tau_*$. Their dimensional counterparts,
when the command is still normalized as $w=ku$, are multiplied by $\tau_*$.

A controller of order $N$ means a proper rational transfer function with
McMillan degree $N$, including its integrator. A finite feedthrough gain
is allowed; an ideal derivative producing an input impulse is not.
The pure delay is retained exactly and is not counted as a rational
state. None of the controllers constructed below contains an internal
delay block. The symbol $\Ncal=N_e+N_v$ continues to denote the total
negative area; $N$ denotes the plant order.

\subsection{The general signed-area identity}
\label{sec:n_balance}

For this identity alone, $H$ need not be a positive cascade. Let it be
any stable strictly proper real rational function with $H(0)=1$.
Assume $e,v\in L^1(0,\infty)$ and settling to $y=w=1$. Define
\begin{equation}
 \mu=-H'(0),\qquad S=\mu+\delta.
 \label{eq:n_mu}
\end{equation}
For \eqref{eq:n_physical},
\begin{equation}
 \mu=\sum_{j=1}^N\sigma_j,\qquad
 S=\delta+\sum_{j=1}^N\sigma_j.
 \label{eq:n_S}
\end{equation}
For a general normalized $H=B_0/A_0$, with $A_0(0)=B_0(0)=1$,
the corresponding expression is $\mu=A_0'(0)-B_0'(0)$. It is a
low-frequency moment, not the order of the plant. It need not be
positive when plant zeros are admitted. The lower-bound and equality
statements below assume $S>0$, as is automatic for the positive cascade.

\begin{theorem}[MIACE excess for a general stable plant]
\label{thm:n_gap}
Under these assumptions,
\begin{gather}
 \int_0^\infty e\dd\theta+\int_0^\infty v\dd\theta=S,
 \qquad A_e+A_u=S+2\Ncal,\label{eq:n_balance}\\
 \boxed{\Mcal_\alpha^2-S^2m_\alpha^2
 =4m_\alpha^2\Ncal(S+\Ncal)
   +\frac{(A_e-\alpha A_u)^2}{1+\alpha}.}
 \label{eq:n_gap}
\end{gather}
Consequently $\Mcal_\alpha\ge S m_\alpha$. Equality holds precisely
when $e,v\ge0$ almost everywhere and $A_e=\alpha A_u$. The equality
areas are
\begin{equation}
 A_e^*=\frac{\alpha S}{1+\alpha},\qquad
 A_u^*=\frac{S}{1+\alpha}.
 \label{eq:n_areas}
\end{equation}
\end{theorem}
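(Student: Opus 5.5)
The plan has two parts. First derive the signed-area identity $\int e+\int v=S$. Then repeat the purely algebraic steps of \cref{thm:miace_shape_gap} and \cref{thm:delayed_shape} with the intercept $1+\delta$ replaced by $S$.

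\emph{Signed areas.} In normalized variables the loop gives $\widehat Y(p)=H(p)e^{-\delta p}\widehat W(p)$ with $w=1+v$. Since $\widehat W=1/p+\widehat V$ and $\widehat E=1/p-\widehat Y$,
\[
 \widehat E(p)=\frac{1-H(p)e^{-\delta p}}{p}-H(p)e^{-\delta p}\widehat V(p).
\]
Because $e,v\in L^1$, both transforms are continuous on the closed right half-plane up to $p=0$ (dominated convergence), and $\widehat E(0^+)=\int e$, $\widehat V(0^+)=\int v$. The rational function $H$ is analytic at $0$ with $H(0)=1$ and $H'(0)=-\mu$. Hence $(1-H(p)e^{-\delta p})/p\to\mu+\delta=S$ as $p\downarrow0$, while $H(p)e^{-\delta p}\to1$. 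Letting $p\downarrow0$ along the reals gives $\int e=S-\int v$, which is the first identity in \eqref{eq:n_balance}.

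This step is the main subtlety. It must be justified from the signal assumptions alone, since no controller or internal stability is assumed. I would first check that the plant relation between $v$ and $y$ holds in transform form for $\operatorname{Re}p>0$. This uses stability of $H$, so that $y$ is the convolution of the impulse response of $He^{-\delta p}$ with $w$, and $w$ is bounded in transform by $1/p$ plus an $L^1$ term. The limit argument above then applies. Alternatively, one could argue in the time domain by writing $1-y=e$ as a convolution and integrating, but the transform route is shorter.

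\emph{Absolute areas.} Apply $|g|=g+2(-g)_+$ to $e$ and $v$ separately. This gives $A_e+A_u=S+2\Ncal$.

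\emph{Gap identity.} The weighted-square identity \eqref{eq:miace_area_decomposition} is plant-independent, so
\[
 \Mcal_\alpha^2=m_\alpha^2(S+2\Ncal)^2+\frac{D_\alpha^2}{1+\alpha}.
\]
Subtracting $S^2m_\alpha^2$ and expanding $(S+2\Ncal)^2-S^2=4\Ncal(S+\Ncal)$ proves \eqref{eq:n_gap}.

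\emph{Lower bound and equality.} With $S>0$ and $\Ncal\ge0$, both terms on the right are nonnegative, so $\Mcal_\alpha\ge Sm_\alpha$. Equality forces $\Ncal(S+\Ncal)=0$, hence $\Ncal=0$, i.e.\ $e,v\ge0$ almost everywhere, together with $D_\alpha=0$. Conversely, these two conditions make both terms vanish.

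\emph{Equality areas.} At equality $A_e+A_u=S$ and $A_e=\alpha A_u$. Solving this linear system gives \eqref{eq:n_areas}.
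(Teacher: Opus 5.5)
Your proposal is correct and takes essentially the same route as the paper. The paper also writes $E(p)=(1-H(p)e^{-\delta p})/p-H(p)e^{-\delta p}V(p)$ and lets $p\to0^+$ to obtain $S=\mu+\delta$. It then applies $|g|=g+2\max\{-g,0\}$ and reuses the plant-independent decomposition \eqref{eq:miace_area_decomposition} with $S>0$ for the gap identity, the equality conditions and the equality areas. Your remarks on justifying the transform relation for $\operatorname{Re}p>0$ and the continuity of the transforms at $p=0$ spell out what the paper covers by the phrase ``absolute integrability.''
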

\begin{proof}
Write $E,V,W,Y$ for the Laplace transforms of the corresponding signals.
Since $W=1/p+V$ and $Y=H(p)e^{-\delta p}W$,
\begin{equation}
 E(p)=\frac{1-H(p)e^{-\delta p}}p-H(p)e^{-\delta p}V(p).
 \label{eq:n_E}
\end{equation}
Let $p\to0^+$. Absolute integrability gives $E(0)=\int e$ and
$V(0)=\int v$, while the first term tends to
$-[H(p)e^{-\delta p}]'_{p=0}=\mu+\delta$. This proves the signed
identity in \eqref{eq:n_balance}. For a real integrable function $g$,
$\int|g|=\int g+2\int\max\{-g,0\}$, proving the absolute identity.

Apply the algebraic decomposition \eqref{eq:miace_area_decomposition}
to $A_e+A_u=S+2\Ncal$. Subtracting $S^2m_\alpha^2$ leaves
\eqref{eq:n_gap}. Since $S>0$, its two nonnegative terms vanish
exactly when $\Ncal=0$ and $A_e=\alpha A_u$. Solving the sum and
ratio equations yields \eqref{eq:n_areas}.
\end{proof}

Thus a higher-order plant preserves the exact MIACE penalty for output
overshoot and command deficit. It does not yet preserve the conclusion
about backward output motion. If the controller has a simple integrator,
$\overline C(p)=b/p+O(1)$, $b>0$, settling of $w$ gives
$1=\lim_{p\to0}pW(p)=bE(0)$. Every attaining controller of this kind
must therefore have
\begin{equation}
 b=b_*:=\frac{1+1/\alpha}{S}.
 \label{eq:n_integral_gain}
\end{equation}
This fixes one coefficient. It does not prove that a stable controller
with the required signal signs exists.

\subsection{Why a positive cascade is monotone at equality}
\label{sec:n_monotonic}

Nonnegative impulse responses and monotone step responses are established
objects of linear-systems theory
\citep{LinFang1997,Darbha2003,DrummondTurner2023}.
Here the issue is whether the two MIACE equality signs force that shape
for the plant being controlled. The following state argument answers
this question without assuming a particular controller.

\begin{lemma}[One-way crossing through a cascade]
\label{lem:n_crossing}
Let $x_j(0)=0$ and
\begin{equation}
 \sigma_jx_j'+x_j=x_{j-1},\qquad j=1,\ldots,N,
 \qquad x_0=w\ge1,
 \label{eq:n_states}
\end{equation}
where $w$ is an ordinary locally integrable function. Each output $x_j$
is nondecreasing while it is below one. Once it reaches one, it cannot
return below one.
\end{lemma}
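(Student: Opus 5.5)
We argue by induction on $j$, using only the first-order equation $\sigma_jx_j'=x_{j-1}-x_j$ and the corresponding property of the input $x_{j-1}$. Each $x_j$ is absolutely continuous, since it is the solution of a linear ODE with locally integrable forcing, and the equation holds almost everywhere. The induction hypothesis for $x_{j-1}$ is the statement of the lemma itself: $x_{j-1}$ is nondecreasing while below one and never returns below one after reaching it. For $j=1$ this is replaced by the stronger fact $x_0=w\ge1$ almost everywhere.

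\emph{Step 1: $x_j$ is nondecreasing while it is below one.} Let $T_j=\inf\{\theta: x_j(\theta)\ge1\}$, possibly infinite. We first show $x_{j-1}\ge x_j$ almost everywhere on $[0,T_j)$; then $x_j'\ge0$ there.

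Suppose first that $x_{j-1}$ has reached one by time $\theta$. Then $x_{j-1}(\theta)\ge1>x_j(\theta)$, which gives the comparison. For $j=1$ this case covers everything.

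Otherwise $x_{j-1}$ is below one on $[0,\theta]$, hence nondecreasing there. Take the variation-of-constants formula
\[
x_j(\theta)=\int_0^\theta \sigma_j^{-1}e^{-(\theta-s)/\sigma_j}x_{j-1}(s)\dd s.
\]
Since $x_{j-1}(s)\le x_{j-1}(\theta)$ for $s\le\theta$, we get
\[
x_j(\theta)\le x_{j-1}(\theta)\bigl(1-e^{-\theta/\sigma_j}\bigr)\le x_{j-1}(\theta).
\]
The last inequality uses $x_{j-1}\ge0$, which holds by induction from nonnegativity of $w$ and the positive kernel.

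Pointwise values of $x_{j-1}$ make sense: for $j\ge2$, $x_{j-1}$ is continuous, and for $j=1$ the first case already applies. Thus $x_j'\ge0$ almost everywhere on $[0,T_j)$, and absolute continuity makes $x_j$ nondecreasing there.

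\emph{Step 2: once $x_j$ reaches one, it stays at or above one.} First observe that $x_{j-1}$ must have reached one no later than $T_j$. If $x_{j-1}<1$ on $[0,T_j]$, Step 1's bound gives $x_j(T_j)\le x_{j-1}(T_j)(1-e^{-T_j/\sigma_j})<1$, a contradiction. Hence $x_{j-1}\ge1$ for all $\theta\ge T_j$ (almost everywhere when $j=1$).

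Now set $d=1-x_j$ on $[T_j,\infty)$. It satisfies
\[
\sigma_jd'=-d+(1-x_{j-1}),\qquad d(T_j)=0,
\]
with $1-x_{j-1}\le0$. Variation of constants gives
\[
d(\theta)=\int_{T_j}^\theta\sigma_j^{-1}e^{-(\theta-s)/\sigma_j}\bigl(1-x_{j-1}(s)\bigr)\dd s\le0,
\]
so $x_j\ge1$ from then on.

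\emph{Main obstacle.} The delicate point is the interleaving in Step 1: the input can be below one on one interval while the output is compared against it, so the hypothesis must cover both regimes. The integral comparison handles this without any differentiability of $x_{j-1}$. At $j=1$, $w$ may be discontinuous, so every comparison there must be made almost everywhere. This is harmless because $w\ge1$ almost everywhere makes the first case hold throughout.
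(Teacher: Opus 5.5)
Your proof is correct and follows essentially the same route as the paper. Both use induction on $j$, the variation-of-constants comparison $x_j\le x_{j-1}$ before the input reaches one, and a second variation-of-constants argument with nonpositive forcing to prevent a return below one; your explicit check that $x_{j-1}$ reaches one no later than $x_j$, and your almost-everywhere care at $j=1$, make precise a step the paper leaves implicit.
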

\begin{proof}
For the first lag, $x_1'=(w-x_1)/\sigma_1\ge0$ while $x_1\le1$.
If it reaches one, variation of constants with $w\ge1$ keeps it at
least one at every later time.

Suppose the assertion has been proved for $x_{j-1}$. Until that state
first reaches one, it is nonnegative and nondecreasing. Therefore
\begin{equation}
 x_j(\theta)=\frac1{\sigma_j}\int_0^\theta
 e^{-(\theta-r)/\sigma_j}x_{j-1}(r)\dd r
 \le x_{j-1}(\theta).
\end{equation}
It follows that $x_j'\ge0$ on this interval. After $x_{j-1}$ reaches
one, it stays at least one by the induction hypothesis. The equation
then gives $x_j'\ge0$ whenever $x_j\le1$. If $x_j$ reaches one,
the same variation-of-constants argument prevents a later return
below it. This completes the induction.
\end{proof}

\begin{theorem}[Shape of every attaining cascade response]
\label{thm:n_monotonic}
For \eqref{eq:n_physical}, any admissible controller attaining
$S m_\alpha$ has a nondecreasing output from zero to one. In particular,
it has neither overshoot nor backward output motion.
\end{theorem}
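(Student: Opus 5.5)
By \cref{thm:n_gap}, attainment of $S m_\alpha$ forces $e,v\ge0$ almost everywhere, so $w=1+v\ge1$ almost everywhere. The plan is to feed this sign information into \cref{lem:n_crossing} and then use $e\ge0$ to exclude any crossing of the target. Throughout, the output is taken as the last cascade state after the delay.

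The first step sets up the cascade with the delayed input. The plant input is $w(\theta-\delta)$, and it vanishes for $\theta<\delta$. Since the states start at zero, the states satisfy $x_j\equiv0$ on $[0,\delta]$. Shifting time by $\delta$, as in \eqref{eq:shifted_error}, gives $x_0(\xi)=w(\xi)\ge1$ for $\xi\ge0$ with zero initial states. This is exactly the setting of \cref{lem:n_crossing}, whose hypothesis only needs $w\ge1$ almost everywhere, since $w$ enters through integrals. The plant output is $y(\xi+\delta)=x_N(\xi)$. Before the delay ends the output is constant at zero, so nothing can go wrong there.

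The second step applies the lemma. \Cref{lem:n_crossing} states that $x_N$ is nondecreasing while it is below one, and that once it reaches one it never returns below one. Equality also gives $e=1-y\ge0$, so $y\le1$ at every time. Here continuity of $y$ upgrades the almost-everywhere statement to an everywhere one. Combining the two facts: if $y$ ever reaches one, it stays at least one by the lemma and at most one by $e\ge0$, hence it equals one from then on. Before that first hitting time it is below one and therefore nondecreasing. So $y$ is nondecreasing on all of $[0,\infty)$.

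The third step fixes the endpoints. The response starts at $y(0)=0$ and settles to $y(\infty)=1$ by admissibility. A nondecreasing function with these limits takes values in $[0,1]$, so there is no overshoot ($P_y=0$) and no backward motion ($B_y=0$).

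The step I expect to need the most care is the application of the lemma. Its monotonicity conclusion covers $x_j$ only while it is below one, and the induction inside the lemma uses the upstream states in the same way. The lemma itself settles this. The remaining subtlety is purely the almost-everywhere versus everywhere issue for $w$, which the integral representation handles. No controller structure is used anywhere, so the conclusion holds for any admissible controller.
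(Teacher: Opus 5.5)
Your proposal is correct and follows the paper's proof. Equality in \cref{thm:n_gap} gives $w\ge1$ and $y\le1$. The time shift by $\delta$ places the output in the setting of \cref{lem:n_crossing}, and the error sign excludes the only case in which the lemma allows a decrease, namely an excursion above one. The only cosmetic difference is that you obtain $0\le y\le1$ from monotonicity and the endpoint values, whereas the paper cites nonnegativity of the lag kernels.
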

\begin{proof}
Equality in \cref{thm:n_gap} gives $w=1+v\ge1$ and $y\le1$.
The output is zero on $[0,\delta)$. Shift time by $\delta$; the
remaining response is the last state of \eqref{eq:n_states}, driven
by $w(\theta)$. By \cref{lem:n_crossing}, that state can decrease
only after exceeding one. The error sign excludes such an excursion.
All cascade states are nonnegative because each lag has a nonnegative
convolution kernel. Hence $0\le y\le1$ and $y'\ge0$ almost everywhere.
\end{proof}

This theorem concerns attainment of the universal lower bound. If a
restricted controller family cannot attain that bound, its own minimizing
response is not characterized by this equality argument.

\subsection{A quantitative extension under an additional positivity condition}
\label{sec:n_variation}

For some higher-order plants, a value near the lower bound also limits
the amount of backward motion. Let $H$ be as in \cref{thm:n_gap},
with $\mu>0$, and define the balance operator
\begin{equation}
 Z(p)=\frac{pH(p)}{1-H(p)}.
 \label{eq:n_Z}
\end{equation}
Assume its causal impulse response is a nonnegative finite measure $q$.
Its mass is then $Z(0)=1/\mu$. A feedthrough mass at zero is allowed
in this auxiliary operator; the physical command remains an ordinary
function.

Put $f(\theta)=e(\theta+\delta)$. The undelayed output is driven
by $W=1/p+V$, and
$F+V=(1-H)W$. Consequently
\begin{equation}
 \mathcal L\{y'(\theta+\delta)\}=Z(p)[F(p)+V(p)].
\end{equation}
The convolution inequality and $\int|f|=A_e-\delta$ give
\begin{equation}
 \mathcal V_y\le\frac{A_e-\delta+A_u}{\mu}
 =1+\frac{2\Ncal}{\mu}.
\end{equation}
Since $\mathcal V_y=1+2B_y$ and $P_y\le B_y$,
\begin{equation}
 \boxed{P_y\le B_y\le\frac{\Ncal}{\mu}
 \le\frac{\Mcal_\alpha/m_\alpha-S}{2\mu}.}
 \label{eq:n_variation_bound}
\end{equation}
The final inequality follows from
$\Mcal_\alpha\ge m_\alpha(A_e+A_u)$.
In particular, $\Mcal_\alpha\le(1+\eta)S m_\alpha$ implies
$P_y,B_y\le\eta S/(2\mu)$.

The positivity assumption is essential. For
$H=(1+a_1p+a_2p^2)^{-1}$ with $a_1,a_2>0$, it holds because
$Z=(a_1+a_2p)^{-1}$. For
$H=(1+a_1p+a_2p^2+a_3p^3)^{-1}$, positive coefficients and
$a_2^2\ge4a_1a_3$ are sufficient. In contrast, $H=(1+p)^{-3}$
has
\begin{equation}
 Z(p)=\frac1{p^2+3p+3},\qquad
 q(\theta)=\frac2{\sqrt3}e^{-3\theta/2}
                   \sin\frac{\sqrt3\theta}{2},
\end{equation}
which changes sign. Thus \eqref{eq:n_variation_bound} is not an
automatic all-order consequence of \eqref{eq:n_gap}.
The exact-equality conclusion of \cref{thm:n_monotonic} still applies
to this three-lag cascade, by its separate state proof.

\subsection{A necessary weight--delay inequality}
\label{sec:n_necessary}

The balance fixes the required command area, but the slowest plant pole
limits how much nonnegative extra command can be applied without
eventually crossing the target.

\begin{theorem}[Necessary condition for attaining the bound]
\label{thm:n_necessary}
If an ordinary nonimpulsive command attains $S m_\alpha$ for
\eqref{eq:n_physical}, then
\begin{equation}
 \boxed{\alpha>\mu+\delta-1.}
 \label{eq:n_necessary}
\end{equation}
Equivalently,
\begin{equation}
 L<\alpha\tau_*-\sum_{j=2}^N\tau_j.
 \label{eq:n_necessary_dimensional}
\end{equation}
\end{theorem}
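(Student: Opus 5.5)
The plan is to convert the equality conditions of \cref{thm:n_gap} into an upper bound on the command area $A_u$, using the slowest lag $\sigma_1=1$. At equality, $e,v\ge0$ almost everywhere and $A_u^*=S/(1+\alpha)$ by \eqref{eq:n_areas}. The target inequality $\alpha>\mu+\delta-1=S-1$ is equivalent to $1+\alpha>S$, that is, to $A_u^*<1$. So the whole proof reduces to showing that a nonnegative, not identically zero centered command whose response never overshoots has absolute area strictly below one in units of $\tau_*$. The dimensional form \eqref{eq:n_necessary_dimensional} then follows by multiplying through by $\tau_*$, using $\mu=1+\sum_{j\ge2}\sigma_j$.

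\emph{Step 1: the first lag cannot exceed one.} Consider the undelayed cascade states of \cref{lem:n_crossing}, driven by $x_0=w=1+v\ge1$. The actual output is $y(\theta)=x_N(\theta-\delta)$. I claim $x_1\le1$ for all $\theta$. Suppose instead $x_1(\theta_0)>1$. Then $u_1=x_1-1$ satisfies $u_1'+u_1=v\ge0$, so $u_1(\theta)\ge u_1(\theta_0)e^{-(\theta-\theta_0)}>0$ for every $\theta\ge\theta_0$. Inductively, each $u_j=x_j-1$ satisfies $\sigma_ju_j'+u_j=u_{j-1}$, with $u_{j-1}>0$ on $[\theta_0,\infty)$. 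Variation of constants then makes $u_j$ strictly positive after some finite time, whatever its value at $\theta_0$, because the forcing term eventually dominates the decaying homogeneous part. Hence $y>1$ eventually, contradicting $e\ge0$. I expect this step to be the main point needing care. It is essentially \cref{lem:n_crossing} made strict, and I would write out the induction explicitly rather than just cite the lemma.

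\emph{Step 2: an exponential-moment bound.} With $\sigma_1=1$ and $x_1(0)=0$, variation of constants gives
\[
 1-x_1(\theta)=e^{-\theta}-\int_0^\theta e^{-(\theta-s)}v(s)\dd s .
\]
Nonnegativity of this quantity for every $\theta$ yields $\int_0^\theta e^{s}v(s)\dd s\le1$. Letting $\theta\to\infty$ gives $\int_0^\infty e^{s}v(s)\dd s\le1$.

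\emph{Step 3: strictness.} Since $A_u^*=S/(1+\alpha)>0$, the function $v$ is not zero almost everywhere. Because $v\ge0$ and $e^{s}>1$ for $s>0$, we get
\[
 A_u=\int_0^\infty v\dd s<\int_0^\infty e^{s}v(s)\dd s\le1 .
\]
Combining this with $A_u=S/(1+\alpha)$ gives $S<1+\alpha$, which is \eqref{eq:n_necessary}.

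The hypothesis that the command is ordinary and nonimpulsive is what makes the states absolutely continuous, so that the variation-of-constants formulas above hold. An impulse in $v$ would also break the identification of $A_u$ with $\int v$.
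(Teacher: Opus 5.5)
Your proof is correct. It reaches the same inequality $\int_0^\infty e^{s}v(s)\dd s\le1$, followed by the same strictness step, but by a different route from the paper.

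\textbf{How the two routes differ.} The paper works in convolution form. It writes the shifted error as $f=F_0-h_A*v$ and uses partial fractions to get $h_A(\theta)\sim F_0(\theta)\sim d\,\theta^{q_*-1}e^{-\theta}$ at the slowest pole of multiplicity $q_*$. It then lets $\theta\to\infty$ in $(h_A*v)/F_0\le1$ via Fatou's lemma. You stay in the time domain instead. In the slowest-lag-first realization, nonovershoot forces $x_1\le1$, and the moment bound is read directly from the explicit formula for $x_1$. The paper uses that same formula only for the converse direction in \cref{thm:command_cone}; your argument would give both directions of that equivalence through one formula.

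\textbf{What each route buys.} Your route avoids the asymptotic constant $d$ and Fatou's lemma. It also exposes the mechanism: excess stored in the slowest lag cannot be removed by a nonnegative command, and it eventually reaches the output. The paper's route is shorter once the partial-fraction asymptotics are granted, and it stays within the transform framework used throughout the section.

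\textbf{A point to fix in Step 1.} The induction hypothesis must be stronger than positivity of $u_{j-1}$ on a half-line. A positive forcing that decays faster than $e^{-\theta/\sigma_j}$ need not overcome a negative value of $u_j$ at the starting time, and $u_j(\theta_0)$ can be close to $-1$ there. Carry instead the bound $u_{j-1}(\theta)\ge c_{j-1}e^{-\theta}$ for all large $\theta$, with $c_{j-1}>0$.

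Your estimate for $u_1$ already gives this at $j-1=1$. Variation of constants propagates it because $1/\sigma_j\ge1$:
\begin{itemize}
\item for $\sigma_j<1$, the forcing contributes at least $\frac{c_{j-1}}{1-\sigma_j}\bigl(e^{-\theta}-Ce^{-\theta/\sigma_j}\bigr)$;
\item for $\sigma_j=1$, it contributes $c_{j-1}(\theta-T)e^{-\theta}$.
\end{itemize}
In either case this dominates the homogeneous term $u_j(T)e^{-(\theta-T)/\sigma_j}$. This is exactly where putting the slowest lag first is used. It is the time-domain counterpart of the paper's slowest-pole asymptotics, including the case of repeated slowest poles.
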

\begin{proof}
Let $h_A$ be the impulse response of $1/A$. Its step-error tail is
\begin{equation}
 F_0(\theta)=1-\int_0^\theta h_A(r)\dd r
            =\int_\theta^\infty h_A(r)\dd r>0.
\end{equation}
After shifting the output by $\delta$,
\begin{equation}
 f(\theta)=F_0(\theta)-(h_A*v)(\theta).
 \label{eq:n_tail_identity}
\end{equation}
At equality, $f,v\ge0$, and hence $(h_A*v)(\theta)/F_0(\theta)\le1$.

The slowest pole is $-1$. If its multiplicity is $q_*$, partial fractions
give
\begin{equation}
 h_A(\theta)\sim d\theta^{q_*-1}e^{-\theta},\qquad
 F_0(\theta)\sim d\theta^{q_*-1}e^{-\theta},
 \qquad
 d=\frac1{(q_*-1)!\prod_{\sigma_j<1}(1-\sigma_j)}>0.
\end{equation}
For each fixed $r\ge0$, therefore,
$h_A(\theta-r)/F_0(\theta)\to e^r$ as $\theta\to\infty$.
Extend $h_A$ by zero at negative arguments and apply Fatou's lemma
to the nonnegative convolution integrands. This yields
\begin{equation}
 \int_0^\infty e^r v(r)\dd r\le1.
 \label{eq:n_exponential_moment}
\end{equation}
The equality area $\int v=S/(1+\alpha)$ is strictly positive.
An ordinary function cannot concentrate this area at the single time
$r=0$. Since $e^r>1$ for $r>0$,
\begin{equation}
 \frac{S}{1+\alpha}=\int_0^\infty v(r)\dd r
 <\int_0^\infty e^r v(r)\dd r\le1.
\end{equation}
Rearranging proves \eqref{eq:n_necessary}.
\end{proof}

For equal time constants, the condition is
$\alpha>N-1+\delta$. In particular, $\alpha=1$ cannot attain the
universal bound for $N\ge2$, even at zero delay. This statement does
not force the actual optimum above the bound to overshoot. Its value
and shape remain a different optimization problem. The strict inequality
also depends on excluding impulsive commands.

\subsection{An exact finite-support remainder for the delay}
\label{sec:n_kernel}

For the construction take $N\ge2$ and write
\begin{equation}
 m=N-1,\quad B(p)=\prod_{j=2}^N(1+\sigma_jp),\quad
 A(p)=(1+p)B(p),\quad \varepsilon_N=(-1)^m.
 \label{eq:n_AB}
\end{equation}
Let $h_B$ be the impulse response of $1/B$ at nonnegative times.
As an exponential polynomial it extends analytically to negative times.
Only this analytic extension is used to define the following kernel;
the physical plant remains causal:
\begin{equation}
 J(t)=(-1)^{m-1}h_B(t-\delta),\quad 0\le t\le\delta,
 \qquad J(t)=0\quad(t>\delta).
 \label{eq:n_J}
\end{equation}
The kernel is nonnegative. To establish its sign also for repeated
lags, let $r_j=1/\sigma_j$, $j=2,\ldots,N$. Repeated convolution of
the exponentials $r_je^{-r_jt}$ gives
\begin{equation}
 h_B(t)=\left(\prod_{j=2}^Nr_j\right)t^{m-1}
 \int_{\Delta_{m-1}}
   \exp\left(-t\sum_{j=2}^Nr_ju_j\right)\dd u.
 \label{eq:n_simplex}
\end{equation}
Here $u_j\ge0$, $\sum u_j=1$, and the simplex measure is normalized
to have volume $1/(m-1)!$. When $m=1$, the integral is the single
exponential. The identity extends analytically to real $t$; its
integral is positive for negative $t$ as well. Thus the sign there is
$(-1)^{m-1}$. This simplex representation belongs to the classical
divided-difference and Hermite--Genocchi framework
\citep[Sec.~9]{deBoor2005}; it is not a new interpolation principle.

Let $I_\delta$ be the unique polynomial of degree at most $m-1$
interpolating $e^{-\delta p}$ at the roots of $B$, with multiplicities.
At a root $z$ of multiplicity $q$ it satisfies
\begin{equation}
 I_\delta^{(\ell)}(z)=(-\delta)^\ell e^{-\delta z},
 \qquad \ell=0,\ldots,q-1.
 \label{eq:n_Hermite}
\end{equation}
These are ordinary confluent Hermite conditions \citep{deBoor2005}.
What is useful for the delayed control problem is the exact time-domain
form of their remainder.

\begin{lemma}[Exact delay remainder]
\label{lem:n_remainder}
With \eqref{eq:n_J} and \eqref{eq:n_Hermite},
\begin{equation}
 \boxed{e^{-\delta p}=I_\delta(p)+
       \varepsilon_N B(p)\widehat J(p),\qquad
 \widehat J(p)=\int_0^\delta e^{-pt}J(t)\dd t.}
 \label{eq:n_remainder}
\end{equation}
\end{lemma}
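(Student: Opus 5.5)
Define the remainder $\widehat R(p)=e^{-\delta p}-I_\delta(p)$. By the interpolation conditions \eqref{eq:n_Hermite}, $\widehat R$ is entire and vanishes at every root of $B$ with the right multiplicity. Hence $\widehat R/B$ is entire. The plan is to identify this quotient with $\varepsilon_N\widehat J$ by a Laplace/contour computation in the time domain. Write $h_B$ as its partial-fraction expansion, a finite sum of terms $c\,t^{\ell}e^{zt}$ over the roots $z$ of $B$. We have $\widehat{h_B}=1/B$ for $\operatorname{Re}p>\max\operatorname{Re}z$, which at least covers $\operatorname{Re}p\ge0$.

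The key computation is as follows. Since $J(t)=(-1)^{m-1}h_B(t-\delta)$ on $[0,\delta]$, substitute $s=t-\delta$ to get
\[
 \widehat J(p)=(-1)^{m-1}e^{-\delta p}\int_{-\delta}^{0}e^{-ps}h_B(s)\dd s .
\]
Split the integral as $\int_{-\delta}^{\infty}-\int_0^\infty$, using the analytic extension of $h_B$. The second piece is $1/B(p)$. The first piece, $\int_{-\delta}^\infty e^{-ps}h_B(s)\dd s$, equals $e^{\delta p}\int_0^\infty e^{-pu}h_B(u-\delta)\dd u$. Each exponential-polynomial term $t^\ell e^{zt}$ shifted by $\delta$ is again an exponential polynomial with the same exponents, so its transform is $P(p)/B(p)$ with $P$ a polynomial of degree $<m$. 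Therefore
\[
 \widehat J(p)=(-1)^{m-1}\frac{P(p)-e^{-\delta p}}{B(p)} .
\]
Since $\varepsilon_N(-1)^{m-1}=-1$, we obtain $\varepsilon_N B\widehat J=e^{-\delta p}-P(p)$, first on a right half-plane and then everywhere by analyticity.

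It remains to show $P=I_\delta$. The left side $\varepsilon_N B\widehat J$ is divisible by $B$ as an entire function, because $\widehat J$ is entire: it is the transform of a bounded function on a compact interval. So $e^{-\delta p}-P$ vanishes at every root of $B$ to the required multiplicity. Hence $P$ satisfies the Hermite conditions \eqref{eq:n_Hermite} and has degree at most $m-1$. Uniqueness of Hermite interpolation then gives $P=I_\delta$, which proves \eqref{eq:n_remainder}.

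The main obstacle is bookkeeping the sign and the analytic continuation. Specifically:
\begin{itemize}
\item The transform of the shifted exponential polynomial must genuinely have the form $P/B$ with $\deg P<m$. This follows because $h_B(u-\delta)$ lies in the span of $u^\ell e^{zu}$, $\ell<q_z$, whose transforms are proper fractions with denominators dividing $B$; the normalisation $B(0)=1$ only fixes constants.
\item The sign $(-1)^{m-1}$ in \eqref{eq:n_J} must combine correctly with $\varepsilon_N$.
\end{itemize}
Both points can be checked on the case $m=1$, where $B=1+\sigma p$, $h_B=\sigma^{-1}e^{-t/\sigma}$, and $I_\delta=e^{\delta/\sigma}$ is constant. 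Direct integration there gives $e^{-\delta p}-e^{\delta/\sigma}=-(1+\sigma p)\widehat J$, consistent with $\varepsilon_N=+1$ for $N=2$.
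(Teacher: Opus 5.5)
Your proof is correct, and it shares the paper's underlying decomposition. Both arguments compare two inverse transforms:
\begin{itemize}
\item the causal inverse of $e^{-\delta p}/B$, which is $h_B(t-\delta)$ for $t\ge\delta$ and zero before;
\item the analytically continued $h_B(t-\delta)$ on all of $t\ge0$.
\end{itemize}
Their difference is $\varepsilon_N J$.

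The two routes differ in how the rational transform of the continued kernel is identified with $I_\delta/B$.
\begin{itemize}
\item The paper argues forward. The Hermite conditions make the principal parts of $I_\delta/B$ and $e^{-\delta p}/B$ coincide at each root of $B$. The residue formula, with derivatives acting on $e^{p(t-\delta)}$ at repeated roots, then shows that $I_\delta/B$ inverts to the continued kernel.
\item You argue backward. The transform of the continued kernel is some $P/B$ with $\deg P<m$, which is immediate from the binomial expansion of $(u-\delta)^\ell$. This gives $e^{-\delta p}=P+\varepsilon_N B\widehat J$. Because $\widehat J$ is entire, $e^{-\delta p}-P$ must vanish to the required orders at the roots of $B$, and uniqueness of Hermite interpolation gives $P=I_\delta$.
\end{itemize}
Your route avoids all residue bookkeeping at repeated roots. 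The cost is an appeal to uniqueness, which the paper's definition of $I_\delta$ as ``the unique polynomial'' already presupposes. The paper's route is more explicit: it exhibits $I_\delta/B$ directly as the transform of the continued impulse response.

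There is one slip in your closing sanity check. For $N=2$ one has $m=1$, so $\varepsilon_N=(-1)^m=-1$. Your correct computation $e^{-\delta p}-e^{\delta/\sigma}=-(1+\sigma p)\widehat J$ is therefore consistent with $\varepsilon_2=-1$, not $+1$. This agrees with your main-text sign identity $\varepsilon_N(-1)^{m-1}=-1$.
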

\begin{proof}
The Hermite conditions make the principal parts of $I_\delta/B$
and $e^{-\delta p}/B$ identical at every root of $B$. Inverting the
partial fractions of $I_\delta/B$ gives the analytically continued
$h_B(t-\delta)$ for $t\ge0$. In the residue formula the derivatives
act on $e^{p(t-\delta)}$, producing the required powers of $t-\delta$
when roots repeat.

The causal inverse of $e^{-\delta p}/B$ is zero before $\delta$ and
equals $h_B(t-\delta)$ afterwards. The difference of these two
inverses is therefore $-h_B(t-\delta)=\varepsilon_NJ(t)$ before
$\delta$ and zero afterwards. Taking transforms and multiplying
by $B$ proves the formula. Both sides are entire, so it also holds
at the roots of $B$ by continuation.
\end{proof}

No rational approximation of the delay has been made. The polynomial
$I_\delta$ and the compactly supported remainder are both retained.
Define the ordinary moments and the exponential moment separately:
\begin{gather}
 j_0=\int_0^\delta J(t)\dd t,\qquad
 j_1=\int_0^\delta tJ(t)\dd t,\qquad
 \Phi(\lambda)=\int_0^\delta e^{\lambda t}J(t)\dd t,\\
 K(t)=\int_t^\delta J(r)\dd r\quad(0\le t\le\delta),
 \qquad K(t)=0\quad(t>\delta).
 \label{eq:n_moments}
\end{gather}
Then $K\ge0$, $\int K=j_1$, and
\begin{equation}
 \widehat K(p)=\frac{j_0-\widehat J(p)}p.
 \label{eq:n_K}
\end{equation}

\subsection{A proper rational controller of the same order}
\label{sec:n_controller}

Set
\begin{equation}
 U_* =\frac{S}{1+\alpha},\qquad
 a=1-U_*+\varepsilon_N j_1,\qquad
 c=1-\varepsilon_N j_0,
 \label{eq:n_ac}
\end{equation}
and define
\begin{equation}
 Q(p)=B(p)(ap+c)-I_\delta(p),\qquad
 \boxed{\overline C_N(p)=\frac{A(p)}{Q(p)},\qquad
 C_N(s)=\frac1k\overline C_N(\tau_*s).}
 \label{eq:n_controller}
\end{equation}
These coefficients depend on the known delay, but the controller itself
is rational. Its integrator is a consequence of the construction.
At zero, \eqref{eq:n_remainder} gives $I_\delta(0)=c$; differentiation
gives
\begin{equation}
 I_\delta'(0)=-\delta-\varepsilon_N B'(0)j_0
                         +\varepsilon_N j_1.
\end{equation}
Using $B'(0)=\mu-1$,
\begin{align}
 Q(0)&=0,\notag\\
 Q'(0)&=a+cB'(0)-I_\delta'(0)
       =a+B'(0)+\delta-\varepsilon_N j_1\notag\\
       &=S-U_*=\frac{\alpha S}{1+\alpha}>0.
 \label{eq:n_Qprime}
\end{align}
The zero at the origin is simple and the integral residue is $b_*$
from \eqref{eq:n_integral_gain}.

For $a>0$, numerator and denominator have degree $N$, so the
controller is proper with feedthrough $1/a$. Its minimal order is
at most $N$. At any root $z$ of $B$,
$Q(z)=-I_\delta(z)=-e^{-\delta z}\ne0$, excluding cancellation
there. The only remaining possible common factor is $p+1$ when
$-1$ is not already a root of $B$. Thus $Q(-1)\ne0$ suffices for
minimal order exactly $N$. This will hold throughout the explicit
all-order family and throughout the small-delay existence interval.

The characteristic expression has the exact factorization
\begin{gather}
 Q(p)+e^{-\delta p}=B(p)R(p),\\
 R(p)=ap+c+\varepsilon_N\widehat J(p)
     =1+ap-\varepsilon_Np\widehat K(p).
 \label{eq:n_R}
\end{gather}
Consequently the reference-to-output transfer and the step-command
transforms are
\begin{gather}
 T_y(p)=\frac{e^{-\delta p}}{B(p)R(p)},\qquad
 W(p)=\frac{1+p}{pR(p)},\\
 V(p)=\frac{1-a+\varepsilon_N\widehat K(p)}{R(p)}.
 \label{eq:n_closed}
\end{gather}
The remaining question is whether the inverse of $R$ is stable and
nonnegative, and whether its combination in $V$ has the required sign.
The two parities of $N$ lead to different sufficient conditions.

\begin{theorem}[Sufficient conditions for global attainment]
\label{thm:n_construction}
Let $N\ge2$, $0<a<1$, and let the controller be
\eqref{eq:n_controller}. Assume the corresponding condition below:
\begin{align}
 N\text{ odd}:&\quad
 \exists\lambda>0:\quad a\lambda\ge c+\Phi(\lambda);
 \label{eq:n_odd}\\
 N\text{ even}:&\quad
 \gamma=c/a,\qquad
 \int_0^\delta e^{\gamma t}K(t)\dd t\le1-a.
 \label{eq:n_even}
\end{align}
The physical closed loop is internally exponentially stable, its
centered command is nonnegative, and its output is nondecreasing
from zero to one. It attains
\begin{equation}
 \Mcal_\alpha^{\min}=S\sqrt{\frac{\alpha}{1+\alpha}},\qquad
 A_e=A_e^*,\quad A_u=A_u^*.
 \label{eq:n_attainment}
\end{equation}
This is a global minimum over all admissible controllers satisfying
the signal assumptions, not only over the constructed rational family.
\end{theorem}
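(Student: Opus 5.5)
The plan is to reduce everything to the equality conditions of \cref{thm:n_gap}, via the factorization \eqref{eq:n_R} and the convolution-resolvent \cref{lem:pid_positive_resolvent}. The central object is the inverse $r$ of $1/R$. I will show, separately for each parity, that $1/R=\widehat g/(1-\widehat K_\star)$ for an explicit nonnegative exponential $g$ and a nonnegative kernel $K_\star$ of mass below one and with a finite exponential moment. The lemma then makes $r\ge0$ an ordinary integrable function with exponential decay. Since $R(0)=1$, it also gives $\int r=1$.

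\emph{Odd $N$} ($\varepsilon_N=1$). Here $R=ap+c+\widehat J$. With the $\lambda$ from \eqref{eq:n_odd}, take $g=e^{-\lambda t}/a$ and $K_\star=(a\lambda-c)g-J*g$. The bound $(J*g)(t)\le e^{-\lambda t}\Phi(\lambda)/a$ gives $K_\star\ge e^{-\lambda t}(a\lambda-c-\Phi(\lambda))/a\ge0$. Its mass is $(a\lambda-c-j_0)/(a\lambda)<1$, because $c+j_0=1$.

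\emph{Even $N$} ($\varepsilon_N=-1$). Here $c=1+j_0>0$ and $R=a(p+\gamma)-\widehat J$. Take $g=e^{-\gamma t}/a$ and $K_\star=J*g\ge0$, whose mass is $j_0/c<1$. In both cases $K_\star$ decays exponentially, so a small exponential weight keeps its norm below one. Hence $|\widehat K_\star|<1$ on a half-plane $\operatorname{Re}p\ge-\eta$, and $R$ has no zeros there.

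Next come the signal signs. The output transform is $Y=e^{-\delta p}/(pBR)$. Thus $y$ is the delayed step response of $1/B$, which lies in $[0,1]$ because the cascade kernel is nonnegative, convolved with the probability density $r$. Hence $0\le y\le1$, so $e\ge0$, and $e$ is integrable because both factors settle exponentially.

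For the command I use $V=(1-a+\varepsilon_N\widehat K)/R$ from \eqref{eq:n_closed}. For odd $N$ this is $(1-a)r+K*r\ge0$, since $0<a<1$ and $K\ge0$. The even case, $v=(1-a)r-K*r$, is the main obstacle. I would handle it by a comparison along the resolvent. The relation $ar'+cr=J*r\ge0$ makes $e^{\gamma t}r(t)$ nondecreasing, so $r(t-s)\le e^{\gamma s}r(t)$. Therefore $(K*r)(t)\le r(t)\int_0^\delta e^{\gamma s}K(s)\dd s\le(1-a)r(t)$ by \eqref{eq:n_even}. The point needing care is the initial value $r(0^+)=1/a$ and the absence of an atom, which follows from $R$ having leading term $ap$.

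Finally, the areas and stability. Evaluating at $p=0$ gives $\int v=V(0)=1-a+\varepsilon_Nj_1=U_*$ by the definition of $a$ in \eqref{eq:n_ac}. Since $e,v\ge0$, \cref{thm:n_gap} then yields $A_u=A_u^*$ and $A_e=S-U_*=A_e^*$, hence $A_e=\alpha A_u$. This gives equality, the global minimum over all admissible commands, and monotonicity by \cref{thm:n_monotonic}.

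For internal stability, the loop characteristic expression with plant realization $A$ and controller realization $Q$ is $A(p)\bigl[Q(p)+e^{-\delta p}\bigr]/A(p)$ at the transfer level. I would instead write the state-space determinant of the plant–controller retarded system, as in the filtered PID section. It equals, up to a constant, $Q(p)+e^{-\delta p}=B(p)R(p)$ times the stable plant factor that appears twice. The roots of $B$ and $A$ are stable, and $R$ is zero-free on $\operatorname{Re}p\ge-\eta$. Minimality of the controller, given $Q(-1)\ne0$, ensures no hidden controller mode is omitted. The characteristic-root criterion then gives internal exponential stability.
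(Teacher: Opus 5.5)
Your proposal is correct and follows the paper's proof essentially step by step. It uses the same parity-dependent factorizations of $1/R$ (your $K_\star$ is the paper's $k_0$ for odd $N$ and $g*J$ for even $N$), the same comparison $r(t-s)\le e^{\gamma s}r(t)$ for the even-order command, positivity of the output through $h_B*r$, and the same unreduced characteristic function $A(p)B(p)R(p)$. Computing $\int v=V(0)=U_*$ directly, instead of $\int e$ via $Q'(0)$, is an equivalent shortcut. Drop the appeal to $Q(-1)\ne0$: the theorem does not assume it, and it is not needed. The determinant $A(p)Q(p)+A(p)e^{-\delta p}$ holds for any order-$N$ controller realization with denominator $Q$, and a common factor $p+1$ would only add a stable mode.
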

\begin{proof}
For odd $N$, $\varepsilon_N=1$. Write
\begin{equation}
 \frac1R=\frac1{a(p+\lambda)}\frac1{1-\widehat k_0(p)},\qquad
 \widehat k_0(p)=\frac{\lambda-c/a-\widehat J(p)/a}{p+\lambda}.
\end{equation}
Its convolution kernel is
\begin{equation}
 k_0(t)=e^{-\lambda t}
 \left[\lambda-\frac ca-\frac1a\int_0^t e^{\lambda r}J(r)\dd r\right].
 \label{eq:n_odd_kernel}
\end{equation}
Condition \eqref{eq:n_odd} makes the bracket nonnegative for every
$t\ge0$. Since $c+j_0=1$,
\begin{equation}
 \|k_0\|_1=\widehat k_0(0)=1-\frac1{a\lambda}<1.
 \label{eq:n_kernel_mass}
\end{equation}
Nonnegativity also implies $a\lambda\ge1$. The convolution Neumann
series therefore converges and gives
\begin{equation}
 r(t):=\mathcal L^{-1}\{1/R\}(t)
 =\frac{e^{-\lambda t}}a*
 \left(\delta_0+\sum_{j=1}^{\infty}k_0^{*j}\right)(t)\ge0.
 \label{eq:n_resolvent}
\end{equation}
Here $\delta_0$ is the identity measure for convolution; it does not
represent an impulse in the physical command. Such resolvent expansions
are standard in Volterra theory \citep{Gripenberg1990}.

The kernel $J$ has finite support and $k_0$ has an exponentially
decaying tail. By continuity of its exponential moment, there is
$\rho>0$, smaller than $\lambda$, for which
$\int e^{\rho t}k_0(t)\dd t<1$. The same geometric series converges
in exponentially weighted $L^1$. It also bounds
$\sup_{t\ge0}e^{\rho t}|r(t)|$, because the initial kernel
$e^{-\lambda t}/a$ has bounded weighted supremum and convolution
is bounded by the product of this norm and the weighted $L^1$ norm.
Thus $r$ decays exponentially. From \eqref{eq:n_closed},
\begin{equation}
 v=(1-a)r+K*r\ge0.
 \label{eq:n_v_odd}
\end{equation}

For even $N$, $\varepsilon_N=-1$ and $c=1+j_0>0$. Now
\begin{equation}
 \frac1R=\frac1{ap+c}\frac1{1-\widehat J/(ap+c)}.
\end{equation}
The base kernel $g(t)=e^{-\gamma t}/a$ is nonnegative, and the
feedback kernel $g*J$ has mass $j_0/c<1$. The same weighted-series
argument proves nonnegative exponentially decaying $r$. Its equation
for $t>0$ is
\begin{equation}
 r'(t)=-\gamma r(t)+\frac1a(J*r)(t)\ge-\gamma r(t),
 \qquad r(0^+)=1/a.
\end{equation}
Integration between $t-s$ and $t$ gives
$r(t-s)\le e^{\gamma s}r(t)$ for $0\le s\le t$. Hence
\begin{align}
 v(t)&=(1-a)r(t)-(K*r)(t)\notag\\
 &\ge\left[1-a-\int_0^\delta e^{\gamma s}K(s)\dd s\right]r(t)
 \ge0.
 \label{eq:n_v_even}
\end{align}
Interchanging the two integrals gives a convenient evaluation formula,
\begin{equation}
 \int_0^\delta e^{\gamma t}K(t)\dd t
 =\frac{\Phi(\gamma)-j_0}{\gamma}.
 \label{eq:n_weighted_K}
\end{equation}

In either parity the impulse response of $T_y$ is the delayed
nonnegative convolution $h_B*r$. Its mass is $T_y(0)=1$, since
$B(0)=R(0)=1$. Therefore the step output increases monotonically
to one and $e\ge0$.

To check internal stability, keep the plant and controller states
before cancelling factors. Their characteristic determinant, up to
a nonzero constant, is
\begin{equation}
 A(p)Q(p)+A(p)e^{-\delta p}=A(p)B(p)R(p).
 \label{eq:n_full_characteristic}
\end{equation}
All roots of $A$ and $B$ lie in $\operatorname{Re}p\le-1$.
For a sufficiently small positive $\rho$, the weighted kernel norm
used above is less than one throughout $\operatorname{Re}p\ge-\rho$,
which makes $R$ zero-free there. The unreduced state equation is
retarded: the nondelayed characteristic polynomial has degree $2N$
and the polynomial multiplying the delay has degree $N<2N$.
The resulting strictly negative spectral bound implies exponential
stability for this finite-dimensional retarded realization
\citep{HaleLunel1993}. The stable cancelled modes are thus included;
positivity of the reference response alone was not used to infer
internal stability.

Finally, \eqref{eq:n_Qprime} and the simple integrator give
$\int e=\alpha S/(1+\alpha)$. The area balance gives
$\int v=S/(1+\alpha)$. Both signals have been proved nonnegative,
so their signed and absolute areas coincide. Equality in the universal
bound proves \eqref{eq:n_attainment} and global optimality.
\end{proof}

Conditions \eqref{eq:n_odd} and \eqref{eq:n_even} are sufficient.
They are not asserted to describe every attaining controller or the
largest possible delay interval.

\subsection{Existence for arbitrary positive time constants}
\label{sec:n_small_delay}

\begin{corollary}[An attaining order-$N$ controller for small delays]
\label{cor:n_small_delay}
Fix $N\ge2$, the positive time constants in \eqref{eq:n_physical},
and a weight $\alpha>\mu-1$. There is $\delta_0>0$ such that for
every $0\le\delta<\delta_0$, \eqref{eq:n_controller} has minimal
order exactly $N$ and satisfies the conclusions of
\cref{thm:n_construction}.
\end{corollary}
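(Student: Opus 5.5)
The plan is a perturbation argument around $\delta=0$: verify each hypothesis of \cref{thm:n_construction} in the limit $\delta\to0$, then extend to a neighbourhood by continuity. At $\delta=0$ the kernel $J$ vanishes identically, so $j_0=j_1=0$, $\widehat J\equiv0$, $K\equiv0$, $\Phi\equiv0$ and $I_0\equiv1$. Hence $c=1$, $a=1-U_*$ with $U_*=\mu/(1+\alpha)$, and $Q(p)=B(p)(ap+1)-1$.

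The assumption $\alpha>\mu-1$ is exactly $U_*<1$, so $a>0$. Also $a<1$ because $\mu\ge\sigma_1=1>0$. The next step is to show that $J$ and its moments depend continuously on $\delta$. On the support $[0,\delta]$ the function $h_B(t-\delta)$ is evaluated at arguments in $[-\delta,0]$, where the exponential polynomial is bounded by a constant independent of small $\delta$. This gives
\[
 |j_0|\le C\delta,\qquad |j_1|\le C\delta^2,\qquad \Phi(\lambda)\le C\delta\, e^{\lambda\delta}.
\]
In fact $h_B$ vanishes to order $m-1$ at $0$, which makes these bounds even smaller. Consequently $a(\delta)$ and $c(\delta)$ are continuous with limits $1-U_*\in(0,1)$ and $1$. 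Note that $U_*=S/(1+\alpha)$ itself contains $\delta$, but only linearly, so it causes no difficulty. For small $\delta$ we therefore keep $0<a<1$. Continuity of $I_\delta$ in $\delta$ follows from its Hermite definition, whose coefficients are entire in $\delta$.

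The parity conditions are then checked in turn.

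\emph{Odd $N$.} Fix any $\lambda$ with $a_0\lambda>1$, where $a_0=1-U_*|_{\delta=0}$. Then $a\lambda\ge c+\Phi(\lambda)$ holds strictly at $\delta=0$, and it persists for small $\delta$ by the continuity bounds above.

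\emph{Even $N$.} Set $\gamma=c/a$, which stays bounded. Then
\[
 \int_0^\delta e^{\gamma t}K\dd t\le e^{\gamma\delta}j_1=O(\delta^2),
\]
while the right-hand side satisfies $1-a\to U_*|_{\delta=0}>0$. For this we need $\mu>0$, which holds since $\mu\ge1$. So the condition holds for small $\delta$.

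At $\delta=0$ exactly, $K\equiv0$ and the left-hand side is $0<1-a$. The odd-$N$ condition also holds there, so $\delta=0$ is included in the interval.

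Minimal order is the remaining point, and I expect it to be the main obstacle. The text before the corollary reduces it to showing $Q(-1)\ne0$ when $-1$ is not a root of $B$. At $\delta=0$ this reads $Q(-1)=B(-1)(1-a)-1$. Since $1-a=U_*$, the condition becomes $U_*B(-1)\ne1$, and this can genuinely fail for particular $\alpha$.

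I would handle it as follows. When $\sigma_j=1$ for some $j\ge2$, $-1$ is a root of $B$ and there is nothing to check. Otherwise $|B(-1)|=\prod_{j\ge2}|1-\sigma_j|<1$ because $0<\sigma_j<1$, and also $U_*<1$. Hence $|U_*B(-1)|<1$, so $Q(-1)\ne0$ at $\delta=0$ holds automatically. By continuity of $Q(-1)$ in $\delta$ it stays nonzero for small $\delta$.

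The final step is to take $\delta_0$ as the minimum of the finitely many continuity thresholds above. \Cref{thm:n_construction} then delivers internal exponential stability, nonnegativity of the centered command, the monotone output, and global attainment for every $0\le\delta<\delta_0$.
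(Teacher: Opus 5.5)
Your proposal is correct and follows essentially the same route as the paper. Both arguments check $0<a<1$ and the parity condition strictly at $\delta=0$ (where $J\equiv0$, $c=1$, $a_0=1-\mu/(1+\alpha)$), show $Q(-1)=B(-1)U_0-1<0$ from $0\le B(-1)<1$, and extend every strict inequality to small $\delta$ by continuity; your bounds $j_0=O(\delta)$, $j_1=O(\delta^2)$ only make the paper's bare continuity claim quantitative. One small slip: your remark that $U_*B(-1)\ne1$ ``can genuinely fail for particular $\alpha$'' is contradicted by your own next paragraph, since $0\le B(-1)<1$ and $U_*<1$ rule this out under the hypotheses.
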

\begin{proof}
At $\delta=0$, $J=0$, $I_0=1$, $c=1$, and
\begin{equation}
 U_0=\frac\mu{1+\alpha}\in(0,1),\qquad
 a_0=1-U_0\in(0,1),\qquad R(p)=1+a_0p.
\end{equation}
For odd $N$, choose a fixed $\lambda>1/a_0$. Then
$a_0\lambda>1=c+\Phi(\lambda)$ at zero delay. The coefficients
and moments depend continuously on $\delta$, so the strict inequality
and $0<a<1$ persist for all sufficiently small positive delays.
For even $N$, the left side of \eqref{eq:n_even} is zero at
$\delta=0$, whereas its right side is $1-a_0=U_0>0$.
The same continuity argument applies.

At zero delay,
\begin{equation}
 Q(-1)=B(-1)U_0-1<0,
 \qquad B(-1)=\prod_{j=2}^N(1-\sigma_j)\in[0,1).
\end{equation}
This strict inequality also persists. Since cancellation at the other
plant roots was already excluded, the controller keeps minimal order
exactly $N$. Taking the smallest of these positive neighborhoods
provides $\delta_0$.
\end{proof}

The theorem covers unequal as well as repeated time constants, but does
not give one common delay limit for every cascade. Very small
$\sigma_j$ can make the continued exponential kernel grow rapidly
with $\delta$. The moment conditions provide the object-specific check.
At zero delay the construction reduces to the transparent formulas
\begin{equation}
 T_y(p)=\frac1{B(p)(1+a_0p)},\qquad
 w(\theta)=1+\left(\frac1{a_0}-1\right)e^{-\theta/a_0}.
\end{equation}

\subsection{One explicit family valid for every order}
\label{sec:n_all_orders}

When $\tau_j=\tau$ for all $j$, the interpolation and kernel become
\begin{gather}
 B(p)=(1+p)^{N-1},\qquad \varepsilon_N=(-1)^{N-1},\\
 I_\delta(p)=e^\delta\sum_{j=0}^{N-2}
          \frac{[-\delta(p+1)]^j}{j!},\qquad
 J(t)=\frac{e^{\delta-t}(\delta-t)^{N-2}}{(N-2)!}\quad(0\le t\le\delta).
 \label{eq:n_equal_kernel}
\end{gather}
Although $I_\delta$ is a truncated Taylor polynomial about $p=-1$,
the remainder in \eqref{eq:n_remainder} is still retained exactly.

\begin{theorem}[Uniform construction for every $N\ge2$]
\label{thm:n_all_orders}
For
\begin{equation}
 G_L(s)=\frac{k e^{-Ls}}{(1+\tau s)^N},\qquad
 N\ge2,\quad \alpha=2N-1,\quad 0\le\delta=L/\tau\le\frac14,
 \label{eq:n_all_orders_conditions}
\end{equation}
use \eqref{eq:n_equal_kernel} and set
\begin{gather}
 a=1-\frac{N+\delta}{2N}+\varepsilon_Nj_1,
 \qquad c=1-\varepsilon_Nj_0,\\
 \boxed{C_N(s)=\frac1k\,
 \frac{(1+\tau s)^N}
 {(1+\tau s)^{N-1}(a\tau s+c)-I_\delta(\tau s)}.}
 \label{eq:n_explicit_controller}
\end{gather}
This controller has minimal order exactly $N$, is internally
stabilizing, and attains
\begin{gather}
 \boxed{\Mcal_{2N-1}^{\min}
       =(N+\delta)\sqrt{\frac{2N-1}{2N}},}\\
 A_e=\frac{(2N-1)(N+\delta)}{2N},\qquad
 A_u=\frac{N+\delta}{2N}.
 \label{eq:n_explicit_minimum}
\end{gather}
Its output is nondecreasing from zero to one.
\end{theorem}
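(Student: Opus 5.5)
The plan is to derive \cref{thm:n_all_orders} as a specialization of \cref{thm:n_construction}. All time constants equal $\tau$, so $\tau_*=\tau$, $\sigma_j=1$, $\mu=N$ and $S=N+\delta$. With $\alpha=2N-1$ this gives $U_*=S/(1+\alpha)=(N+\delta)/(2N)$, and \eqref{eq:n_ac} becomes exactly the $a,c$ of \eqref{eq:n_explicit_controller}. Also $B(p)=(1+p)^{N-1}$, and \eqref{eq:n_equal_kernel} supplies $I_\delta$ and $J$. The only nontrivial inputs of \cref{thm:n_construction} are $0<a<1$ and the parity condition \eqref{eq:n_odd} or \eqref{eq:n_even}. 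Once these hold, internal stability, $v\ge0$, monotone output and attainment of $Sm_\alpha$ with the areas \eqref{eq:n_areas} follow directly. Substituting $S=N+\delta$ and $\alpha=2N-1$ then yields \eqref{eq:n_explicit_minimum}.

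For minimal order I do not need the argument of \cref{cor:n_small_delay}. Since $N\ge2$, the point $-1$ is already a root of $B$. At every root $z$ of $B$ we have $Q(z)=-e^{-\delta z}\ne0$, so $Q$ shares no factor with $A=(1+p)^N$. Together with $a>0$, which makes the controller proper of degree $N$, this gives McMillan degree exactly $N$.

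The quantitative core is a set of moment bounds uniform in $N$. Substituting $s=\delta-t$ gives $J$ in the form $e^{s}s^{N-2}/(N-2)!$. From this I will read off
\[
 0\le j_0\le \frac{e^\delta\delta^{N-1}}{(N-1)!},\qquad
 0\le j_1\le\delta j_0,\qquad
 \Phi(\lambda)\le e^{\lambda\delta}j_0,\qquad
 \int_0^\delta e^{\gamma t}K(t)\dd t\le e^{\gamma\delta}j_1 .
\]
With $\delta\le1/4$ the worst case is $N=2$, where $j_0\le e^{1/4}/4\approx0.32$ and $j_1\lesssim0.08$; both decrease factorially for larger $N$. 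The quantity $a=(N-\delta)/(2N)+\varepsilon_Nj_1$ then lies in roughly $[0.35,0.6]$, in particular in $(0,1)$, and $c=1-\varepsilon_Nj_0$ stays close to $1$.

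For odd $N\ge3$ I will take a fixed rate, say $\lambda=4$. Then $a\lambda\ge4\bigl(\tfrac{3-1/4}{6}-j_1\bigr)>1.8$. On the other side, $c+\Phi(\lambda)\le1+(e^{4\delta}-1)j_0\le1+(e-1)\,e^{1/4}/32<1.1$, which establishes \eqref{eq:n_odd}.

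For even $N\ge2$ I set $\gamma=c/a$. This satisfies $\gamma\le(1+j_0)/a\lesssim4$, so $e^{\gamma\delta}\le e$. Then $\int_0^\delta e^{\gamma t}K\le e\,j_1$, which is at most about $0.22$ at $N=2$ and smaller afterwards. Meanwhile $1-a=(N+\delta)/(2N)+j_1\ge1/2$, which gives \eqref{eq:n_even}. Alternatively one can use the exact formula \eqref{eq:n_weighted_K}.

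I expect the main obstacle to be making these numerical margins rigorous and uniform. In the even case, $\gamma$ depends on $a$ and $c$, which themselves depend on $j_0,j_1$, so the bounds are coupled. The binding instances are $N=2$ (even) and $N=3$ (odd) at $\delta=1/4$. For these two cases I would first evaluate $j_0,j_1,\Phi$ in closed form, since they are elementary integrals of $e^s$ or $se^s$, and check the inequalities with explicit enclosures of $e^{1/4}$. For $N\ge4$ I would use the factorial decay $\delta^{N-1}/(N-1)!$ to show that the same inequalities hold with even wider margins, which completes the uniform statement.
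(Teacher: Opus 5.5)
Your proposal is correct and follows essentially the paper's proof. Both specialize \cref{thm:n_construction} with $S=N+\delta$ and bound $j_0,j_1,\Phi$ uniformly in $N$ from the explicit kernel. Both verify \eqref{eq:n_odd} with a fixed rate; the paper uses $\lambda=3$ and the sharper bound $j_1\le e^\delta\delta^{m+1}/(m+1)!$. Both verify \eqref{eq:n_even} through $\gamma=c/a$ and the estimate $e^{\gamma\delta}j_1$, and obtain minimality from $Q(-1)=-e^{\delta}\ne0$.

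Two small corrections:
\begin{itemize}
\item For odd $N$ you have $\varepsilon_N=+1$, so $a\ge 11/24$ holds directly. Your $-j_1$ is harmless, but with your crude bound $j_1\le\delta j_0$ it does not quite give $a\lambda>1.8$. This is irrelevant, since $c+\Phi(4)<1.1$ anyway.
\item The bounds you list are increasing in $\delta$ and already close both parity cases on all of $[0,1/4]$. So the planned closed-form evaluation at $\delta=1/4$ is unnecessary. Such an evaluation alone would not cover smaller delays anyway.
\end{itemize}
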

\begin{proof}
We verify the sufficient conditions simultaneously for every order.
For $m=N-1$, direct integration bounds give
\begin{equation}
 j_0\le\frac{e^\delta\delta^m}{m!},\qquad
 j_1\le\frac{e^\delta\delta^{m+1}}{(m+1)!}.
\end{equation}
For the second bound, substitute $t=\delta u$ and use
$\int_0^1u(1-u)^{m-1}\dd u=1/[m(m+1)]$.
At $\delta\le1/4$, the elementary inequalities
$e^{1/4}<4/3$ and $e<3$ imply, for every $m\ge1$,
\begin{equation}
 j_0\le\frac13,\qquad j_1\le\frac1{24}.
 \label{eq:n_uniform_moments}
\end{equation}
The exponential inequalities follow by comparison with geometric
series; for the second one use $n!\ge2^{n-1}$ for $n\ge1$,
with strict inequality for some terms.

For even $N$, $\varepsilon_N=-1$ and
\begin{equation}
 U_*\le\frac9{16},\qquad
 \frac{19}{48}\le a=1-U_*-j_1\le\frac12,
 \qquad c\le\frac43.
\end{equation}
Thus $\gamma=c/a\le64/19$ and $\gamma\delta\le16/19<1$.
Since $\int K=j_1$,
\begin{equation}
 \int_0^\delta e^{\gamma t}K(t)\dd t
 \le e^{\gamma\delta}j_1<\frac18<\frac12\le1-a.
\end{equation}
This proves \eqref{eq:n_even} with a strict positive margin.

For odd $N\ge3$, $m\ge2$ improves the first bound to $j_0\le1/24$.
Also,
\begin{equation}
 U_*\le\frac{13}{24},\qquad
 \frac{11}{24}\le a=1-U_*+j_1\le\frac{13}{24}.
\end{equation}
Choose $\lambda=3$. Then
\begin{equation}
 c+\Phi(3)=1+\Phi(3)-j_0
 \le1+(e^{3\delta}-1)j_0
 <\frac{13}{12}<\frac{11}{8}\le3a.
\end{equation}
This proves \eqref{eq:n_odd}, and both cases have $0<a<1$.
Finally, $A$ has only the root $-1$ and
$Q(-1)=-I_\delta(-1)=-e^\delta\ne0$. Hence the minimal controller
order is exactly $N$. Apply \cref{thm:n_construction} with
$S=N+\delta$ to obtain the minimum and both areas.
\end{proof}

The weight $2N-1$ gives an explicit sufficient family. It is not the
only admissible weight, nor is $\delta\le1/4$ asserted to be the
largest attainable delay interval. For $N=2$, $\alpha=3$ and
$\delta=1/4$, the formula is especially simple:
\begin{gather}
 a=\frac{27}{16}-e^{1/4}>0,\quad c=e^{1/4},\qquad
 \overline C_2(p)=
 \frac{(1+p)^2}{p[(27/16-e^{1/4})p+27/16]},\\
 A_e=\frac{27}{16},\qquad A_u=\frac9{16},\qquad
 \Mcal_3^{\min}=\frac{9\sqrt3}{8}.
 \label{eq:n_second_order_example}
\end{gather}
This example can be checked directly without solving an interpolation
system. \Cref{fig:n_responses} shows four orders from the same family.
The proof covers every integer $N\ge2$ and every delay in the stated
interval; the plotted cases only illustrate the resulting waveforms.

\begin{figure}[htbp]
 \centering\includegraphics[width=\linewidth]{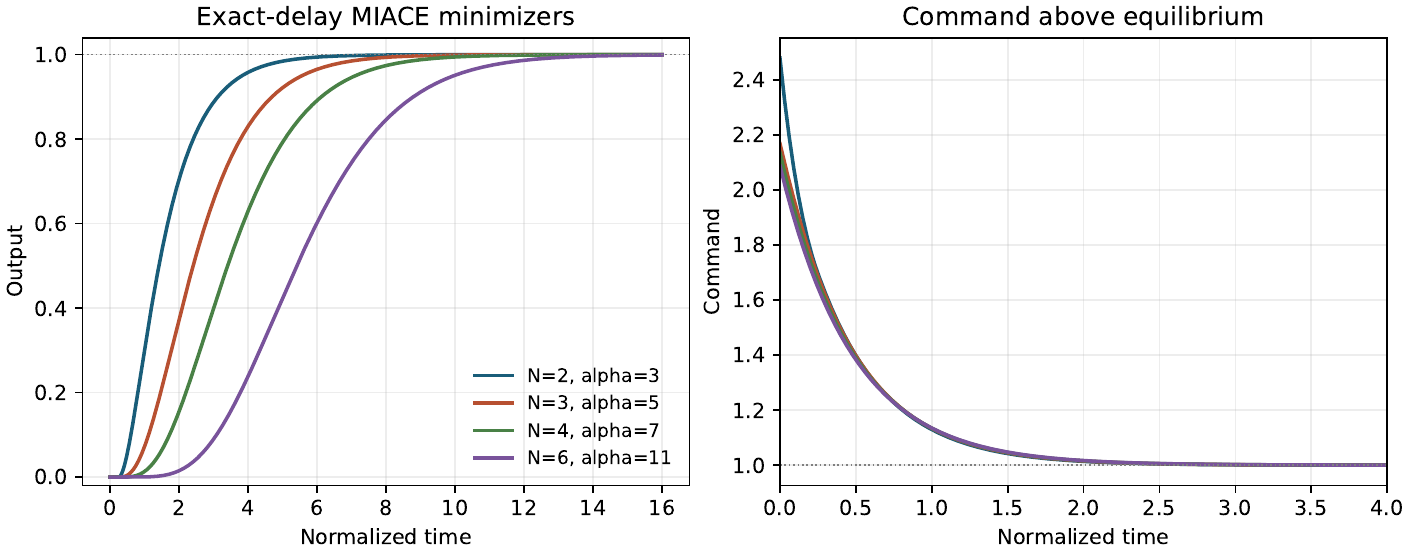}
 \caption{Attaining controllers for equal-lag plants of orders
 $N=2,3,4,6$, with $\delta=1/4$ and $\alpha=2N-1$.
 The output is monotone and the centered command remains nonnegative.
 The exact values are given by \eqref{eq:n_explicit_minimum};
 the curves are obtained from the full physical delayed loop.}

 \label{fig:n_responses}
\end{figure}

\subsection{Smoothness and the boundary of the plant class}
\label{sec:n_scope}

The relative degree of \eqref{eq:n_physical} is $N$. The constructed
proper controller has finite feedthrough $1/a$, so $w(0^+)=1/a$ is
a finite jump rather than an impulse. Its locally bounded command,
filtered through $N$ lags, gives
\begin{equation}
 y\in C^{N-1},\qquad
 y(\delta+\xi)=\frac{\xi^N}{aN!\prod_{j=1}^N\sigma_j}
                   +O(\xi^{N+1})\quad(\xi\downarrow0).
 \label{eq:n_smoothness}
\end{equation}
Indeed, the plant impulse begins as
$h_A(\xi)=\xi^{N-1}/[(N-1)!\prod\sigma_j]+O(\xi^N)$,
and convolution with $w(\xi)=1/a+O(\xi)$ gives the displayed
coefficient. The first $N-1$ output derivatives match the zero waiting
segment continuously. The $N$th derivative can jump. Monotonicity
excludes output reversals; $v\ge0$ only keeps the command above
equilibrium and does not assert that the command itself is monotone.

Plant order alone cannot imply these conclusions. A real right-half-plane
zero is a classical obstruction to monotonic tracking
\citep{Darbha2003,Taghavian2026}. For example, take
\begin{equation}
 G_L(s)=\frac{(1-s/z)e^{-Ls}}{(1+\tau_1s)(1+\tau_2s)},
 \qquad z>0,\quad L>0.
\end{equation}
An internally stable proper unity-feedback controller cannot cancel
this zero with a hidden unstable pole, so $T_y(z)=0$. A monotone
unit step would have $y'\ge0$, nonzero and of mass one, implying
\begin{equation}
 T_y(z)=\int_0^\infty e^{-zt}y'(t)\dd t>0,
\end{equation}
a contradiction. The delay does not remove the zero. This argument
is included to delimit the theorem, not as a new zero-location result.

Even minimum phase alone does not make MIACE equality imply
monotonicity. The distinction between nonovershooting and monotone
responses is classical \citep{LinFang1997}; the following exact
example shows it at the MIACE global minimum. The example has zero
delay, and makes no claim about persistence at positive delay. Set
\begin{gather}
 a_* =\frac{209}{260},\qquad
 F(p)=p^2+\frac{19}{5}p+\frac{418}{5},\\
 D_0(p)=[(p+2)^2+100][(1+a_*)p+1],\qquad
 B_0(p)=\frac65p^2+\frac{122}{5}p+104,\\
 H_3(p)=\frac{B_0(p)}{D_0(p)},\qquad
 \overline C_3(p)=\frac{D_0(p)}{pF(p)}.
 \label{eq:n_counterexample}
\end{gather}
The plant is stable, minimum phase, and of order three. The proper
controller also has order three. Before cancellation the characteristic
polynomial is
\begin{equation}
 D_0(p)[pF(p)+B_0(p)]=D_0(p)(p+1)[(p+2)^2+100],
\end{equation}
so all internal modes are stable. Direct transformation gives
\begin{equation}
 e(t)=\frac45e^{-t}+\frac15e^{-2t}\cos10t,\qquad
 v(t)=a_*e^{-t}.
 \label{eq:n_counterexample_signals}
\end{equation}
Both are positive, with $e(t)\ge(3/5)e^{-t}$, and
\begin{equation}
 A_e=\frac45+\frac15\frac2{4+100}=a_*=A_u,
 \qquad \mu=2a_*.
\end{equation}
Thus at $\alpha=1$ the universal bound is attained exactly:
$\Mcal_1=\sqrt2a_*=\mu/\sqrt2$. But at $t_0=3\pi/20$,
\begin{equation}
 e'(t_0)=-\frac45e^{-t_0}+2e^{-2t_0}>0,
\end{equation}
and the output moves backwards. The equality signs prevent overshoot;
the additional cascade structure was what prevented backward motion
in \cref{thm:n_monotonic}.

\begin{figure}[htbp]
 \centering\includegraphics[width=\linewidth]{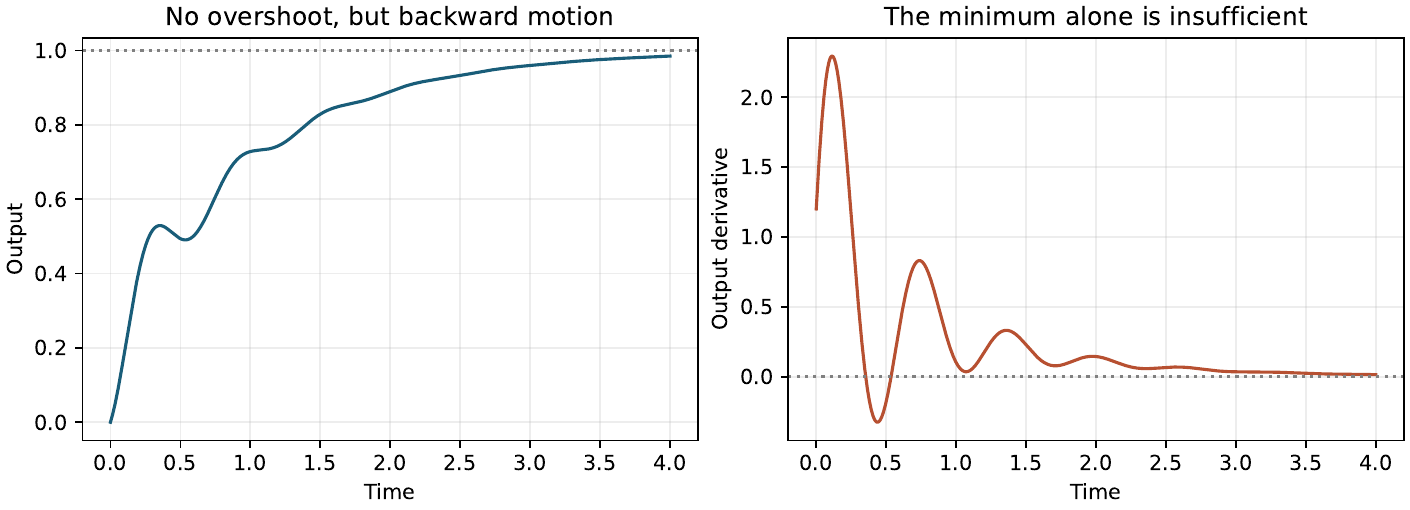}
 \caption{The exact minimum-phase third-order counterexample
 \eqref{eq:n_counterexample}. MIACE attains its lower bound and the
 output never exceeds one, but its derivative is negative on some
 intervals. The assertions follow from the explicit signals, not
 from a sampled sign test.}

 \label{fig:n_counterexample}
\end{figure}

The higher-order optimum is nominal. It uses known parameters and
stable pole cancellation. None of the preceding equalities establishes
the same optimality under parameter errors, uncertain delay, saturation,
measurement noise, or sampling. These limitations concern implementation
and robustness; they do not alter the exact attainment proof for the
stated plant and controller.

%% file: sections/06f_complete_attainment.tex
\section{Complete attainment with an internal model and a sharp command cap}
\label{sec:complete_attainment}

The plant’s pure delay has been retained exactly throughout the preceding analysis. The controllers constructed so far were restricted to proper rational transfer functions, without delay elements inside the controller. We now allow an internal model containing a copy of the plant’s delay, while retaining the same physical plant and tracking problem. In this enlarged controller class, the necessary condition of \cref{thm:n_necessary} becomes sufficient for attaining the MIACE lower bound.

Use the normalization in \cref{sec:order_n}, now also allowing $N=1$:
\begin{equation}
 P(p)=\frac{e^{-\delta p}}{(1+p)B(p)},\qquad
 B(p)=\prod_{j=2}^N(1+\sigma_jp),\qquad
 S=\delta+1+\sum_{j=2}^N\sigma_j.
 \label{eq:complete_plant}
\end{equation}
An empty product is one. The physical gain is removed by $w=ku$.
All responses start from rest with zero command prehistory. Commands are
ordinary functions, with finite jumps allowed. An internal model below
contains the exact causal transport delay; it never advances a signal.

\subsection{An exact exponential-moment characterization}

\begin{theorem}[The nonovershooting command cone]
\label{thm:command_cone}
Let $v=w-1\ge0$ be an admissible locally integrable command. For the
positive cascade \eqref{eq:complete_plant}, the following are equivalent:
\begin{enumerate}[label=(\roman*)]
 \item $y(\theta)\le1$ for every $\theta\ge0$;
 \item $y$ is nondecreasing from zero to one;
 \item $\displaystyle J_v:=\int_0^\infty e^\theta v(\theta)\dd\theta\le1$.
\end{enumerate}
For every such command,
\begin{equation}
 U:=A_u=\int_0^\infty v\dd\theta<1,
 \qquad A_e=S-U.
 \label{eq:complete_front}
\end{equation}
Every $0\le U<1$ is realized by an internally stable causal feedback
controller, if internal delays are allowed.
\end{theorem}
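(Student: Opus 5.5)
I would organize the proof around the tail identity \eqref{eq:n_tail_identity} already used in \cref{thm:n_necessary}. After the shift by $\delta$, the error is
\[
 f=F_0-h_A*v,
\]
where $h_A\ge0$ is the impulse response of $1/A=1/[(1+p)B]$ and $F_0(\theta)=\int_\theta^\infty h_A$.

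The implication (ii)$\Rightarrow$(i) is trivial. For (i)$\Rightarrow$(iii), $f\ge0$ gives $(h_A*v)/F_0\le1$. The asymptotics $h_A(\theta-r)/F_0(\theta)\to e^r$ together with Fatou's lemma give $J_v\le1$, exactly as in \cref{thm:n_necessary}. The slowest pole is $-1$ by the ordering $\sigma_1=1$, and the case $N=1$ is included with $h_A=F_0=e^{-\theta}$.

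For (iii)$\Rightarrow$(ii), \cref{lem:n_crossing} reduces monotonicity to showing $y\le1$, so it suffices to prove $f\ge0$. I would factor the plant as $1/(1+p)$ followed by $1/B$. Let $x_1$ be the output of the slowest lag driven by $w=1+v$. Variation of constants gives
\[
 1-x_1(\theta)=e^{-\theta}\Bigl[1-\int_0^\theta e^{r}v(r)\dd r\Bigr]\ge e^{-\theta}(1-J_v)\ge0.
\]
Hence $x_1\le1$. The remaining lags in $1/B$ have nonnegative kernels of unit mass, so their outputs remain at most one. By \cref{lem:n_crossing} these states are nondecreasing while below one, which gives (ii).

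For the area statement, admissibility and equality of signs let me use the signed balance \eqref{eq:n_balance} of \cref{thm:n_gap}. Since $e\ge0$ and $v\ge0$, it gives $A_e+A_u=S$, so $A_e=S-U$. Strictness $U<1$ comes from $U=\int v<\int e^\theta v\le1$, because an ordinary function with positive area cannot concentrate at $\theta=0$; if $U=0$ this is immediate. Integrability of $f$ follows from $0\le f\le F_0$.

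The realization step is the main obstacle. For $0\le U<1$ I would pick an explicit nonnegative command with $J_v\le1$ and $\int v=U$. A natural choice is the exponential
\[
 v=U\beta e^{-\beta\theta},\qquad \beta\ge1,
\]
whose moment is $J_v=U\beta/(\beta-1)$. This is at most one when $\beta\ge1/(1-U)$, and I would take $\beta=1/(1-U)$; for $U=0$ take $v=0$ and $\beta=1$. This $w$ corresponds to an open-loop target response. To obtain a feedback controller I would use internal model control: a Youla parameter $\mathcal Q(p)=W(p)p$, so that $W=\mathcal Q\cdot(1/p)$, giving
\[
 \mathcal Q=(1+p)B(p)\,\frac{1+(\beta U+1-U)\,\dots}{\dots}.
\]
More cleanly, $\mathcal Q=A(p)\,\frac{1+cp}{1+p/\beta}$ with $c$ chosen so that $pW=\mathcal Q$ reproduces $w$, and this is made proper by cascading with a stable filter of relative degree $N$. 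Here I expect a real difficulty. Exact $A(p)$ inversion needs $W$ to have relative degree $\ge N-1$, while the chosen exponential $v$ has a jump. I would therefore instead select $W=\frac{1}{p}\cdot\frac{A(p)}{F(p)}$, with $F$ a Hurwitz polynomial of degree $N$ with real negative roots and $F(0)=1$, and tune $F$ so that $v$ is nonnegative with moment at most one and area $U$. One option is $F=(1+a_0p)B(p)$, mirroring the zero-delay formula at the end of \cref{sec:n_small_delay}; this gives $v=(1/a_0-1)e^{-\theta/a_0}$ with $U=1-a_0$ and $J_v=1$ exactly. The controller is then
\[
 C=\mathcal Q/(1-P\mathcal Q),\qquad \mathcal Q=A/F,
\]
implemented with an internal copy of $P$, including its delay. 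Internal stability follows from the standard IMC fact that a stable plant with a stable proper $\mathcal Q$ gives an internally stable loop \citep{GarciaMorari1982,Morari1983}, and causality holds because only the delay $e^{-\delta p}$ appears.
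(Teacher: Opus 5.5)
Your proposal is correct and follows essentially the paper's route. It uses the slowest-pole tail and Fatou argument for (i)$\Rightarrow$(iii), the variation-of-constants bound with the slowest lag placed first for (iii)$\Rightarrow$(ii), the signed balance for $A_e=S-U$ with $U<J_v\le1$, and the exponential command $v_a$ with $J_{v_a}=1$ realized by internal model control. The relative-degree obstacle you anticipate is not real, since $pW=(1+p)/(1+ap)$ is already stable and biproper; your final choice $\mathcal Q=A/[(1+a_0p)B]$ is exactly the paper's filter $F_a$ after cancelling the stable factor $B$.
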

\begin{proof}
The slowest-pole tail and Fatou argument in
\cref{thm:n_necessary} proves (i)$\Rightarrow$(iii) without using
the area-ratio equality. It includes repeated slowest poles. To prove
the converse, place the slowest lag first in an equivalent cascade
realization, and temporarily remove the final transport delay. Its
first state satisfies
\begin{equation}
 x_1(\theta)=1-e^{-\theta}
       +e^{-\theta}\int_0^\theta e^r v(r)\dd r.
 \label{eq:complete_first_state}
\end{equation}
If $J_v\le1$, then $0\le x_1\le1$ and
$x_1'=1+v-x_1\ge0$ almost everywhere. Each remaining stable lag
preserves a nondecreasing input and its bounds. Restoring the delay
therefore proves (ii), which implies (i). This statement about
intermediate states uses the reordered realization; it is not a claim
about every ordering of the physical storage elements.

The signed-area identity gives $A_e=S-U$. If $U>0$, then
$U<J_v\le1$, because an ordinary function cannot put positive area
only at zero. For $U=0$ the strict upper bound is immediate.
For any chosen $0<U<1$, put $a=1-U$ and take
\begin{equation}
 v_a(\theta)=\frac{1-a}{a}e^{-\theta/a},\qquad 0<a<1.
 \label{eq:complete_v}
\end{equation}
Its area is $1-a=U$ and $J_{v_a}=1$. The case $U=0$ uses $v=0$.
The feedback realization and its internal stability are proved next.
\end{proof}

Thus the exponential moment previously used only as an obstruction
is also sufficient. After normalization by the slowest time constant,
neither the other lag ratios nor the transport delay changes this
command cone. Those parameters still change $S$ and the required
area ratio. In physical time its condition is
\begin{equation}
 \frac1{\tau_*}\int_0^\infty e^{t/\tau_*}[ku(t)-1]\dd t\le1.
 \label{eq:complete_moment_dimensional}
\end{equation}

\subsection{Closing the necessary--sufficient gap}

For a stable causal filter $F$ with $F(0)=1$, consider the familiar
internal-model realization
\begin{equation}
 \widehat y=P w,\qquad w=F(r-y+\widehat y).
 \label{eq:complete_imc}
\end{equation}
Here $r$ is the reference, not the PI integral-time ratio used in
earlier sections. Since $P$ is strictly proper, this interconnection
has no algebraic loop. Its equivalent error-feedback controller and
nominal closed-loop maps are
\begin{equation}
 \overline C_F=\frac{F}{1-PF},\qquad
 \frac{W}{R}=F,\qquad \frac{Y}{R}=PF.
 \label{eq:complete_imc_maps}
\end{equation}
The architecture and this parameterization are established internal
model control (IMC), not a new predictor principle
\citep{GarciaMorari1982,Morari1983}. What is determined here is the
filter that meets the MIACE equality conditions and its exact feasibility
range. The sign and peak results below do not follow from nominal
stability of IMC alone.

\begin{theorem}[Exact attainment when internal delays are allowed]
\label{thm:complete_attainment}
For \eqref{eq:complete_plant} and $\alpha>0$, an internally stable
causal controller with an ordinary command attains $S m_\alpha$
if and only if
\begin{equation}
 \boxed{\alpha>S-1.}
 \label{eq:complete_iff}
\end{equation}
A sufficient controller, for every delay satisfying this inequality, is
\begin{gather}
 U_* =\frac{S}{1+\alpha},\qquad a=1-U_*\in(0,1),\qquad
 F_a(p)=\frac{1+p}{1+ap},\label{eq:complete_filter}\\
 \boxed{\overline C_a(p)=
 \frac{(1+p)B(p)}{(1+ap)B(p)-e^{-\delta p}}.}
 \label{eq:complete_controller}
\end{gather}
Its output transfer, areas, and initial command peak are
\begin{gather}
 T_a(p)=\frac{e^{-\delta p}}{B(p)(1+ap)},\qquad
 A_e=\frac{\alpha S}{1+\alpha},\qquad A_u=\frac{S}{1+\alpha},
 \label{eq:complete_areas}\\
 \sup_{\theta\ge0}w(\theta)=\frac1a.
 \label{eq:complete_peak}
\end{gather}
The physical controller is $C_a(s)=\overline C_a(\tau_*s)/k$.
\end{theorem}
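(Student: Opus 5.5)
Necessity is already available: \cref{thm:n_necessary} shows that any ordinary command attaining $Sm_\alpha$ forces $\alpha>\mu+\delta-1=S-1$. That result uses only the plant, the MIACE equality signs and Fatou's lemma, so it does not depend on the controller structure. The substance of the proof is therefore sufficiency via the explicit filter $F_a$. I will handle the nominal signals first and internal stability second.

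\emph{Nominal signals.} Assume $\alpha>S-1$. Then $U_*=S/(1+\alpha)\in(0,1)$ and $a=1-U_*\in(0,1)$. By \eqref{eq:complete_imc_maps}, the step command is $W=F_a/p$. Partial fractions then give
\[
 w(\theta)=1+\frac{1-a}{a}e^{-\theta/a},
\]
so $v=v_a$ from \eqref{eq:complete_v}. This command is nonnegative and satisfies $J_{v_a}=1$. It also follows that $\sup w=w(0^+)=1/a$, which is \eqref{eq:complete_peak}. The output transfer is
\[
 T_a=PF_a=\frac{e^{-\delta p}}{B(p)(1+ap)},
\]
because the factor $1+p$ cancels between $F_a$ and $P$. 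This impulse response is a delayed convolution of nonnegative exponentials with unit mass. Hence $y$ is nondecreasing from zero to one and $e\ge0$; alternatively this follows from \cref{thm:command_cone}. The signed areas come from \cref{thm:n_gap}: $\int v=1-a=U_*$ and $\int e=S-U_*=\alpha S/(1+\alpha)$. Both signals are nonnegative, so these are also the absolute areas. The ratio $A_e=\alpha A_u$ holds by construction, and equality in \eqref{eq:n_gap} gives $\Mcal_\alpha=Sm_\alpha$.

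\emph{Internal stability (main obstacle).} The equivalent controller \eqref{eq:complete_controller} contains a delay in its denominator, and the cancellation of $1+p$ hides a mode. Positivity of the reference response therefore cannot be used to infer stability. My plan is to argue from the IMC realization \eqref{eq:complete_imc} itself, with the states of the plant, of the internal model $P$, and of the filter $F_a$ all kept.

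I will write down the closed-loop relations from each exogenous input: a reference, an input disturbance at $w$, an output disturbance at $y$, and model-state initial conditions. In IMC with an exact model, all of these maps are built from products of $P$, $F_a$ and $1-PF_a$. Every one of these is a stable delay-rational function: its poles lie only at $-1$, at the roots of $B$ (real parts at most $-1$), and at $-1/a$, with delay factors $e^{-\delta p}$ only in the numerators. As a cross-check I will form the characteristic quasipolynomial of the unreduced realization. Up to constants, it should factor as a product of $(1+p)$, $B(p)$ and $(1+ap)$, with no delay term in the characteristic part; this parallels \eqref{eq:n_full_characteristic}. The feedback through $-y+\widehat y$ cancels in the nominal loop, which leaves a neutral-free, essentially open-loop stable structure.

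A finite-dimensional retarded system with spectral bound below zero is exponentially stable \citep{HaleLunel1993}. The step I expect to need the most care is making sure the model-mismatch signal $y-\widehat y$, driven by independent initial states, decays. It satisfies a homogeneous equation of the stable plant $P$ alone, so it decays exponentially. That observation should close the argument.
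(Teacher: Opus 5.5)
Your proposal is correct and follows essentially the same route as the paper. Necessity comes from \cref{thm:n_necessary}. The explicit command $v_a$, with $J_{v_a}=1$, gives the signal signs, the areas, the monotone output and the peak $1/a$. Internal stability comes from the exponentially decaying plant--model state difference, which feeds only stable cascades of $P$ and $F_a$ (the four maps $1-PF_a$, $F_a$, $P(1-PF_a)$, $PF_a$). One small correction to the mismatch step: with independent initial delay-line histories, that difference is forced on the first interval of length $\delta$ and is homogeneous only afterwards. It still decays exponentially, as the paper notes.
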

\begin{proof}
Necessity is \cref{thm:n_necessary}. Under the strict inequality,
$0<U_*<1$. The unit-step command of $F_a$ is
$w=1+(1/a-1)e^{-\theta/a}$. Its centered part is
\eqref{eq:complete_v}, so \cref{thm:command_cone} proves the signal
signs and monotonicity. Alternatively, $T_a$ is a delayed cascade
of positive lags and has unit DC gain. The signed-area identity gives
\eqref{eq:complete_areas}; its prescribed ratio proves global attainment
over all admissible controllers, not only over the displayed family.

Internal stability must be checked in the realization, not inferred
from the apparent cancellation in $T_a$. Both the actual plant and
its model are stable cascades with finite delays. At exact parameters
their state difference evolves autonomously and decays exponentially;
any initial delay-line history disappears after a finite interval.
The input of the stable proper filter $F_a$ is $r$ plus this decaying
difference. The command, plant states, model states, and filter state
therefore have exponentially decaying homogeneous responses. The same
argument with additive disturbances uses only stable cascades of
$P$ and $F_a$. In transfer form the four standard internal maps are
$1-PF_a$, $F_a$, $P(1-PF_a)$, and $PF_a$, all stable and causal.
This establishes internal exponential stability while retaining the
model states. Finally $a>0$ makes the command jump finite and gives
\eqref{eq:complete_peak}.
\end{proof}

Expanding the controller at zero gives
$\overline C_a(p)=[(S-U_*)p]^{-1}+O(1)$, in agreement with
\eqref{eq:n_integral_gain}. At zero delay it reduces to a rational
controller already contained in the earlier construction. For positive
delay it is generally nonrational and includes a delay block. Therefore
\cref{thm:complete_attainment} does not enlarge the proved range for
PI, filtered PID, or fixed-order rational controllers without delays.
For identical lags it gives the exact condition
$L/\tau<\alpha-N+1$ in the larger causal class.

The strict boundary also has a concrete meaning. As $U_*\uparrow1$,
$a\downarrow0$ and the command peak diverges. The limiting centered
command is a unit impulse, which is excluded. Within the sign-preserving
class $e,v\ge0$, the full achievable area segment is exactly
\eqref{eq:complete_front}; its endpoint $U=1$ belongs only to its closure.
When $\alpha\le S-1$, the infimum in this sign-preserving class is
$\sqrt{(S-1)^2+\alpha}$ and is not attained by an ordinary command.
This last statement is not a solution of the unrestricted problem,
which permits negative error or command-deficit areas.

\subsection{The least command peak and the exact capped front}

\begin{theorem}[Sharp amplitude-cap criterion]
\label{thm:complete_cap}
Impose $1\le w\le M$ almost everywhere, with $M>1$, and write
\begin{equation}
 c=M-1,\qquad U_M=c\log(1+1/c).
 \label{eq:complete_capacity}
\end{equation}
Among responses with $e,v\ge0$, the attainable command areas are
exactly $0\le U\le U_M$. Consequently the universal MIACE bound
is attainable under this cap if and only if
\begin{equation}
 \boxed{\frac{S}{1+\alpha}\le U_M,\qquad\text{or}\qquad
 \alpha\ge\frac{S}{U_M}-1.}
 \label{eq:complete_cap_iff}
\end{equation}
For every feasible $U$, a realizing command is
\begin{equation}
 v(\theta)=c\,\mathbf1_{[0,T)}(\theta),\qquad T=U/c.
 \label{eq:complete_pulse}
\end{equation}
It is generated by \eqref{eq:complete_imc} with
\begin{equation}
 F_{c,T}(p)=1+c-ce^{-Tp},\qquad
 \overline C_{c,T}(p)=\frac{F_{c,T}(p)}{1-P(p)F_{c,T}(p)}.
 \label{eq:complete_cap_controller}
\end{equation}
This is an internally stable causal feedback realization with finite
command jumps and finite delay blocks. At $U=U_M$, the command
\eqref{eq:complete_pulse} is unique almost everywhere among commands
having that area and satisfying both the cap and nonovershoot.
\end{theorem}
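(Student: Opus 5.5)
The plan is to reduce everything to the exponential-moment characterization of \cref{thm:command_cone}. For a command with $v\ge0$, the condition $e\ge0$ (equivalently $y\le1$) holds if and only if $J_v=\int_0^\infty e^\theta v\dd\theta\le1$. Under the cap, the set of attainable command areas is therefore the set of values of $U=\int v$ over measurable $v$ with $0\le v\le c$ almost everywhere and $J_v\le1$. This is a bathtub (rearrangement) problem. To maximize $\int v$ at a fixed budget on $\int e^\theta v$, the area should sit where the weight $e^\theta$ is smallest, which is as early as possible and at full height $c$.

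The first step is feasibility of the pulse \eqref{eq:complete_pulse}. For $v=c\mathbf1_{[0,T)}$ one computes $J_v=c(e^T-1)$, which is at most one exactly when $T\le T_M:=\log(1+1/c)$. The area $cT$ then ranges over $[0,U_M]$, with $U_M=cT_M$. So every $U\in[0,U_M]$ is realized, and the case $U=0$ uses $v=0$.

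The second step is the upper bound together with uniqueness. Let $v_M=c\mathbf1_{[0,T_M)}$, and let $v$ be admissible. The function $(v_M-v)(\theta)$ is nonnegative for $\theta<T_M$ and nonpositive for $\theta\ge T_M$. The factor $e^\theta-e^{T_M}$ has the opposite sign on each of these two intervals. Hence the product $(v_M-v)(e^\theta-e^{T_M})$ is pointwise nonpositive, and integrating gives
\[
 1\ge J_v\ge J_{v_M}-e^{T_M}\left(\int v_M-\int v\right)
 =1-e^{T_M}(U_M-U).
\]
This forces $U\le U_M$. If $U=U_M$, the integrated pointwise-nonpositive product must vanish, so $v=v_M$ almost everywhere off the single point $T_M$. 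This gives the uniqueness claim.

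The third step is the MIACE statement. By \cref{thm:n_gap}, attainment of $Sm_\alpha$ requires $e,v\ge0$ and $A_u=U_*=S/(1+\alpha)$. Conversely, any sign-preserving response with $A_u=U_*$ attains the bound, because the area balance then gives $A_e=S-U_*=\alpha U_*$. So attainment under the cap is equivalent to $U_*\in[0,U_M]$, which is \eqref{eq:complete_cap_iff}. Since $c\log(1+1/c)<1$, this condition automatically implies $\alpha>S-1$.

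The last step, which I expect to be the main point needing care, is the feedback realization. In \eqref{eq:complete_imc_maps} the reference-to-command map is $F_{c,T}$, so the unit-step command is
\[
 w=(1+c)-c\mathbf1_{[T,\infty)}=1+c\mathbf1_{[0,T)}.
\]
Here $F_{c,T}(0)=1$, and $F_{c,T}$ is a causal, bounded, finite-delay operator, although it is not rational. Internal stability would then follow as in the proof of \cref{thm:complete_attainment}. At exact parameters the plant--model state difference is autonomous and exponentially decaying. The filter is a static gain plus one transport delay, so it has finite memory and any history is flushed after time $T$. The four internal maps $1-PF$, $F$, $P(1-PF)$ and $PF$ are compositions of stable cascades and finite delays. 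What I must check is that no neutral-type instability arises from the delay block inside $F_{c,T}$ within the feedback loop. I would argue this by noting that $P$ is strictly proper, so the loop $PF$ has no direct feedthrough through the delayed term. The realization is therefore retarded, and the characteristic factorization reduces to the stable factors of $P$, its model and $F$.
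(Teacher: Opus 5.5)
Your proof is correct and follows the paper's route: you reduce nonovershoot to $J_v\le1$ via \cref{thm:command_cone}, use a pointwise sign (bathtub) argument against a rectangular pulse, and realize the pulse with the internal-model filter $F_{c,T}$ using the internal-stability argument of \cref{thm:complete_attainment}. The only difference is the comparison pulse. The paper compares $v$ with the equal-area pulse $c\mathbf 1_{[0,U/c)}$, which gives the sharper bound $J_v\ge c(e^{U/c}-1)$ at every area. Your comparison with the fixed boundary pulse gives exactly the tangent line of that bound at $U_M$, and this is enough for both the upper bound $U\le U_M$ and the uniqueness claim.
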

\begin{proof}
Fix an area $U$ and let $v_*=c\mathbf1_{[0,U/c)}$.
For $0\le v\le c$ with the same area, put $T=U/c$. Then
\begin{equation}
 \int_0^\infty e^\theta(v-v_*)\dd\theta
 =\int_0^\infty(e^\theta-e^T)(v-v_*)\dd\theta\ge0.
 \label{eq:complete_rearrangement}
\end{equation}
The integrand is nonnegative on both sides of $T$: before $T$ both
factors are nonpositive; afterwards both are nonnegative. Hence
\begin{equation}
 J_v\ge c(e^{U/c}-1).
 \label{eq:complete_min_moment}
\end{equation}
By \cref{thm:command_cone}, nonovershoot requires this quantity
to be at most one, which is precisely $U\le U_M$.
The rectangular command realizes equality in this moment estimate
and is feasible for every such $U$. Its transfer from reference to
command is $F_{c,T}$, a stable finite-delay filter. The internal-stability
argument for \eqref{eq:complete_imc} applies unchanged. At the boundary,
$J_v\le1=c(e^{U_M/c}-1)$ forces equality in
\eqref{eq:complete_rearrangement}. Strict increase of $e^\theta$
then forces $v=v_*$ almost everywhere.
\end{proof}

This is a design that respects a cap, not a nonlinear saturation or
anti-windup theorem. The cap is proved for the nominal unit-step
experiment. The initial and terminal command jumps do not satisfy
a finite slew-rate constraint.

For a fixed equality area $0<U_*<1$, the smallest possible command
peak is $M_{\min}=1+c_*$, where
\begin{equation}
 \boxed{c_*\log(1+1/c_*)=U_*.}
 \label{eq:complete_min_peak}
\end{equation}
There is exactly one positive solution: the left side increases
continuously from zero to one, since
$\log(1+1/c)-1/(1+c)>0$. The unique command at this least peak
is \eqref{eq:complete_pulse} with $c=c_*$.
For $U_*=1/2$, $c_*=0.3979525473\ldots$, so the least peak is
$1.3979525473\ldots$; the exponential design has peak $2$.
The two designs have exactly the same MIACE and the same two areas.

More generally, the exact minimum over the capped sign-preserving
class is
\begin{equation}
 \widehat U=\min\left\{\frac{S}{1+\alpha},U_M\right\},\qquad
 \boxed{\Mcal_{\alpha,M}^{+}
 =\sqrt{(S-\widehat U)^2+\alpha\widehat U^2}.}
 \label{eq:complete_cap_optimum}
\end{equation}
This follows by minimizing a strictly convex quadratic on
$[0,U_M]$. If \eqref{eq:complete_cap_iff} holds, it is also the
global optimum over all controllers respecting the cap, because
the universal bound is reached. Otherwise global optimality has
been established only in the stated sign-preserving class.

\subsection{Stability and performance bounds under a small delay error}

Exact model matching is nominal. A perturbation estimate is useful
because the controller remains fixed when the physical delay changes.
For a causal stable convolution operator $K$, write $\|K\|_{\rm tv}$
for the total variation of its impulse measure, including any point
masses. This is its induced $L^1$ norm. The following result concerns
the two explicit attaining filters above, with $F=F_a$ or $F_{c,T}$.

\begin{theorem}[Delay-error stability and MIACE deviation]
\label{thm:complete_robust}
Keep the nominal internal model $P=e^{-\delta p}H$ in
\eqref{eq:complete_imc}, but let the actual plant be
$P_\varepsilon=e^{-(\delta+\varepsilon)p}H$, where
$\delta+\varepsilon\ge0$. Set $d=|\varepsilon|$ and
\begin{equation}
 f=\|F\|_{\rm tv}=
 \begin{cases}2/a-1,&F=F_a,\\1+2c,&F=F_{c,T},\ T>0,
 \end{cases}
 \qquad q_d=2f(1-e^{-d}).
 \label{eq:complete_robust_q}
\end{equation}
If $q_d<1$, the perturbed realization is internally exponentially
stable. For the same unit step, with nominal signals denoted by a
subscript zero,
\begin{gather}
 \|w_\varepsilon-w_0\|_1\le\frac{f^2d}{1-q_d},\qquad
 \|e_\varepsilon-e_0\|_1\le\frac{2fd}{1-q_d},
 \label{eq:complete_robust_signals}\\
 \boxed{|\Mcal_{\alpha,\varepsilon}-\Mcal_{\alpha,0}|
 \le\frac{fd\sqrt{4+\alpha f^2}}{1-q_d}.}
 \label{eq:complete_robust_cost}
\end{gather}
The result establishes stability and bounds the performance deviation;
it does not establish nonovershoot, the original cap, or exact equality
after the delay changes.
\end{theorem}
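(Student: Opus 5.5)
The approach is a small-gain argument in the convolution algebra of finite measures on $[0,\infty)$, with the total-variation norm $\|\cdot\|_{\rm tv}$. With the internal model $P$ and the actual plant $P_\varepsilon$, the realization \eqref{eq:complete_imc} gives $w=F(r-y+\widehat y)=F(r-\Delta w)$, where
\[
 \Delta=P_\varepsilon-P=H\,(e^{-(\delta+\varepsilon)p}-e^{-\delta p}).
\]
Introduce the auxiliary signal $s=\Delta w_\varepsilon$. It satisfies $s=\Delta F r-\Delta F s$. Every perturbed signal is then an explicit expression in $s$:
\[
 w_\varepsilon-w_0=-Fs,\qquad y_\varepsilon-y_0=P_\varepsilon w_\varepsilon-Pw_0=PFs\cdot(-1)+s .
\]
The last identity uses $P w_\varepsilon+s=P_\varepsilon w_\varepsilon$ and $P(w_\varepsilon-w_0)=-PFs$. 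So the plan is to bound $\|s\|_1$, and then push the bound through $F$ and $PF$.

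\emph{Step 1: the norms $f$ and $\|\Delta\|_{\rm tv}$.} For $F_a=(1+p)/(1+ap)$, the impulse measure is a point mass $1/a$ plus the kernel $-(1/a-1)a^{-1}e^{-\theta/a}$. The kernel has total mass $1/a-1$ in absolute value, so $f=2/a-1$. For $F_{c,T}=1+c-ce^{-Tp}$, the measure has point masses $1+c$ and $-c$, so $f=1+2c$.

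For $\Delta$, order the cascade with the slowest lag first, so that $H=h_1*h_{\rm rest}$. Here $h_1=e^{-\theta}$, and $h_{\rm rest}\ge0$ has mass one. The norm of $\Delta$ is then at most the $L^1$ distance between $h_1$ and its shift by $d$. A direct computation gives $(1-e^{-d})+(e^d-1)e^{-d}=2(1-e^{-d})$. Hence $\|F\Delta\|_{\rm tv}\le q_d<1$.

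\emph{Step 2: stability.} The resolvent $(1+F\Delta)^{-1}$ is given by a Neumann series, as in \cref{lem:pid_positive_resolvent}. All kernels involved decay exponentially or have finite support, so the contraction persists in a weighted norm $e^{\eta\theta}$ for small $\eta>0$. Each internal map (command, plant, model, filter states, under additive disturbances) is a product of stable factors with this resolvent. This gives internal exponential stability, following the argument of \cref{thm:complete_attainment}.

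\emph{Step 3: signal bounds.} The key estimate is $\|\Delta F r\|_1\le fd$ for the unit step $r$. Write $\Delta F r=F(\Delta r)$. The signal $\Delta r$ is the difference of two shifted monotone unit step responses of $e^{-\delta p}H$. It has constant sign, and its area is exactly $d$. Hence $\|\Delta r\|_1=d$ and $\|\Delta Fr\|_1\le fd$. The equation for $s$ then gives $\|s\|_1\le fd/(1-q_d)$. Consequently $\|w_\varepsilon-w_0\|_1\le f\|s\|_1$, which gives the first bound in \eqref{eq:complete_robust_signals}.

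For the error, nominal monotonicity (\cref{thm:command_cone}, and its use in \cref{thm:complete_cap}) makes the impulse response of $PF$ nonnegative with mass one, for both filters. Therefore $\|e_\varepsilon-e_0\|_1=\|y_\varepsilon-y_0\|_1\le2\|s\|_1$.

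\emph{Step 4: the cost bound.} We have $|A_{e,\varepsilon}-A_{e,0}|\le\|e_\varepsilon-e_0\|_1$ and $|A_{u,\varepsilon}-A_{u,0}|\le\|w_\varepsilon-w_0\|_1$. The triangle inequality for the weighted Euclidean norm $\Mcal_\alpha$ then gives \eqref{eq:complete_robust_cost}.

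The main obstacle is Step 1's bound $\|\Delta\|_{\rm tv}\le2(1-e^{-d})$, together with the exact value $\|\Delta r\|_1=d$. A naive estimate of the shift difference $\delta_\varepsilon-\delta_0$ alone gives norm $2$, which is useless. The improvement must come from the smoothing by the slowest lag and from the positivity of the remaining cascade. Both signs of $\varepsilon$ must be handled, and this works because the shift-difference computation is symmetric.
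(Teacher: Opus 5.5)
Your proposal is correct and follows the paper's proof essentially step for step. It uses the same bounds $\|\Delta\|_{\rm tv}\le 2(1-e^{-d})$ (the shifted slowest lag, then contraction by the remaining unit-mass cascade) and $\|\Delta/p\|_1=d$ (ordered shifted step responses), the same Neumann-series small-gain argument with a small exponential weight, the bound $\|1-PF\|_{\rm tv}\le2$ from nominal monotonicity, and the same reverse triangle inequality. Your auxiliary signal $s=\Delta w_\varepsilon$ is just the time-domain form of the paper's identities $W_\varepsilon-W_0=-F^2\Delta/[(1+F\Delta)p]$ and $E_\varepsilon-E_0=-F(1-PF)\Delta/[(1+F\Delta)p]$, so the resulting norm estimates coincide.
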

\begin{proof}
Let $\Delta=P_\varepsilon-P$. The actual command map is
$W/R=F/(1+F\Delta)$. Since $H=(1+p)^{-1}/B$, contraction by
the nonnegative unit-mass impulse response of $1/B$ gives
\begin{equation}
 \|\Delta\|_{\rm tv}\le2(1-e^{-d}),\qquad
 \|\Delta/p\|_1=d.
 \label{eq:complete_delay_norms}
\end{equation}
The first expression before contraction is the exact $L^1$ distance
between an exponential impulse response and its shift by $d$.
For the second, the two step responses are ordered because $H$
is positive. Integrating their difference over all time gives $d$
times the unit impulse mass, for either sign of $\varepsilon$.

Thus $\|F\Delta\|_{\rm tv}\le q_d<1$. The Neumann series
for $(1+F\Delta)^{-1}$ has norm at most $(1-q_d)^{-1}$.
All participating impulse measures have an exponential moment;
by continuity the same strict inequality holds with a sufficiently
small positive exponential weight. This proves exponential stability
of the inverse and of the full stable-block realization, including
initial-state responses.

For a step input the exact signal differences are
\begin{equation}
 W_\varepsilon-W_0=-\frac{F^2}{1+F\Delta}\frac\Delta p,
 \qquad
 E_\varepsilon-E_0=-\frac{F(1-PF)}{1+F\Delta}\frac\Delta p.
 \label{eq:complete_robust_transforms}
\end{equation}
The nominal transfer $PF$ has a nonnegative unit-mass impulse
response by \cref{thm:command_cone}, so $\|1-PF\|_{\rm tv}\le2$.
Convolution norm inequalities prove \eqref{eq:complete_robust_signals}.
The actual DC gain remains one, so the equilibrium-centered command
is still $w-1$. Finally, changes of absolute areas are bounded by
the corresponding signal $L^1$ differences. The reverse triangle
inequality for the weighted Euclidean norm proves
\eqref{eq:complete_robust_cost}.
\end{proof}

The stability condition is conservative and only sufficient. In particular,
the numerical response may remain satisfactory beyond its radius
$d<-\log(1-1/(2f))$. Its assumptions retain the exact lag constants
and plant gain. A gain error would change the command equilibrium
and requires recentering before any infinite-horizon area comparison.

\subsection{Independent response checks}
\label{sec:complete_checks}

The script \texttt{code/complete\_attainment\_audit.py} integrates the
physical lag cascade for the exponential and rectangular commands,
and compares it with a separate matrix-exponential response formula.
It checks orders one, two, three, and five, repeated and unequal
lags, and delays outside the earlier sufficient rational-construction
interval. A separate method-of-steps integration retains both the
plant and its internal model for nonzero delay errors. Delays are
time shifts, not Pad\'e approximations. Tolerances, finite horizons,
residuals, and the exact formulas used for comparison are saved with
the numerical results. These checks support the calculations; the
infinite-horizon statements rely on the proofs.

Across the ten nominal runs, the maximum discrepancy from the independent
response formula was $1.11\times10^{-10}$ and the largest tracking-area
discrepancy was $2.09\times10^{-11}$. The three delay-error runs use
$\varepsilon=-0.02,0.02,0.05$ in the three-lag example; their MIACE
values are respectively $4.7545220$, $4.7643377$, and $4.8092761$,
against nominal $\sqrt{22.5}=4.7434165$. All satisfy the conservative
bounds in \cref{thm:complete_robust}. Halving the maximum integration
step changes the last MIACE value by less than $10^{-7}$.

\begin{table}[H]
 \centering
 \caption{Nominal equality designs and the reduction in command peak.
 The unequal lag vectors are $(1,0.3)$ in row two and $(1,0.6,0.15)$
 in row four; the other rows have identical lags.}
 \input{data/complete_attainment_table}
 \label{tab:complete_attainment}
\end{table}

\begin{figure}[H]
 \centering
 \includegraphics[width=\linewidth]{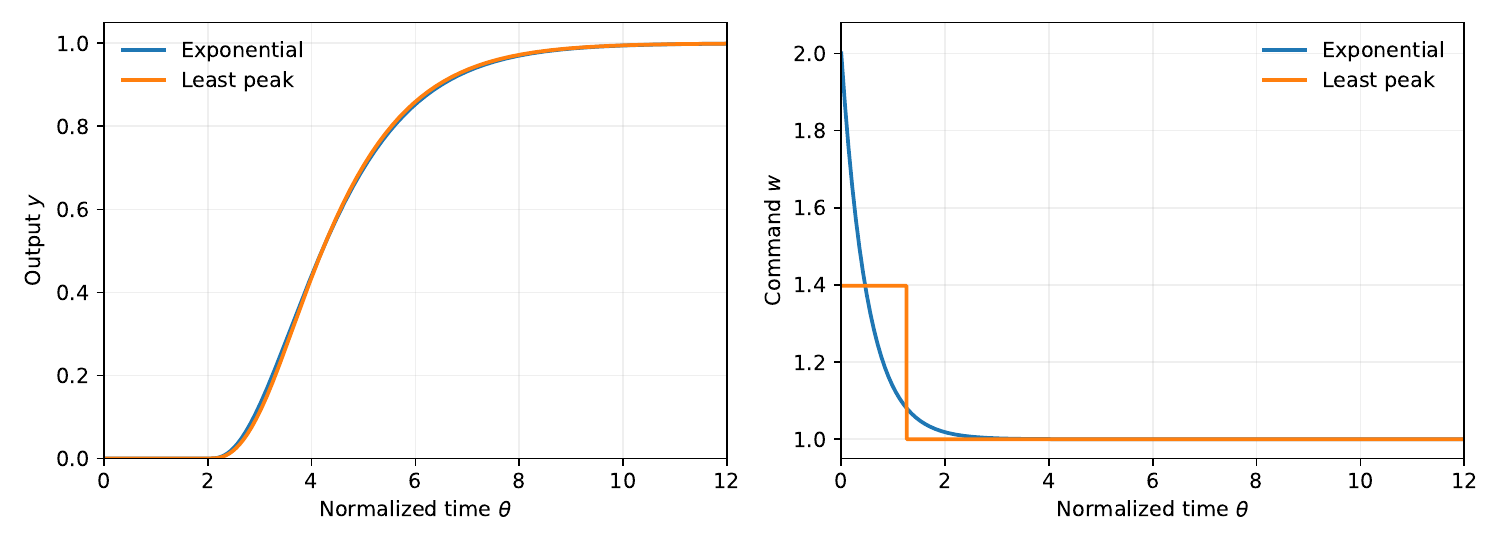}
 \caption{Three identical lags, $\delta=2$, and $\alpha=9$.
 Both designs attain $A_e=4.5$, $A_u=0.5$, and
 $\Mcal_9=\sqrt{22.5}$. The rectangular command has the least
 possible peak among all attaining commands. Internal delay blocks
 are allowed in both feedback realizations.}

 \label{fig:complete_attainment}
\end{figure}


%% file: data/complete_attainment_table.tex
\begin{tabular}{rrrrrr}
\toprule
$N$ & $\delta$ & $\alpha$ & $U_*$ & $\sup w_a$ & $M_{\min}$\\
\midrule
1 & 1.00 & 3 & 0.5 & 2.00000 & 1.39795\\
2 & 1.40 & 5 & 0.45 & 1.81818 & 1.31479\\
3 & 2.00 & 9 & 0.5 & 2.00000 & 1.39795\\
3 & 1.80 & 4.462 & 0.65 & 2.85714 & 1.80494\\
5 & 3.00 & 15 & 0.5 & 2.00000 & 1.39795\\
\bottomrule
\end{tabular}

%% file: sections/03_exact_energies.tex
\section{Quadratic comparison: exact tracking and command energies}
\label{sec:energies}

We first return to the delay-free plant, $L=0$. The MIACE results concern absolute areas. To see what changes when large
excursions receive a squared penalty, consider the centered quadratic
criterion
\begin{equation}
 J=Q\int_0^\infty e^2(t)\dd t+
 R\int_0^\infty\bigl(u(t)-1/k\bigr)^2\dd t,\qquad Q,R>0.
 \label{eq:physical_criterion}
\end{equation}
The weights have units that make the terms comparable. The stationary
PI formula for this criterion is given in
\citet[Sec.~2.
1, pp.~25--28]{Tomov2011}. We retain its derivation and
proof of global optimality as a comparison with MIACE.

Both $e$ and $v$ decay exponentially. Hence the energies
\begin{equation}
 \Ecal=\int_0^\infty e^2\dd\theta,\qquad
 \Ucal=\int_0^\infty v^2\dd\theta
 \label{eq:normalized_energies_definition}
\end{equation}
are finite, with physical scaling
\begin{equation}
 E_2=\tau\Ecal,\qquad U_2=\frac\tau{k^2}\Ucal.
 \label{eq:physical_scaling}
\end{equation}
Consequently,
\begin{equation}
 J=\frac{R\tau}{k^2}F_q,
 \qquad F_q=q\Ecal+\Ucal,
 \qquad q=\frac{Qk^2}{R}>0.
 \label{eq:dimensionless_weight}
\end{equation}
The two energies can be evaluated without separating real and complex poles.
We use a standard second-order energy formula, whose short Gramian derivation
is given in \cref{app:h2}. Here the squared $H^2$ norm of a stable strictly
proper transfer function means the integral of the square of its impulse
response.

\begin{lemma}[Second-order energy formula]
\label{lem:h2}
For $a,b,c>0$ and real $d,e_0$, let
$H(p)=(dp+e_0)/(ap^2+bp+c)$. Then
\begin{equation}
 \norm{H}_2^2=\frac{ae_0^2+cd^2}{2abc}.
 \label{eq:h2_lemma}
\end{equation}
\end{lemma}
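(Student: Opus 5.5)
The approach is a Gramian computation, with an independent check by Parseval. Realize $H$ in controllable companion form: $x'=A_Hx+\beta u$, output $\gamma^{\top}x$, with
\[
A_H=\begin{pmatrix}0&1\\-c/a&-b/a\end{pmatrix},\qquad
\beta=\begin{pmatrix}0\\1\end{pmatrix},\qquad
\gamma^{\top}=\frac1a\,(e_0,\ d).
\]
This realizes $(dp+e_0)/(ap^2+bp+c)$. Since $a,b,c>0$, the denominator is Hurwitz, so the impulse response $\gamma^{\top}e^{A_H\theta}\beta$ decays exponentially. Its squared $L^2$ norm is therefore $\gamma^{\top}W\gamma$, where $W=\int_0^\infty e^{A_H\theta}\beta\beta^{\top}e^{A_H^{\top}\theta}\dd\theta$ is the unique solution of the Lyapunov equation $A_HW+WA_H^{\top}+\beta\beta^{\top}=0$.

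The plan is to solve this $3\times3$ linear system for the symmetric entries $W_{11},W_{12},W_{22}$ explicitly. The $(1,1)$ entry gives $2W_{12}=0$. The $(2,2)$ entry gives $-2(c/a)W_{12}-2(b/a)W_{22}+1=0$, so $W_{22}=a/(2b)$. The $(1,2)$ entry gives $W_{22}-(c/a)W_{11}-(b/a)W_{12}=0$, so $W_{11}=a^2/(2bc)$. Then
\[
\gamma^{\top}W\gamma=\frac1{a^2}\bigl(e_0^2W_{11}+d^2W_{22}\bigr)=\frac{e_0^2}{2bc}+\frac{d^2}{2ab}=\frac{ae_0^2+cd^2}{2abc},
\]
which is \eqref{eq:h2_lemma}. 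The cross term drops out precisely because $W_{12}=0$. That vanishing is the structural reason no $de_0$ term appears, and it is the one point worth stating carefully.

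As a cross-check, which I would mention but not rely on, I would compute the frequency-domain integral $\frac1{2\pi}\int|H(\iu\omega)|^2\dd\omega$. By Parseval, the odd part $de_0\cdot(\text{odd in }\omega)$ of the numerator $|d\iu\omega+e_0|^2$ vanishes, and the two standard integrals $\int\dd\omega/|D|^2$ and $\int\omega^2\dd\omega/|D|^2$ reproduce the same expression.

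No real obstacle is expected. The only care needed is justifying that $W$ exists and is given by the Lyapunov solution, which follows from Hurwitz stability of $A_H$. It also needs noting that real and complex poles require no case split, since the computation never factors the denominator.
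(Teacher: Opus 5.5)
Your proposal is correct and follows essentially the same route as the paper's proof in the appendix: a companion-form realization, the controllability Gramian obtained from the Lyapunov equation, and the vanishing off-diagonal entry $W_{12}=0$ that removes the $de_0$ cross term, with no case split for repeated or complex poles. The only difference is that you put the factor $1/a$ in the output vector instead of the input vector. This rescales the Gramian by $a^2$ and leaves $\gamma^{\top}W\gamma$ unchanged.
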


\begin{proposition}[Exact PI energies]
\label{prop:energies}
For every $x,r>0$,
\begin{align}
 \Ecal(x,r)&=\frac{x+r}{2x(1+x)},
 \label{eq:tracking_energy}\\
 \Ucal(x,r)&=\frac{(x-r)^2+xr(x-1)^2}{2rx(1+x)}.
 \label{eq:effort_energy}
\end{align}
\end{proposition}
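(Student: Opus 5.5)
The plan is to apply \cref{lem:h2} directly to the two transforms in \eqref{eq:error_control_transforms}. Since $\widehat E$ and $\widehat V$ are the Laplace transforms of $e$ and $v$, their squared $H^2$ norms are exactly $\Ecal$ and $\Ucal$ by Parseval. Both transforms share the denominator $D(p)=rp^2+r(1+x)p+x$. Its coefficients $a=r$, $b=r(1+x)$, $c=x$ are strictly positive for every $x,r>0$, so the lemma applies without any case split between real and complex poles. Stability of $D$ was already established in \cref{sec:problem}, so the transforms are strictly proper and stable, as the lemma requires.

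For the tracking energy, $\widehat E=r(p+1)/D(p)$ gives numerator coefficients $d=r$ and $e_0=r$. The formula then yields
\[
\Ecal=\frac{r\cdot r^2+x\cdot r^2}{2\,r\cdot r(1+x)\cdot x}=\frac{r+x}{2x(1+x)},
\]
after cancelling $r^2$. This is \eqref{eq:tracking_energy}.

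For the command energy, $\widehat V=[r(x-1)p+(x-r)]/D(p)$ gives $d=r(x-1)$ and $e_0=x-r$. Substituting,
\[
\Ucal=\frac{r(x-r)^2+x r^2(x-1)^2}{2r^2x(1+x)}.
\]
Cancelling one factor of $r$ gives \eqref{eq:effort_energy}.

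The only point needing care is the justification that $\Ucal=\norm{\widehat V}_2^2$. This requires $v$ to be the causal inverse transform of a strictly proper stable rational function with no impulsive part. That holds because the constant term cancels in $\widehat V=1-(p+1)\widehat E$, as noted after \eqref{eq:error_control_transforms}. The jump $v(0^+)=x-1$ is finite, so it does not obstruct square integrability. Beyond this, the argument is routine algebra.
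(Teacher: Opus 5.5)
Your proposal is correct and follows the paper's own proof. Both apply \cref{lem:h2} with $(a,b,c)=(r,r(1+x),x)$, using numerator coefficients $(r,r)$ for $\widehat E$ and $(r(x-1),x-r)$ for $\widehat V$, then cancel $r^2$ and $r$ respectively. Your added remark that $\widehat V$ is strictly proper, so $v$ has no impulsive part, is a reasonable justification that the paper leaves implicit.
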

\begin{proof}
In \cref{lem:h2}, take $(a,b,c)=(r,r(1+x),x)$.
For $\widehat E$, use $(d,e_0)=(r,r)$. Its numerator in
\eqref{eq:h2_lemma} is $r^3+xr^2=r^2(x+r)$, and its denominator is
$2r^2x(1+x)$. For $\widehat V$, use
$(d,e_0)=(r(x-1),x-r)$. The numerator is
$r(x-r)^2+xr^2(x-1)^2$; cancellation of one factor $r$ gives
\eqref{eq:effort_energy}.
\end{proof}

The formulas remain regular at repeated poles. Moreover, $\Ucal=0$ requires
$x=r=1$. This controller applies the equilibrium command immediately and
keeps it constant for $t>0$. The plant then follows its natural first-order
response. Zero centered effort therefore does not mean zero command: the
initial command jump and the steady operating level are outside this
particular integral penalty.

For the optimization, collecting the terms gives
\begin{equation}
 2x(1+x)F_q=(1+q)r+\frac{x^2}{r}
              +x(q+x^2-2x-1).
 \label{eq:expanded_objective}
\end{equation}

%% file: sections/04_global_optimum.tex
\section{The global quadratic optimum}
\label{sec:global}

Before minimizing the rational expression, it helps to see why a simple
answer is possible. Multiply \eqref{eq:error_balance} by $e$ and integrate.
Since $e(0^+)=1$ and $e(\infty)=0$,
\begin{equation}
 -\frac12=-\Ecal-\int_0^\infty ev\dd\theta,
 \qquad \int_0^\infty ev\dd\theta=\frac12-\Ecal.
 \label{eq:cross_energy}
\end{equation}
Put $s_q=\sqrt{1+q}$. Expanding a square and using
$q=(s_q-1)^2+2(s_q-1)$ gives, term by term,
\begin{align*}
 \int_0^\infty[v-(s_q-1)e]^2\dd\theta
 &=\Ucal-2(s_q-1)(1/2-\Ecal)+(s_q-1)^2\Ecal\\
 &=\Ucal+\bigl[2(s_q-1)+(s_q-1)^2\bigr]\Ecal-(s_q-1)\\
 &=F_q-(s_q-1).
\end{align*}
Consequently,
\begin{equation}
 \boxed{F_q-(s_q-1)
 =\int_0^\infty\bigl[v-(s_q-1)e\bigr]^2\dd\theta.}
 \label{eq:signal_certificate}
\end{equation}
The cost cannot be smaller than $s_q-1$. Equality requires
$v=(s_q-1)e$, so the plant equation becomes $e'=-s_qe$.
Thus the optimal error is $e^{-s_q\theta}$. The following parameter identity
shows exactly which PI controller produces it and how much any other
positive PI controller loses.

\begin{theorem}[Unique global quadratic optimum]
\label{thm:global}
For $q>0$, $s_q=\sqrt{1+q}$, and every $x,r>0$,
\begin{equation}
 \boxed{F_q(x,r)-(s_q-1)
 =\frac{(s_qr-x)^2}{2rx(1+x)}
  +\frac{(x-s_q)^2}{2(1+x)}.}
 \label{eq:sos_certificate}
\end{equation}
The unique global minimizer and minimum are
\begin{equation}
 r^\star=1,\qquad x^\star=s_q,\qquad F_q^\star=s_q-1.
 \label{eq:global_normalized_solution}
\end{equation}
In physical variables,
\begin{equation}
 \boxed{T_i^\star=\tau,\qquad
 K_c^\star=\frac1k\sqrt{1+\frac{Qk^2}{R}},}
 \label{eq:physical_global_controller}
\end{equation}
and, for a unit step,
\begin{equation}
 J^\star=\frac{R\tau}{k^2}
       \left(\sqrt{1+\frac{Qk^2}{R}}-1\right).
 \label{eq:physical_global_value}
\end{equation}
\end{theorem}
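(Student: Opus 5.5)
The plan is to verify the algebraic identity \eqref{eq:sos_certificate} directly from the closed-form energies of \cref{prop:energies}, in the form \eqref{eq:expanded_objective}. Then we read off the minimizer from the sum of squares and convert to physical units. The signal certificate \eqref{eq:signal_certificate} already shows that $s_q-1$ is a lower bound over all admissible signals. The parameter identity adds the exact loss of every positive PI setting and gives uniqueness without any equality-case argument about signals.

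\emph{Step one: the identity.} Multiply both sides of \eqref{eq:sos_certificate} by $2rx(1+x)>0$. The left side becomes
\[
 (1+q)r^2+x^2+rx(q+x^2-2x-1)-2rx(1+x)(s_q-1),
\]
after multiplying \eqref{eq:expanded_objective} by $r$. The right side becomes $(s_qr-x)^2+rx(x-s_q)^2$. Expanding the right side gives $s_q^2r^2-2s_qrx+x^2+rx^3-2s_qrx^2+s_q^2rx$. Replace $s_q^2$ by $1+q$ throughout and compare monomials.
\begin{itemize}
 \item The terms $r^2$ and $x^2$ match immediately.
 \item The $rx^3$ term matches.
 \item For $rx^2$, the left side gives $-2-2(s_q-1)=-2s_q$, which matches.
 \item For $rx$, the left side gives $q-1-2(s_q-1)=q+1-2s_q$, and the right side gives $-2s_q+(1+q)$, which also matches.
\end{itemize}
This routine check is the main computational step. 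The only care needed is to use $s_q^2=1+q$ consistently and not expand $s_q$ itself.

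\emph{Step two: the minimizer.} For $x,r>0$, both terms on the right are nonnegative, so $F_q\ge s_q-1$. Equality requires both terms to vanish: $x=s_q$ from the second and $s_qr=x$ from the first, hence $r=1$. Conversely, $(x,r)=(s_q,1)$ makes both terms zero. Therefore the minimizer is unique and $F_q^\star=s_q-1$. As a consistency check, this agrees with the signal-level bound, since $r=1$, $x=s_q$ gives $e=e^{-s_q\theta}$ and $v=(s_q-1)e$.

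\emph{Step three: physical units.} Undo the normalization \eqref{eq:normalization}: $T_i=r\tau=\tau$ and $K_c=x/k=s_q/k$, with $q=Qk^2/R$. This gives \eqref{eq:physical_global_controller}. Finally, \eqref{eq:dimensionless_weight} gives $J^\star=(R\tau/k^2)F_q^\star$, which is \eqref{eq:physical_global_value}.

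The only real obstacle is bookkeeping in the polynomial identity. I would present the grouped coefficient comparison, collected by the monomials $r^2, x^2, rx^3, rx^2, rx$, rather than a full expansion.
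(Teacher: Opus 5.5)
Your proposal is correct and takes the same route as the paper. You verify the sum-of-squares identity from the expanded objective \eqref{eq:expanded_objective}, read off $(x,r)=(s_q,1)$ as the unique common zero of the two nonnegative terms, and rescale via \eqref{eq:normalization} and \eqref{eq:dimensionless_weight}. The only difference is presentation: the paper completes the square, writing $s_q^2r+x^2/r=(s_qr-x)^2/r+2s_qx$ and reducing the remaining numerator to $x(x-s_q)^2$, whereas you clear the denominator $2rx(1+x)$ and match the coefficients of $r^2,x^2,rx^3,rx^2,rx$. Those coefficient comparisons all check out.
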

\begin{proof}
In \eqref{eq:expanded_objective}, write
\[
 s_q^2r+\frac{x^2}{r}
 =\frac{(s_qr-x)^2}{r}+2s_qx.
\]
Subtract $2x(1+x)(s_q-1)$ from the numerator of
\eqref{eq:expanded_objective}. After removing $(s_qr-x)^2/r$, the
remaining numerator is
\[
 x\{2s_q+s_q^2-1+x^2-2x-1-2(1+x)(s_q-1)\}
 =x(x-s_q)^2.
\]
Dividing by $2x(1+x)$ proves
\eqref{eq:sos_certificate}. Both denominators are positive. The second
square vanishes only at $x=s_q$, and the first then vanishes only at $r=1$.
Equations \eqref{eq:physical_global_controller} and
\eqref{eq:physical_global_value} follow from the scaling of $F_q$.
\end{proof}

The controller zero cancels the stable plant pole at this optimum, but
cancellation has not been assumed in the minimization. The two squares give
an explicit measure of the loss caused by changing the gain and integral
time. For $k=\tau=Q=R=1$, they give $K_c^\star=\sqrt2$, $T_i^\star=1$, and
$J^\star=\sqrt2-1$.

Equation \eqref{eq:signal_certificate} is the scalar LQR completion of
squares for $e'=-e-v$. Its Riccati equation is $P^2+2P-q=0$, with
$P=s_q-1$ \citep{AndersonMoore1990}. In fact, that signal identity holds for
any absolutely continuous error with $e(0)=1$, $e(\infty)=0$, and
$e,v\in L^2$ satisfying the plant equation. The PI family attains this
broader signal-level minimum. Accordingly, global quadratic optimality here
is a specialization of classical optimal control; the explicit PI gap and
the subsequent controller-space characterizations are the results needed
for the present tuning problem.

The LQR feedback is written for the centered command $v$, whereas the PI
law is written for $w=v+1$. Their agreement can be checked directly. With
$x=s_q,r=1$,
\[
 w=s_qe^{-s_q\theta}+s_q\int_0^\theta e^{-s_q\xi}\dd\xi
   =1+(s_q-1)e^{-s_q\theta}.
\]
Thus the integral contribution supplies the equilibrium offset, and the
transient part is exactly the scalar optimal feedback. This statement is
about the stipulated step and initial state; it does not assert that the PI
and the scalar feedback have identical responses to arbitrary disturbances.

%% file: sections/04b_delayed_quadratic.tex
\subsection{Why the quadratic setting also changes under delay}
\label{sec:delayed_quadratic}

The delayed MIACE formula does not provide the delayed quadratic optimum.
Even on the cancellation family, the latter cost depends on the delay
through a different expression. It is useful to calculate it exactly,
because it shows directly why the delay-free gain cannot simply be kept.

\begin{proposition}[Delayed quadratic energies on the cancellation family]
\label{prop:delayed_quadratic}
For $r=1$ and $0\le x\delta<\pi/2$, define
\begin{equation}
 \Psi(z)=\frac{1+\sin z}{\cos z}.
 \label{eq:delay_energy_function}
\end{equation}
The error and centered-command energies are
\begin{equation}
 \Ecal_\delta(x)=\frac{\Psi(x\delta)}{2x},\qquad
 \Ucal_\delta(x)=(x^2+1)\Ecal_\delta(x)-\delta-1.
 \label{eq:delayed_quadratic_energies}
\end{equation}
Consequently, for $q>0$,
\begin{equation}
 F_q(x,1;\delta)
 =\frac12\left(x+\frac{q+1}{x}\right)\Psi(x\delta)-\delta-1.
 \label{eq:delayed_quadratic_cost}
\end{equation}
If $\delta>0$ and the old gain $s_q=\sqrt{1+q}$ still satisfies
$s_q\delta<\pi/2$, then
\begin{equation}
 \left.\frac{\partial F_q(x,1;\delta)}{\partial x}\right|_{x=s_q}
 =s_q\delta \Psi'(s_q\delta)>0.
 \label{eq:delayed_old_quadratic_derivative}
\end{equation}
Thus the old quadratic optimum is no longer stationary even on $r=1$.
\end{proposition}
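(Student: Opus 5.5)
On $r=1$ the error satisfies the scalar delay equation \eqref{eq:delayed_error_dde}, $e'(\theta)=-xe(\theta-\delta)$, with $e=1$ on $[0,\delta]$. By \cref{lem:delayed_cancellation} it decays exponentially for $x\delta<\pi/2$. The plan is to compute $\Ecal_\delta$ through the autocorrelation $c(s)=\int_0^\infty e(\theta)e(\theta+s)\dd\theta$. The value $c(0)$ is what we want, and we derive two scalar relations between $c(0)$ and $c(\delta)$.

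\emph{First relation.} Multiply the equation by $e(\theta)$ and integrate on $[0,\infty)$. The left side gives $-\tfrac12 e(0)^2=-\tfrac12$. On the right, $\int_0^\infty e(\theta)e(\theta-\delta)\dd\theta=c(\delta)$, because $e(\theta-\delta)=0$ for $\theta<\delta$. Hence $x\,c(\delta)=\tfrac12$.

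\emph{Second relation.} Differentiate $c$ in $s$ and use the equation:
\[
 c'(s)=-x\int_0^\infty e(\theta)e(\theta+s-\delta)\dd\theta .
\]
For $0\le s\le\delta$ the integrand involves only values where $e$ is known or shifted, so after the substitution $\eta=\theta+s-\delta$ this equals $-x[c(\delta-s)+\int_{0}^{\delta-s}e(\eta)\,\cdot\,(\text{boundary piece})]$. I expect the cleanest route is to work instead with the extended correlation $c(s)$ for $s\in[-\delta,\delta]$, using $c(-s)=c(s)$, where the prehistory is zero and the cancellation form $e'=-xe(\cdot-\delta)$ holds for all $\theta>0$. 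This yields the coupled linear system $c'(s)=-x\,c(s-\delta)$ and $c'(s-\delta)=x\,c(s)$ on the appropriate range, together with a jump contribution from $e(0)=1$. The system gives $c''=-x^2c$. With symmetry at $s=\delta/2$ the solution is a combination of $\cos(xs)$ and $\sin(xs)$. The constants are fixed by $c(\delta)=1/(2x)$ and the derivative jump $c'(0^+)-c'(0^-)=-e(0)^2=-1$. Solving this $2\times2$ trigonometric system gives $c(0)=\Psi(x\delta)/(2x)$.

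The main obstacle is this second step: setting up the correlation ODE correctly across the waiting interval, where $e\equiv1$ and the prehistory is zero. To guard against sign errors, I would cross-check the result at $\delta=0$, where it must reduce to $1/(2x)$. I would also verify it independently via Parseval on $\widehat E=1/(p+xe^{-\delta p})$ from \eqref{eq:delayed_cancellation}, evaluating $\frac1{2\pi}\int|i\omega+xe^{-i\omega\delta}|^{-2}\dd\omega$ as a known delay-integral.

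For $\Ucal_\delta$, use \eqref{eq:delayed_command_identity}, $v(\theta)=xe(\theta)-e(\theta+\delta)$, and square it. This gives
\[
 \Ucal=x^2c(0)-2x\,c(\delta)+\int_0^\infty e(\theta+\delta)^2\dd\theta .
\]
The last integral is $c(0)-\delta$, since $e=1$ on $[0,\delta]$. Substituting $xc(\delta)=\tfrac12$ gives $(x^2+1)\Ecal-\delta-1$.

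The cost $F_q=q\Ecal+\Ucal$ then follows by algebra. For the derivative, write $F_q=\tfrac12 g(x)\Psi(x\delta)-\delta-1$ with $g(x)=x+(q+1)/x$. Then $g'(s_q)=0$, so
\[
 \partial_xF_q\big|_{x=s_q}=\tfrac12 g(s_q)\,\delta\,\Psi'(s_q\delta)=s_q\delta\,\Psi'(s_q\delta).
\]
Positivity comes from $\Psi'(z)=(1+\sin z)/\cos^2 z>0$ on $[0,\pi/2)$.
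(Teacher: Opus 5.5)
Your proposal is correct and follows essentially the paper's route: the autocorrelation satisfies $c'(s)=-x\,c(\delta-s)$ on $(0,\delta)$, hence $c''=-x^2c$, with constants fixed by $c'(0^+)=-\tfrac12$ and $c(\delta)=1/(2x)$. Your direct derivation of $c(\delta)=1/(2x)$ is the same identity the paper obtains by letting $s\downarrow0$, and $\Ucal_\delta$ then follows from squaring $v=xe(\theta)-e(\theta+\delta)$ exactly as you do. In your second step, though, the zero prehistory already makes the integrand vanish for $\theta<\delta-s$, so $c'(s)=-x\,c(\delta-s)$ holds directly with no boundary piece and no even extension. Also, $c$ has no symmetry about $\delta/2$ (indeed $c(0)=\Psi(x\delta)/(2x)>c(\delta)$ when $x\delta>0$), so that remark should be dropped.
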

\begin{proof}
For $\delta=0$, \eqref{eq:delayed_quadratic_energies} reduces to the
previous cancellation formulas because $\Psi(0)=1$.
Now let $\delta>0$. Exponential stability makes the following
integrations and differentiations legitimate. Put
\[
 R_e(s)=\int_0^\infty e(t)e(t+s)\dd t,\qquad s\ge0.
\]
For $0<s<\delta$, \eqref{eq:delayed_error_dde} and its zero
prehistory give
\begin{align*}
 R_e'(s)
 &=-x\int_{\delta-s}^\infty e(t)e(t+s-\delta)\dd t\\
 &=-x\int_0^\infty e(u+\delta-s)e(u)\dd u
 =-xR_e(\delta-s).
\end{align*}
Differentiating this equality yields
$R_e''(s)=xR_e'(\delta-s)=-x^2R_e(s)$.
The initial derivative is
\[
 R_e'(0^+)=\int_0^\infty ee'\dd t
 =\tfrac12[e(\infty)^2-e(0)^2]=-\tfrac12.
\]
Taking $s\downarrow0$ in $R_e'(s)=-xR_e(\delta-s)$ also gives
$R_e(\delta)=1/(2x)$. Since $R_e(0)=\Ecal_\delta$,
\[
 R_e(s)=\Ecal_\delta\cos(xs)-\frac{\sin(xs)}{2x}.
\]
Evaluating at $s=\delta$ proves the first energy formula; division
by $\cos(x\delta)$ is valid in the specified stable interval.

Next square \eqref{eq:delayed_command_identity} and integrate.
The waiting interval has $e=1$, so
$\int_0^\infty e(t+\delta)^2\dd t=\Ecal_\delta-\delta$.
The cross term is $R_e(\delta)=1/(2x)$. Therefore
\[
 \Ucal_\delta=x^2\Ecal_\delta+(\Ecal_\delta-\delta)
              -2xR_e(\delta)
 =(x^2+1)\Ecal_\delta-\delta-1.
\]
This proves \eqref{eq:delayed_quadratic_cost} as well.
At $x=s_q$, the derivative of $x+(q+1)/x$ is zero, while that
factor itself equals $2s_q$. Finally
$\Psi'(z)=(1+\sin z)/\cos^2z>0$ for $0\le z<\pi/2$.
These observations prove \eqref{eq:delayed_old_quadratic_derivative}.
\end{proof}

Decreasing the old gain slightly therefore lowers the delayed quadratic
cost whenever the old setting remains stable. This comparison does not
solve the two-parameter delayed quadratic problem or establish that its
optimum must still have $r=1$. It also shows why the delay-free
weight-matching relation derived below should not be carried over to
a delayed plant without a separate calculation.

%% file: sections/05_pareto_geometry.tex
\section{The complete quadratic trade-off}
\label{sec:pareto}

For the delay-free plant, changing $q$ changes the preferred balance between tracking and effort.
To describe all such choices, we determine the Pareto front. A controller is
Pareto efficient if no other admissible controller has both objectives no
larger and at least one strictly smaller.

Cauchy--Schwarz applied to \eqref{eq:cross_energy} gives
\[
 \Ecal\Ucal\ge(1/2-\Ecal)^2.
\]
This bound comes directly from the plant. The PI energy formulas give its
exact residual.

\begin{theorem}[Complete $L^2$ Pareto front]
\label{thm:l2_pareto}
For all $x,r>0$,
\begin{equation}
 \boxed{4\Ecal(\Ucal+1-\Ecal)-1
 =\frac{x(r-1)^2}{r(1+x)^2}\ge0.}
 \label{eq:pareto_identity}
\end{equation}
Thus
\begin{equation}
 \Ucal\ge\phi(\Ecal):=\Ecal+\frac{1}{4\Ecal}-1.
 \label{eq:pareto_lower_bound}
\end{equation}
The complete efficient controller set is
\begin{equation}
 r=1,\qquad x\ge1,
 \label{eq:pareto_controller_set}
\end{equation}
and its objective front is
\begin{equation}
 \boxed{\Ecal=\frac1{2x},\qquad
 \Ucal=\frac{(x-1)^2}{2x},\qquad x\ge1.}
 \label{eq:pareto_parametrization}
\end{equation}
\end{theorem}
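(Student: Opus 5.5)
The identity \eqref{eq:pareto_identity} will be verified directly from the closed forms in \cref{prop:energies}. Put $D=2x(1+x)$, so that $\Ecal=(x+r)/D$. From \eqref{eq:effort_energy} we get
\[
 \Ucal+1-\Ecal=\frac{(x-r)^2+xr(x-1)^2+2rx(1+x)-r(x+r)}{rD}.
\]
The next step is to expand this numerator. A cleaner route is to note that it should equal $(x+r)(x+r x^2)/\ldots$ or a similar product. We will test a factorization of the form $(x+r)\cdot(\text{something})$, because the left side $4\Ecal(\Ucal+1-\Ecal)$ must then reduce to a quotient with denominator $rD^2/4=r x^2(1+x)^2$.

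Concretely, expand $(x-r)^2+xr(x^2-2x+1)+2rx+2rx^2-rx-r^2$. This gives $x^2-2xr+xr x^2-2x^2r+xr+2rx+2rx^2-rx$. The terms simplify to $x^2+x^3r$, so the numerator equals $x^2(1+xr)$.

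Hence
\[
 4\Ecal(\Ucal+1-\Ecal)=\frac{4(x+r)x^2(1+xr)}{rD^2}=\frac{(x+r)(1+xr)}{r(1+x)^2}.
\]
Subtracting $1$ gives numerator $x+x^2r+r+xr^2-r(1+x)^2=x(r-1)^2$, which is exactly \eqref{eq:pareto_identity}. Since $4\Ecal>0$, the inequality rearranges to $\Ucal\ge\phi(\Ecal)$.

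For the efficient set, equality in \eqref{eq:pareto_lower_bound} holds iff $r=1$. On $r=1$, \cref{prop:energies} gives $\Ecal=1/(2x)$ and $\Ucal=(x-1)^2/(2x)$, which is \eqref{eq:pareto_parametrization}.

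Next we show which points of this curve are efficient.
\begin{itemize}
 \item $\phi$ is decreasing on $(0,1/2]$, since $\phi'(E)=1-1/(4E^2)<0$ there, and increasing for $E>1/2$. The range $x\ge1$ corresponds to $\Ecal\in(0,1/2]$.
 \item \emph{Efficiency of $x\ge1$.} Take a rival $(\Ecal',\Ucal')$ with $\Ecal'\le\Ecal\le 1/2$. Then $\Ucal'\ge\phi(\Ecal')\ge\phi(\Ecal)=\Ucal$, with equality throughout forcing $\Ecal'=\Ecal$ (strict monotonicity). So no rival dominates.
 \item \emph{Inefficiency of everything else.} For $r=1$, $x<1$, the point has $\Ecal>1/2$ and is strictly dominated by $(1,1)$, whose objectives are $(1/2,0)$. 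Any $r\ne1$ lies strictly above $\phi$. It is dominated by the point on the front with $\Ecal'=\min\{\Ecal,1/2\}$: if $\Ecal\le1/2$ this gives equal $\Ecal$ and strictly smaller $\Ucal$; otherwise it gives $(1/2,0)$, which is strictly better since $\Ucal>\phi(\Ecal)\ge0$.
\end{itemize}

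The main obstacle is only the algebraic simplification of the numerator. The efficiency part is routine monotonicity of $\phi$.
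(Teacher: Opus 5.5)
Your proposal is correct and follows the paper's proof in substance. You use the same direct elimination from \cref{prop:energies}: your numerator $x^2(1+xr)$ is exactly the paper's intermediate expression $\Ucal+1-\Ecal=x(x+1/r)/[2(1+x)]$. You also prove efficiency the same way, through strict decrease of $\phi$ on $(0,1/2]$.

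The only difference is cosmetic. You dominate each $r\ne1$ controller directly by a front point with $\Ecal'=\min\{\Ecal,1/2\}$. The paper instead uses the cancellation controller $\bar x=x(1+x)/(x+r)$, $\bar r=1$, which has the same tracking energy, and handles the $x<1$ branch separately via $(1,1)$.
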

\begin{proof}
The elimination can be checked with one intermediate expression:
\[
 \Ucal+1-\Ecal=\frac{x(x+1/r)}{2(1+x)}.
\]
Multiplication by $4\Ecal$ gives
$(x+r)(x+1/r)/(1+x)^2$. Subtracting one leaves
$x(r+1/r-2)/(1+x)^2$, which is \eqref{eq:pareto_identity}.
Equality holds exactly when $r=1$.
For any controller define
\begin{equation}
 \bar x=\frac1{2\Ecal(x,r)}=\frac{x(1+x)}{x+r},\qquad \bar r=1.
 \label{eq:same_tracking_controller}
\end{equation}
It has the same tracking energy and effort $\phi(\Ecal)$. If $r\ne1$,
its effort is strictly smaller, so the original controller is dominated.
On $r=1$, gains $x<1$ are dominated by $(1,1)$, which has objectives
$(1/2,0)$.

It remains to show that every point with $r=1$, $x\ge1$ is efficient.
On $0<\Ecal\le1/2$, $\phi$ is strictly decreasing. If another controller
has a strictly smaller tracking energy, \eqref{eq:pareto_lower_bound}
forces a strictly larger effort. At equal tracking energy, no smaller effort
is possible. The endpoint $(1/2,0)$ is efficient because zero effort is
attained only at $(1,1)$. This proves both completeness and efficiency.
\end{proof}

The distinction between a lower boundary and an efficient front matters.
The cancellation curve also exists for $x<1$, but that part is dominated.
The efficient front has $0<\Ecal\le1/2$ and is strictly convex because
$\phi''(\Ecal)=1/(2\Ecal^3)>0$. Its interior points correspond one-to-one
to positive weights:
\begin{equation}
 q=x^2-1,\qquad x=\sqrt{1+q}>1.
 \label{eq:weight_controller_bijection}
\end{equation}
The endpoint $x=1$ corresponds to $q=0$, or to the limit $q\downarrow0$;
no positive weight selects it exactly.

\begin{figure}[t]
 \centering
 \includegraphics[width=0.91\linewidth]{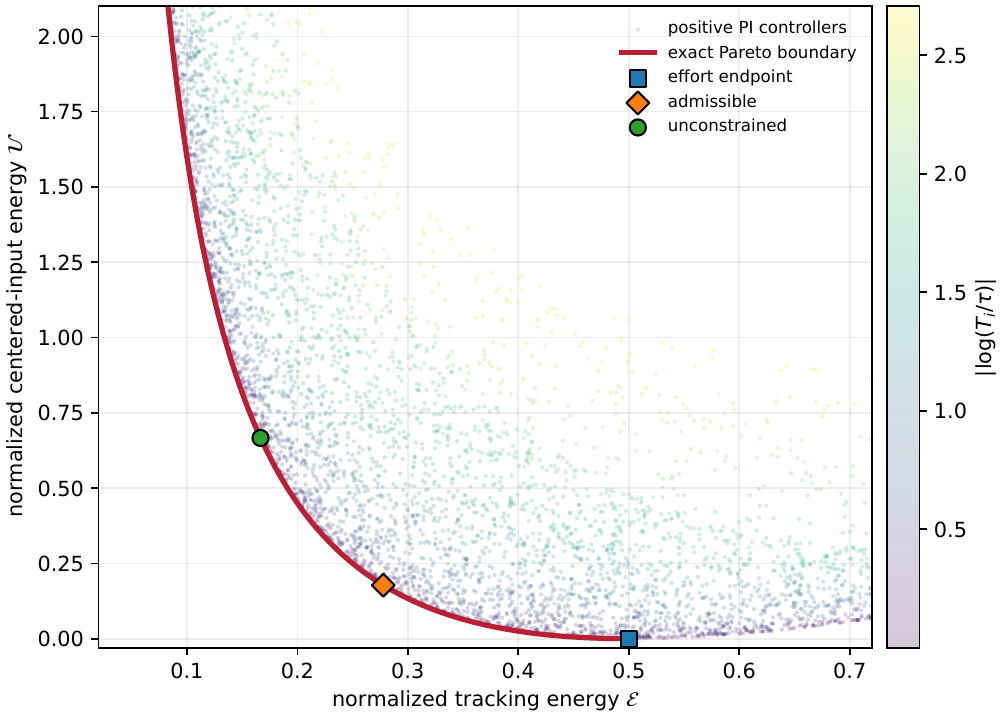}
 \caption{A sample of $12{,}000$ positive PI controllers and the exact
 efficient front. Color represents $|\log(T_i/\tau)|$, the logarithmic
 distance from the cancellation line. The sample illustrates the identity;
 it is not used to establish the front.}

 \label{fig:pareto_cloud}
\end{figure}

\subsection{Matching the quadratic and MIACE weights}

The quadratic and absolute-area
criteria select the same efficient controller when
\begin{equation}
 \sqrt{1+q}=1+\frac1\alpha,\qquad
 q=\frac2\alpha+\frac1{\alpha^2}.
 \label{eq:matched_weights}
\end{equation}
This equates their selected gains, not their values away from the optimum.

\begin{figure}[t]
 \centering
 \includegraphics[width=0.90\linewidth]{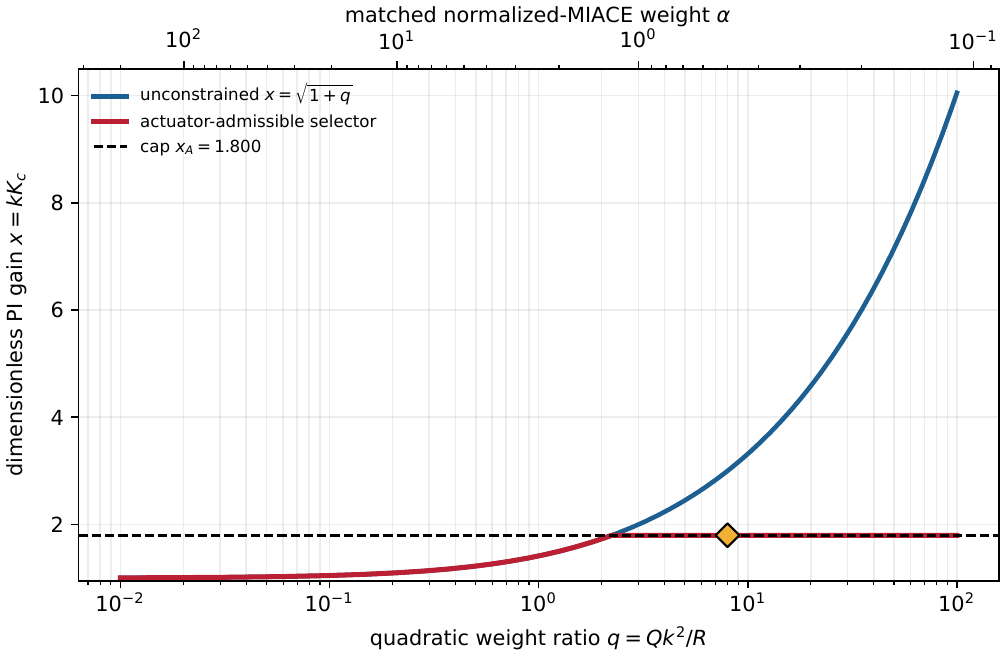}
 \caption{The quadratic weight, matched normalized MIACE weight, and selected
 gain on the common efficient family. The capped curve restricts selection
 to the part of that family passing the stated amplitude and post-jump-rate
 bounds. It does not solve the unrestricted constrained PI problem.}

 \label{fig:weight_gain_map}
\end{figure}

%% file: sections/07_actuator_design.tex
\section{Command bounds and the complete nonovershooting class}
\label{sec:actuator}

The domain and constrained optima in this section concern the delay-free
plant, $L=0$.

A small integrated effort does not exclude a large command peak. A
command-amplitude constraint must therefore be imposed separately, as in
earlier constrained PI designs
\citep{KlanGorez2008,Shi2009,ThuLee2013}. For a positive unit step from a
zero operating point, let
\begin{equation}
 0\le u(t)\le u_{\max},\qquad A=ku_{\max}\ge1.
 \label{eq:actuator_bound}
\end{equation}
The lower bound on $A$ follows from $ku_\infty=1$. We first identify every
positive PI controller whose output approaches one without crossing it.
A restriction to $x,r\ge1$ would discard valid monotone responses with $x<1$.

\subsection{Which positive PI controllers avoid overshoot?}

Set $\Delta=(1+x)^2-4x/r$. When $\Delta>0$, let
\[
 \lambda_1=\frac{1+x-\sqrt\Delta}{2},\qquad
 \lambda_2=\frac{1+x+\sqrt\Delta}{2}.
\]
They are positive and the closed-loop poles are $-\lambda_1,-\lambda_2$.
Partial fractions of $\widehat E$ give
\begin{equation}
 e=a_1e^{-\lambda_1\theta}+a_2e^{-\lambda_2\theta},\qquad
 a_1=\frac{1-\lambda_1}{\lambda_2-\lambda_1},\quad
 a_2=\frac{\lambda_2-1}{\lambda_2-\lambda_1}.
 \label{eq:positive_residues}
\end{equation}
These coefficients sum to one, and
$\lambda_1a_1+\lambda_2a_2=x$, as required by the initial conditions.
They need not both be positive.

\begin{theorem}[Complete nonovershooting domain]
\label{thm:monotone}
For the positive PI domain and zero initial conditions, the following are
equivalent: $y(\theta)\le1$ for all $\theta\ge0$; $y$ is nondecreasing
from zero to one; and
\begin{equation}
 (x,r)\in\mathcal C_{\rm mon}
 :=\{x>0,\ r\ge m(x)\},\qquad
 m(x)=
 \begin{cases}
 4x/(1+x)^2,&0<x\le1,\\
 1,&x\ge1.
 \end{cases}
 \label{eq:nonovershoot_class}
\end{equation}
Every controller in this class satisfies
\begin{equation}
 0\le w(\theta)\le\max\{1,x\},\qquad
 \sup_{\theta\ge0}w(\theta)=\max\{1,x\}.
 \label{eq:command_peak_general}
\end{equation}
The supremum may be approached only as $\theta\to\infty$.
\end{theorem}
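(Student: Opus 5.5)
The plan is to prove \cref{thm:monotone} by working with the explicit error signal $e$ from \eqref{eq:error_ode}. Since $y=1-e$, the first two conditions are really statements about $e$: $y\le1$ means $e\ge0$, and $y$ nondecreasing means $e'\le0$. The implication from monotonicity to $y\le1$ is immediate, because a nondecreasing output with limit one cannot exceed one. So the main task is to show that $e\ge0$ on $[0,\infty)$ holds exactly on $\mathcal C_{\rm mon}$, and that on this set $e$ is also nonincreasing. I will split according to the sign of the discriminant $\Delta=(1+x)^2-4x/r$.

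If $\Delta<0$, the closed-loop poles are complex. Then $e$ is a damped sinusoid with nonzero oscillating part and changes sign, so these controllers overshoot. I also need to check that this region is exactly $r<4x/(1+x)^2$, and that it lies outside $\mathcal C_{\rm mon}$. This holds because $4x/(1+x)^2\le1$, with equality only at $x=1$. If $\Delta\ge0$, I use \eqref{eq:positive_residues}, together with its repeated-root limit, whose form is $e=(1+(\lambda-x)\theta)e^{-\lambda\theta}$ with $\lambda=(1+x)/2$. Since $a_1+a_2=1$ and $\lambda_1<\lambda_2$, the error satisfies $e\ge0$ for all time if and only if the slow coefficient $a_1\ge0$, that is, $\lambda_1\le1$. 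If $a_1<0$, the slow mode dominates eventually and $e$ becomes negative. If $a_1\ge0$, then $a_2\ge0$ as well, because $\lambda_2\ge1$ follows from $\lambda_1\lambda_2=x/r$ and $\lambda_1+\lambda_2=1+x$ (I will verify this). In that case both $e\ge0$ and $e'=-a_1\lambda_1e^{-\lambda_1\theta}-a_2\lambda_2e^{-\lambda_2\theta}\le0$ hold.

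It remains to translate $\lambda_1\le1$ into conditions on $(x,r)$. The quantity $\lambda_1\le1$ is equivalent to $D(-1)\ge0$ or to $-1$ not lying between the two roots. Computing $D(-1)/r=1-(1+x)+x/r=x(1/r-1)$, the value at $p=-1$ is nonnegative when $r\le1$. In that case $-1$ lies outside the root interval. Comparing $-1$ with the midpoint $-(1+x)/2$ shows that this places both roots on one side of $-1$. When $x\ge1$, the roots lie to the left of $-1$, which gives $\lambda_1\ge1$ and is the wrong direction. I expect this bookkeeping to be the main obstacle: the boundary $r=1$ for $x\ge1$ and the curve $4x/(1+x)^2$ for $x\le1$ must come out precisely. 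In particular, $a_1\ge0$ should reduce to $r\ge1$ in the real-pole region when $x\ge1$, while for $x<1$ every real-pole point, that is every point with $r\ge4x/(1+x)^2$, should qualify. I will settle this by direct sign analysis of $1-\lambda_1$, squaring $\sqrt\Delta$ compared with $x-1$ and treating $x<1$ and $x\ge1$ separately. The repeated-root and $\lambda_1=1$ edge cases will be checked directly.

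For the command bound \eqref{eq:command_peak_general}, I will write $w=1+v$ with $v=-e-e'$, as given by \eqref{eq:error_balance}. This yields
\[
 w=1+a_1(\lambda_1-1)e^{-\lambda_1\theta}+a_2(\lambda_2-1)e^{-\lambda_2\theta}.
\]
The first correction term is nonpositive and the second nonnegative, and together they give $w(0^+)=x$. I will show that $w$ moves monotonically between $x$ and $1$, or else is bounded by their maximum, by examining the single possible stationary point. From this, $0\le w\le\max\{1,x\}$ follows. The supremum is attained at $0^+$ when $x\ge1$ and approached as $\theta\to\infty$ when $x<1$.
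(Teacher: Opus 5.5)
Your route is the paper's: the discriminant split, the residues in \eqref{eq:positive_residues}, slow-mode dominance, and then squaring $\sqrt\Delta$ against $x-1$. However, the step ``if $a_1\ge0$ then $a_2\ge0$, because $\lambda_2\ge1$'' is false, and no verification will rescue it. It fails on the low-gain branch $0<x<1$, $4x/(1+x)^2<r<1$, which you yourself expect to qualify. There $\lambda^2-(1+x)\lambda+x/r$ equals $x(1/r-1)>0$ at $\lambda=1$, while its midpoint $(1+x)/2$ is below one. So both decay rates lie below one, and $a_2=(\lambda_2-1)/(\lambda_2-\lambda_1)<0$. For example, $(x,r)=(1/2,9/10)$ gives $\lambda_1=2/3$, $\lambda_2=5/6$ and $e=2e^{-2\theta/3}-e^{-5\theta/6}$. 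Your derivation of $e'\le0$ from two nonpositive terms, and of $e\ge0$ from two nonnegative coefficients, therefore does not apply to these controllers. As a result, the implication from $(x,r)\in\mathcal C_{\rm mon}$ to monotonicity is unproved exactly on the branch with $r<1$. The missing idea is to factor out the slow mode:
$e=e^{-\lambda_1\theta}[a_1+a_2e^{-(\lambda_2-\lambda_1)\theta}]$ and $-e'=e^{-\lambda_1\theta}[\lambda_1a_1+\lambda_2a_2e^{-(\lambda_2-\lambda_1)\theta}]$. When $a_2<0$, both brackets increase from their initial values $1$ and $\lambda_1a_1+\lambda_2a_2=x>0$. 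Hence $e\ge0$ and $e'\le0$, which is how the paper closes this case.

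Some smaller corrections follow.
\begin{itemize}
\item Your equivalence ``$\lambda_1\le1$ iff $D(-1)\ge0$'' is also wrong. Since $D(-1)=x(1-r)$, it is negative on the monotone region $r>1$ and nonnegative on the overshooting region $x>1$, $r<1$. The squaring comparison of $\sqrt\Delta$ with $x-1$ that you fall back on is the correct translation.
\item For the command, your sign claim holds for all distinct real poles, because the two coefficients equal $-(1-\lambda_1)^2/(\lambda_2-\lambda_1)$ and $(\lambda_2-1)^2/(\lambda_2-\lambda_1)$. The bracket multiplying $e^{-\lambda_1\theta}$ then decreases from $x-1$. This gives $w\le\max\{1,x\}$ in one stroke, slightly more economically than the paper's split into $r>1$, $r=1$ and $r<1$.
\item That argument does not give $w\ge0$. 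For that, use the PI law $w=xe+(x/r)\int_0^\theta e\dd\eta$ together with $e\ge0$.
\item $w$ need not move monotonically between $x$ and one: for $r>1$ it dips below one before returning.
\item The repeated-pole points on $r=4x/(1+x)^2$ need their own one-line check, $v=-e^{-\lambda\theta}[(1-x)+(1-\lambda)^2\theta]\le0$.
\end{itemize}
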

\begin{proof}
If $\Delta<0$, the real error is an exponentially decaying sinusoid with
nonzero amplitude, since $e(0)=1$. It takes negative values and hence
produces overshoot.

Suppose $\Delta>0$. If $\lambda_1>1$, then $a_1<0$, and its slower
exponential eventually dominates the error. Thus $e<0$ eventually.
If $\lambda_1\le1$, then $a_1\ge0$. For $a_2\ge0$, both $e\ge0$ and
$-e'\ge0$ follow immediately. If $a_2<0$, factor out the slower
exponential:
\[
 e=e^{-\lambda_1\theta}
       [a_1+a_2e^{-(\lambda_2-\lambda_1)\theta}].
\]
The bracket starts at one and increases. Similarly the bracket in
\[
 -e'=e^{-\lambda_1\theta}
 [\lambda_1a_1+\lambda_2a_2e^{-(\lambda_2-\lambda_1)\theta}]
\]
starts at $x>0$ and increases. Hence $e\ge0$ and $y'=-e'\ge0$.
For distinct real poles, nonovershoot and monotonicity are therefore
equivalent to $\lambda_1\le1$.

For $\Delta=0$, put $\lambda=(1+x)/2$. The initial conditions give
\begin{equation}
 e=e^{-\lambda\theta}[1+(1-\lambda)\theta],\qquad
 -e'=e^{-\lambda\theta}[x+\lambda(1-\lambda)\theta].
 \label{eq:repeated_error}
\end{equation}
Both conditions hold exactly when $\lambda\le1$, or $x\le1$.
For $x\le1$, the real-pole condition $\Delta\ge0$ already implies
$\lambda_1\le(1+x)/2\le1$ and is equivalent to $r\ge4x/(1+x)^2$.
For $x>1$, the condition $\lambda_1\le1$ is equivalent to
$\sqrt\Delta\ge x-1$, which after squaring gives $r\ge1$.
This proves \eqref{eq:nonovershoot_class}, including its boundary.

Nonnegativity of $w$ follows from the PI law, since $e\ge0$ and its
integral is nonnegative. To locate its maximum, first consider $r>1$.
The polynomial $\lambda^2-(1+x)\lambda+x/r$ is negative at one, so
$0<\lambda_1<1<\lambda_2$ and both residues in
\eqref{eq:positive_residues} are positive. Because $w=1-e'-e$,
\[
 e^{\lambda_1\theta}w'
 =a_1\lambda_1(1-\lambda_1)
  -a_2\lambda_2(\lambda_2-1)e^{-(\lambda_2-\lambda_1)\theta}.
\]
The right side increases strictly. An interior stationary point can
therefore only be a minimum. The supremum is the larger of $w(0^+)=x$
and $w(\infty)=1$. For $r=1$, the signal
$w=1+(x-1)e^{-x\theta}$ gives the same result.

The remaining case is $r<1$ in $\mathcal C_{\rm mon}$, which requires
$x<1$ and real poles below one. For distinct poles,
$a_1>0,a_2<0$, and
\[
 v=e^{-\lambda_1\theta}
 [(\lambda_1-1)a_1+(\lambda_2-1)a_2
                         e^{-(\lambda_2-\lambda_1)\theta}].
\]
The second coefficient is positive, so the bracket decreases from
$x-1<0$. Hence $w<1$. At a repeated pole, substitution in
\eqref{eq:repeated_error} gives
$v=-e^{-\lambda\theta}[(1-x)+(1-\lambda)^2\theta]<0$.
Thus the supremum is one in these cases as well.
\end{proof}

\subsection{Exact clipping over the whole class}

\begin{theorem}[Amplitude-constrained optima without overshoot]
\label{thm:amplitude}
For $A\ge1$, the unique optima over
$\mathcal C_{\rm mon}$ subject to \eqref{eq:actuator_bound} are
\begin{align}
 x_{q,A}^\star&=\min\{\sqrt{1+q},A\},&
 r_{q,A}^\star&=1,\label{eq:amplitude_quadratic_optimum}\\
 x_{\alpha,A}^\star&=\min\{1+1/\alpha,A\},&
 r_{\alpha,A}^\star&=1.\label{eq:amplitude_miace_optimum}
\end{align}
The constraint is equivalent to $x\le A$ within this class.
\end{theorem}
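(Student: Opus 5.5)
The plan is to reduce the constrained problem to a one-parameter problem on the cancellation line $r=1$, using \cref{thm:monotone} for the command peak and the exact gap identities for the two criteria.

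\emph{Step 1: the constraint.} By \cref{thm:monotone}, every $(x,r)\in\mathcal C_{\rm mon}$ satisfies $0\le w\le\max\{1,x\}$, and the supremum equals $\max\{1,x\}$. Since $A=ku_{\max}\ge1$, the bound $0\le u\le u_{\max}$ is therefore equivalent to $\max\{1,x\}\le A$, that is, to $x\le A$. The feasible set is $\mathcal C_{\rm mon}\cap\{x\le A\}$. The supremum statement matters here: it shows that gains with $x>A$ genuinely violate the cap, even when the peak is only the initial jump $w(0^+)=x$.

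\emph{Step 2: the quadratic criterion.} Use the sum-of-squares identity \eqref{eq:sos_certificate},
\[
 F_q(x,r)=(s_q-1)+\frac{(s_qr-x)^2}{2rx(1+x)}+\frac{(x-s_q)^2}{2(1+x)}.
\]
If $s_q\le A$, the unconstrained optimum $(s_q,1)$ lies in $\mathcal C_{\rm mon}$ and is feasible, so uniqueness follows from \cref{thm:global}. If $s_q>A$, I would avoid a two-variable calculus argument and reduce to $r=1$. For fixed $x$, the map
\[
 r\mapsto\frac{(s_qr-x)^2}{r}=s_q^2r-2s_qx+\frac{x^2}{r}
\]
is convex with minimizer $r=x/s_q$. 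When $x\le A<s_q$, this minimizer is below $1\le m(x)$ for $x\ge1$. For $x<1$ one needs $x/s_q\le4x/(1+x)^2$, i.e.\ $(1+x)^2\le4s_q$, which holds because $s_q>1$ and $x<1$. Thus over $r\ge m(x)$ the minimum in $r$ is at $r=m(x)$. Comparing $x<1$ with $x=1$: for $x\le1$, \eqref{eq:expanded_objective} shows the objective is decreasing in $x$ on the boundary, or one can argue that $(1,1)$ dominates. The simpler route is to restrict first to $r=1$, where $F_q(x,1)$ is strictly decreasing for $x<s_q$, and then handle $x<1$ separately. This shows the minimum is at $x=A$, $r=1$.

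The main obstacle is precisely the region $x<1$, $r\in[m(x),1)$. My first attempt would use the Pareto identity of \cref{thm:l2_pareto}. Any controller is dominated, or tied in $\Ecal$ with strictly smaller $\Ucal$, by $(\bar x,1)$ with $\bar x=x(1+x)/(x+r)$. I would check that $\bar x\le\max\{x,1\}\le A$, which holds since $r\ge x$ when $x\le1$ and $r\ge m(x)\ge x$. This replaces Step 2 with a cleaner argument: project onto $r=1$ without raising the peak, then minimize the one-variable function $F_q(x,1)=\tfrac12(x+(1+q)/x)-1$, which is strictly convex with minimizer $s_q$. Uniqueness comes from strictness of the Pareto projection when $r\ne1$, together with strict convexity on $r=1$.

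\emph{Step 3: MIACE.} Use the same projection logic with \eqref{eq:negative_area_gap} and \cref{thm:miace}. On $\mathcal C_{\rm mon}$ we have $e\ge0$, so $A_e=z=r/x$. The controller $(1/z,1)$ yields areas $(z,1-z)$, which are no larger in both coordinates. Its peak is $\max\{1,1/z\}=\max\{1,x/r\}\le\max\{1,x\}\le A$, because $r\ge1$ for $x\ge1$. Hence minimization reduces to $r=1$, where $\Mcal_\alpha^2=x^{-2}+\alpha(1-1/x)^2$ is strictly convex in $1/x$ with minimizer $x=1+1/\alpha$. The optimum is therefore $\min\{1+1/\alpha,A\}$. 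Uniqueness uses the equality cases of \cref{thm:miace}: a point $(z,1-z)$ with $z<1$ is attained only at $r=1$. The remaining degenerate case $z=1$ forces $x=r=1$.
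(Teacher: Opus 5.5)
Your proof is correct, but it takes a different route from the paper's.

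\textbf{The paper's route.} The paper splits $\mathcal C_{\rm mon}$ into two branches.
\begin{itemize}
\item On the low-gain branch $0<x<1$, it derives $\Ecal>1/2$ and $A_e=r/x>1$ from $r\ge4x/(1+x)^2$. The single feasible controller $(1,1)$ therefore strictly beats that entire branch for both criteria.
\item On $1\le x\le A$, $r\ge1$, the quadratic case uses $\partial F_q/\partial r>0$, which holds because $x/r\le A<s_q$, to force $r=1$. The MIACE case uses monotonicity of $z^2+\alpha(1-z)^2$ on $z\ge1/A$. Uniqueness then follows from the elementary fact that $r/x=1/A$, $r\ge1$ and $x\le A$ force $(x,r)=(A,1)$.
\end{itemize}

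\textbf{Your route.} You instead project every feasible controller onto the cancellation line. For the quadratic criterion you use the equal-$\Ecal$ controller \eqref{eq:same_tracking_controller}; for MIACE you use the equal-signed-area controller $(1/z,1)$. You then check that the projection never raises the peak $\max\{1,x\}$; this is exactly where $m(x)\ge x$ on $(0,1]$ is needed.

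\textbf{What each approach buys.} Your argument treats both branches, and both the clipped and unclipped cases, in one stroke. It also exposes the structural reason clipping is exact in the nonovershooting class: the efficient representative of any feasible controller is itself feasible. The price is dependence on \cref{thm:l2_pareto} and, for MIACE uniqueness, on the residue analysis of \cref{app:miace_uniqueness}. The paper's proof is self-contained. It also records the quantitative dominance of the whole low-gain branch by $(1,1)$, which the subsequent example $(1/2,9/10)$ uses.

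\textbf{Small repairs for the write-up.}
\begin{itemize}
\item Discard the abandoned first half of Step~2 and keep only the projection argument.
\item In Step~3, the controller $(1/z,1)$ has areas $(z,|1-z|)$, not $(z,1-z)$, whenever $z>1$. This happens on the whole low-gain branch, where $z\ge4/(1+x)^2>1$. Domination still holds because $A_u\ge|1-z|$.
\item The peak bound $x/r\le\max\{1,x\}$ for $x<1$ rests on $r\ge m(x)\ge x$, not on $r\ge1$.
\item For uniqueness, say explicitly that a constrained optimum must have $z=\max\{\alpha/(1+\alpha),1/A\}\le1$ and must attain both area lower bounds. Only then invoke the equality case.
\end{itemize}
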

\begin{proof}
The peak formula proves feasibility. We first dispose of
the additional branch $0<x<1$, rather than presuming it irrelevant.
Let $h=1+x<2$. The domain condition and the energy formula give
\begin{equation}
 \Ecal\ge\frac{1+4/h^2}{2h}>\frac12,\qquad
 A_e=\frac rx\ge\frac4{h^2}>1.
 \label{eq:low_gain_dominance}
\end{equation}
The first strict inequality follows from $1/h>1/2$ and $4/h^3>1/2$.
Both effort objectives are nonnegative. The feasible controller $(1,1)$
has $(\Ecal,\Ucal)=(1/2,0)$ and $(A_e,A_u)=(1,0)$, and therefore
strictly improves both scalar criteria on this entire branch.

It remains to consider $1\le x\le A,r\ge1$. If the unconstrained
quadratic optimum is feasible, its uniqueness has already been proved.
Otherwise $s_q>A$, and
\[
 \frac{\partial F_q}{\partial r}
 =\frac{s_q^2-x^2/r^2}{2x(1+x)}>0.
\]
Thus $r=1$ at the constrained minimum. On that line,
$F_q=x/2-1+s_q^2/(2x)$ has derivative
$(1-s_q^2/x^2)/2<0$ for $x\le A<s_q$, so its unique minimum is $x=A$.

For normalized MIACE, $z=r/x\ge1/A$. If the unconstrained selector is
infeasible, $z_\alpha=\alpha/(1+\alpha)<1/A$. The lower bound
$z^2+\alpha(1-z)^2$ has derivative
$2(1+\alpha)(z-z_\alpha)>0$ on $z\ge1/A$.
It is attained at $x=A,r=1$. Equality also requires $r/x=1/A$;
together with $r\ge1,x\le A$, this forces those parameters.
The argument includes $A=1$, where both optima are $(1,1)$.
\end{proof}

For example, $(x,r)=(1/2,9/10)$ is monotone although $r<1$.
It has $\Ecal=14/15>1/2$ and $A_e=9/5>1$.
It is included in \cref{thm:monotone} and then excluded from optimality
by \eqref{eq:low_gain_dominance}. This explains why enlarging the class
does not change the clipped answer.

\begin{figure}[tb]
 \centering
 \includegraphics[width=.86\linewidth]{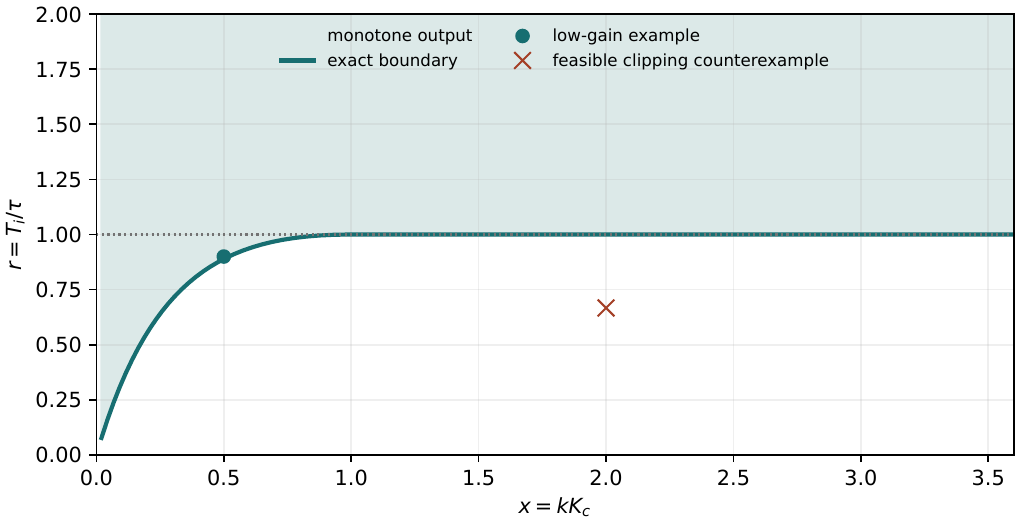}
 \caption{The complete monotone domain lies on and above $r=m(x)$.
 The low-gain example belongs to the domain although $r<1$.
 The amplitude-feasible clipping counterexample lies outside it}

 \label{fig:monotone_domain}
\end{figure}

\subsection{Why clipping still fails when overshoot is admitted}

Take $q=10,A=2$. The clipped cancellation controller $(x,r)=(2,1)$
gives $F_{10}=11/4$. The alternative
\begin{equation}
 (x,r)=(2,2/3),\qquad
 \Ecal=\frac29,\quad\Ucal=\frac7{18},\quad
 F_{10}=\frac{47}{18}<\frac{11}{4}
 \label{eq:clipping_counterexample}
\end{equation}
has command
\begin{equation}
 w=1+e^{-3\theta/2}
 \left[\cos\frac{\sqrt3\theta}{2}
       +\frac1{\sqrt3}\sin\frac{\sqrt3\theta}{2}\right].
 \label{eq:counterexample_command}
\end{equation}
Writing $\phi=\sqrt3\theta/2$ gives
$w'=-e^{-3\theta/2}(\cos\phi+\sqrt3\sin\phi)$.
Its first interior zero is at $\phi=5\pi/6$, where
\[
 w_{\min}=1-\frac1{\sqrt3}e^{-5\pi/(2\sqrt3)}
          \simeq0.9938037100.
\]
Subsequent extrema alternate about one and their distances from one
decrease by $e^{-\sqrt3\pi}$ at each half-cycle. Thus the maximum is
$w(0^+)=2$, the command is nonnegative, and the design is amplitude
feasible. Its error has complex poles and crosses zero, so it lies
outside $\mathcal C_{\rm mon}$. The improvement is exactly $5/36$,
or $5/99$ of the clipped objective. This establishes failure of clipping
over the full positive PI domain; it does not identify its constrained
global optimum.

\subsection{Amplitude and the rate after the initial jump}

On the efficient family $r=1,x\ge1$,
\begin{equation}
 y(t)=1-e^{-xt/\tau},\qquad
 u(t)=\frac1k[1+(x-1)e^{-xt/\tau}],
 \label{eq:front_physical_signals}
\end{equation}
and hence
\begin{equation}
 \norm{u}_\infty=\frac xk,\qquad
 \norm{u-u_\infty}_\infty=\frac{x-1}k,\qquad
 \sup_{t>0}|\dot u(t)|=\frac{x(x-1)}{k\tau}.
 \label{eq:front_peaks}
\end{equation}
If $B=k\tau\dot u_{\max}\ge0$, the part of this family satisfying
both bounds has
\begin{equation}
 1\le x\le X,\qquad
 X=\min\left\{A,\frac{1+\sqrt{1+4B}}2\right\}.
 \label{eq:front_two_caps}
\end{equation}
Clipping the two unconstrained selectors at $X$ is exact
\emph{on this family}. The amplitude theorem does not extend this
statement to every monotone PI controller when a rate bound is added.

The initial command changes from zero to $K_c$ instantaneously. Its
distributional derivative contains an impulse, which the regular rate
in \eqref{eq:front_peaks} does not count. A finite actuator slew limit
including that instant requires a ramped reference, a prefilter, or an
actuator state in the model. Likewise, if the predicted amplitude exceeds
the available range, saturation changes the dynamics and requires an
anti-windup analysis \citep{TarbouriechTurner2009}.

%% file: sections/08_model_mismatch.tex
\section{Exact cost of plant-parameter mismatch}
\label{sec:mismatch}

The cancellation optimum uses the plant time constant. To see what is lost
when that value is inaccurate, fix the physical weights $Q,R$ and design
the controller for $k_0,\tau_0>0$:
\[
 q_0=Qk_0^2/R,\qquad s_0=\sqrt{1+q_0},\qquad
 K_c=s_0/k_0,\quad T_i=\tau_0.
\]
Now let the actual plant remain first order and delay free, with
$k=\kappa k_0,\tau=\eta\tau_0$ and $\kappa,\eta>0$.
In its own normalized coordinates the unchanged controller has
\begin{equation}
 x=\kappa s_0,\qquad r=1/\eta,\qquad
 q=q_0\kappa^2,\qquad \sigma=\sqrt{1+q_0\kappa^2}.
 \label{eq:mismatch_coordinates}
\end{equation}
The centered command must use the actual equilibrium $1/k$. Centering
instead at $1/k_0$ would leave a nonzero limiting offset when
$\kappa\ne1$ and make the infinite-horizon effort integral diverge.

\begin{corollary}[Mismatch loss and its local expansion]
\label{cor:mismatch}
Every positive mismatch in \eqref{eq:mismatch_coordinates} preserves
internal stability. Relative to the optimal PI controller retuned to the
actual plant, the exact normalized loss is
\begin{equation}
 \boxed{L(\kappa,\eta)=
 \frac{(\sigma-\kappa s_0\eta)^2}
      {2\eta\kappa s_0(1+\kappa s_0)}
 +\frac{(\kappa s_0-\sigma)^2}{2(1+\kappa s_0)}.}
 \label{eq:mismatch_loss}
\end{equation}
For a unit step, $J_{\rm fixed}-J_{\rm retuned}=R\tau L/k^2$.
Writing $\epsilon_k=\kappa-1,\epsilon_\tau=\eta-1$,
\begin{equation}
 L=
 \frac{\epsilon_k^2}{2s_0^3}
 +\frac{\epsilon_k\epsilon_\tau}{s_0(1+s_0)}
 +\frac{s_0\epsilon_\tau^2}{2(1+s_0)}
 +O\!\left((|\epsilon_k|+|\epsilon_\tau|)^3\right).
 \label{eq:mismatch_taylor}
\end{equation}
For $\kappa s_0\ge1$, the response is nonovershooting exactly when
$\eta\le1$. For $\kappa s_0<1$, the condition is
$\eta\le(1+\kappa s_0)^2/(4\kappa s_0)$.
\end{corollary}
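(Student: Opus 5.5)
The plan is to reduce everything to the delay-free PI results already proved, evaluated at the actual plant's normalized coordinates \eqref{eq:mismatch_coordinates}.

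\textbf{Internal stability.} In actual-plant coordinates the fixed controller is an ideal PI with $x=\kappa s_0>0$ and $r=1/\eta>0$. The Hurwitz argument after \eqref{eq:characteristic} applies to the unreduced two-state realization for every positive pair. Hence stability is immediate for every positive mismatch.

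\textbf{Exact loss.} The physical cost with the actual plant is $J=(R\tau/k^2)F_q(x,r)$ by \eqref{eq:dimensionless_weight}, with the actual weight ratio $q=Qk^2/R=q_0\kappa^2$. The retuned optimum has cost $(R\tau/k^2)(\sigma-1)$ by \cref{thm:global}, since $s_q=\sigma$. Thus $L=F_q(x,r)-(\sigma-1)$, and substituting $x=\kappa s_0$, $r=1/\eta$ into \eqref{eq:sos_certificate} gives the two squares. The first is
\[
 \frac{(\sigma/\eta-\kappa s_0)^2}{2(\kappa s_0/\eta)(1+\kappa s_0)}.
\]
Multiplying numerator and denominator by $\eta^2$ gives the first term of \eqref{eq:mismatch_loss}. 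The second term is \eqref{eq:sos_certificate}'s second square unchanged. The identity $J_{\rm fixed}-J_{\rm retuned}=R\tau L/k^2$ is then just the scaling.

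\textbf{Nonovershoot.} Apply \cref{thm:monotone} with $x=\kappa s_0$ and $r=1/\eta$. If $x\ge1$, the condition $r\ge1$ is $\eta\le1$. If $x<1$, the condition $r\ge4x/(1+x)^2$ is $\eta\le(1+x)^2/(4x)$.

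\textbf{Taylor expansion.} This is the only computational step and where errors are likely. Both squares vanish at $\kappa=\eta=1$, since then $\sigma=s_0=x$ and $\eta=1$. So $L$ is quadratic to leading order, and I only need the linearizations of the quantities inside the squares, with the denominators evaluated at the nominal point.

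First, $\sigma=\sqrt{1+q_0\kappa^2}$ has derivative $q_0/s_0=(s_0^2-1)/s_0$ in $\kappa$. Hence
\[
 \kappa s_0-\sigma\approx\epsilon_k\Bigl(s_0-\frac{s_0^2-1}{s_0}\Bigr)=\frac{\epsilon_k}{s_0}.
\]
The second term therefore contributes $\epsilon_k^2/[2s_0^2(1+s_0)]$.

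Next,
\[
 \sigma-\kappa s_0\eta\approx-\frac{\epsilon_k}{s_0}-s_0\epsilon_\tau,
\]
so the first term contributes $(\epsilon_k/s_0+s_0\epsilon_\tau)^2/[2s_0(1+s_0)]$.

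Summing the two contributions:
\begin{itemize}
 \item The $\epsilon_k^2$ coefficient is $\frac{1}{2s_0^2(1+s_0)}\bigl(1+\frac1{s_0}\bigr)=\frac1{2s_0^3}$.
 \item The cross coefficient is $\frac{2}{2s_0(1+s_0)}=\frac1{s_0(1+s_0)}$.
 \item The $\epsilon_\tau^2$ coefficient is $\frac{s_0}{2(1+s_0)}$.
\end{itemize}
These match \eqref{eq:mismatch_taylor}. The remainder is $O(\|\epsilon\|^3)$ because $L$ is real-analytic near $(1,1)$ and its value and gradient vanish there.
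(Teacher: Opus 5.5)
Your proposal is correct and follows the paper's proof essentially step for step: stability from the Hurwitz property for positive $x=\kappa s_0$, $r=1/\eta$; the exact loss by substituting into the two squares of \eqref{eq:sos_certificate} and rescaling the first by $\eta^2$; nonovershoot by inserting $r=1/\eta$ into $\mathcal C_{\rm mon}$; and the expansion by linearizing the squared numerators while freezing the denominators at the nominal point. Your observation that the actual weight is $q=q_0\kappa^2$, so the retuned cost is $\sigma-1$, and your three coefficients both agree with the paper's computation.
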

\begin{proof}
Positivity of $x,r$ gives stability by \eqref{eq:characteristic}.
Substitute \eqref{eq:mismatch_coordinates} into the two squares in
\eqref{eq:sos_certificate}. In the first term, multiply numerator and
denominator by $\eta^2$ to obtain \eqref{eq:mismatch_loss}.

For the expansion, put $s=s_0$. Since $q_0=s^2-1$,
\[
 \sigma=s+\frac{s^2-1}{s}\epsilon_k+O(\epsilon_k^2).
\]
The linear terms of the two squared numerators in the original
identity are
\[
 \sigma r-x=-\epsilon_k/s-s\epsilon_\tau+O(\|\epsilon\|^2),
 \qquad x-\sigma=\epsilon_k/s+O(\epsilon_k^2).
\]
Their denominators can be evaluated at $(x,r)=(s,1)$ for the
quadratic terms: each denominator correction contributes only at order
three. Therefore the second-order loss is
\[
 \frac{(\epsilon_k/s+s\epsilon_\tau)^2}{2s(1+s)}
 +\frac{(\epsilon_k/s)^2}{2(1+s)}.
\]
Collecting its three coefficients gives \eqref{eq:mismatch_taylor}.
The nonovershoot conditions follow by inserting $r=1/\eta$ into
\eqref{eq:nonovershoot_class}.
\end{proof}

The quadratic form in \eqref{eq:mismatch_taylor} is positive definite.
Its diagonal coefficients are positive and the determinant of its
symmetric coefficient matrix is $1/[4s_0(1+s_0)^2]>0$.
The exact loss is zero only when $\kappa=\eta=1$: the second square
forces $\kappa s_0=\sigma$, whose square gives $\kappa^2=1$; the first
then forces $\eta=1$.

A transparent case is exact gain but an inaccurate time constant.
Setting $\kappa=1$ gives
\begin{equation}
 L(1,\eta)=\frac{s_0(\eta-1)^2}{2\eta(1+s_0)}.
 \label{eq:time_mismatch_loss}
\end{equation}
For $q_0=8,s_0=3,\eta=1.05$, this is $1/1120$. The actual optimal
normalized cost is two, so the relative cost increase is only $1/2240$,
about $0.04464\%$. Nevertheless, the response overshoots: its slower
decay rate is $(4-\sqrt{3.4})/2>1$, with a negative error residue.
A second-order cost penalty can therefore coexist with loss of strict
nonovershoot for an arbitrarily small positive time-constant error.

The corollary evaluates a fixed nominal design. It does not solve a
minimax tuning problem over an uncertainty set, and its stability
conclusion does not cover added delay, neglected poles, actuator dynamics,
or a change in the sign of the plant gain.

%% file: sections/08_numerical_studies.tex
\section{Numerical examples and verification}
\label{sec:numerics}

The calculations begin with the MIACE shape bounds and their pure-delay
PI, filtered PID, and arbitrary-order cascade extensions, then put the delay-free PI tuning family into physical units
and examine constraints beyond
\cref{thm:amplitude}. The proofs of the global
optima and complete fronts do not depend on these numerical results.

\subsection{MIACE response-shape bounds}

The script \path{code/miace_shape_audit.py} checks
\cref{thm:miace_shape_gap,thm:miace_variation,prop:finite_miace_shape}
by direct physical-loop integration. The cases include optimal controllers,
settings on both sides of the monotone boundary, repeated poles, slow
responses, and prescribed non-PI commands. The latter are formed from
sums of decaying sinusoidal terms; their command is then applied to a
separately integrated first-order plant. Absolute areas, negative areas,
total variation, and backward motion are accumulated during integration.
No extremum grid is used to establish the theoretical inequalities.

\Cref{tab:miace_shape_examples} shows four PI examples at $\alpha=1$.
The optimum is monotone. Moving the integral time below the cancellation
value introduces overshoot, with a positive MIACE excess and a valid
upper bound on the accumulated backward motion. The tabulated peaks are
dense-grid numerical estimates; the analytical proof covers all times.
The two equal-cost monotone responses in \cref{fig:miace_shape_examples}
also illustrate why controller-independent optimality does not imply a
unique waveform.

\begin{table}[htbp]
\centering
\caption{MIACE shape checks at $\alpha=1$, with $m_1=1/\sqrt2$.
The quantities $P_y$ and $B_y$ are normalized by the step magnitude.}
\label{tab:miace_shape_examples}
\small
\input{data/miace_shape_table_rows.tex}
\end{table}

\input{data/miace_shape_verification.tex}

\subsection{Pure delay: attainment and its limits}
\label{sec:delay_numerical_checks}

\Cref{tab:delay_examples} compares the old gain with the corrected formula
at three delay values and $\alpha=1$. At $\delta=0.1$ both responses
are monotone, but the old gain no longer balances the two areas. At
$\delta=0.2$ the old gain produces about $0.070708\%$ overshoot of
the step, whereas \cref{thm:delayed_optimum} guarantees exactly zero
overshoot for the corrected setting. At $\delta=0.25$ the gain formula
is outside its attainment range. Its extrapolation $x=1.6,r=1$ has
$x\delta=0.4>1/\mathrm e$ and also overshoots by about
$0.070708\%$; the old gain $x=2$ overshoots by about $4.05196\%$.
The extrapolated setting is not asserted to solve the delayed PI
optimization. Its small positive cost excess is consistent with the
bound in \cref{thm:delayed_shape}.

\begin{table}[htbp]
\centering
\caption{Pure-delay examples with $r=1$ and $\alpha=1$. The corrected
formula is a proved optimum for the first two delays only.}
\label{tab:delay_examples}
\small
\input{data/delay_table_rows.tex}
\end{table}

The physical-loop calculation in \path{code/delay_audit.py} integrates
$y'=-y+w(\theta-\delta)$ and $z'=1-y$, with
$w=x(1-y+z/r)$. It uses the method of steps on intervals of length
$\delta$, so every delayed state comes from an already completed interval.
The DOP853 relative and absolute tolerances are $2\times10^{-12}$ and
$3\times10^{-14}$, and the maximum internal step is $\delta/12$.
No pole cancellation or rational delay approximation enters this solver.
Absolute and signed areas, negative areas, output variation, backward
motion, and the two quadratic energies are integrated independently
from its dense state solution. Quadrature is split at every delay boundary.

For $r=1$, expanding \eqref{eq:delayed_cancellation} in a right half-plane
and inverting each delayed power gives an independent exact expression:
\begin{equation}
 \widehat E(p)=\sum_{n=0}^\infty
 \frac{(-x)^n e^{-n\delta p}}{p^{n+1}},\qquad
 e(\theta)=\sum_{n=0}^{\lfloor\theta/\delta\rfloor}
 \frac{(-x)^n(\theta-n\delta)^n}{n!},\quad \delta>0.
 \label{eq:delayed_exponential_series}
\end{equation}
The term with $n=0$ is one. At each fixed time the time-domain sum is
finite. It is evaluated with 65 decimal digits and compared with the
physical-loop error at 81 points on $[0,10]$. The quadratic integrals
are checked separately against \eqref{eq:delayed_quadratic_energies}.

The audited settings include both sides of the attainment boundary,
two noncancellation controllers, the stability-preserving but oscillatory
case $x=2,\delta=0.5$, and the exact boundary at weights
$\alpha=0.5,1,\mathrm e,4$. At $\alpha=\mathrm e$ both restrictions
in \eqref{eq:delayed_attainment_range} are active. A further case uses
$\alpha=4$, $\delta=0.3$, and $x=1.25/1.3<1$ at $r=1$.
Its output is monotone because $x\delta<1/\mathrm e$, but its
initial centered command is negative. It therefore cannot attain
the lower bound even though it has no output overshoot. This is a
check of the distinction between output shape and the two equality
conditions, not a claim that this last setting is optimal.

\input{data/delay_verification.tex}

The numerical horizon for each case is
$\max\{40,48/x,128\delta\}$. The largest residuals and full-precision
gains are recorded in \path{data/delay_audit.json}.
Peak estimates combine a dense grid with local scalar refinement; they
do not provide rigorous interval bounds on the global peaks. Nor are the finite-horizon
integrals rigorous enclosures of arbitrary tails. The infinite-horizon
statements, including zero overshoot at the exact optimum, rest on
the preceding proofs.

\subsection{Filtered PID beyond the PI attainment range}
\label{sec:pid_numerical_checks}

\Cref{tab:pid_delay_examples} compares the exact PID constructions with
extrapolated PI settings at the same weights and delays. In the first
pair, the PID has $A_e=A_u=5/8$, while the PI at $x=1.6,r=1$ slightly
overshoots and has a positive MIACE excess. In the second pair, both
outputs are monotone, but the PI at $x=15/16,r=1$ starts with a negative
centered command. The larger cost of this PI therefore arises from its
command deficit rather than from output overshoot. These comparisons do
not claim that the extrapolated PI settings are optimal within PI.

\begin{table}[htbp]
 \centering
 \caption{Filtered PID and extrapolated PI. The delay is $\delta=1/4$
 for $\alpha=1$ and $\delta=1/3$ for $\alpha=4$. All areas and criteria
 are dimensionless. The PID values attain the exact bound.}
 \label{tab:pid_delay_examples}
 \small
 \input{data/pid_delay_table_rows.tex}
\end{table}

The script \path{code/pid_delay_audit.py} integrates the full physical loop,
with three states initialized at zero:
\begin{align}
 y'&=-y+w(\theta-\delta),& z'&=1-y,\\
 \zeta'&=(1-y-\zeta)/\rho,&
 w&=p_c(1-y)+i_cz+(d_c/\rho)(1-y-\zeta).
 \label{eq:pid_physical_states}
\end{align}
The stored parameter names \texttt{a,b,c,h} correspond respectively to
$p_c,i_c,d_c,\ell$ in the manuscript; $\rho$ and the normalized signals
are unchanged. The method of steps uses intervals of length $\delta$,
with the delayed command taken from an already completed interval.
The DOP853 tolerances are $2\times10^{-12}$ relative and
$3\times10^{-14}$ absolute, with maximum internal step
$\min\{\delta/12,\rho/4\}$. Neither pole cancellation nor a rational
approximation of the delay is used in this physical solver. Quadrature
is split at every delay boundary and separately accumulates signed and
absolute areas, negative areas, variation, and backward motion.

An independent calculation checks the error from its exact finite
transport series. For sufficiently large $\operatorname{Re}p$,
\begin{equation}
 \widehat E(p)=\sum_{n=0}^{\infty}
 \frac{(-i_c)^n e^{-n\delta p}}{p^{n+1}}\,H_F(p)^n.
 \label{eq:pid_transport_series}
\end{equation}
Write $H_\infty=\ell/\rho$ and
$H_F=H_\infty+(1-H_\infty)/(1+\rho p)$. Define, for $u\ge0$,
\begin{equation}
 J_{n,j}(u)=\frac{u^{n+j}}{\rho^j(n+j)!}
 {}_1F_1\left(j;n+j+1;-\frac u\rho\right).
 \label{eq:pid_series_basis}
\end{equation}
For $j=0$ this is $u^n/n!$. For $j\ge1$, convolution of
$u^n/n!$ with the impulse response
$u^{j-1}e^{-u/\rho}/[\rho^j(j-1)!]$ gives
\[
 \frac1{\rho^j n!(j-1)!}\int_0^u
 (u-s)^n s^{j-1}e^{-s/\rho}\dd s.
\]
Putting $s=uv$ and using the beta-integral representation of
${}_1F_1$ gives exactly \eqref{eq:pid_series_basis}. Expanding
$H_F^n$ by the binomial theorem and applying the time shift in
\eqref{eq:pid_transport_series} therefore gives
\begin{equation}
 e(\theta)=\sum_{n=0}^{\lfloor\theta/\delta\rfloor}(-i_c)^n
 \sum_{j=0}^n\binom nj H_\infty^{n-j}(1-H_\infty)^j
 J_{n,j}(\theta-n\delta),\qquad \delta>0.
 \label{eq:pid_finite_series}
\end{equation}
The term $n=j=0$ is one. Causality makes the sum finite at each fixed
finite time, so no time-domain truncation of an infinite series is used.

This formula is evaluated with 80 decimal digits at 26 points on
$[0,5]$ for each of the two central PID examples. Kummer's transformation
${}_1F_1(j;n+j+1;-x)=e^{-x}{}_1F_1(n+1;n+j+1;x)$ gives a
positive-term series for the special-function evaluation at $x\ge0$.
The resulting high-precision value is then compared with the independently
integrated physical error. This is a numerical cross-check of the exact
formula, not a rigorous bound on its floating-point evaluation error.

The eight cases also include the second two-rate PID with
$\lambda=5$, $\mu=7$, the explicit-family endpoint
$\alpha=4$, $\delta=1/\mathrm e$, and two additional PI references.
The exponential signs in both two-rate constructions have a separate
check using exact \texttt{Fraction} arithmetic. The degree-36 and degree-37
Taylor sums enclose the required exponentials. Rational interval operations
then verify filter positivity, $\ell>\rho$, the two rate restrictions,
the initial-command forcing bound, and the kernel norm. The interval
endpoints are retained as exact rational numbers in
\path{data/pid_delay_audit.json}; decimal endpoints are display values.

\input{data/pid_delay_verification.tex}

The horizon is 40 for every PID audit case. Peak estimates use 4001
samples on $[0,10]$ and do not provide rigorous bounds on the global peaks. Small final
residuals do not constitute rigorous enclosures of arbitrary tails.
Internal stability and infinite-horizon positivity in the cases covered by the theorems
rest on \cref{thm:pid_two_rate_optimum,thm:pid_explicit_family}.

\subsection{Arbitrary-order cascades with the exact delay}
\label{sec:n_numerics}

The script \path{code/order_n_audit.py} checks the construction in
\cref{thm:n_construction} in two independent state realizations.
The physical calculation keeps all $2N$ rational states. If
$(A_c,b_c,c_c,d_c)$ realizes $A/Q$, its equations are
\begin{align}
 z'&=A_cz+b_c(1-y),&
 w&=c_cz+d_c(1-y),\notag\\
 \sigma_1x_1'&=-x_1+w(\theta-\delta),&
 \sigma_jx_j'&=-x_j+x_{j-1}\quad(2\le j\le N),\qquad y=x_N.
 \label{eq:n_physical_audit}
\end{align}
All states and the command prehistory are zero. The method of steps
uses the completed preceding interval for every delayed command.
No cancellation or rational approximation of the delay is used in
this physical model.

The independent reduced calculation realizes $1/Q$ with $N$ states.
Its input is $e(\theta-\delta)$, with zero error prehistory, and its
output is $y$. This follows from the exact relation
$Y=e^{-\delta p}E/Q$. It therefore checks the cancellation-based
transfer calculation against the unreduced plant--controller loop.
The two outputs are compared at 201 points on $[0,20]$.
Separate real and complex transform evaluations check the exact
interpolation remainder and characteristic factorization.

\Cref{tab:n_audit} includes equal-lag orders $2,3,4,6$ at
$\delta=1/4$ and $\alpha=2N-1$. The two additional cases use
$\boldsymbol{\sigma}=(1,1/2,1/4)$ at $(\delta,\alpha)=(0.1,2)$
and $(1,3/4,1/2,1/4)$ at $(0.1,4)$. Their respective parity
inequalities are checked from the kernel moments. The controller
coefficients, margins, moment values, and full-precision results
are recorded in \path{data/order_n_audit.json}.

\begin{table}[htbp]
 \centering
 \caption{Arbitrary-order attainment checks. The first four plants
 have identical lags; the final two have the unequal normalized
 time constants specified in the text. The last column is the
 analytical lower bound $S m_\alpha$.}
 \label{tab:n_audit}
 \small\input{data/order_n_table_rows.tex}
\end{table}

The DOP853 tolerances are $2\times10^{-11}$ relative and
$5\times10^{-13}$ absolute. Signed and absolute areas are integrated
to $\theta=50$, with quadrature split at the delay boundaries.
On the six cases, the largest discrepancy between an integrated
area and its exact value is $3.48\times10^{-13}$. The two state
realizations differ by at most $3.45\times10^{-15}$ on the comparison
grid, and the largest exact-remainder evaluation residual is below
$5.81\times10^{-16}$. The output curves in
\cref{fig:n_responses} come from the physical model.

Finite-horizon quadrature and sampled signs are numerical cross-checks.
They do not enclose every tail or prove a sign on an uncountable time
set. The all-order assertion and the complete delay interval in
\cref{thm:n_all_orders} follow from the analytical estimates.
The separate counterexample in \cref{fig:n_counterexample} is verified
from its exact rational transfer functions and closed-form signals.

\subsection{A dimensional example}
\label{sec:dimensional_benchmark}

Take
\begin{equation}
 k=2,\quad \tau=5\,\mathrm{s},\quad Q=2,\quad R=1,
 \quad q=8,\quad\alpha=0.5.
 \label{eq:benchmark_parameters}
\end{equation}
The weights are matched by \eqref{eq:matched_weights}. Both criteria select
$x=3$, or $K_c=1.5$ and $T_i=5\,\mathrm{s}$. Suppose the available positive
command range ends at $u_{\max}=0.9$. Then $A=ku_{\max}=1.8$ and
\cref{thm:amplitude} selects $x=1.8$, $r=1$ in $\mathcal C_{\rm mon}$.

\begin{table}[t]
\centering
\caption{The dimensional example, with $T_i=5\,\mathrm{s}$ for all three
designs. $J=2E_2+U_2$; $\Mcal_{0.5}$ is dimensionless.}
\label{tab:dimensional_benchmark}
\begin{tabular}{@{}lrrrrrr@{}}
\toprule
Design & $x$ & $K_c$ & $E_2$ & $U_2$ & $J$ & $\Mcal_{0.5}$\\
\midrule
Minimum effort & 1.0 & 0.5 & 2.5000 & 0 & 5.0000 & 1.0000\\
Amplitude feasible & 1.8 & 0.9 & 1.3889 & 0.2222 & 3.0000 & 0.6383\\
Unconstrained optimum & 3.0 & 1.5 & 0.8333 & 0.8333 & 2.5000 & 0.5774\\
\bottomrule
\end{tabular}
\end{table}

The amplitude cap raises $J$ from $2.5$ to $3.0$ in this class, a $20\%$
increase. The feasible command starts at $0.9$, whereas the unconstrained
command starts at $1.5$. The associated post-jump rates are $0.144$ and
$0.6$. A separate bound of $0.15$ on this regular rate would give
$x\le(1+\sqrt7)/2=1.822876\ldots$ on the cancellation family, so the
amplitude cap remains the active restriction in this example.

\begin{figure}[t]
 \centering
 \includegraphics[width=0.92\linewidth]{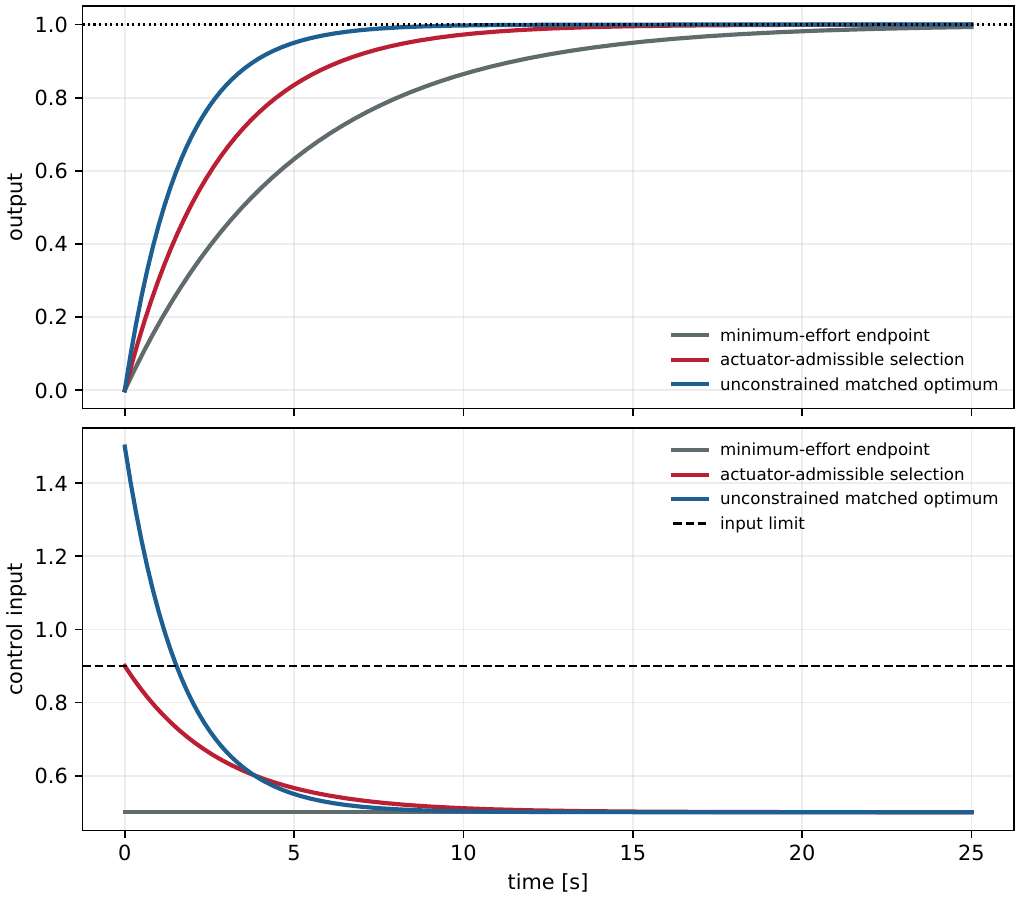}
 \caption{Output and command for the designs in
 \cref{tab:dimensional_benchmark}. The plotted command begins at $0^+$,
 after the ideal PI jump.}

 \label{fig:benchmark_responses}
\end{figure}

\subsection{Two-parameter constrained search}
\label{sec:bounded_search}

For the normalized plant $G(p)=1/(p+1)$, use the parallel form
\begin{equation}
 C(p)=p_c+\frac{i_c}{p},\qquad p_c=x,\quad i_c=x/r.
 \label{eq:parallel_normalized_pi}
\end{equation}
The numerical domain and command bounds are
\begin{equation}
 0\le p_c\le2,\qquad0.05\le i_c\le8,
 \qquad0\le w\le2,\qquad\sup_{\theta>0}|w'|\le1.
 \label{eq:numerical_box}
\end{equation}
The boundary $p_c=0$ includes pure integral control as a separate limiting
case of the parallel form; it is not part of the strictly positive
$(x,r)$ theorem domain. The rate condition applies after the initial jump.
In exact arithmetic, all extrema are obtained from the formulas described
in \cref{app:peak_oracle}; the numerical implementation uses floating-point
arithmetic.

We minimized $\beta\Ecal+\Ucal$, where $\beta$ is the same normalized
weight as $q$. Differential evolution used population multiplier $20$,
relative tolerance $10^{-10}$, and at most $700$ generations. SLSQP refinement
used objective tolerance $10^{-13}$ and at most $1000$ iterations.
Four separate seeds were used to check the five representative weights in
\cref{tab:constrained_search}. The supplementary archive lists the seed for
each of the other plotted weights. Feasibility is checked at the
full-precision gains, with maximum permitted violation $10^{-8}$;
rounded table entries are not substitutes for those gains.

\begin{table}[t]
\centering
\caption{Reproduced numerical candidates. Improvement is relative to the
best cancellation controller satisfying both bounds in
\eqref{eq:numerical_box}. ``Rate'' means the regular post-jump rate.}
\label{tab:constrained_search}
\input{data/constrained_table_rows.tex}
\end{table}

The cancellation family satisfies these two bounds only up to
$x=(1+\sqrt5)/2$. Once a bound becomes active, the reproduced candidates
move away from $p_c=i_c$. This is consistent with the exact counterexample
\eqref{eq:clipping_counterexample}; constraints can exclude the controller
that would dominate a given design in the unconstrained problem.

For $\alpha=1$, the unconstrained normalized MIACE optimum is $(2,2)$ with
$\Mcal_1^2=0.5$. The constrained calculation gives
\begin{equation}
 (p_c,i_c)\simeq(1.66484,1.77168),\qquad
 \Mcal_1^2\simeq0.51560940.
 \label{eq:constrained_miace_candidate}
\end{equation}
Its post-jump rate bound is active. Here differential evolution used
population multiplier $25$ and at most $900$ generations; the refinement
tolerances were unchanged. Four additional seeds reproduced the objective
to the tolerance reported below. Agreement between starts supports numerical
reproducibility; it does not prove global constrained optimality.

\begin{figure}[t]
 \centering
 \includegraphics[width=0.91\linewidth]{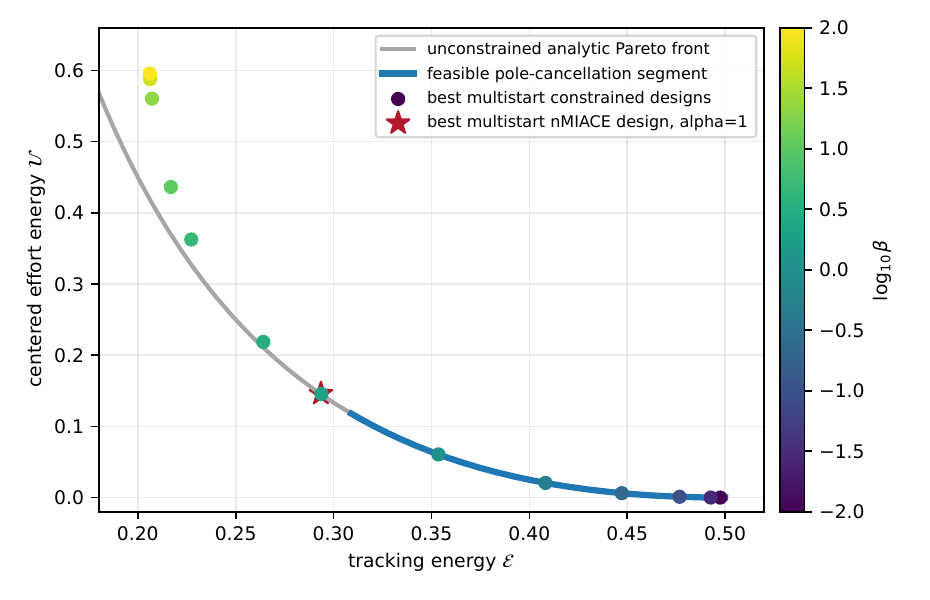}
 \caption{The analytic front, its feasible cancellation segment, and
 reproduced two-parameter constrained candidates. The colored points are
 numerical designs; a complete constrained Pareto front is not claimed.}

 \label{fig:constrained_pareto}
\end{figure}

\subsection{Checks independent of the tuning search}
\label{sec:numerical_audits}

The original transfer-function Gramian checks were reproduced on $256$ and
$4096$ parameter samples. A further $512$-case calculation used the physical
PI state equations in \cref{app:peak_oracle}. It checked the two energies,
the cross-energy balance, and the signal square independently of the
transfer-function realization. Thirty-six rational parameter combinations
also satisfied the two parameter identities exactly in rational arithmetic;
these are implementation checks in addition to the algebraic proofs.

For the absolute areas, $35$ direct integrations of the physical loop included
real, complex, repeated, and nearly repeated poles. Separate calculations
with step amplitudes $0.2$, $2.5$, and $-3$ checked
\eqref{eq:physical_nmiace_scaling}. The integration horizon was $45$ divided
by the slowest decay rate, with physical time scaling included. These are
floating-point cross-checks, not rigorous tail enclosures for arbitrary
controllers.

\begin{table}[t]
\centering
\caption{Maximum discrepancies in the reproduced checks. The first two
rows concern energies; the next two compare analytic formulas with direct
ODE calculations. The last rows compare objective values across four seeds.}
\label{tab:audit_summary}
\input{data/audit_table_rows.tex}
\end{table}

The $40$ command-extrema checks used an independent adaptive ODE solution
evaluated on a dense time grid. Finite grid spacing can miss an extremum,
while the ODE solver also has numerical error. The discrepancies in
\cref{tab:audit_summary} were below the declared $5\times10^{-7}$ threshold.
All optimizer runs passed their success and feasibility checks. The
supplement includes the full parameter values, convergence settings,
software versions, and JSON results, including the separate $20{,}000$-case
quadratic-identity check.

The extended monotonicity and mismatch results have a separate check.
Twenty-eight physical-loop integrations include both sides of the domain
boundary and repeated-pole cases. They also compare the negative-part
areas from direct integration with the absolute-area implementation.
Another 1200 controllers cover the complete monotone domain, including
its low-gain branch; their command peaks and costs are checked against
the clipping formulas. Finally, 512 physical-state Gramian evaluations
check the exact mismatch loss. These finite samples support the
implementation and do not establish the universal domain statements.
The energy and Pareto identities, the derivative used in clipping,
the mismatch identity, and its Hessian have zero symbolic residuals.

\begin{table}[htbp]
\centering
\caption{Checks of the complete-domain and mismatch extensions}
\label{tab:extension_checks}
\input{data/extension_table_rows.tex}
\end{table}

%% file: data/miace_shape_table_rows.tex
\begin{tabular}{@{}rrrrrr@{}}\toprule
$x$ & $r$ & $\Mcal_1$ & $P_y$ & $B_y$ & $(\Mcal_1/m_1-1)/2$\\\midrule
2.000000 & 1.000000 & 0.707107 & 0.000000 & 0.000000 & 0.000000\\
2.000000 & 0.980000 & 0.708084 & 0.000306 & 0.000306 & 0.000691\\
2.000000 & 0.800000 & 0.755047 & 0.016145 & 0.016146 & 0.033899\\
2.000000 & 0.200000 & 1.403592 & 0.236391 & 0.289700 & 0.492489\\
\bottomrule\end{tabular}

%% file: data/miace_shape_verification.tex
All three symbolic residuals vanish. The numerical checks cover 32 positive PI controllers, 16 non-PI commands, and 240 finite records. The largest scaled residual in the infinite-horizon gap identity is $9.46\times10^{-13}$, where scaling means division by $\max\{1,\Mcal_\alpha^2\}$. The largest absolute residual in $\mathcal V_y=1+2B_y$ is $1.26\times10^{-12}$. The direct PI integrals differ from the closed-form absolute areas by at most $9.71\times10^{-8}$ and from the closed-form output variation by $4.73\times10^{-10}$. The scaled finite-record identity residual is at most $2.00\times10^{-15}$. All checks passed their stated tolerances; raw cases, horizons, parameters, and limitations are in \path{data/miace_shape_audit.json}.

%% file: data/delay_table_rows.tex
\begin{tabular}{@{}llrrrr@{}}
\toprule
$\delta$ & Design & $x$ & $A_e$ & $A_u$ & $\Mcal_1$\\
\midrule
0.10 & Old gain & 2.000000 & 0.500000 & 0.600000 & 0.781025\\
0.10 & Optimum & 1.818182 & 0.550000 & 0.550000 & 0.777817\\
0.20 & Old gain & 2.000000 & 0.500708 & 0.700940 & 0.861409\\
0.20 & Optimum & 1.666667 & 0.600000 & 0.600000 & 0.848528\\
0.25 & Old gain & 2.000000 & 0.542173 & 0.809371 & 0.974183\\
0.25 & Extrapolated formula & 1.600000 & 0.625884 & 0.625832 & 0.885097\\
\bottomrule
\end{tabular}

%% file: data/delay_verification.tex
The delay audit covers $14$ physical-loop cases, including $12$ cancellation settings and $6$ settings satisfying the hypotheses of \cref{thm:delayed_optimum}. The largest discrepancy between the physical-loop error and the independent delayed-exponential series is $8.28\times10^{-16}$. The largest signed-area error is $5.92\times10^{-14}$, and the largest quadratic-energy error is $8.88\times10^{-16}$. The delay-gap and variation residuals are at most $1.29\times10^{-13}$ and $1.17\times10^{-13}$, respectively.

%% file: data/pid_delay_table_rows.tex
\begin{tabular}{@{}lrrrr@{}}
\toprule
Design & $A_e$ & $A_u$ & $\Mcal_\alpha$ & Lower bound\\
\midrule
PID, $\alpha=1$ & 0.62500000 & 0.62500000 & 0.88388348 & 0.88388348\\
PI, $\alpha=1$ & 0.62588446 & 0.62583214 & 0.88509730 & 0.88388348\\
PID, $\alpha=4$ & 1.06666667 & 0.26666667 & 1.19256959 & 1.19256959\\
PI, $\alpha=4$ & 1.06666667 & 0.27083333 & 1.19631959 & 1.19256959\\
\bottomrule
\end{tabular}

%% file: data/pid_delay_verification.tex
The eight physical-loop cases include four settings with proved attainment. Both two-rate constructions satisfy every sign condition checked using exact rational interval arithmetic. The largest difference between the physical error and the independent finite series is $3.44\cdot10^{-15}$. The signed-area discrepancy is at most $4.26\cdot10^{-14}$, the MIACE identity residual is at most $4.82\cdot10^{-14}$, and the variation identity residual is at most $2.33\cdot10^{-14}$. All numerical integrals are over $[0,40]$. These residuals check the implementation; the analytical proofs establish the signs and optimum on the infinite horizon.

%% file: data/order_n_table_rows.tex
\begin{tabular}{@{}rrrrrrr@{}}
\toprule
$N$ & $\delta$ & $\alpha$ & $A_e$ & $A_u$ & $\Mcal_\alpha$ & Bound\\
\midrule
2 & 0.25 & 3 & 1.6875000 & 0.5625000 & 1.9485572 & 1.9485572\\
3 & 0.25 & 5 & 2.7083333 & 0.5416667 & 2.9668305 & 2.9668305\\
4 & 0.25 & 7 & 3.7187500 & 0.5312500 & 3.9755110 & 3.9755110\\
6 & 0.25 & 11 & 5.7291667 & 0.5208333 & 5.9839194 & 5.9839194\\
3 & 0.10 & 2 & 1.2333333 & 0.6166667 & 1.5105187 & 1.5105187\\
4 & 0.10 & 4 & 2.0800000 & 0.5200000 & 2.3255107 & 2.3255107\\
\bottomrule
\end{tabular}

%% file: data/constrained_table_rows.tex
\begin{tabular}{@{}rrrrlr@{}}
\toprule
$\beta$ & $p_c$ & $i_c$ & $\beta\mathcal{E}+\mathcal{U}$ & Active bound & Improvement\\
\midrule
1 & 1.414214 & 1.414214 & 0.414214 & none & 0.000\%\\
2 & 1.664066 & 1.769115 & 0.733064 & rate & 0.408\%\\
5 & 1.966768 & 2.868176 & 1.499062 & rate & 9.864\%\\
10 & 2.000000 & 3.316625 & 2.605542 & amplitude & 18.785\%\\
50 & 1.987161 & 4.306353 & 10.900620 & amplitude & 29.985\%\\
\bottomrule\end{tabular}

%% file: data/audit_table_rows.tex
\begin{tabular}{@{}lr@{}}
\toprule
Check & Maximum discrepancy\\
\midrule
Transfer-function Gramian, 4096 cases & $1.01\times10^{-11}$\\
Physical-loop Gramian, 512 cases & $5.68\times10^{-14}$\\
Absolute areas versus physical-loop ODE & $8.90\times10^{-8}$\\
Command extrema versus ODE grid & $1.93\times10^{-8}$\\
Quadratic objective spread (largest of five weights) & $2.58\times10^{-13}$\\
Normalized MIACE squared, four seeds & $3.63\times10^{-14}$\\
\bottomrule\end{tabular}

%% file: data/extension_table_rows.tex
\begin{tabular}{@{}lr@{}}\toprule
Check & Maximum discrepancy\\\midrule
Negative-area gap, 28 physical-loop integrations & $2.07\times10^{-9}$\\
Command peak, 1200 monotone controllers & $0$\\
Mismatch loss, 512 physical-loop Gramians & $8.45\times10^{-15}$\\
\bottomrule\end{tabular}

%% file: sections/08b_nonlinear_balance.tex
\section{A nonlinear first-order extension through a storage balance}
\label{sec:nonlinear_balance}

The area argument does not require a transfer function. What it requires
is a balance that fixes the sum of the two signed areas. A nonlinear
first-order plant can have such a balance even when its speed changes
with its state. Consider, in fixed dimensionless time $\theta$,
\begin{equation}
 y'=a(y)(w-y),\qquad y(0)=0,\qquad
 a:\Rset\longrightarrow(0,\infty),
 \label{eq:nonlinear_plant}
\end{equation}
where $a$ is locally Lipschitz. The target is one, the equilibrium
command is $w_\infty=1$, and
$e=1-y$, $v=w-1$. This section has no transport delay. The factor
$1/a(y)$ can be read as a positive state-dependent storage coefficient
in the equation $a(y)^{-1}y'=w-y$; it need not be constant or monotone.

An admissible response has an ordinary locally integrable command,
a globally defined, locally absolutely continuous output satisfying
\eqref{eq:nonlinear_plant} almost everywhere, $y(\theta)\to1$, and
$e,v\in L^1(0,\infty)$. Retain exactly the same objective,
\begin{equation}
 A_e=\int_0^\infty|e|\dd\theta,\qquad
 A_u=\int_0^\infty|v|\dd\theta,\qquad
 \Mcal_\alpha=\sqrt{A_e^2+\alpha A_u^2},\qquad \alpha>0.
 \label{eq:nonlinear_objective}
\end{equation}
Define the storage coordinate and its change between the equilibria by
\begin{equation}
 \Phi_a(y)=\int_0^y\frac{ds}{a(s)},\qquad
 S_a=\Phi_a(1)=\int_0^1\frac{ds}{a(s)}>0.
 \label{eq:nonlinear_storage}
\end{equation}
The coordinate $\Phi_a$ is used to integrate the plant equation.
The error in MIACE remains $1-y$; neither the measured error nor the
time variable in \eqref{eq:nonlinear_objective} is replaced.

\begin{theorem}[Exact MIACE optimum for a nonlinear storage class]
\label{thm:nonlinear_miace}
For every admissible response of \eqref{eq:nonlinear_plant}, let
\[
 \Ncal=\int_0^\infty\max\{-e,0\}\dd\theta
       +\int_0^\infty\max\{-v,0\}\dd\theta,
 \qquad m_\alpha=\sqrt{\frac{\alpha}{1+\alpha}}.
\]
Then
\begin{gather}
 \int_0^\infty e\dd\theta+\int_0^\infty v\dd\theta=S_a,
 \qquad A_e+A_u=S_a+2\Ncal,
 \label{eq:nonlinear_balance}\\
 \boxed{\Mcal_\alpha^2-S_a^2m_\alpha^2
 =4m_\alpha^2\Ncal(S_a+\Ncal)
  +\frac{(A_e-\alpha A_u)^2}{1+\alpha}.}
 \label{eq:nonlinear_gap}
\end{gather}
The global minimum over this admissible class is
\begin{equation}
 \boxed{\min\Mcal_\alpha=S_a\sqrt{\frac{\alpha}{1+\alpha}}.}
 \label{eq:nonlinear_minimum}
\end{equation}
A response attains this minimum if and only if $e,v\ge0$ almost
everywhere and $A_e=\alpha A_u$. Every attaining output is
nondecreasing from zero to one.

One attaining causal feedback law, with equilibrium feedforward, is
\begin{equation}
 \boxed{w=1+\frac{1-y}{\alpha},\qquad v=\frac e\alpha.}
 \label{eq:nonlinear_feedback}
\end{equation}
Put $c_\alpha=1+1/\alpha$ and
$a_{\min}=\min_{0\le z\le1}a(z)$,
$a_{\max}=\max_{0\le z\le1}a(z)$. Its response satisfies
\begin{gather}
 e^{-c_\alpha a_{\max}\theta}
 \le e(\theta)\le e^{-c_\alpha a_{\min}\theta},
 \label{eq:nonlinear_decay}\\
 A_e=\frac{\alpha S_a}{1+\alpha},\qquad
 A_u=\frac{S_a}{1+\alpha},\qquad
 1\le w(\theta)\le1+\frac1\alpha,
 \quad\sup_{\theta\ge0}w(\theta)=1+\frac1\alpha.
 \label{eq:nonlinear_areas_peak}
\end{gather}
\end{theorem}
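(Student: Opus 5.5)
The plan is to reproduce the linear argument with the storage coordinate $\Phi_a$ in place of the output. Since $a$ is positive and locally Lipschitz, $\Phi_a$ is $C^1$ and strictly increasing. Because $y$ is locally absolutely continuous, the chain rule gives
\[
 \frac{\mathrm d}{\mathrm d\theta}\Phi_a(y)=\frac{y'}{a(y)}=w-y=e+v
\]
almost everywhere. Integrating over $[0,h]$ gives $\Phi_a(y(h))=\int_0^h(e+v)\dd\theta$. As $h\to\infty$, the left side tends to $\Phi_a(1)=S_a$ by continuity and $y\to1$, and the right side converges by $e,v\in L^1$. This yields the signed identity in \eqref{eq:nonlinear_balance}. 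Applying $|g|=g+2\max\{-g,0\}$ to $e$ and $v$ then gives $A_e+A_u=S_a+2\Ncal$.

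With this balance in hand, I would apply the purely algebraic decomposition \eqref{eq:miace_area_decomposition}, exactly as in \cref{thm:n_gap}, and subtract $S_a^2m_\alpha^2$ to obtain \eqref{eq:nonlinear_gap}. Since $S_a>0$, both terms are nonnegative, so $\Mcal_\alpha\ge S_am_\alpha$. Equality holds if and only if $\Ncal=0$ and $A_e=\alpha A_u$, that is, if and only if $e,v\ge0$ almost everywhere and the areas are balanced. For monotonicity at equality, note that $e,v\ge0$ gives $y'=a(y)(e+v)\ge0$ almost everywhere. Absolute continuity then makes $y$ nondecreasing, and since $y(0)=0$ and $y\to1$, the output runs from zero to one. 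No cascade argument is needed because the plant is scalar.

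For the feedback law \eqref{eq:nonlinear_feedback}, the closed loop is $e'=-a(1-e)\,c_\alpha e$ with $e(0)=1$, which is a locally Lipschitz scalar ODE. The main obstacle is global existence together with the two-sided bounds, and I would settle both by a comparison argument. The point $e=0$ is an equilibrium, so by uniqueness the solution cannot reach zero in finite time. Moreover $e'<0$ whenever $e>0$, so $e$ stays in $(0,1]$. Hence $y\in[0,1)$, so $a(y)\in[a_{\min},a_{\max}]$ and the solution is global. Integrating $(\log e)'=-c_\alpha a(y)$ between these bounds gives \eqref{eq:nonlinear_decay}. In particular $e\to0$, and both $e$ and $v=e/\alpha$ are positive and integrable, so the response is admissible.

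Finally, the signed identity together with $\int v=\frac1\alpha\int e$ gives $A_e(1+1/\alpha)=S_a$. This yields the stated areas, which satisfy $A_e=\alpha A_u$, so the bound is attained. The command $w=1+e/\alpha$ lies in $[1,1+1/\alpha]$, and its supremum is the value $w(0)=1+1/\alpha$.
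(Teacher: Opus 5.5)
Your proposal is correct and follows the paper's proof essentially step by step. The steps match: the chain rule for $\Phi_a(y)$ with the limit justified by integrability, the algebraic decomposition with $S_a>0$ for the equality conditions, $y'=a(y)(e+v)\ge0$ for monotonicity, and the scalar closed loop integrated as $(\log e)'=-c_\alpha a(y)$ for the bounds, areas and peak. The only cosmetic difference is that you argue invariance and global existence through the equilibrium $e=0$ in the error coordinate, whereas the paper uses the equilibrium $y=1$ in the output coordinate; these are the same argument.
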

\begin{proof}
Positivity and continuity of $a$ make $\Phi_a$ continuously
differentiable. The chain rule for locally absolutely continuous
functions gives
\[
 \frac{d}{d\theta}\Phi_a(y(\theta))
 =\frac{y'(\theta)}{a(y(\theta))}
 =w(\theta)-y(\theta)=e(\theta)+v(\theta)
\]
almost everywhere. Integrating on $[0,T]$, using $y(0)=0$, and then
letting $T\to\infty$ proves the signed identity in
\eqref{eq:nonlinear_balance}. Absolute integrability justifies the
limit on the integral side, and continuity of $\Phi_a$ gives
$\Phi_a(y(T))\to\Phi_a(1)$.
The identity $|f|=f+2\max\{-f,0\}$ proves the absolute-area balance.

Apply the algebraic decomposition
\[
 A_e^2+\alpha A_u^2
 =\frac{\alpha}{1+\alpha}(A_e+A_u)^2
  +\frac{(A_e-\alpha A_u)^2}{1+\alpha}
\]
and substitute $A_e+A_u=S_a+2\Ncal$. This gives
\eqref{eq:nonlinear_gap}. Since $S_a>0$, the two terms on its
right vanish exactly when $\Ncal=0$ and $A_e=\alpha A_u$.
Vanishing negative-part integrals give the two almost-everywhere
signs. For an equality response,
$y'=a(y)(e+v)\ge0$ almost everywhere; local absolute continuity
makes $y$ nondecreasing. Its initial and limiting values then give
$0\le y\le1$ at every time.

It remains to show attainment by an admissible response.
Under \eqref{eq:nonlinear_feedback},
\begin{equation}
 y'=c_\alpha a(y)(1-y),\qquad
 e'=-c_\alpha a(1-e)e,
 \qquad e(0)=1.
 \label{eq:nonlinear_closed_loop}
\end{equation}
Local Lipschitz continuity gives local existence and uniqueness.
Starting at zero, $y$ increases while $y<1$ and cannot cross the
equilibrium $y=1$. Hence it stays in the compact interval $[0,1]$,
and the solution extends to every $\theta\ge0$. On this interval,
$0<a_{\min}\le a(y)\le a_{\max}<\infty$. Solving the scalar
equation for $e$ gives
\[
 e(\theta)=\exp\!\left[-c_\alpha
                   \int_0^\theta a(y(s))\dd s\right],
\]
which proves \eqref{eq:nonlinear_decay}, strict positivity at finite
times, and exponential convergence. Thus $e,v=e/\alpha$ belong
to $L^1$ and are nonnegative. The command has a finite initial
jump from its zero prehistory to $1+1/\alpha$ and then decreases
to one. It is an ordinary bounded function.

The pointwise ratio $v=e/\alpha$ gives $A_u=A_e/\alpha$.
Combining it with the balance yields the areas in
\eqref{eq:nonlinear_areas_peak}; the peak follows from $e(0)=1$
and $e$ decreasing to zero. The equality conditions are therefore
attained, establishing \eqref{eq:nonlinear_minimum} globally over
the declared admissible class.
\end{proof}

The feedback law contains the equilibrium feedforward term one.
It is not being identified with a positive PI controller acting only
on the tracking error. Its formula is independent of $a$, so the same
law attains the corresponding bound for every positive locally
Lipschitz $a$ in \eqref{eq:nonlinear_plant}. The minimum value and
the transient still depend on $a$ through $S_a$ and
\eqref{eq:nonlinear_closed_loop}. This independence concerns the
specified plant class; it does not cover an incorrect equilibrium
input, an added delay, or additional states. Nor does the theorem
assert uniqueness of the minimizing trajectory.

\subsection{A state-dependent speed with an exact area calculation}

Take
\begin{equation}
 a(y)=1+y^2,
 \qquad S_a=\arctan(1)=\frac\pi4.
 \label{eq:nonlinear_example}
\end{equation}
At $\alpha=1$, the attaining law is $w=2-y$, and
\[
 y'=2(1+y^2)(1-y),\qquad
 A_e=A_u=\frac\pi8,\qquad
 \Mcal_1=\frac{\pi}{4\sqrt2}.
\]
The areas have been evaluated without integrating a numerical
trajectory. For any $\alpha>0$, separation of variables also gives
an exact implicit expression for the output:
\begin{equation}
 c_\alpha\theta
 =-\frac12\log(1-y)
   +\frac14\log(1+y^2)+\frac12\arctan y,
 \qquad 0\le y<1.
 \label{eq:nonlinear_implicit}
\end{equation}
The derivative of the right side with respect to $y$ is
$[(1+y^2)(1-y)]^{-1}$, which verifies the separated equation
and the initial value.

For comparison, $a\equiv1$ under the same feedback has
$e=e^{-c_\alpha\theta}$ and $S_a=1$. At $\alpha=1$, its two
areas are $1/2$ rather than $\pi/8$, although both commands start
at two. The nonlinear plant moves faster as $y$ increases, and the
command returns to equilibrium sooner. In the area plane the
supporting line changes from $A_e+A_u=1$ to $A_e+A_u=\pi/4$;
the optimal ratio remains $A_e/A_u=\alpha$.
Figure~\ref{fig:nonlinear_miace} illustrates this comparison.

\begin{figure}[H]
 \centering
 \includegraphics[width=.96\linewidth]{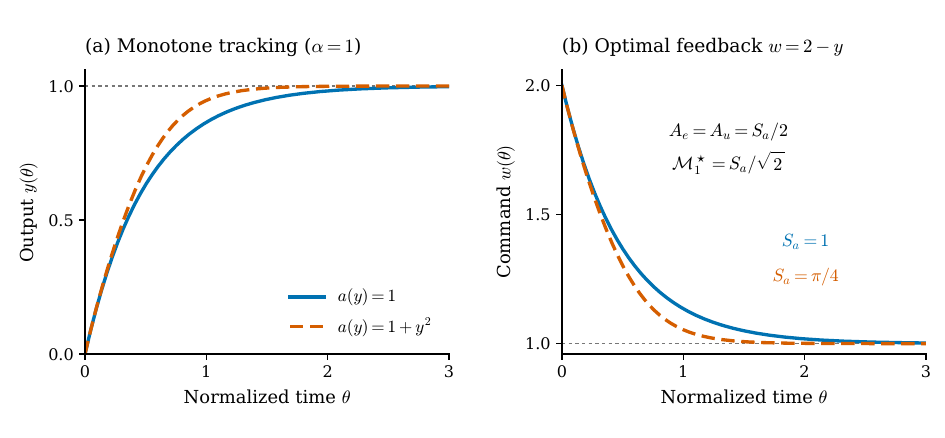}
 \caption{The exact nonlinear extension at $\alpha=1$. The plants
 $a(y)=1$ and $a(y)=1+y^2$ use the same feedback $w=2-y$.
 The panels show the monotone output $y$ and the command $w$.
 Both plants have $v=e$ and the same initial command peak, but
 different areas: $A_e=A_u=1/2$ for the linear lag and
 $A_e=A_u=\pi/8$ for the nonlinear plant.
 The displayed trajectories illustrate the exact
 balance; their sampled values are not used to establish the optimum.}

 \label{fig:nonlinear_miace}
\end{figure}

Here the nonlinear storage coordinate replaces the linear plant's
integrated state equation, while the MIACE criterion and its
negative-area penalty remain unchanged. The result extends the
first-order balance and attainment argument to
\eqref{eq:nonlinear_plant}. The delayed PI, filtered PID, and
positive-cascade theorems retain their original hypotheses; no
claim about arbitrary nonlinear plants with delay follows from
this calculation.

%% file: sections/09_discussion.tex
\section{Interpretation and scope}
\label{sec:discussion}

For the delay-free plant, the MIACE value gives explicit bounds on overshoot and total backward motion. The exact
identity \eqref{eq:miace_shape_gap} separates the cost of negative areas
from the imbalance between tracking and effort. Through $y'=e+v$, the
same areas bound every reversal of the output. Thus the criterion controls
both overshoot and motion away from the target before it is reached.
The inequality holds for any admissible command of the first-order plant;
PI is needed to identify its unique two-parameter minimizer, not to establish
the inequality. Total variation is an existing response-shape measure
\citep{HubaEtAl2021}; the calculation here connects it to the attained
MIACE value and retains the endpoint correction for finite records.

For $L=0$, the controller $x=r=1$ provides a physical reference for both fronts.
It applies the command needed at equilibrium and lets the plant respond
with its own time constant. Raising $x$ along $r=1$ replaces that time
constant by $\tau/x$. The price is an initial command $x/k$ followed by
an exponential return to $1/k$. The squared-energy front charges heavily
for a large excursion, while the absolute-area front counts its area.
In both cases the nominal efficient family is the same, although the
weights selecting a member differ.

The second parameter, $r$, moves the controller zero relative to the
plant pole. Without constraints, this change cannot improve both
objectives. For the quadratic pair,
\eqref{eq:same_tracking_controller} constructs a cancellation controller
with the same tracking energy and lower effort whenever $r\ne1$.
For the absolute pair, \eqref{eq:negative_area_gap} gives a physical
measure of the departure from the efficient line: the excess is twice
the total area of output overshoot and command deficit below equilibrium.
These comparisons concern every positive PI controller, including
oscillatory and repeated-pole cases.

An amplitude constraint preserves the clipped solution when overshoot
is forbidden. The complete domain in \cref{thm:monotone} explains
why: its additional $x<1$ branch is dominated by the feasible
equilibrium-command controller. Once overshoot is allowed, the exact
counterexample improves on clipping by using a faster integral action.
The numerical search with a post-jump-rate constraint explores a still
different admissible set. Its agreement across starts establishes
reproducibility of the reported candidates, rather than a missing proof
of global constrained optimality.

The mismatch calculation separates stability, cost, and response shape.
All positive gains and time constants in the same first-order plant class
preserve stability under the fixed positive PI controller. Cost loss is
given exactly by \eqref{eq:mismatch_loss} and begins at second order
near the nominal model. Monotonicity has a one-sided boundary instead:
when the normalized proportional gain remains at least one, any actual
time constant larger than the nominal integral time produces some
overshoot. An engineering requirement on overshoot must therefore be
checked even when the calculated cost penalty is small. The MIACE shape
bound is consistent with this distinction: a positive optimality gap permits
some reverse motion, but places an explicit limit on its total size when
the criterion and the lower bound refer to the same actual model.

With a pure delay, the same argument starts after the waiting interval.
The extra error area is exactly $\delta=L/\tau$, and the bound on output
travel uses $A_e+A_u-\delta$. Thus the delay changes both the reference
cost and the way a relative cost excess is converted into an overshoot
bound. It does not remove the penalty for negative areas. What becomes
more restrictive is attainment: the positive PI family reaches the lower
bound exactly on the range in \cref{thm:delayed_optimum}. Within that
range the gain is reduced by $1/(1+\delta)$, the integral time remains
$\tau$, and the output is monotone. The command's initial linear rise
compensates for its lower starting value, leaving the same peak as in
the delay-free optimum.

The endpoint of this range is not a general boundary between good and
bad delayed tuning. Beyond it, the analytical lower bound remains valid,
but no positive ideal PI controller can attain it. A controller may still
have a monotone output while its centered command changes sign. Determining
the actual delayed PI optimum in that region requires a further analysis;
neither the delay-free front nor the cancellation formula supplies it.
The delayed quadratic comparison reinforces the distinction: its old gain
is not even stationary after a nonzero delay is introduced.

The filtered PID constructions show that the PI attainment boundary is
a restriction of that controller family. At $\alpha=1$, $\delta=1/4$,
a genuine filtered PID reaches the lower bound with a monotone output,
although no positive PI reaches it. For $\alpha>\mathrm e$, the explicit
family in \cref{thm:pid_explicit_family} supplies a further interval above
$1/\alpha$. The derivative and its filter reshape the command during the
waiting interval; their benefit is established by the sign of $v$ as
well as the output shape. At $\alpha=4$, $\delta=1/3$, both the PID and
the extrapolated PI can have monotone outputs, but only the constructed
PID keeps the command above its equilibrium value and balances the areas.

These results also separate uniqueness of a controller from uniqueness
of the optimal objective pair. The PI equality analysis fixes both PI
parameters. For filtered PID it fixes the integral gain and a relation
between the remaining coefficients, while a continuum of settings can
satisfy the sign conditions. Their two areas and MIACE values coincide,
but their waveforms need not. A secondary requirement on command amplitude,
noise sensitivity, or robustness could distinguish them; no such secondary
optimum follows from the area criterion alone.

The filter realization matters. The present proof uses a derivative of
the full tracking error with an independently chosen filter time. It does
not carry over without calculation to a derivative of the measured output,
a fixed filter ratio, sampled implementation, or a saturated actuator.
The necessary cancellation also concerns the nominal first-order pole.
The delay-free mismatch calculation does not establish robustness
of these delayed PID designs. Positivity and exponential stability are
proved here for the specified exact plant and known delay.

The cascade result separates three questions that coincide for the
first-order plant. The low-frequency moment determines the area bound;
the cascade states make equality imply monotonicity; the interpolation
and positivity conditions establish that a controller can actually
attain the bound. None of these steps can replace either of the others.
The strict necessary condition $\alpha>\mu+\delta-1$ shows that equal
weights are already unsuitable for bound attainment in two or more
identical lags. This does not prove overshoot of the actual optimum
when the bound is unattainable. It leaves that different optimization
problem open.

For arbitrary positive lag constants, the existence interval depends
on those constants and on the weight. For identical lags, the explicit
choice $\alpha=2N-1$ ensures attainment at every order throughout the common delay interval
$0\le L/\tau\le1/4$. This is a sufficient interval, not the largest
one. The construction has minimal realization order exactly $N$ in
the stated families; it does not show that no controller of lower
order could reach the same objective pair. Such order limitations
belong to a separate question, also addressed for rational monotonic
tracking in \citet{Taghavian2026}.

The resulting output has the regularity supplied by $N$ plant lags:
its first $N-1$ derivatives are continuous through the end of the
waiting interval. This gives a precise meaning to a smooth transient.
It does not impose a physical slew limit on the command, whose
initial finite jump is allowed by the proper controller. The command
remains above equilibrium but need not decrease monotonically.
The nominal plant factors cancelled by the controller are stable;
the proof retains them in the full characteristic determinant.
Uncertain cancellation and delay require a separate robustness analysis.

The distinction between exact and near attainment also matters.
Every attaining positive-cascade response is monotone, but the
quantitative first-order reversal bound extends to higher order here
only when the additional balance operator $Z=pH/(1-H)$ has a
nonnegative impulse measure. Even three identical lags fail that
operator condition. Conversely, the exact third-order minimum-phase
counterexample shows that nonnegative error and command do not
generally eliminate backward motion. The absence of overshoot,
monotonicity, and small MIACE excess are related properties under
stated dynamics, rather than interchangeable descriptions of a trace.

The scope of the quadratic optimality result is also precise. Its
signal-level completion of squares is classical LQR, not a separate
general optimal-control principle. The explicit PI parameter identities,
the complete efficient controller sets, their absolute-area equality
conditions, and the full nonovershooting amplitude problem supply the
controller-specific content. The mismatch corollary is a consequence of
those identities. The quadratic stationary formula remains attributed to the dissertation.
MIACE is traced to the author's 2003 diploma work and PIDBul2003, with
the 2011 dissertation providing the later explicit finite-horizon formula.
This chronology is distinct from a claim that the present global and
shape theorems had already been established in the early software.

The internal-model extension resolves feasibility in a larger controller
class. Its exact condition $\alpha>S-1$ agrees with the previous
necessary inequality, but its realization contains delay blocks.
The rational constructions still answer the separate finite-order
question. The exponential moment also separates the command's area
from its peak: MIACE equality leaves many waveforms possible, while
the sharp amplitude bound selects a unique rectangular excess command
at the least peak. This command has finite jumps, so the cap theorem
does not impose a slew limit or describe a saturated implementation.
The delay-error estimate gives a sufficient stability condition and an explicit cost bound,
based on conservative convolution estimates. It does not preserve exact
attainment or nominal signal signs under perturbation.

The nonlinear result in \cref{thm:nonlinear_miace} identifies an
integrated storage balance as another source of the same area geometry.
The transformation is used to integrate the plant equation; the error
inside MIACE remains $1-y$. The attaining feedback does not depend
on the positive function $a(y)$, although the attained value and the
time history do. This gives a common optimal law within that nonlinear
class, with unit equilibrium gain and zero delay. It does not establish
optimality under an altered equilibrium relation, added delay, actuator
saturation, or arbitrary nonlinear dynamics. Nor does an optimal pair
of areas imply a unique minimizing trajectory.

The assertion that every positive PI setting is stable concerns the
delay-free plant only. The pure-delay extension uses an exactly known
delay and does not establish a robustness margin for delay uncertainty.
The rational positive-cascade extension is also nominal; the internal-model
delay-error result is stated separately in \cref{thm:complete_robust}.
Neither a hard physical slew limit
through the initial jump nor actuator saturation is represented by the
stated PI and filtered PID equations. The analysis does not establish a disturbance-
rejection optimum, anti-windup design, or hardware performance.
Within its stated domain, it supplies an exact benchmark: a numerical
tuning method can be checked against both the optimum and the gap
of every returned controller.

%% file: sections/10_conclusion.tex
\section{Conclusion}
\label{sec:conclusion}

MIACE admits an exact connection between its value, the signs of its
two signals, and the shape of the resulting response. For a stable
unit-gain plant with pure delay, the signed tracking-error and centered
command areas sum to $S=\mu+\delta$, where $\mu=-H'(0)$. When $S>0$,
the lower bound is $S\sqrt{\alpha/(1+\alpha)}$. Its squared excess
separates into a penalty for negative areas and a penalty for an
incorrect ratio of the two absolute areas. Equality requires both
signals to be nonnegative and fixes their areas separately. This
statement is independent of the chosen controller family.

For the first-order plant, the physical balance also bounds total
output travel, accumulated backward motion, and peak overshoot.
Without delay, a relative MIACE excess of at most $\eta$ permits at
most $\eta/2$ of the step in either of the latter two quantities.
Every response attaining the bound is monotone. The finite-record
version retains the terminal error and describes only the observed
interval. A known delay changes the signed-area intercept and the
conversion from excess cost to possible reverse motion.

In the positive ideal PI family, equality identifies the unique
attaining controller. Without delay, $T_i=\tau$ and
$kK_c=1+1/\alpha$; with delay, the proportional gain is divided by
$1+L/\tau$ and attainment requires the exact range derived in
\cref{thm:delayed_optimum}. Filtered PID extends attainment beyond
that range. Its equality conditions fix the integral gain and force
a controller structure, while nonnegative convolution representations
provide sufficient conditions for stability and nonnegativity. The attaining
PID need not be unique, and the constructions do not characterize
the entire remaining delay domain.

For a positive cascade of arbitrary order, a separate state argument
proves that every attaining response is monotone. Attainment by an
ordinary command requires the strict inequality
$\alpha>\mu+\delta-1$. An exact finite-support remainder of the delay
then constructs a proper rational controller of the same order as
the plant. Its convolution resolvent proves internal exponential
stability and nonnegativity of both signals. Every fixed cascade
admits this construction at a suitable weight for all sufficiently
small delays. For $N$ identical lags, the explicit choice
$\alpha=2N-1$ works for every integer $N\ge2$ throughout
$0\le L/\tau\le1/4$, attaining
$(N+L/\tau)\sqrt{(2N-1)/(2N)}$.

The output is nondecreasing and has $N-1$ continuous derivatives;
the finite initial command jump is retained. The controller order
and the stated delay interval are achieved values, not proofs of
minimal necessary order or of the largest possible interval.
The additional positivity condition for a quantitative near-minimum
bound is stated separately. An exact minimum-phase counterexample
shows why equality alone does not ensure monotonicity outside the
positive-cascade class.

When internal delays are permitted, the exponential-moment condition
is both necessary and sufficient for a nonnegative centered command
to produce a monotone cascade output. The exact attainment condition
becomes $\alpha>S-1$, and a stable internal-model realization constructs
an attaining controller throughout this range. With a cap $1\le w\le M$,
the available command area is exactly
$(M-1)\log[M/(M-1)]$. A rectangular excess command attains its boundary
and gives the least possible peak among all MIACE-equality responses.
The capped sign-preserving optimum is explicit; global optimality
outside that class is asserted only when the universal bound is reached.
A sufficient small-gain estimate also bounds MIACE degradation under
a small error in the delay, while leaving shape preservation unclaimed.

The quadratic comparison selects the same efficient delay-free PI
family with a different weight mapping. Its classical LQR bound,
explicit parameter gap, complete front, actuator result, and mismatch
loss provide complementary checks. They retain their stated
first-order and delay-free qualifications.

For the nonlinear first-order class $y'=a(y)(w-y)$, with positive locally
Lipschitz $a$, a storage balance gives the intercept
$S_a=\int_0^1 a(s)^{-1}\dd s$ and the same
exact MIACE excess identity. The feedback $w=1+(1-y)/\alpha$ attains
the global bound with a finite command jump and a monotone output.
This construction requires no knowledge of the state-dependent response
rate, while the optimum value depends on that rate. The result retains
the original two absolute areas and is proved for zero delay.

The original criterion and its PIDBul2003 implementation arose in the
author's 2003 diploma work. The present analysis concerns the exact
infinite-horizon bound, its equality conditions, and constructions
that attain it. Monotonic tracking, external positivity, interpolation,
and convolution resolvents have their own established literature,
which is explicitly acknowledged. The contribution is the connection
of these tools to MIACE attainment for the specified dynamics.
General parameter uncertainty, nonlinear saturation, sampling, and
disturbance-rejection optimality remain outside the analysis. The new
delay-error result treats a fixed controller and unchanged lag
constants and gain; it is not a robust-optimality theorem.
A natural next step is to extend MIACE to broader classes of nonlinear, time-varying and multivariable systems. The central question is when a storage balance yields sharp bounds on the criterion and allows its minimizers to be characterized in terms of tracking and control effort. Pure delays, actuator constraints and model uncertainty introduce further questions of attainability and robustness. Establishing precise links between absolute signal areas, settling time and oscillatory behaviour would also help turn the original “minimum inertia” motivation into a more general theory of controller design.

%% file: declarations.tex
\section*{Acknowledgements and declarations}
\noindent\textbf{Funding.} This research received no external funding.
\noindent\textbf{Competing interests.} The author has no relevant financial
or non-financial interests to disclose.

\noindent\textbf{Author contributions.} Latchezar P. Tomov: conceptualization,
methodology, formal analysis, investigation, software, visualization,
writing, and validation.

\noindent\textbf{Use of generative AI.} OpenAI ChatGPT and Codex were used
during August--October 2026 for drafting and revision, assistance with
mathematical proofs and lean 4 formalization and Python code, computational checks, reference
searches, and language editing. The derivations and computational checks are
presented explicitly in the manuscript and accompanying material. The paper is based on original optimization criteria, developed by the author in 2003, on his hypothesis for the behaviour of certain systems optimized with that criteria and some partial results for quadratic optimization from 2011. The usage of AI is to verify and extend the ideas of the author. The author
is responsible for the submitted content and its scholarly claims.

\section*{Data and software availability}
The reproduction code, generated numerical data, verification scripts and
figure sources are provided in the ancillary archive
\texttt{MIACE\_Reproducibility.zip}.
The reported theoretical results require no external dataset.
The \href{https://github.com/luchesart/MIACE/blob/monotone-tracking/MIACE_Proofs_Companion.pdf}{proofs companion}
provides background for reading the proofs \citep{Tomov2026Companion}.
The full \href{https://github.com/luchesart/MIACE/tree/monotone-tracking}{Lean project and theorem--proof map}
are available in the same repository \citep{Tomov2026Lean}.

%% file: sections/appendices.tex
\appendix
\section{The second-order energy formula}
\label{app:h2}

A realization of $H(p)=(dp+e_0)/(ap^2+bp+c)$ is
\[
 A=\begin{bmatrix}0&1\\-c/a&-b/a\end{bmatrix},\quad
 B=\begin{bmatrix}0\\1/a\end{bmatrix},\quad
 C=\begin{bmatrix}e_0&d\end{bmatrix}.
\]
The controllability Gramian $P=\int_0^\infty e^{At}BB^\top e^{A^\top t}\dd t$
solves $AP+PA^\top+BB^\top=0$ because $A$ is Hurwitz. Writing its entries
as $P_{11},P_{12},P_{22}$, the three scalar equations are
\[
 2P_{12}=0,\qquad P_{22}-\frac caP_{11}-\frac baP_{12}=0,\qquad
 -\frac{2c}aP_{12}-\frac{2b}aP_{22}+\frac1{a^2}=0.
\]
They give $P_{12}=0$, $P_{22}=1/(2ab)$, and $P_{11}=1/(2bc)$.
Therefore $CPC^\top=e_0^2/(2bc)+d^2/(2ab)$, proving
\eqref{eq:h2_lemma}, including repeated poles.

\section{Equality in the two absolute-area bounds}
\label{app:miace_uniqueness}

Suppose $0<z=r/x<1$ and both signed-area bounds are attained.
Continuity then gives $e\ge0$ and $v\ge0$. Since $v(0^+)=x-1$,
we must have $x\ge1$. The real decay rates, when they exist, solve
\[
 f(\lambda)=\lambda^2-(1+x)\lambda+x/r=0.
\]

If $r>1$, then $f(1)=x(1/r-1)<0$, so
$0<\lambda_1<1<\lambda_2$. The positive residues in
\eqref{eq:positive_residues} apply. Since $v=-e'-e$, its slower exponential
has coefficient $a_1(\lambda_1-1)<0$. That term eventually dominates,
contradicting $v\ge0$.

If $r<1$ and the roots are complex, $e$ is a decaying nonzero sinusoid and
changes sign. If the roots are distinct and real, $f(1)>0$ and
$\lambda_1+\lambda_2=1+x\ge2$ imply $1<\lambda_1<\lambda_2$.
The slower error residue is
$(1-\lambda_1)/(\lambda_2-\lambda_1)<0$, so $e$ is eventually negative.
For a repeated root $\lambda=(1+x)/2$, the initial conditions give
\[
 e(\theta)=e^{-\lambda\theta}\left[1+\frac{1-x}{2}\theta\right].
\]
Here $r=4x/(1+x)^2<1$ and $x\ge1$ imply $x>1$; the bracket eventually
becomes negative. Thus $r<1$ is also impossible.

Consequently $r=1$, and $e=e^{-x\theta}$,
$v=(x-1)e^{-x\theta}$. The prescribed signed area then fixes $x=1/z$.
This proves uniqueness at every interior point of the $L^1$ front, not only
at one selected weight.

\section{Extrema, absolute areas, and numerical checks}
\label{app:peak_oracle}

In parallel coordinates, $e$, $w-1$, and $w'$ satisfy
$f''+(1+p_c)f'+i_cf=0$. For two distinct real poles, each is a sum of two
exponentials and has at most one interior stationary point. A repeated pole
gives $e^{\rho\theta}(a+b\theta)$. For complex poles, each is a damped
sinusoid: successive stationary values alternate in sign and their
magnitudes decrease geometrically. Evaluating the initial and limiting
values and the first extrema of both signs therefore gives the global
extrema in exact arithmetic. The implementation evaluates these formulas
in floating-point arithmetic and checks them separately against an ODE
solver; it does not provide rigorous interval bounds.

The candidate times can be written explicitly. For real distinct decay
rates, let $f=c_1e^{-\lambda_1\theta}+c_2e^{-\lambda_2\theta}$,
$0<\lambda_1<\lambda_2$. An interior stationary point exists only if
$-\lambda_2c_2/(\lambda_1c_1)>0$ and
\[
 \theta_*=\frac{\log[-\lambda_2c_2/(\lambda_1c_1)]}
                  {\lambda_2-\lambda_1}>0.
\]
Zero coefficients are handled as single exponentials. A possible zero of
$f$ is obtained by replacing that ratio by $-c_2/c_1$. For
$f=e^{-\lambda\theta}(a+b\theta)$, its only possible interior stationary
point is $\theta_*=(b-\lambda a)/(\lambda b)$ when $b\ne0$ and that
value is positive. Its possible zero is $-a/b$.

For $f=e^{-d\theta}(a\cos\omega\theta+b\sin\omega\theta)$, write
$f'=e^{-d\theta}(C\cos\omega\theta+D\sin\omega\theta)$, with
$C=-da+\omega b$, $D=-db-\omega a$. If $(C,D)\ne(0,0)$ and
$\delta=\operatorname{atan2}(D,C)$, all stationary times are
$\theta_j=(\delta+\pi/2+j\pi)/\omega$; retain nonnegative times.
Successive values change sign and shrink by $e^{-d\pi/\omega}$.
Thus the first extrema of both signs, the initial value, and the limit
zero suffice. Applying the construction to $v=w-1$ and $v'=w'$
gives the command and regular-rate bounds.

For the absolute integrals, the real-pole cases are integrated on the
intervals separated by their possible zero. In the complex-pole case,
$e^{-d\theta}(a\cos\omega\theta+b\sin\omega\theta)$ has zeros separated
by $\pi/\omega$. Integrate the first partial lobe, integrate the next full
lobe, and sum the remaining absolute lobe areas as a geometric series with
ratio $e^{-d\pi/\omega}$. This evaluates the infinite tail rather than
truncating it at a simulation horizon.
For direct implementation, an antiderivative is
\[
 \frac{e^{-d\theta}}{d^2+\omega^2}
 \bigl[(-da-\omega b)\cos\omega\theta
       +(\omega a-db)\sin\omega\theta\bigr].
\]
At a zero crossing, integrate each lobe with its own sign before summing
absolute areas. Repeated or nearly repeated real poles need their limiting
form to avoid subtracting large residues; the ODE checks cover this case.

The separate state-space check uses the centered integral contribution
$h=(x/r)\int_0^\theta e\dd\eta-1$. Its equations are
\[
 \begin{bmatrix}e\\h\end{bmatrix}'=
 \begin{bmatrix}-(1+x)&-1\\x/r&0\end{bmatrix}
 \begin{bmatrix}e\\h\end{bmatrix},\qquad
 \begin{bmatrix}e(0)\\h(0)\end{bmatrix}=
 \begin{bmatrix}1\\-1\end{bmatrix},\qquad v=xe+h.
\]
Observability Gramians in this realization check the physical loop without
reusing the transfer-function realization of \cref{app:h2}. Additional
ODE quadrature checks cover absolute areas, nearly repeated poles, and
non-unit reference steps. A counterexample check evaluates
\eqref{eq:clipping_counterexample}. The script
\texttt{python code/reproduce\_package.py}, run from the source-project root,
rebuilds the figures, numerical designs, and all checks. Seeds, tolerances,
and full-precision results are supplied in the accompanying supplementary
archive.